\newcommand{\cR}{\mathcal R}
\newcommand{\ph}{\varphi}
\newcommand{\Cinfini}{\mathcal C^\infty}
\newcommand{\F}{\mathcal F}
\newcommand{\zz}{z}
\newcommand{\lamc}{\check\lambda}
\newcommand{\sigc}{\check\sigma}
\newcommand{\varc}{\check\varepsilon}
\newcommand{\bc}{\check{b}}
\newcommand{\loc}{\textnormal{loc}}
\newcommand{\astar}{\alpha^\star}
\newcommand{\mm}{A}
\DeclareMathOperator{\sech}{sech}
\DeclareMathOperator{\MM}{M}
\newtheorem{theorem}{Theorem}[section]
\newtheorem{lemma}[theorem]{Lemma}
\newtheorem{proposition}[theorem]{Proposition}
\theoremstyle{definition}
\newtheorem{definition}[theorem]{Definition}
\theoremstyle{remark}\newtheorem{remark}{Remark}[section]
\numberwithin{equation}{section}
\begin{document}
\title[Flattening solitary waves for critical gKdV]{Full family of flattening solitary waves for the mass critical generalized KdV equation}

\author[Y. Martel]{Yvan Martel}
\address{CMLS, \'Ecole Polytechnique, CNRS, 91128 Palaiseau, France}
\email{yvan.martel@polytechnique.edu}

\author[D. Pilod]{Didier Pilod}
\address{Department of Mathematics, University of Bergen, Postbox 7800, 5020 Bergen, Norway }
\email{Didier.Pilod@uib.no}

\subjclass[2010]{35Q53 (primary), 35B40, 37K40}
\thanks{D. P. was supported by a grant from the Trond Mohn Foundation.}

\begin{abstract}
For the mass critical generalized KdV equation 
$\partial_t u + \partial_x (\partial_x^2 u + u^5)=0$ on $\mathbb R$,
we construct a full family of flattening solitary wave solutions.
Let $Q$ be the unique even positive solution of $Q''+Q^5=Q$.
For any $\nu\in (0,\frac 13)$, there exist 
global (for $t\geq 0$)  solutions of the equation with the asymptotic behavior
\begin{equation*}
u(t,x)= t^{-\frac{\nu}2} Q\left(t^{-\nu} (x-x(t))\right)+w(t,x)
\end{equation*}
where, for some $c>0$,
\begin{equation*}
x(t)\sim c t^{1-2\nu} \quad \mbox{and}\quad 
\|w(t)\|_{H^1(x>\frac 12 x(t))} \to 0\quad 
\mbox{as $t\to +\infty$.}
\end{equation*}
Moreover, the initial data for such solutions can be taken arbitrarily close to a solitary wave in the energy space. The long-time flattening of the solitary wave is forced by  a slowly decaying tail in the initial data.

This result and its proof are inspired and complement recent blow-up results for the critical generalized KdV equation.
This article is also motivated by previous constructions of exotic behaviors close to solitons for 
other nonlinear dispersive equations such as the
energy-critical wave equation.
\end{abstract}

\maketitle

\section{Introduction}
\subsection{Motivation and main result}
We consider the $L^2$-critical generalized Korteweg-de Vries equation (gKdV)
\begin{equation} \label{gkdv}
\partial_tu+\partial_x\big(\partial_x^2u+u^5\big)=0  ,\quad (t,x) \in \mathbb R\times \mathbb R,
\end{equation}
where $u(t,x)$ is a real-valued function. 

The mass $M(u)$ and the energy $E(u)$ are (formally) conserved by the flow of \eqref{gkdv} where 
\begin{equation} \label{mass_energy}
M(u)=\int_{\mathbb R} u^2 \, dx \quad \text{and} \quad E(u)=\frac12 \int_{\mathbb R} (\partial_xu)^2 \, dx-\frac16 \int_{\mathbb R} u^6 \, dx  .
\end{equation}
We recall the scaling invariance: if $u$ is a solution to \eqref{gkdv}, then for any $\lambda>0$
\begin{equation*}u_{\lambda}(t,x):=\lambda^{\frac12} u(\lambda^3 t, \lambda x)\end{equation*} is also a solution to \eqref{gkdv}.

Recall that the Cauchy problem for \eqref{gkdv} is locally well-posed in the energy space $H^1(\mathbb R)$ by the work of Kenig, Ponce and Vega \cite{KPV,KPV2}: for any $u_0 \in H^1(\mathbb R)$, there exists a unique (in a certain sense) \emph{maximal solution} of \eqref{gkdv} in $\mathcal{C}\big([0,T^{\star}): H^1(\mathbb R)\big)$ satisfying $u(0,\cdot)=u_0$. Moreover, we have the \emph{blow-up alternative}: 
\begin{equation*} \text{if} \quad T^{\star}<+\infty, \quad \text{then} \quad \lim_{t \uparrow T^{\star}} \|\partial_xu(t)\|_{L^2}=+\infty  . 
\end{equation*}
 For such $H^1$ solutions, the quantities $M(u)(t)$ and $E(u)(t)$ are conserved on $[0,T^{\star})$.

We recall the family of solitary wave solutions of \eqref{gkdv}. Let
 $Q(x)=\big(3\sech^2(2x)\big)^{1/4}$ be the unique (up to translation) positive solution of the equation
\begin{equation} \label{eq:Q}
-Q''+Q-Q^5=0 \quad \mbox{on $\mathbb R$} .
\end{equation}
Then, the function
\begin{equation*} 
u(t,x)=\lambda_0^{-\frac12}Q\big(\lambda_0^{-1}(x-\lambda_0^{-2}t-x_0)\big), \quad 
\mbox{for any $(\lambda_0,x_0) \in
(0,+\infty) \times \mathbb R$} ,
\end{equation*}
is a solution of \eqref{gkdv}.
It is well-known that $E(Q)=0$ and that $Q$ is related to the following sharp Gagliardo-Nirenberg inequality (see \cite{Wei}) 
\begin{equation} \label{sharpGN}
\frac 13 \int_{\mathbb R} \phi^6 \le \left(\frac{\int_{\mathbb R} \phi^2}{\int_{\mathbb R} Q^2} \right)^2 \int_{\mathbb R} (\partial_x\phi)^2, \quad \forall \, \phi \in H^1(\mathbb R)  .
\end{equation}
It follows from \eqref{sharpGN} and the conservation of the mass and the energy that 
any initial data $u_0 \in H^1(\mathbb R)$ satisfying $\|u_0\|_{L^2} < \|Q\|_{L^2}$ generates a global in time solution of \eqref{gkdv} that is also bounded in $H^1(\mathbb R)$.
 
Now, we summarize available results on blow-up solutions for \eqref{gkdv} in the case of
 initial data with mass equal or slightly above the threshold mass, \emph{i.e.} satisfying
\begin{equation*}
\|Q\|_{L^2}\leq \|u_0\|_{L^2}\leq (1+\delta_0) \|Q\|_{L^2} \quad \hbox{where}\quad 0<\delta_0\ll 1.
\end{equation*}
\begin{itemize}
\item At the threshold mass $\|u_0\|_{L^2}=\|Q\|_{L^2}$, there exists a unique (up to the invariances of the equation) blow-up solution $S(t)$
of the equation, which blows up in finite time (denoted by $T>0$) with the rate $\|S(t)\|_{H^1} \sim C(T-t)^{-1}$
as $t\to T$. See \cite{CoMa1,MaMeRa2}.
\item For mass slightly above the threshold, there exists a large set (including negative and zero energy solutions, and open in some topology) of blow-up solutions, with the blow-up rate $\|u(t)\|_{H^1} \sim C(T-t)^{-1}$
as $t\to T$. See \cite{MaMeRa1,Mjams} and other references therein.
\item In the neighborhood of the soliton for the same topology ($H^1$ solutions with suitable decay on the right),
there exists a $\mathcal C^1$ co-dimension one threshold manifold which separates the above stable blow-up behavior from 
solutions that eventually exit the soliton neighborhood by vanishing.
Solutions on the manifold are global and locally converge to the ground state $Q$ up to the invariances of the equation.
In this class of initial data, one thus obtains the following trichotomy: stable finite time blowup, soliton behavior or exit. See~\cite{MaMeNaRa,MaMeRa1,MaMeRa2}.
\item There also exists a large class of exotic finite time blow-up solutions, close to the family of solitons, enjoying blow-up rates of the form $\|u(t)\|_{H^1} \sim C (T-t)^{-\nu}$ for any $\nu>\frac {11}{13}$.
Note that the exponent $\frac{11}{13}$ does not seem sharp 
and it is an open question to determine the lowest finite time blow-up exponent for $H^1$ initial data.
Global solutions blowing up in infinite time with $\|u(t)\|_{H^1} \sim C t^{\nu}$ as $t\to \infty$,
were also constructed for any positive power $\nu>0$. See~\cite{MaMeRa3}.

Such exotic behaviors are generated by the interaction of the soliton with explicit slowly decaying tails added to the initial data.
Because of the tail, these $H^1$ solutions do not belong to the class where the trichotomy (blowup, soliton, exit) occurs.
\end{itemize}
We refer to the above mentioned articles and to the references therein for detailed results and previous references on the subject.

Recall that for the mass critical nonlinear Schr\"odinger equation (NLS), there exists a large class (stable in $H^1$) of blow-up solutions enjoying the so-called
$\log\log$ blow-up rate (see \cite{MR} and references therein), whereas (unstable) blow-up solutions with the conformal blow-up rate
$\|u(t)\|_{H^1}\sim C (T-t)^{-1}$ were also constructed by perturbation of the explicit minimal mass blow-up solution (\cite{BW,KS2,MRS}). Moreover, in the vicinity of the soliton, 
it is proved in \cite{Ra} that solutions cannot have a blow-up rate
strictly between the $\log\log$ rate and the conformal rate.
It is an open question to build solutions with a blow-up rate higher than the conformal one (see however \cite{MaRa} in the case of several solitons). 
The only available results concerning flattening solitons are deduced from the pseudo-conformal transformation applied to the solutions discussed above.
For the mass critical (NLS), the question of the existence of exotic behaviors is thus widely open. 

The systematic study of exotic blow-up behaviors was initiated by the articles \cite{KST1,KST2} for energy critical dispersive models, followed by \cite{DK,HR,JJ,KS}.
(We also refer to \cite{GNT} for the construction of exotic solutions in other contexts.)
The article \cite{DK}, where a class of flattening bubbles is constructed for the energy critical wave equation on $\mathbb R^3$, is particularly related to our work.
More precisely, $W$ being the unique radial positive solution of $\Delta W+W^5=0$ on $\mathbb R^3$,
it is proved in \cite{DK} that for any $|\nu|\ll 1$, there exist global (for positive time) solutions of $\partial_t^2 u =\Delta u + |u|^4 u$ such that $u(t,x)\sim t^{\nu/2} W (t^{\nu} x)$ as $t\to +\infty$; the case $0<\nu\ll 1$ corresponds to blow-up in infinite time, while
$0<-\nu\ll 1$ corresponds to flattening solitons.

Such construction is especially motivated by the \emph{soliton resolution conjecture}, which 
states that any global solution should decompose for large time into a certain number of decoupled solitons plus a dispersive part. 
We refer to \cite{DKM} and references therein for the proof of the soliton resolution conjecture for the $3$D critical wave equation 
in the radial case. It follows from \cite{DK} that some flexibility on the geometric parameters is necessary
 in the statement of the conjecture.

The above mentioned works are a strong motivation for investigating exotic behaviors related to flattening solitons in the context of mass critical dispersive models.
Our main result is the existence of such solutions for the critical generalized KdV equation.

\begin{theorem}\label{th:1}
Let any $\beta \in (\frac 13 ,1)$.
For any $\delta>0$, there exist $T_\delta>0$ and $u_0\in H^1(\mathbb R)$ with 
$\|u_0-Q\|_{H^1}\leq \delta$ such that the solution $u$ of \eqref{gkdv} with initial data $u_0$ is global 
for $t\geq 0$ and decomposes for all $t\geq 0$ as
\begin{equation*}
u(t,x)= \frac 1{\ell^{\frac 12}(t)} Q\left( \frac{x-x(t)}{\ell(t)}\right)+w(t,x)
\end{equation*}
where the functions $\ell(t)$, $x(t)$ and $w(t,x)$ satisfy
\begin{equation}\label{def:ell}
\ell(t)\sim\left(\frac t{T_\delta}\right)^{\frac {1-\beta}2},
\quad
x(t)\sim \frac{T_\delta}{\beta} \left(\frac t{T_\delta}\right)^{\beta}\quad \mbox{as $t\to +\infty$,}
\end{equation}
and
\begin{equation}\label{def:eta}
\sup_{t\geq 0}\|w(t)\|_{H^1}\leq \delta,\quad \ \lim_{t\to +\infty} \|w(t)\|_{H^1(x>\frac 12 x(t))}=0.
\end{equation} 
\end{theorem}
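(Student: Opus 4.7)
The plan is to adapt to the flattening regime the infinite-time blow-up construction of Martel, Merle and Rapha\"el for \eqref{gkdv} from \cite{MaMeRa3}, by building a modulated soliton profile corrected with an explicit slowly decaying tail in the initial data, and then closing the construction by a backward-in-time compactness argument.

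\emph{Step 1: approximate profile.} I would first prescribe the scaling parameter $\ell(t)$ and the translation $x(t)$ a priori by the asymptotics \eqref{def:ell}, introduce rescaled variables $y=(x-x(t))/\ell(t)$ and $ds/dt=\ell^{-3}$, and look for an approximate solution of the form
\begin{equation*}
V(t,x)=\frac{1}{\ell^{\frac12}(t)}\Big(Q(y)+b(t)P_1(y)+b^2(t)P_2(y)\Big)+\Psi(t,x),
\end{equation*}
where $b(t)$ is the dynamic scaling parameter tied to $\ell$ by $b\simeq-\ell\,\ell_t$, the profiles $P_k$ solve a triangular system associated with the linearized operator $L=-\partial_y^2+1-5Q^4$ (and must be truncated as in \cite{MaMeRa3} because $LP_1=\Lambda Q$ produces slowly growing solutions), and $\Psi(t,x)$ is a slowly decaying tail placed to the right of the soliton whose nonlinear interaction with $Q$ generates a source term of the right size in the formal equation for $b(t)$ to force the flattening rate $\ell\sim t^{(1-\beta)/2}$.

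\emph{Step 2: modulation and energy-virial control.} For a genuine solution $u$, I would decompose $u(t,x)=\ell^{-1/2}(t)(P+\varepsilon)(t,y)$, with $P=Q+bP_1+b^2P_2$, and impose three orthogonality conditions on $\varepsilon$ so that the parameters $(\ell,x,b)$ satisfy modulation ODEs of the schematic form
\begin{equation*}
\Big|\frac{\ell_t}{\ell}+\frac{b}{\ell^2}\Big|+\Big|\frac{x_t}{\ell^2}-1\Big|+\big|b_t+\Sigma(t)\big|\lesssim \|\varepsilon\|_{H^1_\loc}^2,
\end{equation*}
where $\Sigma(t)$ is the tail-induced source. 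The bootstrap bound on $\varepsilon$ would come from a mixed energy-virial functional $\mathcal{F}(t)$ of localized Weinstein type for $\varepsilon$, weighted by a cut-off $\varphi((x-x(t))/B\ell(t))$ with $B\gg 1$. Coercivity of $\mathcal{F}$ modulo the orthogonality directions, combined with a time-monotonicity estimate $d\mathcal{F}/dt\leq -c\|\varepsilon\|_{H^1_\loc}^2/\ell^3+(\text{tail remainder})$ exploiting the one-directional propagation of KdV, would then close the estimate.

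\emph{Step 3: backward construction, compactness, and the main obstacle.} For a sequence $T_n\to+\infty$, I would solve \eqref{gkdv} backward on $[0,T_n]$ with data $u_n(T_n)=V(T_n)$, apply the bootstrap of Step~2 to get uniform $H^1$ bounds on $\varepsilon_n$ together with the sharper localized decay on $\{x>\tfrac12 x(t)\}$, and then extract a weak $H^1$ limit of $\{u_n(0)\}$; local well-posedness and a continuity argument propagate the decomposition forward to $[0,+\infty)$, giving a solution $u$ with $\|u(0)-Q\|_{H^1}\leq\delta$ satisfying \eqref{def:ell}--\eqref{def:eta}. The hardest point is the energy-virial step: the $L^2$-critical linearization around $Q$ contains the resonant direction $\Lambda Q$, which renders the coercivity of $\mathcal{F}$ only marginal and forces one to track the sign of every subleading term after the cancellations produced by the modulation equations; moreover the matching between the spatial decay of $\Psi$ and the flattening exponent is rigid, and the threshold $\beta>\tfrac{1}{3}$ should appear precisely as the boundary where the tail required to close the equation for $b(t)$ is still $H^1$-admissible while producing a dominant enough source against the soliton.
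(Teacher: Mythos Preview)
Your overall architecture (slowly decaying tail + modulated profile $Q_b$ + energy-virial bootstrap) matches the paper, but two points differ in ways that matter.

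\textbf{Treatment of the tail.} You place the tail $\Psi(t,x)$ inside the approximate profile $V$ and let the remainder $\varepsilon$ absorb whatever is left. The paper instead writes $U=f+v$ where $f$ is the \emph{exact} gKdV evolution of the tail data $f_0(x)=c_0x^{-\theta}$, and decomposes only $v$ around $W=Q_b+rR$ (with $r=\lambda^{1/2}f(t,\sigma)$). This is not cosmetic: since $f_0\notin L^1$ and decays only like $x^{-\theta}$ with $\theta<1$, one needs pointwise control of $f(t,x)$ and its derivatives in the moving region $x\gtrsim t^\beta$ for all $\beta>\tfrac13$. That control (Proposition~\ref{decay:q_0} and the long weighted-monotonicity Lemmas~\ref{decay_lemma.1}--\ref{decay_lemma.3}) is the main technical novelty of the paper and is what allows the full range $\beta\in(\tfrac13,1)$; your proposal does not mention this step, and without it the tail-induced source term in the $b$-equation and the error $\mathcal E(W)$ cannot be estimated.

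\textbf{Direction of the construction.} You propose a backward-in-time compactness argument \`a la Merle/Martel. The paper instead constructs \emph{forward} from a large time $t_0$: the bootstrap \eqref{BS:param}--\eqref{BS:eps} is run on $[s_0,\infty)$, and the scaling instability (the quantity $g=b/\lambda^2+\tfrac{4c_0}{\int Q}\lambda^{-3/2}\sigma^{-\theta}$) is neutralized by a one-parameter topological/shooting argument on the initial value $b_0$ (Proposition~\ref{prop:BS}). A backward scheme is not obviously wrong, but it conflicts with the tail analysis, which is a forward monotonicity result: you would need to know $f(T_n)$ precisely to set $u_n(T_n)=V(T_n)$, and the persistence estimates for $f$ go the wrong way. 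The forward approach also lets the paper exploit directly that $\lambda_s/\lambda>0$ in the defocusing regime, which gives a favorable sign in the energy-virial estimate (term $\mathrm{f}_4$ in the proof of \eqref{dsF}) and is one reason the whole range $\beta\in(\tfrac13,1)$ closes with a relatively crude ansatz.
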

Theorem~\ref{th:1} states the existence of solutions arbitrarily close to the soliton $Q$ which eventually defocus in large time
with scaling $\ell(t)\sim t^{-\nu}$ where $\nu=(1-\beta)/2$ is any value in~$(0,\frac 13)$.
The values of the exponents and multiplicative constants in \eqref{def:ell} are consistent with the formal equation
$x'(t)= \ell^{-2}(t)$ relating the two geometrical parameters $x(t)$ and $\ell(t)$.

Note that by continuous dependence of the solution of \eqref{gkdv} with respect to the initial data, the constant $T_\delta$ in Theorem~\ref{th:1} satisfies $T_\delta\to \infty$ as $\delta\to 0$ .
The estimates in \eqref{def:ell} make sense only for $t\gg T_\delta$ when the flattening regime appears.
Of course, one can use the scaling invariance of the equation to generate solutions with different multiplicative constants
in \eqref{def:ell}. In the statement of Theorem~\ref{th:1}, the scaling is adjusted so that one can compare the initial data
with the soliton $Q$. We refer to Remark~\ref{rk:T0} for details.

We also notice that $w(t)$ does not converge to $0$ in $H^1(\mathbb R)$ as $t\to +\infty$; 
otherwise, it would hold $E(u(t))=0$ and $\int u^2(t)=\int Q^2$ and by variational arguments, $u(t)$ would be exactly a soliton.
However, the residue $w$ is arbitrarily small in $H^1$ and converges strongly to $0$ as $t\to \infty$ in the space-time region $x>\frac 12 x(t)\gg \ell(t)$ which largely includes the soliton.

To complement Theorem~\ref{th:1},
we prove in Section~\ref{S:6.5} that the solutions do not behave as solutions of the linear Airy equation $\partial_t v+\partial_x^3 v=0$ as $t\to \infty$ (non-scattering solutions).

We claim that the restriction $\beta\in(\frac 13,1)$ in Theorem~\ref{th:1} corresponds to the full range of relevant exponents.
Indeed, the exponent $\beta=\frac 13$ is related to self-similarity, and
in the region $x<t^{1/3}$, the question of existence or non-existence of coherent nonlinear structures is of different nature. See~\cite{MP} for several results in this direction.

As mentioned above, infinite time blow-up solutions with any positive power rate were constructed in \cite{MaMeRa3}. Thus 
 Theorem~\ref{th:1} essentially settles the question of all possible single soliton behaviors as $t\to+\infty$.
It also sheds some light on the classification of all possible behaviors in $H^1$, while the results in \cite{MaMeNaRa,MaMeRa1,MaMeRa2} hold in a stronger topology.

\begin{remark}
We note from the proof that all initial data in Theorem~\ref{th:1} have a tail on the right of the soliton of the form $c_0 x^{-\theta}$, for $c_0>0$ and $\theta=\frac {5\beta-1}{4\beta}\in (\frac 12,1)$.
Observe that for such value of $\theta$, this tail does not belong to $L^1(\mathbb R)$.

Recall from \cite{MaMeRa3} that $\theta\in (1,\frac 54]$ corresponds
to blowup in infinite time and $\theta \in (\frac 54,\frac {29}{18})$ to exotic blowup in finite time
(for negative values of the multiplicative constant $c_0$). This means that, except the remaining question of the largest value of $\theta$ leading to exotic blowup, the influence of such tails on the soliton is now well-understood.
\end{remark}

\begin{remark}
The more general statement Theorem~\ref{th:2} given in Section~\ref{S:6} provides a large set of initial data,
related to a one-parameter condition to control the scaling instability direction (in particular responsible for blowup
in finite time).
As in the classification given by \cite{MaMeRa1}, a strong topology related to $L^2$ weighted norm is necessary to avoid destroying the tail leading to the soliton flattening. 
Therefore, though the phenomenon of flattening solitons may seem exotic, it is rather robust by perturbation in weighted norms,
its only instability in such spaces being related to the scaling direction.
Moreover, it follows from formal arguments that any small perturbation in that direction should lead to blowup with the blow-up rate $C(T-t)^{-1}$ or to exit of the soliton neighborhood.
This is analogous to the situation described by the construction of the $\mathcal C^1$ threshold manifold in~\cite{MaMeNaRa}. Here, because of
weaker decay estimates on the residue, we do not address the question of the regularity of this set.
\end{remark}

\begin{remark}
Flattening solitary waves were constructed in Theorem 1.5 of \cite{Lan2} for the following double power (gKdV) equations with
saturated nonlinearities
\begin{equation*}
\partial_t u + \partial_x(\partial_x^2 u + u^5 - \gamma |u|^{q-1} u) = 0 \quad \mbox{where $q>5$ and $0<\gamma\ll 1$.}
\end{equation*}
The blow-down rate and the position of the soliton are fixed 
\begin{equation*}
\ell(t)\sim c_1 t^{\frac 2{q+1}},\quad x(t)\sim c_2 t^{\frac {q-3}{q+1}} \quad \mbox{as $t\to +\infty$.}
\end{equation*}
Observe that $q>5$ corresponds to $\frac 2{q+1}\in (0,\frac 13)$, \emph{i.e.} the same range of decay rates as 
in Theorem~\ref{th:1} for equation \eqref{gkdv}.

Analogous results (construction of minimal mass solutions with exotic blow-up rates) were also established for a double power nonlinear Schr\"odinger equation in \cite{LeMaRa}.
\end{remark}

\subsection*{Notation}
For $x\in \mathbb R$, we denote $x_+=\max(0,x)$.

For a given small positive constant $0<\astar \ll 1$, $\delta(\astar)$ will denote a small constant with 
\begin{equation*} \delta(\astar) \to 0 \quad \text{as} \quad \astar \to 0  .\end{equation*}
We will denote by $c$ a positive constant that may change from line to line. The notation $a \lesssim b$ (respectively, $a \gtrsim b$) means that $a \le c b$ (respectively, $a \ge c b$) for some positive constant $c$.

For $1 \le p \le +\infty$, $L^p(\mathbb R)$ denote the classical Lebesgue spaces. 
We define the weighted spaces $L^ 2_\loc=L^2(\mathbb R; e^{-\frac{|y|}{10}}dy)$ and $L^2_B(\mathbb R)=L^2(\mathbb R; e^{\frac{y}{B}}dy)$, for $B \ge 100$ to be fixed later in the proof, through the norms
\begin{equation} \label{def:L2B}
\|f\|_{L^2_\loc}=\left(\int_{\mathbb R} f^2(y) e^{-\frac{|y|}{10}} dy \right)^ {\frac12} \quad \text{and} \quad \|f\|_{L^2_B}=\left( \int_{\mathbb R} f^2(y)e^{\frac{y}B} dy\right)^{\frac12}  .
\end{equation}
It is clear from the definition that $\|f\|_{L^2_\loc} \lesssim \|f\|_{L^ 2_B}$.

For $f$, $g \in L^2(\mathbb R)$ two real-valued functions, we denote the scalar product 
\begin{equation*} (f,g)=\int_{\mathbb R} f(x)g(x) dx  .\end{equation*} 
We introduce the generator of the scaling symmetry 
\begin{equation} \label{def:lambda}
 \Lambda f=\frac12 f +yf'  .
 \end{equation}
We also define the linearized operator $\mathcal{L}$ around the ground state by 
\begin{equation} \label{def:L}
 \mathcal{L}f=-f''+f-5Q^4f  .
 \end{equation}

From now on, for simplicity of notation, we write $\int$ instead of $\int_{\mathbb R}$ and omit~$dx$ in integrals. 

\subsection{Strategy of the proof}
 
The overall strategy of the proof, based on the construction of a suitable ansatz and energy estimates,
follows the one developed in \cite{Ma,MaMeRa1,MaMeRa2,MaMeRa3,Me,RaSz} in similar contexts.
The originality of the present work lies mainly in the
prior preparation of suitable tails and the rigorous justification of all relevant flattening regimes.

\smallskip

\noindent (i) \textit{Definition of the slowly decaying tail.}
Given $c_0 >0$, $x_0 \gg 1$ and $\frac12<\theta<1$, we introduce a smooth function $f_0$ corresponding to a slowly decaying tail on the right:
\begin{equation*} 
f_0(x)=c_0x^{-\theta} \ \text{for} \ x>\frac{x_0}2, \quad f_0(x)=0, \ \text{for} \ x<\frac{x_0}4  .
\end{equation*}
In the present case, a special care has to be taken in the preparatory step of understanding the evolution of such slowly decaying tails under the (gKdV) flow. Not only the decay rate is slower than the one in \cite{MaMeRa3}
 but also the control of the solution 
is needed close to the larger space-time region $x\gtrsim t^{\beta}$, for $\beta>\frac 13$.
Note that the proof uses the mass criticality of the 
exponent (it extends to super-critical exponents). See Section~\ref{S:2}.

\smallskip

\noindent (ii) \textit{Emergence of the flattening regime.}
For $t_0\gg 1$, we consider the rescaled time variable 
\begin{equation} \label{eq:s}
\frac{ds}{dt}=\frac1{\lambda^3} \iff s(t)=s_0+\int_{t_0}^t\frac{d\tau}{\lambda^3(\tau)} \,d\tau  .
\end{equation}
In the variable $s$, the equations governing the parameters $(\lambda,\sigma,b) \in (0,+\infty)\times \mathbb R^2$ write
\begin{equation} \label{eq:la:si:b}
\frac{\lambda_s}{\lambda}+b=0, \quad \sigma_s=\lambda, \quad \frac{d}{ds}\left(\frac{b}{\lambda^2}+\frac{4}{\int Q}c_0 \lambda^{-\frac 32}\sigma^{-\theta}\right)=0  ,
\end{equation}
where the term $c_0 \lambda^{-\frac 32}\sigma^{-\theta}$ comes from the tail. See computations in Lemmas~\ref{lemma:est:rF}-\ref{lemma:gh}.

We integrate these equations following the formal argument in~\cite{MaMeRa3}. First, we observe integrating the last equation in \eqref{eq:la:si:b} that 
\begin{equation} \label{eq:b}
\frac{b}{\lambda^2}+\frac{4}{\int Q}c_0 \lambda^{-\frac 32}\sigma^{-\theta}=l_0  ,
\end{equation}
where $l_0$ is a constant. As in \cite{MaMeRa3}, we focus on the regime $l_0=0$, which corresponds formally to
avoid the instability by scaling.
By combining \eqref{eq:b} with the first two equations in \eqref{eq:la:si:b}, this leads to 
\begin{equation*} 
\lambda^{-\frac12}\lambda_s=\frac{4}{\int Q}c_0\sigma^{-\theta}\sigma_s  ,
\end{equation*}
which yields after integration 
\begin{equation*} 
\lambda^{\frac12} -\frac{2}{\int Q}\frac{c_0}{1-\theta}\sigma^{-\theta+1} =l_1  .
\end{equation*}
Since we expect $\lambda(s)\to +\infty$ as $s\to +\infty$, we can neglect the constant $l_1$, which leads us to
\begin{equation*}
\lambda^{\frac12} =\frac{2}{\int Q}\frac{c_0}{1-\theta}\sigma^{-\theta+1}  .
\end{equation*}
This imposes the conditions $\theta<1$ and $c_0>0$. Now, we use the second equation in~\eqref{eq:la:si:b} to obtain
(using the condition $\frac12<\theta$ which  also ensures that the tail belongs to the space $L^2$) 
\begin{equation*} 
\sigma_s=\lambda=\left(\frac2{\int Q}\frac{c_0}{1-\theta}\right)^2\sigma^{2-2\theta} \implies \sigma^{2\theta-1}(s)
=(2\theta-1)\left(\frac2{\int Q}\frac{c_0}{1-\theta}\right)^2s  ,
\end{equation*}
after integrating over $[s_0,s]$ and choosing $\sigma^{2\theta-1}(s_0)=(2\theta-1) \big(\frac2{\int Q}\frac{c_0}{1-\theta} \big)^2s_0$.
Hence,
\begin{equation*} 
\lambda(s)=(2\theta-1)^{\frac{2(1-\theta)}{2\theta-1}}\left(\frac2{\int Q}\frac{c_0}{1-\theta}\right)^{\frac2{2\theta-1}}s^{\frac{2(1-\theta)}{2\theta-1}}  .
\end{equation*}
By using the first equation \eqref{eq:la:si:b}, we also compute 
\begin{equation*} 
b(s)=-\frac{2(1-\theta)}{2\theta-1}s^{-1}  .
\end{equation*}
To simplify constants, we choose 
\begin{equation} \label{def:c0}
c_0=\frac{\int Q}{2}(1-\theta)(2\theta-1)^{-(1-\theta)}>0  ,
\end{equation}
so that 
\begin{equation} \label{resol:la:si:b}
\lambda(s)=s^{\frac{2(1-\theta)}{2\theta-1}}, \quad \sigma(s)=(2\theta-1)s^{\frac1{2\theta-1}} \quad \text{and} \quad b(s)=-\frac{2(1-\theta)}{2\theta-1}s^{-1}  .
\end{equation}
To come back to the original time variable, we first need to solve \eqref{eq:s}. We set 
\begin{equation*}
\beta=\frac{1}{5-4\theta}\in\left(\frac 13,1\right) \iff
\theta=\frac{5\beta-1}{4\beta}  .
\end{equation*}
Then, 
by choosing 
\begin{equation*} 
t_0=\frac{2\theta-1}{5-4\theta}s_0^{\frac{5-4\theta}{2\theta-1}} \quad \text{and} \quad c_s=\left(\frac2{3\beta-1}\right)^{\frac{3\beta-1}{2}}  ,
\end{equation*}
we obtain
\begin{equation*}
t=\frac{2\theta-1}{5-4\theta}s^{\frac{5-4\theta}{2\theta-1}} \iff t= \frac{3\beta-1}{2}s^{\frac2{3\beta-1}} \iff s=c_st^{\frac{3\beta-1}{2}}.
\end{equation*}
Last, we deduce from \eqref{resol:la:si:b} that 
\begin{equation} \label{resol:t:la:si:b}
\lambda(t)=c_{\lambda}t^{\frac{1-\beta}{2}} \quad \text{and} \quad \sigma(t)=c_{\sigma}t^{\beta}  ,
\end{equation}
for some positive constants $c_{\lambda}$ and $c_{\sigma}$
(see \eqref{def:cts}).

\smallskip

\noindent (iii) \emph{Energy estimates.} 
In order to construct an exact solution of \eqref{gkdv} satisfying the formal regime~\eqref{resol:t:la:si:b}, we use a variant
of the mixed energy-virial functional first introduced for (gKdV) in \cite{MaMeRa1} (the introduction of the 
virial argument in the neighborhood of the soliton for critical (gKdV) goes back to \cite{MaMejmpa}).
Considering a defocusing regime induces a simplification (see also the energy estimates in~\cite{CoMa1}) that allows us to treat the whole range
$\beta\in (\frac 13,1)$ in spite of a basic ansatz and relatively large error terms. See Section~\ref{section:energy}.

\section{Persistence properties of slowly decaying tails on the right}\label{S:2}

In this section, we present a general result concerning the persistence of a class of slowly decaying tails for the critical gKdV equation in a suitable space-time region. 

Let $\theta\in (\frac 12,1]$ and define
\begin{equation}\label{def:gar}
\beta=\frac{1}{5-4\theta}\in\left(\frac 13,1\right],\quad
\theta=\frac{5\beta-1}{4\beta},\quad
\nu=\frac{1-\beta}2\in\left[0,\frac13\right).
\end{equation}
For $c_0>0$ and $x_0 \gg 1$, we consider $f_0$ any smooth nonnegative function such that 
\begin{equation} \label{def:f_0}
f_0(x)= \begin{cases} c_0 x^{-\theta} & \text{for} \ x>\frac{x_0}2  \\ 0 & \text{for} \ x<\frac{x_0}4 \end{cases} 
\quad \text{and} \quad \Big|f_0^{(k)}(x)\Big| \lesssim c_0|x|^{-\theta-k}, \ \forall \, k \in \mathbb N, \ \forall \, x \in \mathbb R  .
\end{equation}
Note that 
\begin{equation} \label{est:f_0}
 \|f_0\|_{L^2} \sim c_0\left(\int_{x>x_0/4}x^{-2\theta} \, dx \right)^{\frac12} \sim c_0 x_0^{-(\theta-\frac12 )}=\delta(x_0^{-1})  .
 \end{equation}
Now,  for   $t_0 \gg 1$ to be fixed, let $f$ be a solution of the IVP 
\begin{equation} \label{eq:q_0}
\left\{ \begin{aligned} & \partial_t f+\partial_x\big(\partial_x^2 f+f^5\big)=0, \\ &f(t_0,x)=f_0(x)  .\end{aligned}\right. 
\end{equation}

The main result of this section states that the special asymptotic behavior of $f_0(x)$ on the right persists for $f(t,x)$ in regions of the form $x\gtrsim t^{\beta}$.
 
\begin{proposition} \label{decay:q_0} 
Let $\theta \in \big(\frac12,1\big]$, $\beta=\frac{1}{5-4\theta}$ and $c_0>0$.
For $x_0>0$ large enough, for any $\kappa_0>0$, setting $t_0:= ({x_0}/{\kappa_0} )^{1/\beta}$,
the solution $f$ of \eqref{eq:q_0} is global, smooth and bounded in $H^1$.
Moreover, it holds
for all $t \ge t_0$ and $x>\kappa_0 t^{\beta}\ge x_0$,
 \begin{align} \label{decay:q_0.1}
\forall \, k \in \mathbb N, \quad & \big|\partial_x^k f(t,x)-f_0^{(k)}(x)\big| \lesssim |x|^{-(5\theta-2+k)}  ,
\\ \label{decay:q_0.2}
& \big|\partial_t f(t,x)\big|\lesssim |x|^{-(\theta+3)}.
\end{align}
 \end{proposition}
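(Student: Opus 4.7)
The statement has two pieces: global regularity of $f$, and the pointwise persistence estimates \eqref{decay:q_0.1}--\eqref{decay:q_0.2} in the shifted region $\{x>\kappa_0 t^\beta\}$. Global existence is essentially free: \eqref{est:f_0} gives $\|f_0\|_{L^2}=\delta(x_0^{-1})<\|Q\|_{L^2}$, so \eqref{sharpGN} together with the conservation of mass and energy yields a uniform $H^1$ bound on $f$ and rules out blow-up; smoothness is obtained by regularizing $f_0$, using persistence of Sobolev regularity for \eqref{gkdv} from \cite{KPV,KPV2}, and passing to the limit. The rest of the proof is devoted to the decay estimates.

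\emph{Heuristic.} Set $r=f-f_0$, which satisfies
\[
\partial_t r+\partial_x^3 r+\partial_x\bigl((r+f_0)^5\bigr)=-\partial_x^3 f_0,\qquad r(t_0,\cdot)=0.
\]
In the target region we have $t\lesssim x^{1/\beta}=x^{5-4\theta}$, so time-integration of a source of size $x^{-\alpha}$ contributes at most $x^{5-4\theta-\alpha}$. Applied to the linear source $\partial_x^3 f_0\sim x^{-\theta-3}$ this produces the correction $x^{2-5\theta}=x^{-(5\theta-2)}$, which is exactly the target rate in \eqref{decay:q_0.1} for $k=0$. Applied to the nonlinear source $\partial_x\bigl((r+f_0)^5\bigr)\sim x^{-5\theta-1}$ it produces $x^{4-9\theta}$, strictly smaller than $x^{-(5\theta-2)}$ precisely when $\theta>\tfrac12$. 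This explains both the exponent $5\theta-2$ and the lower endpoint of the admissible range of $\theta$. Differentiating in $x$ shifts every exponent by $-1$, and \eqref{decay:q_0.2} follows from \eqref{decay:q_0.1} via $\partial_t f=-\partial_x^3 f-\partial_x(f^5)$.

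\emph{Rigorous implementation and main obstacle.} I would make the heuristic rigorous by a bootstrap on the pointwise estimates $|\partial_x^k r(t,x)|\le C\,x^{-(5\theta-2+k)}$ on $\{x>\tfrac12\kappa_0 t^\beta\}$ for $0\le k\le K$, with $K$ arbitrary but fixed. Each estimate is closed by a localized $L^2$ energy identity for $\partial_x^k r$ against a smooth cutoff $\chi\bigl(x-\tfrac12\kappa_0 t^\beta\bigr)$ supported to the right of the moving threshold. The dispersive term produces the Kato-type smoothing contribution $-3\int(\partial_x^{k+1}r)^2\chi'$ with the favorable sign; the boundary-motion term $\sim t^{\beta-1}\int(\partial_x^k r)^2\chi'$ is lower order because $\beta<1$; and the source contributions from $\partial_x^{k+3}f_0$ and $\partial_x^{k+1}\bigl((r+f_0)^5\bigr)$ are controlled via the size heuristic above. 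Weighted $L^2$ control is upgraded to pointwise bounds by $H^1$-Sobolev embedding and iterated in $k$. The main obstacle is designing the cutoff and the bootstrap hypotheses coherently across all orders so that the smoothing term dominates both the boundary-motion term and the nonlinear contribution simultaneously; the two endpoints $\theta=\tfrac12$ and $\theta=1$ of the admissible range appear naturally at this step, matching exactly the constraints already visible in the heuristic.
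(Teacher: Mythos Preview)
Your heuristic is correct and pins down the right exponents; the global-existence paragraph is fine. The implementation, however, has a real gap.

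With a fixed-width moving cutoff $\chi\bigl(x-\tfrac12\kappa_0 t^\beta\bigr)$, the dispersive term in the energy identity for $q=\partial_x^k r$ produces $+\int q^2\,\chi'''$, which has no sign. The Kato term $-3\int(\partial_x q)^2\chi'$ controls the wrong quantity (one more derivative), so the only negative contribution in $q^2$ is the boundary-motion term $-c\,t^{\beta-1}\int q^2\chi'$ that you dismiss as ``lower order''. Since $|\chi'''|/\chi'=O(1)$ for a fixed-width cutoff while its prefactor $t^{\beta-1}\to0$, the absorption fails for large $t$; the term you call lower order is precisely the one that has to do the work. A second problem: a bare cutoff yields only a localized $L^2$ bound $\int_{x>\frac12\kappa_0 t^\beta}q^2\lesssim\cdots$, not the pointwise rate $x^{-(5\theta-2+k)}$. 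Nothing in your sketch explains how that specific exponent is extracted, and ``Sobolev embedding plus iteration'' does not produce a polynomial decay rate from an unweighted localized $L^2$ estimate.

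The paper resolves both issues at once by using a \emph{polynomial} weight $\omega_r(\bar x)\sim\bar x^{\,r}$ in the rescaled variable $\bar x=(x-t^\beta)/t^{\nu+\epsilon}$ with $\nu=\tfrac{1-\beta}{2}$. The growing width $t^{\nu+\epsilon}$ makes the $\omega_r'''$ contribution smaller than the $\partial_t\bar x$ contribution by a factor $t^{-2\epsilon}$, so the absorption closes; and the polynomial growth of $\omega_r$, with $r$ taken just below $2\theta+4$, is exactly what turns the time-integrated source bound into the pointwise rate $x^{-(5\theta-2)}$ after one Sobolev step. The proof then climbs in $k$ through a hierarchy of such weighted functionals. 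This weight design in the scaled variable is the technical heart of the proposition and is absent from your outline.
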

The rest of this section is devoted to the proof of Proposition~\ref{decay:q_0}, which
requires preparatory monotonicity lemmas based on variants of the so-called Kato identity (see \cite{Kato,MaMejmpa,MaMe}). 
This result is a substantial generalization of Lemma 2.3 in \cite{MaMeRa3}, where only the case $\theta=1$ is treated.
Our proof allows regions $x \gtrsim t^{\beta}$ for any $\beta>\frac 13$.
Complementary results are obtained in \cite{MP}, where large regions close to $x=0$ are investigated
by similar functionals.

\begin{remark} \label{rem:decay}
Without loss of generality and for simplicity of notation, we reduce ourselves to prove 
estimates \eqref{decay:q_0.1} and \eqref{decay:q_0.2} for the special value $\kappa_0=2$. Indeed, consider the function $\widetilde{f}(s,y)=\lambda^{\frac12}f(\lambda^3 s, \lambda y)$. Then $\widetilde{f}$ is a solution to \eqref{eq:q_0} where $\widetilde{f}_0=\widetilde f(0)$ satisfies \eqref{def:f_0} with $\widetilde{c}_0=\lambda^{\frac12-\theta}c_0$ instead of $c_0$. Moreover, the condition $x>2t^{\beta}$ rewrites $y>2\lambda^{3\beta-1}s^{\beta}>\kappa_0 s^{\beta}$ by choosing $\lambda=(2\kappa_0)^{-\frac1{3\beta-1}}$ (recall that $\beta>\frac13$).
 \end{remark}
 
First, note that if $x_0$ is chosen large enough, it follows directly from the Cauchy theory developed in \cite{KPV} (see Corollary 2.9) and \eqref{est:f_0} that $f \in C(\mathbb R : H^s(\mathbb R))$ for all $s \ge 0$ and 
\begin{equation} \label{bound:q_0}
\sup_{t \in \mathbb R}\|f(t)\|_{H^s} \lesssim \delta(x_0^{-1})  .
\end{equation}
Moreover, by using the sharp Gagliardo-Nirenberg inequality \eqref{sharpGN} and the conservations of the mass and the energy \eqref{mass_energy}, we deduce, for $x_0$ large enough, that
\begin{equation} \label{bound:df:dx} 
\sup_{t \in \mathbb R} \| \partial_xf(t)\|_{L^2} \lesssim |E(f_0)| \lesssim x_0^{-(\theta+\frac12)}  .
\end{equation}

 Define $q(t,x):=f(t,x)-f_0(x)$. Then, it follows from \eqref{eq:q_0} that 
 \begin{equation} \label{eq:q}
 \left\{ \begin{aligned} &\partial_tq+\partial_x\big(\partial_x^2q+(q+f_0)^5-f_0^5\big)=F_0, \\ &q(t_0)=0 , \end{aligned}\right. 
 \end{equation}
 where 
 \begin{equation*} F_0:=-\partial_x^3f_0-\partial_x(f_0^5)  . \end{equation*}
For any $\bar{r} \ge 0$, we define a smooth function $\omega_{\bar{r}}$ such that
\begin{equation} \label{def:omega}
\omega_{\bar{r}}(x)= x^{\bar{r}} \ \text{for} \ x \ge 2, \quad \omega_{\bar{r}}(x)=e^{\frac{x}8} \ \text{for} \ x \le 0, \quad \omega_{\bar{r}}' > 0 \ \text{on} \ \mathbb R.
\end{equation}
Observe that
\begin{equation}\label{def:omegabis}
|\omega_{\bar{r}}'' |+|\omega_{\bar{r}}'''|\le C\omega_{\bar{r}}' \ \text{on} \ \mathbb R,
\end{equation}
for some constant $C=C(\omega_{\bar r})>0$.

\begin{lemma} \label{decay_lemma.1}
Let $0<r<2\theta+4$, $r\neq 5$ and $0<\epsilon<\frac{3\beta-1}{20}|r-5|$.
Define 
\begin{equation*}
M_r(t):= \int q^2(t,x) \omega_r(\bar{x}) \, dx   \quad \text{where} \quad \bar{x}=\frac{x-t^{\beta}}{t^{\nu+\epsilon}}  .
\end{equation*}
Then, for $x_0>1$ large enough, and any $t \ge t_0=\left(\frac{x_0}2\right)^{\frac1{\beta}}$,
\begin{multline}\label{decay_claim.1} 
M_{r}(t)+\int_{t_0}^t\int\left[s^{-3\nu-\epsilon} q^2
+s^{-1} \bar x_+ q^2
+s^{-\nu-\epsilon}(\partial_xq)^2\right]\omega_{r}'(\bar{x})\, dx ds \\
\lesssim 
\begin{cases}
t^{\frac{3\beta-1}2 (r-5)-r\epsilon} & \mbox{if $r>5$}\\
t_0^{-\frac{3\beta-1}2 (5-r)-r\epsilon} & \mbox{if $r<5$.}
\end{cases} 
\end{multline}
\end{lemma}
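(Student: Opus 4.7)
The plan is to derive a differential inequality for $M_r(t)$ by applying a Kato--virial energy identity to equation \eqref{eq:q}, and then to integrate in time. Writing $\phi(t,x):=\omega_r(\bar{x})$, combining $\partial_t q = -\partial_x^3 q - \partial_x N + F_0$ (with $N=(q+f_0)^5-f_0^5$) with the standard Kato identity $-2\int u\phi\,\partial_x^3 u = -3\int \phi'(\partial_x u)^2 + \int \phi''' u^2$ and the chain rule, I obtain
\begin{align*}
\frac{d}{dt}M_r &= \int q^2 \partial_t\omega_r(\bar x)\, dx - \frac{3}{t^{\nu+\epsilon}}\int \omega_r'(\bar x)(\partial_x q)^2 dx + \frac{1}{t^{3(\nu+\epsilon)}}\int \omega_r'''(\bar x)\, q^2 dx \\
&\quad -2\int q\,\omega_r(\bar x)\,\partial_x N\, dx + 2\int q\,\omega_r(\bar x)\, F_0\, dx.
\end{align*}
A direct computation (using $\nu=(1-\beta)/2$, so that $\beta-1-\nu-\epsilon=-3\nu-\epsilon$) gives $\partial_t \bar x = -\beta t^{-3\nu-\epsilon}-(\nu+\epsilon)t^{-1}\bar x$, whence
\begin{equation*}
\int q^2\,\partial_t\omega_r(\bar x)\, dx = -\beta t^{-3\nu-\epsilon}\int q^2\omega_r'(\bar x) dx - (\nu+\epsilon)t^{-1}\int q^2 \bar x\,\omega_r'(\bar x)\, dx.
\end{equation*}
Replacing $\bar x$ by $\bar x_+$ at the cost of a harmless bounded contribution on $\{\bar{x}\le 0\}$ (where $\omega_r'$ decays exponentially), I recover the three good dissipative quantities on the LHS of \eqref{decay_claim.1}.

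I then absorb every lower-order defect. The Kato remainder is controlled by \eqref{def:omegabis}: since $|\omega_r'''|\le C\omega_r'$, one has $t^{-3(\nu+\epsilon)}\int|\omega_r'''|q^2 \le (C/t^{2\epsilon})\cdot\beta t^{-3\nu-\epsilon}\int \omega_r' q^2$, which is absorbed once $t\ge t_0$ is sufficiently large. Expanding $N = \sum_{k=0}^{4}\binom{5}{k+1}q^{k+1}f_0^{4-k}$ and integrating $-2\int q\,\omega_r\,\partial_x N$ by parts, each contribution reduces to an integral of the form $t^{-\nu-\epsilon}\int \omega_r'(\bar x)\,q^a f_0^b\, dx$ or $\int \omega_r(\bar x)\,q^a f_0^{b-1}\partial_x f_0\, dx$ (with $a+b=6$, $a\ge 1$). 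Each is absorbed into a small fraction of the good terms using $\|q\|_{L^\infty}\lesssim \delta(x_0^{-1})$ (Sobolev embedding and \eqref{bound:q_0}) and $|f_0(x)|+|x\partial_x f_0(x)|\lesssim c_0 x^{-\theta}$, provided $x_0$ is chosen large enough.

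The heart of the argument is the source. By a weighted Cauchy--Schwarz inequality,
\begin{equation*}
2\Bigl|\int q\,\omega_r(\bar x)\, F_0\, dx\Bigr| \le \tfrac{\beta}{2}\, t^{-3\nu-\epsilon}\int \omega_r'(\bar x)\, q^2\, dx + C\, t^{3\nu+\epsilon}\int \frac{\omega_r^2(\bar x)}{\omega_r'(\bar x)}\, F_0^2\, dx,
\end{equation*}
the first term being absorbed by the dissipative quantity from Step 1. For the second, $|F_0(x)|\lesssim x^{-\theta-3}$ on $\operatorname{supp}(F_0)\subset\{x\gtrsim x_0\}$, $\omega_r^2/\omega_r'\sim \bar x^{r+1}/r$ for $\bar x\ge 2$, and the exponential decay of $\omega_r$ for $\bar x\le 0$ makes that regime negligible. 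The change of variables $y = \bar x = (x-t^\beta)/t^{\nu+\epsilon}$, combined with the algebraic identity $2\theta\beta=(5\beta-1)/2$ equivalent to $\beta=1/(5-4\theta)$, leads after direct computation to
\begin{equation*}
t^{3\nu+\epsilon}\int \frac{\omega_r^2(\bar x)}{\omega_r'(\bar x)}\, F_0^2\, dx \le C\, t^{\frac{3\beta-1}{2}(r-5)-r\epsilon-1} =: C\, G_r(t),
\end{equation*}
the remaining spatial integral $\int_0^\infty (1+z)^{-2\theta-6}z^{r+1}dz$ being finite thanks to the assumption $r<2\theta+4$.

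Collecting everything and using $q(t_0,\cdot)\equiv 0$ (so $M_r(t_0)=0$), integration of the resulting differential inequality from $t_0$ to $t$ yields \eqref{decay_claim.1}: for $r>5$, the constraint $\epsilon<\frac{3\beta-1}{20}|r-5|$ together with $r<2\theta+4<10$ makes the exponent of $s$ in $G_r$ strictly greater than $-1$, so $\int_{t_0}^t G_r(s)\, ds\lesssim t^{(3\beta-1)(r-5)/2-r\epsilon}$; for $r<5$ that exponent is strictly less than $-1$ and the integral converges on $[t_0,+\infty)$, bounded by $t_0^{-(3\beta-1)(5-r)/2-r\epsilon}$. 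The main technical obstacle is the source estimate in Step 3: producing exactly the exponent $\frac{3\beta-1}{2}(r-5)-r\epsilon$ requires a careful split of the spatial integral into the two regimes $x\sim t^\beta$ and $x\gg t^\beta$, and a precise accounting of the algebraic relation $\beta=1/(5-4\theta)$ against the weight powers, with the sharpness of the $\epsilon$-condition ensuring integrability in both the $r>5$ and $r<5$ regimes.
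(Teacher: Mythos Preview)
Your overall architecture matches the paper's: differentiate $M_r$, identify the three dissipative contributions, absorb the Kato remainder via \eqref{def:omegabis}, and estimate the source $F_0$ by weighted Cauchy--Schwarz to produce the power $t^{\frac{3\beta-1}{2}(r-5)-r\epsilon-1}$. The source computation and the final time integration are essentially the paper's.

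There is, however, a genuine gap in your treatment of the nonlinear terms. The purely nonlinear contribution (the $a=6$, $b=0$ case in your notation, i.e.\ the $q^6$ term coming from $q^5$ inside $N$) cannot be absorbed using only $\|q\|_{L^\infty}\lesssim\delta(x_0^{-1})$. Bounding four copies of $q$ in $L^\infty$ gives
\[
t^{-\nu-\epsilon}\int q^6\,\omega_r'(\bar x)\,dx \;\lesssim\; \delta(x_0^{-1})\, t^{-\nu-\epsilon}\int q^2\,\omega_r'(\bar x)\,dx,
\]
but the available $q^2$ dissipation is $\beta\, t^{-3\nu-\epsilon}\int q^2\omega_r'$. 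Since $\nu=(1-\beta)/2>0$ whenever $\theta<1$, the factor $t^{-\nu-\epsilon}$ is \emph{larger} than $t^{-3\nu-\epsilon}$ by $t^{2\nu}$, and no smallness of $\delta(x_0^{-1})$ can compensate for this divergence as $t\to\infty$. The paper closes this by a weighted Sobolev inequality (see \eqref{est:infty}):
\[
\bigl\|q^2\sqrt{\omega_r'(\bar x)}\bigr\|_{L^\infty}^2 \lesssim \|q\|_{L^2}^2\Bigl[\int(\partial_x q)^2\omega_r'(\bar x)+t^{-2\nu-2\epsilon}\int q^2\omega_r'(\bar x)\Bigr],
\]
which converts the loss into $\delta(x_0^{-1})\bigl[t^{-\nu-\epsilon}\int(\partial_xq)^2\omega_r'+t^{-3\nu-\epsilon}\int q^2\omega_r'\bigr]$; both pieces are now absorbable, the first into the gradient dissipation $M_1$. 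Similarly, for the mixed terms carrying $f_0^b$ with $b\ge1$, smallness of $\|f_0\|_{L^\infty}$ alone is not enough: the paper gains the missing factor $t^{-2\nu}$ from the pointwise decay $f_0^4\lesssim x^{-4\theta}\lesssim t^{-4\theta\beta}$ on the region $x>\tfrac12 t^\beta$ (using $4\theta\beta>2\nu$), together with the exponential decay of $\omega_r'$ on the complementary region $\bar x<-t^{1-3\nu-2\epsilon}$. Your terms of the second type, $\int\omega_r(\bar x)\,q^a f_0^{b-1}\partial_x f_0$, also require the conversion $\omega_r(\bar x)\,x^{-1}\lesssim t^{-\nu-\epsilon}\omega_r'(\bar x)$ (see \eqref{omega:est}) before they can be absorbed; this step is missing from your sketch.
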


\begin{proof}
To prove \eqref{decay_claim.1}, we differentiate $M_{r}$ with respect to time, use \eqref{eq:q} and integrate by parts in the $x$ variable to obtain 
\begin{align*}
M'_r &= -3t^{-\nu-\epsilon}\int (\partial_xq)^2\omega_r'(\bar{x})+t^{-3\nu-3\epsilon}\int q^2\omega_r'''(\bar{x})- \beta t^{-3\nu-\epsilon}\int q^2\omega_{{r}}'(\bar{x})\\ 
&\quad-(\nu+\epsilon) t^{-1} \int q^2 \, \bar{x}\omega_{{r}}'(\bar{x}) -2t^{-\nu-\epsilon} \int \left(\frac{(q+f_0)^6}{6}-(q+f_0)^5q-\frac{f_0^6}{6}\right)\omega_{{r}}'(\bar{x})
\\ &
\quad -2\int \left( (q+f_0)^5-5f_0^4q-f_0^5\right)f_0'\omega_{{r}}(\bar{x}) +2\int q F_0 \omega_{{r}}(\bar{x}) \\& =: M_1 +M_2+M_3+M_4+M_5+M_6+M_7 .
\end{align*}
By using \eqref{def:omegabis}, for $t_0$ large enough, we have
\begin{equation} \label{decay_claim1.1}
|M_2|=t^{-3\nu-3\epsilon}\left| \int q^2\omega_{{r}}'''(\bar{x})\right| \leq ct_0^{-2\epsilon} t^{-3\nu-\epsilon} \int q^2\omega_{{r}}'(\bar{x})\leq -\frac 12 M_3 ,
\end{equation}
and so
\begin{equation*}
M_1+M_2+M_3\leq
-3t^{-\nu-\epsilon}\int (\partial_xq)^2\omega_r'(\bar{x})
- \frac{\beta}2 t^{-3\nu-\epsilon}\int q^2\omega_{{r}}'(\bar{x}).
\end{equation*}

Next, we estimate $M_j$ for $j=4,\cdots,7$ separately. 
For future use, observe that by the assumption $0<\epsilon<\frac{3\beta-1}{20}|r-5|$ and $0<r<6$, we also have
$0<\epsilon<\frac{3\beta-1}{4}$.

We denote
\begin{equation}\label{notation}
M_j=\int_{\bar x<-t^{1-3\nu-2\epsilon}}+
\int_{-t^{1-3\nu-2\epsilon}<\bar x<0}+\int_{\bar x>0}=:M_j^-+M_j^0+M_j^+.
\end{equation}

\smallskip

\noindent \textit{Estimate for $M_4$.} It is clear that $M_4^+(t) \le 0$. Next, 
for $t_0$ large enough,
\begin{equation*} 
M_4^0 \leq(\nu+\epsilon) t^{-3\nu-2\epsilon}\int q^2 \omega_{{r}}'(\bar{x})
\leq (\nu+\epsilon) t_0^{-\epsilon} t^{-3\nu-\epsilon}\int q^2 \omega_{{r}}'(\bar{x})
\leq -\frac14M_3.
\end{equation*}
Then, it follows from the definition of $\omega_{{r}}$ in \eqref{def:omega} and \eqref{bound:q_0} that, 
for $t_0$ large enough, 
\begin{equation*} 
M_4^-
\lesssim t^{-1}\int_{\bar x<-t^{1-3\nu-2\epsilon}}q^2|\bar x|e^{\frac{\bar x}{8}} 
\lesssim t^{-1} e^{-\frac1{16}(t^{1-3\nu-2\epsilon)}}\int q^2 
\lesssim t^{-10} ,
\end{equation*}
since $0<\epsilon<\frac{3\beta-1}4$.
 
\smallskip

\noindent \textit{Estimate for $M_5$.} 
Using
\begin{equation*}
\left| \frac{(q+f_0)^6}{6}-(q+f_0)^5q-\frac{f_0^6}{6}\right| \lesssim q^2 f_0^4+q^6  ,
\end{equation*}
it holds 
\begin{equation*} 
\big| M_5 \big| \le c t^{-\nu-\epsilon}\int q^2 f_0^4 \omega_{{r}}'(\bar{x})
+ct^{-\nu-\epsilon}\int q^6 \omega_{{r}}'(\bar{x})
=:M_{5,1}+M_{5,2}  .
\end{equation*}
We observe that, for $t_0$ large enough,
\begin{equation}\label{zone}
-t^{1-3\nu-2\epsilon}=-t^{\beta-\nu-2\epsilon}<\bar x \implies t^{\beta}-t^{\beta-2\epsilon}<x
\implies \frac 12 t^\beta <x .
\end{equation}
Thus, we deduce from \eqref{def:f_0}, and then
$4\theta \beta > 2\beta>2\nu$ (since $\theta>\frac12$ and $\beta > \frac13>\nu$),
 that, for $t_0$ large enough,
\begin{align}
M_{5,1}^0+M_{5,1}^+
&\leq c t^{-\nu-\epsilon} \int_{x>\frac 12 t^\beta} q^2 x^{-4\theta} \omega_{{r}}'(\bar{x})
\leq c t^{-(\nu+4\theta \beta+\epsilon)} \int q^2 \omega_{{r}}'(\bar{x}) \nonumber
\\ &\leq c t_0^{-(\nu+4\theta \beta)+3\nu} t^{-3\nu-\epsilon} \int q^2 \omega_{{r}}'(\bar{x})
\leq -\frac 18 M_3. \label{M51}
\end{align}
As before for $M_4^-$, we have for $t_0$ large enough,
\begin{equation} \label{M51bis}
M_{5,1}^- \lesssim t^{-10}  .
\end{equation}

To deal with $M_{5,2}$, we follow an argument in Lemma 6 of \cite{Mjams}. We have by using the fundamental theorem of calculus 
\begin{equation*} 
-q^2(x,t)\sqrt{\omega_{{r}}'(\bar{x})}=2\int_x^{+\infty}q\partial_xq\sqrt{\omega_{{r}}'(\bar{x})}+\frac12t^{-\nu-\epsilon}\int_x^{+\infty}q^2 \frac{\omega_{{r}}''(\bar{x})}{\sqrt{\omega_{{r}}'(\bar{x})}}  ,
\end{equation*}
and so, by Cauchy Schwarz inequality and then \eqref{def:omegabis},
\begin{align}
\left\|q^2(x,t)\sqrt{\omega_{{r}}'(\bar{x})}\right\|_{L^\infty}^2
&\lesssim \|q\|_{L^2}^2\int (\partial_xq)^2 \omega_{{r}}'(\bar{x})
 +t^{-2\nu-2\epsilon}\|q\|_{L^2}^2\int q^2\frac{\big(\omega_{{r}}''(\bar{x})\big)^2}{\omega_{{r}}'(\bar{x})}\nonumber\\
&\lesssim \|q\|_{L^2}^2\left[\int (\partial_xq)^2 \omega_{{r}}'(\bar{x})
 +t^{-2\nu-2\epsilon}\int q^2 \omega_{{r}}'(\bar{x})\right].
\label{est:infty}
\end{align}
Therefore, using also \eqref{bound:q_0}, for $x_0$ large enough,
\begin{align}
M_{5,2} &\lesssim t^{-\nu-\epsilon} \|q\|_{L^2}^2 \left\| q^2\sqrt{\omega_{{r}}'(\bar{x})}\right\|_{L^\infty}^2 \label{M52} \\
& \le \delta(x_0^{-1})\left[ t^{-\nu-\epsilon}\int (\partial_x q)^2 \omega_{{r}}'(\bar{x})+ t^{-3\nu-\epsilon} \int q^2 \omega_{{r}}'(\bar{x})\right]
\leq -\frac 12 M_1-\frac 1{16}M_3.\nonumber
\end{align}

\smallskip
 
\noindent \textit{Estimate for $M_6$.} By using interpolation, \eqref{def:f_0} and then the inequality $|x|^{-\theta}q^5 \lesssim x^{-4\theta}q^2+q^6$, we observe that
\begin{equation*} 
\left| f_0'\left((q+f_0)^5-5f_0^4q-f_0^5\right)\right| \lesssim |f_0'||f_0|^3q^2+|f_0'||q|^5
\lesssim |x|^{-1} f_0^4q^2+|x|^{-1}q^6 .
\end{equation*}
It follows that
\begin{equation*} 
\big| M_6 \big| \le \int_{x\geq \frac 14 x_0}q^2 f_0^4 x^{-1} \omega_r(\bar{x})
+\int_{x\geq \frac 14 x_0} x^{-1}q^6 \omega_r(\bar{x})=:M_{6,1}+M_{6,2}  .
\end{equation*}
By \eqref{def:omega} and \eqref{zone}, and choosing $\epsilon>0$ such that 
$0<\epsilon<\beta-\nu=\frac{3\beta-1}2$,
\begin{equation} \label{omega:est}
\omega_{{r}}(\bar{x}) x^{-1}\lesssim
\begin{cases}
\omega'_{{r}}(\bar{x}) {\bar x}{x^{-1}}
\lesssim t^{-\nu-\epsilon}\omega_{{r}}'(\bar{x}) &\mbox{for $\bar x>2$,}
\\
t^{-\beta}\omega_{{r}}'(\bar{x})\lesssim t^{-\nu-\epsilon}\omega_{{r}}'(\bar{x}) & 
\mbox{for $-t^{1-3\nu-2\epsilon}<\bar x<2$,}
\end{cases}
\end{equation}
Thus, for $t_0$ and $x_0$ large enough,
\begin{equation*} 
M_{6,1}^{0,+}+M_{6,2}^{0,+} \leq c( M_{5,1}^{0,+}+M_{5,2}^{0,+})
\leq c (\delta(x_0^{-1})+\delta(t_0^{-1})) (M_1+M_3)
\leq -\frac 14 M_1-\frac 1{32}M_3  .
\end{equation*}

Last, $M_{6,1}^- +M_{6,2}^- \lesssim t^{-10}$ is proved as for $M_4^-$.

\smallskip
 
\noindent \textit{Estimate for $M_7$.} We get from the Cauchy-Schwarz inequality that 
\begin{equation*} 
\big| M_7 \big| \leq 2 \left( \int q^2 \omega_{{r}}'(\bar{x})\right)^{\frac12}\left(\int F_0^2 \frac{\omega_{{r}}^ 2(\bar{x})}{\omega_{{r}}'(\bar{x})} \right)^{\frac12} 
\le -\frac 1{64} M_3 + cM_8\quad \mbox{where}\quad M_8=t^{3\nu+\epsilon}\int F_0^2 \frac{\omega_{{r}}^ 2(\bar{x})}{\omega_{{r}}'(\bar{x})}. 
\end{equation*}
First, we see from \eqref{def:f_0} that 
for $x>\frac 14 x_0$, 
$|F_0| \lesssim |x|^{-\theta-3}$ ($\theta>\frac12$),
and for $x<\frac 14 x_0$, $F_0=0$. 

For $\bar{x} \ge 2$, it holds $ \frac{\omega_{{r}}^ 2(\bar{x})}{\omega_{{r}}'(\bar{x})} =r^{-1} |\bar{x}|^{{r}+1}
 \lesssim t^{-(\nu+\epsilon)({r}+1)}|x|^{{r}+1}$. Hence, 
\begin{align*}
t^{3\nu+\epsilon}\int_{\bar x>2} F_0^2 \frac{\omega_{{r}}^ 2(\bar{x})}{\omega_{{r}}'(\bar{x})}
&\lesssim t^{2\nu-r(\nu+\epsilon)}\int_{\bar x>2} |x|^{-2(\theta+3)}| {x}|^{{r}+1} 
\\ & \lesssim t^{2\nu-r(\nu+\epsilon)}\int_{x>t^\beta} |x|^{-2\theta+{r}-5}
\\ & \lesssim t^{2\nu-r(\nu+\epsilon)}t^{-\beta(2\theta-{r}+4)}
=t^{-1+\frac{3\beta-1}2 (r-5)-r\epsilon} ,
\end{align*}
since $2\theta-r+4>0$ by assumption, and
\begin{align}
1+2\nu-r(\nu+\epsilon) -\beta(2\theta-{r}+4)
&=r(\beta-\nu)+1+2\nu-2\beta\theta-4\beta-r\epsilon\nonumber\\
&=\frac{3\beta-1}2 (r-5)-r\epsilon.\label{pourtt}
\end{align}

For $-t^{-1-3\nu-2\epsilon}< \bar x<2$, it holds $ \frac{\omega_{{r}}^ 2(\bar{x})}{\omega_{{r}}'(\bar{x})} \lesssim 1$
and $x\geq \frac 12 t^\beta$ (from \eqref{zone}) so that 
\begin{align*} 
t^{3\nu+\epsilon}\int_{-t^{-1-3\nu-2\epsilon}<\bar x<2} F_0^2 \frac{\omega_{{r}}^ 2(\bar{x})}{\omega_{{r}}'(\bar{x})} &\lesssim t^{3\nu+\epsilon} \int_{x>\frac 12t^{\beta}}x^{-2(\theta+3)} 
\\
&\lesssim t^{3\nu+\epsilon}t^{-\beta(2\theta+5)}=t^{-9\beta+2+\epsilon}  .
\end{align*}
Last, for $\bar x<-t^{-1-3\nu-2\epsilon}$, then $\frac{\omega_{{r}}^ 2(\bar{x})}{\omega_{{r}}'(\bar{x})} =8e^{\frac{\bar{x}}8}$ so that as for $M_4^-$,
\begin{equation*}
t^{3\nu+\epsilon}\int_{\bar x<-t^{-1-3\nu-2\epsilon}} F_0^2 \frac{\omega_{{r}}^ 2(\bar{x})}{\omega_{{r}}'(\bar{x})} 
 \lesssim t^{-10}  .
\end{equation*}

Gathering all those estimates, we obtain in conclusion that, for some $c>0$, 
\begin{equation*}
M_r'+c\int\left[t^{-3\nu-\epsilon} q^2
+t^{-1} \bar x_+ q^2
+t^{-\nu-\epsilon}(\partial_xq)^2\right]\omega_{r}'(\bar{x})\, dx 
\lesssim 
t^{-1+\frac{3\beta-1}2 (r-5)-r\epsilon}.
\end{equation*}
Observe that by the assumption $0<\epsilon<\frac{3\beta-1}{20} |r-5|$,
\begin{equation} \label{assump:epsilon}
r\epsilon<6\epsilon<\frac{3}{10}(3\beta-1)|r-5|<\frac{3\beta-1}2|r-5|.
\end{equation} 
Thus, integrating this estimate on $[t_0,t]$, we obtain \eqref{decay_claim.1}.
\end{proof}

We prove a similar estimate for a quantity related to the energy.
\begin{lemma}
Let $0<r<2\theta+4$, $r\neq 5$ and $0<\epsilon<\frac{3\beta-1}{20} |r-5| .$
Define
\begin{equation*}
E_{{r}}(t):=t^{2\nu+2\epsilon}\int \left[ (\partial_xq)^2-\frac13\big((q+f_0)^6-f_0^6-6qf_0^5\big) \right](t,x) \omega_{{r}+2}(\bar{x}) \, dx . 
\end{equation*}
where $\bar{x}=\frac{x-t^{\beta}}{t^{\nu+\epsilon}}$. 
Then, for $x_0>1$ large enough, and any $t \ge t_0=\left(\frac{x_0}2\right)^{\frac1{\beta}}$,
\begin{multline} \label{decay_claim.2} 
E_{{r}}(t)+\int_{t_0}^t\int\left[s^{-\nu+\epsilon} (\partial_xq)^2 
+s^{2\nu+2\epsilon-1}   \bar x_+(\partial_x q)^2 
+s^{\nu+\epsilon} (\partial_x^2q)^2\right]\omega_{r+2}'(\bar{x})\, dxds  \\  \lesssim 
\begin{cases}
t^{\frac{3\beta-1}2 (r-5)-r\epsilon} & \mbox{if $r>5$}\\
t_0^{-\frac{3\beta-1}2 (5-r)-r\epsilon} & \mbox{if $r<5$.}
\end{cases}
\end{multline}
\end{lemma}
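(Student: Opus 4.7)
The plan is to mirror the proof of Lemma~\ref{decay_lemma.1} at one higher derivative level. I would differentiate $E_r(t)$ in time, use the equation \eqref{eq:q} for $\partial_t q$, integrate by parts in $x$, and arrange the terms so that the coercive quantities on the left-hand side of \eqref{decay_claim.2} appear with a favourable sign, while the remainders are either forcing (from $F_0$) or absorbable thanks to \eqref{bound:q_0}--\eqref{bound:df:dx}.

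First, set $P := \partial_x^2 q + (q+f_0)^5 - f_0^5$ so \eqref{eq:q} reads $\partial_t q = -\partial_x P + F_0$, and let $V(q) = -\frac{1}{3}[(q+f_0)^6 - f_0^6 - 6qf_0^5]$ with $V'(q) = -2[(q+f_0)^5 - f_0^5]$, so that the integrand of $E_r$ is $(\partial_x q)^2 + V(q)$ weighted by $\omega_{r+2}(\bar x)$. Differentiating in time produces three families of terms: the prefactor contribution $(2\nu+2\epsilon)t^{-1}E_r(t)$; moving-weight terms from $\partial_t\omega_{r+2}(\bar x) = \omega'_{r+2}(\bar x)[-\beta t^{\beta-1-\nu-\epsilon} - (\nu+\epsilon)t^{-1}\bar x]$, which play the role of $M_3, M_4$ of Lemma~\ref{decay_lemma.1} and supply the two $(\partial_x q)^2$ coercive pieces of \eqref{decay_claim.2}; and the Kato-type piece $\int[2\partial_x q\,\partial_x\partial_t q + V'(q)\partial_t q]\omega_{r+2}(\bar x)$. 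One integration by parts rewrites the last piece as $-2\int P\,\partial_t q\,\omega_{r+2}(\bar x) - 2 t^{-\nu-\epsilon}\int \partial_t q\,\partial_x q\,\omega'_{r+2}(\bar x)$, and substituting $\partial_t q = -\partial_x P + F_0$ yields the leading quadratic form $-t^{-\nu-\epsilon}\int [3(\partial_x^2 q)^2 + 4g\,\partial_x^2 q + g^2]\omega'_{r+2}(\bar x)$ (with $g := (q+f_0)^5 - f_0^5$), plus $F_0$-forcing and $\omega''_{r+2}$ lower-order pieces. The $-3 t^{-\nu-\epsilon}(\partial_x^2 q)^2 \omega'_{r+2}$ part is the $(\partial_x^2 q)^2$ coercive term sought; the $-g^2$ has favourable sign; and the cross $-4g\partial_x^2 q$ is integrated once more by parts into $+20(q+f_0)^4(\partial_x q)^2 \omega'_{r+2}$ plus $f_0'$ mixed terms.

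To absorb the nonlinear remainders I split integrals as in \eqref{notation}. On $\bar x<-t^{1-3\nu-2\epsilon}$, exponential decay and \eqref{bound:q_0} give $O(t^{-10})$ as for $M_4^-$. On the other two regions, \eqref{zone} forces $x\gtrsim t^\beta$, so every factor of $f_0$ pays $t^{-\beta\theta}$, which beats the polynomial powers of $t^{\nu+\epsilon}$ entering the weights, exactly as in \eqref{M51}. The polynomial-in-$q$ pieces (in particular the $(q+f_0)^4(\partial_x q)^2$ term produced by the cross integration by parts above) are controlled by an improved Cauchy--Schwarz of the form \eqref{est:infty}, this time applied to $\partial_x q\,\sqrt{\omega'_{r+2}(\bar x)}$ and combined with \eqref{bound:q_0}--\eqref{bound:df:dx}; this furnishes a $\delta(x_0^{-1})$ prefactor so the pieces can be swallowed by the $(\partial_x^2 q)^2$ and $(\partial_x q)^2$ coercive terms. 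Pieces containing undifferentiated $q$ (arising from $V$ at low power, and from the $\omega''_{r+2}$ term analogous to \eqref{decay_claim1.1}) are bounded using \eqref{decay_claim.1} of Lemma~\ref{decay_lemma.1} at the appropriate index.

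Finally, the $F_0$ forcing $-2\int F_0 P\,\omega_{r+2}(\bar x) - 2t^{-\nu-\epsilon}\int F_0\,\partial_x q\,\omega'_{r+2}(\bar x)$ is treated exactly as $M_7$: on $\bar x>2$ use $\omega_{r+2}^2/\omega'_{r+2}\lesssim |\bar x|^{r+3}$ together with $|F_0|\lesssim x^{-\theta-3}$ on $\{x>x_0/4\}$ and $F_0\equiv 0$ elsewhere; the algebra of \eqref{pourtt} with $r$ replaced by $r+2$, after accounting for the extra $t^{2\nu+2\epsilon}$ prefactor, reproduces the source term $t^{-1+\frac{3\beta-1}{2}(r-5)-r\epsilon}$, while the middle region contributes a smaller polynomial and the far-left region is $O(t^{-10})$. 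Gathering all contributions yields a differential inequality of the same shape as at the end of Lemma~\ref{decay_lemma.1}; integrating on $[t_0,t]$ and using \eqref{assump:epsilon} produces the two cases of \eqref{decay_claim.2}. The main obstacle is the bookkeeping of Step~2: the second cross integration by parts generates a web of quartic and $f_0',f_0''$ mixed terms that must each be matched either against the coercive quadratic forms (using the $\delta(x_0^{-1})$ gain) or against the already controlled quantities in \eqref{decay_claim.1}.
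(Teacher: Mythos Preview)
Your proposal is correct and follows essentially the same route as the paper: differentiate $E_r$, integrate by parts to extract the coercive $-P^2\omega'_{r+2}$, $-(\partial_x^2 q)^2\omega'_{r+2}$ and moving-weight pieces, absorb the nonlinear remainders by weighted Sobolev bounds (the paper uses $\|q\,\partial_x q\sqrt{\omega'_{r+2}}\|_{L^\infty}$ rather than your $\|\partial_x q\sqrt{\omega'_{r+2}}\|_{L^\infty}$), treat the $F_0$ forcing as in $M_7$, and close by feeding the residual $q^2\omega_r'$ and $(\partial_x q)^2\omega_r'$ terms into \eqref{decay_claim.1}. Two bookkeeping points to watch: the Kato piece must carry the $t^{2\nu+2\epsilon}$ prefactor (so the leading quadratic form sits at scale $t^{\nu+\epsilon}$, not $t^{-\nu-\epsilon}$), and the positive prefactor term $(2\nu+2\epsilon)t^{-1}\int(\partial_x q)^2\omega_{r+2}$ is not harmless---the paper absorbs it into the $\bar x_+$ moving-weight piece via $\omega_{r+2}(\bar x)=\tfrac{\bar x}{r+2}\omega'_{r+2}(\bar x)$ on $\bar x>2$, which works since $\tfrac{2}{r+2}<1$.
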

 
\begin{proof}
We differentiate $E_r$ with respect to time and integrate by parts to obtain
\begin{align*}
E_r'&= -t^{\nu+\epsilon}\int \left[\partial_x^2 q+(q+f_0)^5-f_0^5\right]^2 \omega_{r+2}'(\bar x)\\
&\quad -2t^{\nu+\epsilon}\int (\partial_x^2 q)^2 \omega_{r+2}'(\bar x)
+t^{-\nu-\epsilon}\int(\partial_x q)^2 \omega_{r+2}'''(\bar x)\\
&\quad -\beta t^{-\nu+\epsilon}
 \int \ (\partial_xq)^2 \omega_{r+2}'(\bar{x})
-(\nu+\epsilon) t^{2\nu+2\epsilon-1} 
\int (\partial_xq)^2 \bar x \omega_{r+2}'(\bar{x})\\
&\quad +\frac\beta3 t^{-\nu+\epsilon}\int \left[(q+f_0)^6-f_0^6-6qf_0^5\right]\omega_{r+2}'(\bar{x})\\
&\quad +\frac13 (\nu+\epsilon) t^{2\nu+2\epsilon-1} 
\int \left[(q+f_0)^6-f_0^6-6qf_0^5 \right] \bar x \omega_{r+2}'(\bar{x})\\
&\quad +2t^{\nu+\epsilon}\int\left[(q+f_0)^5-f_0^5\right]_x (\partial_x q) \omega_{r+2}'(\bar{x})\\
&\quad +2t^{2\nu+2\epsilon}\int (\partial_x q ) F_0' \omega_{r+2}(\bar x) -2t^{2\nu+2\epsilon}\int\left[(q+f_0)^5-f_0^5\right] F_0 \omega_{r+2}(\bar{x})\\
&\quad +2(\nu+\epsilon)t^{2\nu+2\epsilon-1}\int (\partial_xq)^2\omega_{r+2}(\bar x)\\
&\quad -\frac23(\nu+\epsilon)t^{2\nu+2\epsilon-1}\int \left[(q+f_0)^6-f_0^6-6qf_0^5 \right] \omega_{r+2}(\bar x)
\\& =: E_1 +E_2+E_3+E_4+E_5+E_6+E_7+E_8+E_9 +E_{10}+E_{11}+E_{12} .
\end{align*}
First, observe that $E_1\leq 0$, $E_2\leq 0$ and $E_4\leq 0$.
As in the proof of \eqref{decay_claim1.1}, we have for $t_0$ large, $|E_3|\leq -\frac 12 E_4$.

Next, we use the same notation as in \eqref{notation} for $E_j$, $j=5,\cdots,10$. We observe that $E_5^+\leq 0$.
Moreover, as for the estimate of $M_4$ in the proof of \eqref{decay_claim.1},
it holds $E_5^0+E_5^-\leq -\frac 14 E_4+Ct^{-10}$.

\smallskip

\noindent \emph{Estimate for $E_6$}. First, we note
\begin{equation*}
|E_6|\leq c t^{-\nu+\epsilon}\int q^2 f_0^4 \omega_{r+2}'(\bar x)
+ct^{-\nu+\epsilon}\int q^6 \omega_{r+2}'(\bar x)=:E_{6,1}+E_{6,2}.
\end{equation*}
We estimate $E_{6,1}^-\lesssim t^{-10}$ and
\begin{align*}
E_{6,1}^+
&\lesssim t^{-\nu+\epsilon} \int_{\bar x>0} q^2 x^{-4\theta} \omega_{r+2}'(\bar{x})
\lesssim t^{-\nu+\epsilon} \int_{\bar x>0} q^2 x^{-4\theta}(1+\bar x^2) \omega_{r}'(\bar{x})\\
&
\lesssim t^{-3\nu-\epsilon } \int_{x> t^\beta} q^2 x^{-4\theta+2 }\omega_{{r}}'(\bar{x})
\lesssim t^{-3\nu-(4\theta-2)\beta-\epsilon } \int q^2 \omega_{{r}}'(\bar{x})
\lesssim t^{-3\nu-\epsilon } \int q^2 \omega_{{r}}'(\bar{x});
\end{align*}
moreover, since for $\bar x<0$, $\omega_{r+2}'(\bar x)= \omega_{r}'(\bar x)$, we deduce from \eqref{zone} that
\begin{equation*}
E_{6,1}^0\lesssim t^{-\nu+\epsilon}\int_{x>\frac12t^{\beta}} x^{-4\theta}q^2\omega_{r}'(\bar x)
\lesssim t^{-\nu+\epsilon-4\beta\theta}\int q^2 \omega_{{r}}'(\bar{x})\lesssim t^{-3\nu-\epsilon } \int q^2 \omega_{{r}}'(\bar{x}),
\end{equation*}
since $\epsilon<\frac{3\beta-1}2$.
Arguing as for $M_{5,2}$ in the proof of \eqref{decay_lemma.1},
\begin{equation*}
E_{6,2} \lesssim t^{-\nu+\epsilon}\|q\|_{L^2}^4 \int (\partial_xq)^2 \omega_{r+2}'(\bar{x})
 +t^{-3\nu-\epsilon}\|q\|_{L^2}^4 \int q^2\frac{\big(\omega_{r+2}''(\bar{x})\big)^2}{\omega_{r+2}'(\bar{x})} ;
\end{equation*}
and thus, as before, for $x_0$ large,
\begin{equation*}
E_{6,2} \leq-\frac 14 E_4
+t^{-3\nu-\epsilon} \int q^2 \omega_{{r}}'(\bar{x})  .
\end{equation*}

\smallskip

\noindent \emph{Estimate for $E_7$}.
We estimate $E_7^-\leq t^{-10}$ and 
\begin{equation*}
|E_7^0|\lesssim 
t^{-\nu} \int_{-t^{1-3\nu-2\epsilon}<\bar x<0} (q^2 f_0^4+q^6) \omega_{r+2}'(\bar{x})
\lesssim t^{-\varepsilon} (E_{6,1}+E_{6,2}).
\end{equation*}
Now, we estimate
\begin{equation*}E_7^+\lesssim t^{-1+2\nu+2\epsilon} \int_{\bar x>0} q^2 f_0^4\bar x \omega_{r+2}'(\bar{x}) +t^{-1+2\nu+2\epsilon} \int_{\bar x>0} q^6 \bar x \omega_{r+2}'(\bar{x}) 
=:E_{7,1}^++E_{7,2}^+.
\end{equation*}
As before, we have
\begin{align*}
E_{7,1}^+
&\lesssim t^{-1+2\nu+2\epsilon} \int_{\bar x>0} q^2 x^{-4\theta} \bar x\omega_{r+2}'(\bar{x})
\lesssim t^{-1+2\nu+2\epsilon} \int_{\bar x>0} q^2 x^{-4\theta}(1+\bar x^2)\bar x \omega_{r}'(\bar{x})\\
&
\lesssim t^{-1} \int_{x> t^\beta} q^2 x^{-4\theta+2}\bar x_+\omega_{{r}}'(\bar{x})
\lesssim t^{-1} \int q^2 \bar x_+\omega_{{r}}'(\bar{x}).
\end{align*}
Last, by arguing similarly as in \eqref{M52},
\begin{align*}
E_{7,2}^+
&\lesssim t^{-1+2\nu+2\epsilon}\|q\|_{L^2}^2\left\|q^2\sqrt{\omega_{r+2}(\bar x)}\right\|_{L^\infty(\bar x>0)}^2\\
&\lesssim t^{-1+2\nu+2\epsilon}\|q\|_{L^2}^4\left(\int_{\bar x>0}(\partial_x q)^2\omega_{r+2}(\bar x)
+t^{-2\nu-2\epsilon}\int_{\bar x>0}q^2\frac{(\omega_{r+2}')^2}{\omega_{r+2}}\right)\\
&\lesssim
t^{-1+2\nu+2\epsilon}\delta(x_0^{-1})\left(\int_{\bar x>0}(\partial_x q)^2(1+\bar x)\omega_{r+2}'(\bar x)
+t^{-2\nu-2\epsilon}\int_{\bar x>0}q^2(1+\bar x)\omega_r'(\bar x)\right)\\
&\lesssim \delta(x_0^{-1}) (E_4+E_5^+) +
t^{-1}\delta(x_0^{-1})\left( \int_{\bar x>0}q^2 \omega_r'(\bar x)+ \int_{\bar x>0}q^2\bar x\omega_r'(\bar x)\right).
\end{align*}

\smallskip

\noindent \emph{Estimate for $E_8$}.
We compute
\begin{align*}
E_8&=
10t^{\nu+\epsilon}\int (q+f_0)^4(\partial_x q)^2 \omega_{r+2}'(\bar{x})
+10t^{\nu+\epsilon}\int \left[(q+f_0)^4-f_0^4\right](\partial_x q) f_0' \omega_{r+2}'(\bar{x}) \\ 
& \lesssim t^{\nu+\epsilon} \int f_0^4(\partial_x q)^2 \omega_{r+2}'(\bar{x})+ t^{\nu+\epsilon} \int q^4 (\partial_x q)^2\omega_{r+2}'(\bar{x}) \\ & \quad +t^{\nu+\epsilon} \int |q||\partial_xq||f_0|^3|f_0'|\omega_{r+2}'(\bar{x})
+t^{\nu+\epsilon} \int |q|^4|\partial_xq||f_0'|\omega_{r+2}'(\bar{x}) \\ 
& =: E_{8,1}+E_{8,2}+E_{8,3}+E_{8,4}  .
\end{align*}

First, we have as before $E_{8,1}^- \lesssim t^{-10}$. Now, arguing as for $M_{5,1}$, we get for $t_0$ large enough that 
\begin{equation*}
E_{8,1}^0+E_{8,1}^+ \le ct^{\nu+\epsilon} \int_{x>\frac12t^{\beta}}(\partial_xq)^2x^{-4\theta} \omega_{r+2}'(\bar{x}) \le ct^{-(-\nu+4\theta\beta-\epsilon)}\int(\partial_xq)^2 \omega_{r+2}'(\bar{x}) \le -\frac18E_4  .
\end{equation*}

To handle $E_{8,2}$, we use a similar argument as in \eqref{M52}. Observe from the fundamental theorem of calculus that 
\begin{align*} 
-[q\partial_xq](x,t)\sqrt{\omega_{{r+2}}'(\bar{x})}&=\int_x^{+\infty}q\partial_x^2q\sqrt{\omega_{{r+2}}'(\bar{x})}+\int_x^{+\infty}(\partial_xq)^2\sqrt{\omega_{{r+2}}'(\bar{x})}
\\ &\quad +\frac12t^{-\nu-\epsilon}\int_x^{+\infty}q\partial_xq \frac{\omega_{{r+2}}''(\bar{x})}{\sqrt{\omega_{{r+2}}'(\bar{x})}}  .
\end{align*}
Thus, by the Cauchy-Schwarz inequality,
\begin{align*}
\left\| q\partial_xq \sqrt{\omega_{{r+2}}'(\bar{x})}\right\|_{L^{\infty}} &\lesssim \|q\|_{L^2}\left(\int (\partial_x^2q)^2 \omega_{{r+2}}'(\bar{x})\right)^{\frac12} +\int (\partial_xq)^2\sqrt{\omega_{{r+2}}'(\bar{x})}\nonumber \\ &\quad +t^{-\nu-\epsilon}\|q\|_{L^2}\left(\int (\partial_xq)^2 \frac{\big(\omega_{{r+2}}''(\bar{x})\big)^2}{\omega_{{r+2}}'(\bar{x})}\right)^{\frac12} .
\end{align*}
To deal with the second term on the right-hand side of the above inequality, we integrate by parts to get
\begin{equation*}
\int (\partial_xq)^2\sqrt{\omega_{{r+2}}'(\bar{x})}=-\int q\partial_x^2q \sqrt{\omega_{{r+2}}'(\bar{x})}-\frac12t^{-\nu-\epsilon} \int q\partial_xq\frac{\omega_{{r+2}}''(\bar{x})}{\sqrt{\omega_{{r+2}}'(\bar{x})}}  .
\end{equation*}
Using again the Cauchy-Schwarz inequality and then \eqref{def:omegabis} and using \eqref{bound:q_0}, we deduce
\begin{align}
\left\| q\partial_xq \sqrt{\omega_{{r+2}}'(\bar{x})}\right\|_{L^{\infty}}^2 
&\lesssim \|q\|_{L^2}^2\int (\partial_x^2q)^2 \omega_{{r+2}}'(\bar{x})
+t^{-2\nu-2\epsilon}\|q\|_{L^2}^2 \int (\partial_xq)^2 \frac{\big(\omega_{{r+2}}''(\bar{x})\big)^2}{\omega_{{r+2}}'(\bar{x})} \nonumber \\
&\lesssim \|q\|_{L^2}^2\left[ \int (\partial_x^2q)^2 \omega_{{r+2}}'(\bar{x})
+t^{-2\nu-2\epsilon}\int (\partial_xq)^2 \omega_{{r+2}}'(\bar{x})\right]
 . \label{Sobo:firstderiv}
\end{align}
Therefore, for $x_0$ large enough, we obtain
\begin{align*}
E_{8,2} &\lesssim t^{\nu+\epsilon} \|q\|_{L^2}^2 \left\| q\partial_xq \sqrt{\omega_{{r+2}}'(\bar{x})}\right\|_{L^\infty}^2 \\
 & \lesssim \delta(x_0^{-1}) \left[t^{\nu+\epsilon} \int (\partial_x^2q)^2 \omega_{{r+2}}'(\bar{x})
 +t^{-\nu-\epsilon} \int (\partial_xq)^2\omega_{{r+2}}'(\bar{x}) \right]\\ 
 & \le -\frac14 E_2-\frac 1{16}E_4  .
 \end{align*}

Next we deal with $E_{8,3}$. It is clear that $E_{8,3}^- \lesssim t^{-10}$. Moreover, since $\omega_{r+2}'(\bar{x}) \sim \omega_r'(\bar{x})$ for $\bar{x}<2$, we deduce from \eqref{zone} and the Cauchy-Schwarz inequality that 
\begin{align*}
 t^{\nu+\epsilon}\int_{-t^{-1-3\nu-2\epsilon}<\bar{x}<2}&|q||\partial_xq||f_0^3||f_0'| \omega'_{r+2}(\bar{x})\\&\lesssim t^{\nu+\epsilon} \int_{x>\frac12t^{\beta}} |q| |\partial_xq| |x|^{-4\theta-1}\omega'_r(\bar{x})\\
 &\lesssim t^{-\beta(4\theta+1)+\epsilon}\int q^2 \omega'_r(\bar{x}) +t^{2\nu-\beta(4\theta+1)+\epsilon}\int (\partial_xq)^2 \omega'_{r+2}(\bar{x}) \\ 
 & \le ct^{-3\nu-\epsilon}\int q^2 \omega'_r(\bar{x})-\frac1{32}E_4  ,
\end{align*}
for $t_0$ large enough, since $\beta(4\theta+1)-3\nu=\frac52(3\beta-1)>0$. Finally, we use that $\omega'_{r+2}(\bar{x})=(r+2)\bar{x}\bar{x}^{\frac{r-1}2}\bar{x}^{\frac{r+1}2}$ when $\bar{x}>2$. Thus, we get by using again the Cauchy-Schwarz inequality that 
\begin{align*}
 t^{\nu+\epsilon}\int_{\bar{x}>2}|q||\partial_xq||f_0^3||f_0'| \omega'_{r+2}(\bar{x})&\lesssim \int_{x>t^{\beta}} |x|^{-4\theta} |q|\bar{x}^{\frac{r-1}2} |\partial_xq|\bar{x}^{\frac{r+1}2}\\
 &\lesssim t^{-\nu-4\beta\theta-\epsilon}\int q^2 \omega'_r(\bar{x}) +t^{\nu-4\beta\theta+\epsilon}\int (\partial_xq)^2 \omega'_{r+2}(\bar{x}) \\ 
 & \le ct^{-3\nu-\epsilon}\int q^2 \omega'_r(\bar{x})-\frac1{64}E_4  ,
\end{align*}
by taking $t_0$ large enough, since $4\beta\theta-2\nu=2(3\beta-1)$.

Finally we estimate $E_{8,4}$. First, we have from Young's inequality 
\begin{equation*}
E_{8,4} \lesssim 
t^{\nu+\epsilon} \int q^4 (\partial_x q)^2\omega_{r+2}'(\bar{x}) + t^{\nu+\epsilon} \int q^4 (f_0')^2\omega_{r+2}'(\bar{x})
=: E_{8,2}+E_{8,5}  .
\end{equation*}
As before, it is clear that $E_{8,5}^- \lesssim t^{-10}.$ By interpolation, we have $q^4|f_0'|^2 \lesssim q^2f_0^4|x|^{-2}+q^6|x|^{-2}.$ Moreover, we get arguing as in \eqref{omega:est} that $\omega'_{r+2}(\bar{x})x^{-2} \lesssim t^{-2\nu-2\epsilon} \omega'_r(\bar{x})$ for $\bar{x}>-t^{1-3\nu-2\epsilon}$. Hence, we deduce using the estimates for $M_{5,1}^0$, $M_{5,1}^+$ and $M_{5,2}$ that 
\begin{equation*}
E_{8,5}^0+E_{8,5}^+ \lesssim M_{5,1}^0+M_{5,1}^++M_{5,2} 
\lesssim t^{-3\nu-\epsilon} \int q^2 \omega'_r(\bar{x})+t^{-\nu-\epsilon}\int (\partial_xq)^2 \omega'_r(\bar{x}) .
\end{equation*}

\smallskip

\noindent \emph{Estimate for $E_9$}.
We have
\begin{equation*} 
\big| E_9 \big| \leq 2t^{2\nu+2\epsilon} \left( \int (\partial_xq)^2 \omega_{r+2}'(\bar{x})\right)^{\frac12}\left(\int (F_0')^2 \frac{\omega_{r+2}^2(\bar{x})}{\omega_{r+2}'(\bar{x})}\right)^{\frac12} 
\le -\frac 1{2^7} E_4 + c\widetilde{E}_{9}\end{equation*}
where $\widetilde{E}_{9}=t^{5\nu+3\epsilon}\int (F_0')^2 \frac{\omega_{r+2}^ 2(\bar{x})}{\omega_{r+2}'(\bar{x})}$.
From \eqref{def:f_0}, it follows that 
for $x>\frac 14 x_0$, 
$|F_0'| \lesssim |x|^{-\theta-4}$
and for $x<\frac 14 x_0$, $F_0'=0$. 

For $\bar{x} \ge 2$, it holds $ \frac{\omega_{r+2}^ 2(\bar{x})}{\omega_{r+2}'(\bar{x})} \lesssim |\bar{x}|^{r+3}
 \lesssim t^{-(\nu+\epsilon)(r+3)}|x|^{r+3}$. Hence, 
\begin{align*}
t^{5\nu+3\epsilon}\int_{\bar x>2} (F_0')^2 \frac{\omega_{r+2}^ 2(\bar{x})}{\omega_{r+2}'(\bar{x})}
&\lesssim t^{2\nu-r(\nu+\epsilon)}\int_{\bar x>2} |x|^{-2(\theta+4)}| {x}|^{{r}+3} 
\\ & \lesssim t^{2\nu-r(\nu+\epsilon)}\int_{x>t^\beta} |x|^{-2\theta+{r}-5}
\\ & \lesssim t^{2\nu-r(\nu+\epsilon)}t^{-\beta(2\theta-{r}+4)}
=t^{-1+\frac{3\beta-1}2 (r-5)-r\epsilon},
\end{align*}
since $2\theta-r+4>0$ by assumption, and using \eqref{pourtt}.

For $-t^{-1-3\nu-2\epsilon}< \bar x<2$, it holds $ \frac{\omega_{r+2}^ 2(\bar{x})}{\omega_{r+2}'(\bar{x})} \lesssim 1$
and $x\geq \frac 12 t^\beta$ (from \eqref{zone}) so that 
\begin{align*} 
t^{5\nu+3\epsilon}\int_{-t^{-1-3\nu-2\epsilon}<\bar x<2}(F_0')^2 \frac{\omega_{r+2}^ 2(\bar{x})}{\omega_{r+2}'(\bar{x})}&\lesssim t^{5\nu+3\epsilon} \int_{x>\frac 12t^{\beta}}x^{-2(\theta+4)} 
\\
&\lesssim t^{5\nu+3\epsilon}t^{-\beta(2\theta+7)}=t^{-12\beta+3+\epsilon}  .
\end{align*}
Last, as before,
\begin{equation*}
t^{5\nu+3\epsilon}\int_{\bar x<-t^{-1-3\nu-2\epsilon}} (F_0')^2 \frac{\omega_{r+2}^ 2(\bar{x})}{\omega_{r+2}'(\bar{x})} 
 \lesssim t^{-10}  .
\end{equation*}

\smallskip

\noindent \emph{Estimate for $E_{10}$}. We have
\begin{equation*}
|E_{10}|\lesssim 
t^{2\nu+2\epsilon}\int |q| f_0^4 |F_0| \omega_{r+2}(\bar{x}) + t^{2\nu+2\epsilon}\int |q|^5 |F_0| \omega_{r+2}(\bar{x})
=:E_{10,1}+E_{10,2}.
\end{equation*}

First,
\begin{equation*}
E_{10,1}\lesssim t^{-3\nu-\epsilon} \int q^2 \omega_{r}'(\bar{x}) + 
t^{7\nu+5\epsilon} \int f_0^8F_0^2\frac{\omega_{r+2}^2(\bar{x})}{\omega_{r}'(\bar{x})}  .
\end{equation*}
Then,
\begin{align*}
t^{7\nu+5\epsilon} \int_{\bar x>2} f_0^8F_0^2\frac{\omega_{r+2}^2(\bar{x})}{\omega_{r}'(\bar{x})}
&\lesssim t^{7\nu+5\epsilon} \int_{\bar x>2} x^{-10\theta-6}|\bar x|^{r+5} \\ &
\lesssim t^{2\nu-r\nu-r\epsilon} \int_{x>t^\beta} x^{-10\theta+r-1}\\
&\lesssim t^{2\nu-r(\nu+\epsilon)-\beta(10\theta-r)} \\ &\lesssim t^{2\nu-r(\nu+\epsilon)}t^{-\beta(2\theta-{r}+4)}
=t^{-1+\frac{3\beta-1}2 (r-5)-r\epsilon},
\end{align*}
for $0<\epsilon$ small enough since $\theta>\frac12$,
 \begin{equation*}
t^{7\nu+5\epsilon} \int_{-t^{-1-3\nu-2\epsilon}<\bar x<2} f_0^8F_0^2\frac{\omega_{r+2}^2(\bar{x})}{\omega_{r}'(\bar{x})}
\lesssim t^{7\nu-\beta(10\theta+5)+5\epsilon},
\end{equation*}
and
\begin{equation*}
t^{7\nu+5\epsilon} \int_{\bar x<-t^{-1-3\nu-2\epsilon}} f_0^8F_0^2\frac{\omega_{r+2}^2(\bar{x})}{\omega_{r}'(\bar{x})}
\lesssim t^{-10}  .
\end{equation*}

Now, we deal with $E_{10,2}$. On the one hand, we estimate as before $E_{10,2}^- \lesssim t^{-10}$. On the other hand, we deduce by using \eqref{zone} and \eqref{omega:est} that
\begin{equation*}
E_{10,2}^0+E_{10,2}^+\lesssim t^{\nu+\epsilon}
\int_{x>\frac12t^{\beta}} |q|^5 |x|^{-(\theta+2)} \omega_{r+2}'(\bar x) \lesssim t^{+\nu+\epsilon} \left\|q^2 \sqrt{\omega_{r+2}'(\bar x)}\right\|_{L^\infty}^2\int_{x>\frac12t^\beta} |q| |x|^{-(\theta+2)}.
\end{equation*}
Thus, it follows arguing as in \eqref{M52} that for $t_0$ large enough,
\begin{align*}
E_{10,2}^0+E_{10,2}^+
&\lesssim t^{\nu+\epsilon} \left[\int_{x>\frac12t^\beta} |x|^{-2(\theta+2)}\right]^{\frac 12} \|q\|_{L^2}^3
\left[\int (\partial_x q)^2 \omega_{r+2}'(\bar x)
+t^{-2\nu-2\epsilon}\int q^2 \frac{(\omega_{r+2}''(\bar x))^2}{\omega_{r+2}'(\bar x)}\right]\\
&\lesssim t^{\nu-(\theta+\frac32)\beta+\epsilon}
\left[\int (\partial_x q)^2 \omega_{r+2}'(\bar x)
+t^{-2\nu-2\epsilon}\int q^2 \omega_r'(\bar x)\right] \\ 
& \le-\frac1{2^8}E_4+t^{-3\nu-\epsilon} \int q^2 \omega_r'(\bar x)  ,
\end{align*}
since $(\theta+\frac32)\beta=\frac{11\beta}{4}-\frac14>2\nu=1-\beta$, thanks to \eqref{def:gar}.

\smallskip

\noindent \emph{Estimate for $E_{11}$}. As before, $E_{11}^-\lesssim t^{-10}$. Moreover, observe from \eqref{def:omega}, that 
\begin{equation} \label{omega:estbis}
\begin{cases}
 \omega_{{r+2}}(\bar{x})=\frac{\bar{x}}{r+2} \omega_{r+2}'(\bar{x})&\mbox{for $\bar x>2$,}
\\
\omega_{{r+2}}(\bar{x}) \lesssim \omega_{r+2}'(\bar{x}) & 
\mbox{for $-t^{1-3\nu-2\epsilon}<\bar x<2$}
.\end{cases}
\end{equation} 
Then, it follows that for $t_0$ large enough,
\begin{align*}
E_{11}^0+E_{11}^+
&\le ct_0^{-\frac{3\beta-1}{2}+\epsilon}t^{-\nu+\epsilon} \int (\partial_x q)^2 \omega_{r+2}'(\bar x)
+\frac{2}{r+2}(\nu+\epsilon)t^{-1+2\nu+2\epsilon}\int_{\bar{x}>2}(\partial_x q)^2 \bar{x}\omega_{r+2}'(\bar x)
\\
& \le-\frac1{2^9}E_4-\frac2{r+2}E_5^+  ,
\end{align*}
since $1-3\nu=\frac{3\beta-1}2$ and $0<\epsilon<\frac{3\beta-1}4$.

\smallskip

\noindent \emph{Estimate for $E_{12}$}. On the one hand, it holds $E^{-}_{12} \lesssim t^{-10}$. On the other, we observe arguing as for $E_7$ and using \eqref{omega:estbis} that 
\begin{equation*} 
|E_{12}^0|+|E_{12}^+| \lesssim t^{-\epsilon}\big(E_{6,1}+E_{6,2}\big)+E_{7,1}^++E_{7,2}^+  ,
\end{equation*}
so that those terms are estimated similarly. 

\smallskip 

Gathering all those estimates, we obtain in conclusion, for some $c>0$, 
\begin{align*}
&E_r'+c\int \left[t^{-\nu+\epsilon} (\partial_xq)^2 
+t^{2\nu+2\epsilon-1} \bar x_+(\partial_x q)^2 
+t^{\nu+\epsilon} (\partial_x^2q)^2\right]\omega_{r+2}'(\bar{x})\, dx
\\ & \quad \lesssim 
\int\left[t^{-3\nu-\epsilon} q^2
+t^{-1} \bar x_+ q^2
+t^{-\nu-\epsilon}(\partial_xq)^2\right]\omega_{r}'(\bar{x})\, dx +t^{-1+\frac{3\beta-1}2 (r-5)-r\epsilon}.
\end{align*}
Therefore, we conclude the proof of \eqref{decay_claim.2} by using \eqref{decay_claim.1}, integrating the previous estimate over $[t_0,t]$ and using \eqref{assump:epsilon}.
\end{proof}

\begin{proof}[Proof of Proposition \ref{decay:q_0} in the case $k=0$]
First, we look for an estimate on $\int (\partial_xq)^2 \omega_{r+2}$ from the energy estimate.
Arguing as in \eqref{M51}, \eqref{M51bis} and \eqref{M52}, we get that 
\begin{equation*}
t^{2\nu+2\epsilon}\int q^2f_0^4 \omega_{{r+2}}(\bar{x}) \lesssim 
 \int q^2 \omega_{{r}}(\bar{x}) +t^{-10}
 \end{equation*}
and
\begin{equation*}
t^{2\nu+2\epsilon}\int q^6 \omega_{{r+2}}(\bar{x}) \lesssim t^{2\nu+2\epsilon}\|q\|_{L^2}^4 \int (\partial_xq)^2 \omega_{{r+2}}(\bar{x})
 +\|q\|_{L^2}^4 \int q^2 \omega_{{r}}(\bar{x})  . \nonumber
 \end{equation*}
Thus, it follows that for $x_0$ large enough
\begin{align*} 
E_r(t) &\ge t^{2\nu+2\epsilon}\int (\partial_xq)^2 \omega_{{r+2}}(\bar{x}) -ct^{2\nu+2\epsilon}\int \big( q^6+q^2f_0^4 \big) \omega_{{r+2}}(\bar{x}) \\ &
\ge \frac12t^{2\nu+2\epsilon}\int (\partial_xq)^2 \omega_{{r+2}}(\bar{x})-cM_r(t)-ct^{-10}  .
\end{align*}
Hence, we deduce by using \eqref{decay_claim.1} and \eqref{decay_claim.2} that, for $0<r<2\theta+4$,
\begin{multline}\label{decay_claim.3} 
t^{2(\nu+\epsilon)}\int (\partial_xq)^2 \omega_{{r+2}}(\bar{x})+\int_{t_0}^t\int\left[s^{-\nu+\epsilon} (\partial_xq)^2 
+s^{2(\nu+\epsilon)-1} \bar x_+(\partial_x q)^2 
+s^{\nu+\epsilon} (\partial_x^2q)^2\right]\omega_{r+2}'(\bar{x}) \\ \lesssim 
\begin{cases}
t^{\frac{3\beta-1}2 (r-5)-r\epsilon} & \mbox{if $r>5$}\\
t_0^{-\frac{3\beta-1}2 (5-r)-5\epsilon} & \mbox{if $r<5$.}
\end{cases}
\end{multline}

\smallskip

Now, we give the proof of estimate \eqref{decay:q_0.1} in the case $k=0$. 
By the fundamental theorem of calculus and the properties of $\omega_r$
it holds, for any $x$, 
\begin{align*}
t^{\nu+\epsilon}q^2(t,x) \omega_{{r}+1}(\bar{x}) &\lesssim t^{\nu+\epsilon}\int_x^{+\infty} |q||\partial_xq|\omega_{{r}+1}(\bar{x})+\int_x^{+\infty} q^2 \omega_{{r}+1}'(\bar{x}) \\ & \lesssim t^{2(\nu+\epsilon)}\int (\partial_xq)^2\omega_{{r}+2}(\bar{x})+\int q^2\omega_{{r}}(\bar{x})  .
\end{align*}
Hence, we obtain from estimates \eqref{decay_claim.1} and \eqref{decay_claim.3} that 
\begin{equation} \label{decay_claim.4}
t^{(\nu+\epsilon)}q^2(t,x) \omega_{{r}+1}(\bar{x}) \lesssim 
\begin{cases}
t^{\frac{3\beta-1}2 (r-5)-r\epsilon} & \mbox{if $r>5$}\\
t_0^{-\frac{3\beta-1}2 (5-r)-r\epsilon} & \mbox{if $r<5$}
.\end{cases}
\end{equation}

For $x>2t^{\beta}$, we have that $\bar{x}>t^{\beta-\nu}>2$, for $t \ge t_0$ large enough.
Then, we deduce from 
the properties of $\omega_r$, estimate \eqref{decay_claim.4} and the identity \eqref{pourtt} that
\begin{equation} \label{decay_claim.4bis}
q^2(t,x) \left| \frac{x-t^{\beta}}{t^{\nu+\epsilon}}\right|^{{r}+1} \lesssim t^{2-\beta-\nu({r}+1)-\beta(2\theta-{r}+4)-(r+1)\epsilon}  ,
\end{equation}
and thus, for such $t\geq t_0$ and $x>2t^\beta$, we have
\begin{equation*}
q^2(t,x)\lesssim x^{-(r+1)}t^{2-\beta-\beta(2\theta-r+4)}.
\end{equation*}
Taking $r$ close enough to $2\theta+4$ so that $2-\beta-\beta(2\theta-r+4)>0$, 
we conclude the proof of \eqref{decay:q_0.1} in the case $k=0$ and $\kappa_0=2$ (see Remark~\ref{rem:decay})
using $t^\beta<x$.
\end{proof}

\begin{proof}[Proof of Proposition \ref{decay:q_0} in the case $k \ge 1$] We will prove estimate \eqref{decay:q_0.1} in the case where $k \ge 1$ by an induction on $k$.
\begin{definition} Let $l \in \mathbb N$, $0<r<2\theta+4$, $r\neq 5$ and $0<\epsilon<\frac{3\beta-1}{20}|r-5|$.
We say that the induction hypothesis $\mathcal{H}_l$ holds true if 
\begin{equation} \label{def:Hl}
t^{2l(\nu+\epsilon)}\int (\partial_x^lq)^2 \omega_{r+2l}(\bar{x}) \, dx
 \lesssim 
\begin{cases}
t^{\frac{3\beta-1}2 (r-5)-r\epsilon} & \mbox{if $r>5$}\\
t_0^{-\frac{3\beta-1}2 (5-r)-r\epsilon} & \mbox{if $r<5$}
.\end{cases} 
\end{equation}
\end{definition}

First, it is clear arguing as in \eqref{decay_claim.4} that if $\mathcal{H}_l$ and $\mathcal{H}_{l-1}$ hold true for some $l \in \mathbb N$, $l \ge 1$, then 
\begin{equation} \label{decay_claim.5}
t^{(2l-1)(\nu+\epsilon)}(\partial_x^{l-1}q)^2(t,x) \omega_{r+2l-1}(\bar{x}) \lesssim 
\begin{cases}
t^{\frac{3\beta-1}2 (r-5)-r\epsilon} & \mbox{if $r>5$}\\
t_0^{-\frac{3\beta-1}2 (5-r)-r\epsilon} & \mbox{if $r<5$}
.\end{cases}
\end{equation}
Notice in particular that \eqref{decay_claim.5} would imply \eqref{decay:q_0.1} in the case $k=l-1$ arguing as in \eqref{decay_claim.4bis}.

\smallskip

Thus, it suffices to prove that \eqref{def:Hl} hold for any $l \in \mathbb N$ to conclude the proof of Proposition \ref{decay:q_0}. Observe from \eqref{decay_claim.1} and \eqref{decay_claim.2} that $\mathcal{H}_0$ and $\mathcal{H}_1$ hold true. 

Assume that \eqref{def:Hl} holds true for $l=0,1,\cdots,k-1$. 
The next lemma will prove that \eqref{def:Hl} is true for $l=k$, which will conclude the proof of Proposition \ref{decay:q_0}.
\end{proof}

\begin{lemma} \label{decay_lemma.3}
Let $k \in \mathbb N$, $k \ge 2$, $0<r<2\theta+4$, $r\neq 5$ and $0<\epsilon<\frac{3\beta-1}{20}|r-5|$. Assume moreover that \eqref{def:Hl} holds true for $l=0,1,\cdots,k-1$.
Define 
\begin{equation*}
F_{r,k}(t):= t^{2k(\nu+\epsilon)}\int (\partial_x^kq)^2(t,x) \omega_{r+2k}(\bar{x}) \, dx , \quad \text{where} \quad \bar{x}=\frac{x-t^{\beta}}{t^{\nu+\epsilon}}  .
\end{equation*}
Then, for $x_0>1$ large enough, and any $t \ge t_0=\left(\frac{x_0}2\right)^{\frac1{\beta}}$,
\begin{multline*} 
F_{r,k}(t)+\int_{t_0}^t\int\left[s^{(2k-1)(\nu+\epsilon)-2\nu} (\partial_x^kq)^2
+s^{(2k-1)(\nu+\epsilon)}(\partial_x^{k+1}q)^2\right]\omega_{r+2k}'(\bar{x})\, dx ds \\
 \lesssim 
\begin{cases}
t^{\frac{3\beta-1}2 (r-5)-r\epsilon} & \mbox{if $r>5$}\\
t_0^{-\frac{3\beta-1}2 (5-r)-r\epsilon} & \mbox{if $r<5$.}
\end{cases} 
\end{multline*}
\end{lemma}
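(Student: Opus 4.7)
Following the template established in the proofs of Lemma~\ref{decay_lemma.1} (for $k=0$) and the analogous energy estimate (for $k=1$), the plan is to differentiate $F_{r,k}$ in time, substitute the equation
\begin{equation*}
\partial_t(\partial_x^k q) + \partial_x^{k+3}q + \partial_x^{k+1}\!\bigl[(q+f_0)^5-f_0^5\bigr] = \partial_x^k F_0
\end{equation*}
obtained from \eqref{eq:q}, and integrate by parts in $x$. Three integrations by parts in the dispersive contribution and the time derivative of the shifted weight $\bar x = (x-t^\beta)/t^{\nu+\epsilon}$ together produce the two coercive contributions
\begin{equation*}
-3\,t^{(2k-1)(\nu+\epsilon)}\!\int (\partial_x^{k+1}q)^2 \omega_{r+2k}'(\bar x)\,dx \quad\text{and}\quad -\beta\,t^{(2k-1)(\nu+\epsilon)-2\nu}\!\int (\partial_x^k q)^2 \omega_{r+2k}'(\bar x)\,dx,
\end{equation*}
that will dominate the inequality. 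The $\omega'''_{r+2k}$-error from the dispersive IBP is absorbed using $|\omega'''|\lesssim \omega'$ for $t_0$ large, as in \eqref{decay_claim1.1}, and the auxiliary factor $-(\nu+\epsilon)\,t^{2k(\nu+\epsilon)-1}\int(\partial_x^k q)^2\bar x\,\omega_{r+2k}'$ coming from the shift is split into an exponentially small piece on $\bar x<-t^{1-3\nu-2\epsilon}$ and an absorbable piece on $-t^{1-3\nu-2\epsilon}<\bar x<0$, treated as $M_4^-$ and $M_4^0$.

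\textbf{Forcing and routine terms.} The forcing contribution $2\,t^{2k(\nu+\epsilon)}\!\int (\partial_x^k q)\,\partial_x^k F_0\,\omega_{r+2k}(\bar x)\,dx$ is bounded by Cauchy--Schwarz after absorbing a small fraction of the coercive $(\partial_x^k q)^2$-term, using the pointwise estimate $|\partial_x^k F_0|\lesssim |x|^{-\theta-3-k}$ on $\{x>x_0/4\}$ (which follows from $F_0=-\partial_x^3 f_0 - \partial_x(f_0^5)$ and \eqref{def:f_0}) and the same trichotomy $\bar x>2$, $-t^{1-3\nu-2\epsilon}<\bar x<2$, $\bar x<-t^{1-3\nu-2\epsilon}$ as for $M_7$; the dominant region $\bar x>2$ yields exactly the right-hand side of the claim thanks to the algebraic identity \eqref{pourtt} (now with $r$ replaced by $r+2k-4k=r$ in the homogeneity count, since the extra factors of $t^{(\nu+\epsilon)}$ from $F_{r,k}$'s prefactor cancel against extra $\bar x$-factors in $\omega_{r+2k}$), while the other two regions contribute $O(t^{-10})$ plus a high negative power of $t$.

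\textbf{Main obstacle.} The substantive difficulty lies in the nonlinear contribution
\begin{equation*}
-2\,t^{2k(\nu+\epsilon)}\!\int(\partial_x^k q)\,\partial_x^{k+1}\bigl[(q+f_0)^5-f_0^5\bigr]\,\omega_{r+2k}(\bar x)\,dx.
\end{equation*}
Expanding by the binomial theorem and distributing $\partial_x^{k+1}$ by Leibniz, this becomes a finite sum of integrals of the form
\begin{equation*}
t^{2k(\nu+\epsilon)}\!\int (\partial_x^k q)\,\Bigl(\prod_{i}\partial_x^{\alpha_i}q\Bigr)\Bigl(\prod_{j}\partial_x^{\beta_j}f_0\Bigr)\omega_{r+2k}(\bar x)\,dx, \qquad \sum_i\alpha_i+\sum_j\beta_j = k+1,
\end{equation*}
with at least one $q$-factor present. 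The plan is to integrate by parts so that the highest derivative on $q$ appears either as $\partial_x^{k+1}q$ (absorbed by the dispersive coercive term) or as $\partial_x^k q$ (absorbed by the moving-weight coercive term), and to control the remaining factors in weighted $L^\infty$ by combining the induction hypotheses \eqref{def:Hl} for $l\le k-1$ with the pointwise bounds \eqref{decay_claim.5} for $l\le k-1$, the uniform $H^s$-smallness \eqref{bound:q_0}, and the fundamental-theorem-of-calculus argument of \eqref{est:infty}--\eqref{Sobo:firstderiv} (now iterated to higher orders). Any term carrying a factor $\partial_x^j f_0$ is further reduced via $|\partial_x^j f_0|\lesssim x^{-\theta-j}$ and confinement to $x\gtrsim t^\beta$, producing the same $t^{-4\beta\theta}$-type gains that swallowed $M_{5,1}$ and $E_{6,1}$; this is where mass-criticality, encoded in the inequality $4\beta\theta>2\nu$, is essential. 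Once every nonlinear piece is absorbed or bounded by the previous-level estimates times a factor $\lesssim t^{-1+\frac{3\beta-1}{2}(r-5)-r\epsilon}$, integration of the resulting differential inequality on $[t_0,t]$ together with \eqref{assump:epsilon} yields the claimed bound, completing the induction step $\mathcal{H}_{k-1}\Rightarrow\mathcal{H}_k$ and hence Proposition~\ref{decay:q_0}.
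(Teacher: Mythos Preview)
Your approach matches the paper's proof essentially term for term: differentiate $F_{r,k}$, extract the two coercive contributions $F_1$ and $F_3$, absorb $F_2$ and $F_4$ as you describe, treat the forcing $F_6$ via Cauchy--Schwarz and the trichotomy in $\bar x$, and handle the nonlinearity $F_5$ by Leibniz plus the inductive weighted-$L^\infty$ bounds \eqref{decay_claim.5}.

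One genuine omission: you never account for the term produced by differentiating the explicit prefactor $t^{2k(\nu+\epsilon)}$, namely
\[
F_7 = 2k(\nu+\epsilon)\,t^{2k(\nu+\epsilon)-1}\int(\partial_x^k q)^2\,\omega_{r+2k}(\bar x)\,dx,
\]
which is \emph{positive} and not small. The paper controls $F_7$ by observing that on $\{\bar x>2\}$ one has $\omega_{r+2k}(\bar x)=\frac{\bar x}{r+2k}\,\omega_{r+2k}'(\bar x)$, so $F_7^+$ is exactly $\frac{2k}{r+2k}$ times $(-F_4^+)$ and is absorbed because $\frac{2k}{r+2k}<1$; on the middle region $\omega_{r+2k}\lesssim\omega_{r+2k}'$ gives absorption into $F_3$. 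Your treatment of $F_4$ discards the $\bar x>0$ part as ``free sign'', but that piece must be retained precisely to swallow $F_7^+$. With this correction your sketch is complete and coincides with the paper's argument.
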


\begin{proof} 
We differentiate $F_{k,r}$ with respect to time and integrate by parts to obtain
\begin{align*}
F'_{r,k} &= -3t^{(2k-1)(\nu+\epsilon)}\int (\partial_x^{k+1}q)^2\omega_{r+2k}'(\bar{x})+t^{(2k-3)(\nu+\epsilon)}\int (\partial_x^{k}q)^2\omega_{r+2k}'''(\bar{x})
\\ 
&\quad- \beta t^{(2k-1)(\nu+\epsilon)-2\nu}\int (\partial_x^{k}q)^2\omega_{r+2k}'(\bar{x})-(\nu+\epsilon) t^{2k(\nu+\epsilon)-1} \int (\partial_x^{k}q)^2  \bar{x}\omega_{r+2k}'(\bar{x}) 
\\ &
\quad +2t^{2k(\nu+\epsilon)}\int \partial_x^{k}\left( (q+f_0)^5-f_0^5\right) \left((\partial_x^{k+1}q)\omega_{r+2k}(\bar{x})+t^{-(\nu+\epsilon)}(\partial_x^{k}q)\omega_{r+2k}'(\bar{x})\right)
\\ & \quad +2t^{2k(\nu+\epsilon)}\int (\partial_x^kq) F_0^{(k)} \omega_{r+2k}(\bar{x}) 
+2k(\nu+\epsilon)t^{2k(\nu+\epsilon)-1}\int (\partial_x^kq)^2 \omega_{r+2k}(\bar{x})
\\& =: F_1 +F_2+F_3+F_4+F_5+F_6+F_7  .
\end{align*}
First observe that $F_1 \le 0$ and $F_3 \le 0$. Moreover, arguing as in the proof of \eqref{decay_claim1.1}, we have that, for $t_0$ large enough, $|F_2|\leq -\frac 12 F_3$. 

Next, we use the same notation as in \eqref{notation} for $F_j$, $j=4,\cdots,6$. We have $F_4^+\leq 0$.
Moreover, as for the estimate of $M_4$ in the proof of \eqref{decay_claim.1},
it holds $F_4^0+F_4^-\leq -\frac 14 F_3+Ct^{-10}$.

\smallskip

\noindent \emph{Estimate for $F_5$.} We will only explain how to estimate the terms 
\begin{equation*} 
F_{5,1}:=2t^{2k(\nu+\epsilon)}\int \partial_x^{k}(q^5) \left((\partial_x^{k+1}q)\omega_{r+2k}(\bar{x})+t^{-(\nu+\epsilon)}(\partial_x^{k}q)\omega_{r+2k}'(\bar{x})\right)
\end{equation*}
and
\begin{equation*} 
F_{5,2}:=2t^{2k(\nu+\epsilon)}\int \partial_x^{k}(f_0^4q) \left((\partial_x^{k+1}q)\omega_{r+2k}(\bar{x})+t^{-(\nu+\epsilon)}(\partial_x^{k}q)\omega_{r+2k}'(\bar{x})\right)
\end{equation*}
since the other ones are estimated interpolating between these estimates. 

We deal first with $F_{5,1}$. We deduce from the Cauchy-Schwarz and Young inequalities that 
\begin{equation*}
\big| F_{5,1} \big| \lesssim -\frac12 F_1-\frac18 F_3+t^{(2k+1)(\nu+\epsilon)} \int \left(\partial_x^k(q^5) \right)^2 \frac{\omega^2_{r+2k}(\bar{x})}{\omega_{r+2k}'(\bar{x})}=:-\frac12 F_1-\frac18 F_3+\widetilde{F}_{5,1}  .
\end{equation*}
We observe by using the Leibniz rule, the Cauchy-Schwarz inequality and the properties of $\omega$ in \eqref{def:omega} that 
\begin{equation*} 
\widetilde{F}_{5,1} \lesssim \!\!\!\! \sum_{\genfrac{}{}{0pt}{2}{k_1+\cdots+k_5=k}{k_1 \le \cdots \le k_5}} t^{(2k_1+\cdots 2k_5+1)(\nu+\epsilon)}\int \prod_{j=1}^5 (\partial_x^{k_j}q)^2 \omega_{r+2k_1+\cdots+2k_5+1} =:\!\!\!\!
\sum_{\genfrac{}{}{0pt}{2}{k_1+\cdots+k_5=k}{k_1 \le \cdots \le k_5}} \widetilde{F}_{5,1}(k_1,\cdots,k_5) .
\end{equation*}

When $k_5=k$, we have $k_1=\cdots=k_4=0$. It follows then applying \eqref{decay_claim.4} with $r=\frac12$ that
\begin{equation*}
\widetilde{F}_{5,1}(0,0,0,0,k) \le \Big\| t^{(\nu+\epsilon)}q^2\omega_{\frac32} \Big\|_{L^{\infty}_{x}}^4t^{(2k-3)(\nu+\epsilon)} \int (\partial_x^ kq)^2 \omega_{r+2k}'(\bar{x}) \le \delta(t_0^{-1}) F_3  .
\end{equation*}

When $k_5 \le k-1$, we consider two different cases. First if $k=2$, then $k_5=k_4=1$ and $k_1=k_2=k_3=0$, we deduce from \eqref{decay_claim.4} with $r=1$ and then \eqref{decay_claim.3} that
\begin{align*}
\widetilde{F}_{5,1}(0,0,0,1,1) &\le \Big\| t^{(\nu+\epsilon)}q^2\omega_{2} \Big\|_{L^{\infty}_{x}}^3\|\partial_xq\|_{L^{\infty}_x}^2 \left(t^{2(\nu+\epsilon)} \int (\partial_xq)^2 \omega_{r+2}(\bar{x}) \right)
\\ &\lesssim 
\begin{cases}
t^{\frac{3\beta-1}2 (r-5)-r\epsilon} & \mbox{if $r>5$}\\
t_0^{-\frac{3\beta-1}2 (5-r)-5\epsilon} & \mbox{if $r<5$}.
\end{cases}
\end{align*}

In the case where $k_5 \le k-1$ and $k \ge 3$, observe that $k_4 \le k-2$. By using the induction hypothesis \eqref{def:Hl} for $l=0,1,\cdots,k-1$, we deduce that \eqref{decay_claim.5} hold for $l=1,\cdots,k-1$. Thus, it follows that
\begin{align*}
\widetilde{F}_{5,1}(k_1,k_2,k_3,k_4,k_5) &\le \prod_{j=1}^4\Big\| t^{(2k_j+1)(\nu+\epsilon)}(\partial_x^{k_j}q)^2\omega_{2k_j+\frac32} \Big\|_{L^{\infty}_{x}} \left[t^{(2k_5-3)(\nu+\epsilon)} \int (\partial_x^{k_5}q)^2 \omega_{r+2k_5}(\bar{x}) \right]
\\ &\lesssim 
\begin{cases}
t^{\frac{3\beta-1}2 (r-5)-r\epsilon} & \mbox{if $r>5$}\\
t_0^{-\frac{3\beta-1}2 (5-r)-5\epsilon} & \mbox{if $r<5$}
.\end{cases}
\end{align*}

Now we deal with $F_{5,2}$. By using the Leibniz rule and integration by parts, we decompose
\begin{align*} 
F_{5,2}&:=2\sum_{l=0}^{k-1}\begin{pmatrix} k \\ l \end{pmatrix}t^{2k(\nu+\epsilon)}\int \partial_x^{k-l}(f_0^4)(\partial_x^lq) (\partial_x^{k+1}q)\omega_{r+2k}(\bar{x})-t^{2k(\nu+\epsilon)}\int \partial_x(f_0^4)(\partial_x^{k}q)^2 \omega_{r+2k}(\bar{x}) \\ &
\quad +t^{(2k-1)(\nu+\epsilon)}\int f_0^4 (\partial_x^{k}q)^2\omega_{r+2k}'(\bar{x}) 
+2\sum_{l=0}^{k-1}\begin{pmatrix} k \\ l \end{pmatrix}t^{(2k-1)(\nu+\epsilon)}\int \partial_x^{(k-l)}(f_0^4) (\partial_x^{k}q)\omega_{r+2k}'(\bar{x}) \\ &=: F_{5,2,1}+F_{5,2,2}+F_{5,2,3}+F_{5,2,4} .
\end{align*}
First, it is clear that $ |F_{5,2,1}^-|+F_{5,2,2}^-+F_{5,2,3}^-+F_{5,2,4}^- \lesssim t^{-10}$. Next, we observe arguing as in \eqref{omega:est} that 
\begin{equation} \label{omega:est.2}
\omega_{r+2k}(\bar{x})|x|^{-(k-l)-1} \lesssim t^{-((k-l)+1)(\nu+\epsilon)}\omega_{r+k+l-1}(\bar{x}), \quad \text{for} \quad \bar{x}>-t^{1-3\nu-2\epsilon} .
\end{equation}
Hence, it follows from \eqref{def:f_0}, \eqref{zone},  $\theta>\frac12$ and the Cauchy-Schwarz inequality that 
\begin{align*} 
F_{5,2,1}^0+F_{5,2,1}^+ &\lesssim \sum_{l=0}^{k-1}t^{(k+l-1)(\nu+\epsilon)}\int_{x>\frac12t^{\beta}} |x|^{-(4\theta-1)}|\partial_x^lq| |\partial_x^{k+1}q|\omega_{r+k+l-1}(\bar{x}) 
\\ & \lesssim \sum_{l=0}^{k-1} \left( t^{2l(\nu+\epsilon)} \int (\partial_x^lq)^2\omega_{r+2l}(\bar{x})\right)^{\frac12} \left( t^{(2k-2)(\nu+\epsilon)} \int (\partial_x^{(k+1)}q)^2\omega_{r+2k}'(\bar{x})\right)^{\frac12} \\
& \lesssim -\frac14F_1+\sum_{l=0}^{k-1}t^{2l(\nu+\epsilon)} \int (\partial_x^lq)^2\omega_{r+2l}(\bar{x})  .
\end{align*}
Now, observe from \eqref{def:f_0}, \eqref{zone} and \eqref{omega:est} that, for $t_0$ large enough,
 \begin{align*} 
F_{5,2,2}^0+F_{5,2,2}^+ +F_{5,2,3}^0+F_{5,2,3}^+&\lesssim t^{(2k-1)(\nu+\epsilon)}\int_{x>\frac12t^{\beta}} |x|^{-4\theta}(\partial_x^kq)^2\omega_{r+2k}'(\bar{x}) \\ &\lesssim t^{(2k-1)(\nu+\epsilon)-4\theta \beta}\int (\partial_x^kq)^2\omega_{r+2k}'(\bar{x})
\le -\frac1{2^4}F_3  ,
\end{align*}
since $4\theta \beta =5\beta-1=4-10\nu>2\nu$, thanks to \eqref{def:gar}.

Finally, by using an estimate similar to \eqref{omega:est.2},
 \begin{align*} 
F_{5,2,4}^0+F_{5,2,4}^+ &\lesssim \sum_{l=0}^{k-1}t^{(k+l-1)(\nu+\epsilon)}\int_{x>\frac12t^{\beta}} |x|^{-4\theta}|\partial_x^lq| |\partial_x^{k}q|\omega_{r+k+l}'(\bar{x}) 
\\ & \lesssim \sum_{l=0}^{k-1} \left( t^{2l(\nu+\epsilon)} \int (\partial_x^lq)^2\omega_{r+2l}(\bar{x})\right)^{\frac12} \left( t^{(2k-2)(\nu+\epsilon)-8\theta\beta} \int (\partial_x^{k}q)^2\omega_{r+2k}'(\bar{x})\right)^{\frac12} \\
& \lesssim -\frac1{2^5}F_3+\sum_{l=0}^{k-1}t^{2l(\nu+\epsilon)} \int (\partial_x^lq)^2\omega_{r+2l}(\bar{x})  .
\end{align*}

\smallskip

\noindent \emph{Estimate for $F_6$.} From the Cauchy-Schwarz inequality,
\begin{align*} 
\big| F_6 \big| &\leq 2 t^{2k(\nu+\epsilon)}\left( \int (\partial_x^kq)^2 \omega_{{r+2k}}'(\bar{x})\right)^{\frac12}\left(\int \big(F_0^{(k)}\big)^2 \frac{\omega_{{r+2k}}^ 2(\bar{x})}{\omega_{{r+2k}}'(\bar{x})} \right)^{\frac12} \\ &
\le -\frac 1{2^6} F_3 + ct^{2k(\nu+\epsilon)+3\nu+\epsilon}\int \big(F_0^{(k)}\big)^2 \frac{\omega_{{r+2k}}^ 2(\bar{x})}{\omega_{{r+2k}}'(\bar{x})}  .
\end{align*}
First,  from \eqref{def:f_0}, 
for $x>\frac 14 x_0$, 
$\big|F_0^{(k)}\big| \lesssim |x|^{-(\theta+k+3)}$ ($\theta>\frac12$),
and for $x<\frac 14 x_0$, $F_0=0$. 

For $\bar{x} \ge 2$, it holds $ \frac{\omega_{{r+2k}}^ 2(\bar{x})}{\omega_{{r+2k}}'(\bar{x})} =(r+2)^{-1} |\bar{x}|^{r+2k+1}
 \lesssim t^{-(\nu+\epsilon)(r+2k+1)}|x|^{r+2k+1}$. Hence, by using \eqref{pourtt},
\begin{align*}
t^{2k(\nu+\epsilon)+3\nu+\epsilon}\int_{\bar x>2} \big(F_0^{(k)}\big)^2 \frac{\omega_{{r+2k}}^ 2(\bar{x})}{\omega_{{r+2k}}'(\bar{x})}
&\lesssim t^{2\nu-r(\nu+\epsilon)}\int_{\bar x>2} |x|^{-2(\theta+k+3)}| {x}|^{r+2k+1} 
\\ & \lesssim t^{2\nu-r(\nu+\epsilon)}\int_{x>t^\beta} |x|^{-2\theta+{r}-5}
\\ & \lesssim t^{2\nu-r(\nu+\epsilon)}t^{-\beta(2\theta-{r}+4)}
=t^{-1+\frac{3\beta-1}2 (r-5)-r\epsilon}  ,
\end{align*}
since $2\theta-r+4>0$ by assumption.

For $-t^{-1-3\nu-2\epsilon}< \bar x<2$, it holds $ \frac{\omega_{{r+2k}}^ 2(\bar{x})}{\omega_{{r+2k}}'(\bar{x})} \lesssim 1$
and $x\geq \frac 12 t^\beta$ (from \eqref{zone}) so that 
\begin{align*} 
t^{2k(\nu+\epsilon)+3\nu+\epsilon}\int_{-t^{-1-3\nu-2\epsilon}<\bar x<2} \big(F_0^{(k)}\big)^2 \frac{\omega_{{r+2k}}^ 2(\bar{x})}{\omega_{{r+2k}}'(\bar{x})} &\lesssim t^{2k(\nu+\epsilon)+3\nu+\epsilon} \int_{x>\frac 12t^{\beta}}x^{-2(\theta+k+3)} 
\\
&\lesssim t^{2k(\nu+\epsilon)+3\nu+\epsilon}t^{-\beta(2\theta+2k+5)}\\
&\lesssim t^{-\frac{k(3\beta-1)}{2}-9\beta+2+(2k+1)\epsilon}  .
\end{align*}
Last, for $\bar x<-t^{-1-3\nu-2\epsilon}$, then $\frac{\omega_{{r+2k}}^ 2(\bar{x})}{\omega_{{r+2k}}'(\bar{x})} =8e^{\frac{\bar{x}}8}$ so that as for $M_4^-$,
\begin{equation*}
t^{2k(\nu+\epsilon)+3\nu+\epsilon}\int_{\bar x<-t^{-1-3\nu-2\epsilon}} \big(F_0^{(k)}\big)^2 \frac{\omega_{{r+2k}}^ 2(\bar{x})}{\omega_{{r+2k}}'(\bar{x})} 
 \lesssim t^{-10}  .
\end{equation*}

\smallskip

\noindent \emph{Estimate for $F_7$.} As before, $F_{7}^-\lesssim t^{-10}$. Moreover, observe from \eqref{def:omega}, that 
\begin{equation*} 
\begin{cases}
 \omega_{{r+2k}}(\bar{x})=\frac{\bar{x}}{r+2k} \omega_{r+2k}'(\bar{x})&\mbox{for $\bar x>2$,}
\\
\omega_{{r+2k}}(\bar{x}) \lesssim \omega_{r+2k}'(\bar{x}) & 
\mbox{for $-t^{1-3\nu-2\epsilon}<\bar x<2$}
.\end{cases}
\end{equation*} 
Then, it follows that for $t_0$ large enough,
\begin{align*}
F_{7}^0+F_{7}^+
&\le ct_0^{-\frac{3\beta-1}{2}+\epsilon}t^{-\nu+\epsilon} \int (\partial_x^k q)^2 \omega_{r+2}'(\bar x)
+\frac{2k}{r+2k}(\nu+\epsilon)t^{-1+2\nu+2\epsilon}\int_{\bar{x}>2}(\partial_x^k q)^2 \bar{x}\omega_{r+2}'(\bar x)
\\
& \le-\frac1{2^7}F_3-\frac{2k}{r+2k}F_4^+  ,
\end{align*}
since $1-3\nu=\frac{3\beta-1}2$ and $0<\epsilon<\frac{3\beta-1}4$.

\smallskip

Therefore, we complete the proof of Lemma \ref{decay_lemma.3} combining all those estimates  with the induction hypothesis \eqref{def:Hl} for $l=0,1,\cdots,k-1$.
\end{proof}

\section{Decomposition around the soliton} \label{section:decomp_sol}

\subsection{Linearized operator}
Here, we recall some properties of the linearized operator $\mathcal{L}$ around the soliton $Q$ defined in \eqref{def:L}. 
We first introduce the function space $\mathcal{Y}$: 
\begin{equation*}
\mathcal{Y}:= \Big\{ \phi \in \mathcal{C}^{\infty}(\mathbb R : \mathbb R) : \forall \, k \in \mathbb N, \, \exists \, C_k, \, r_k>0 \ \text{s.t.} \ |\phi^{(k)}(y)| \le C_k(1+|y|)^{r_k}e^{-|y|}, \ \forall \, y \in \mathbb R\Big\} \, .
\end{equation*}

\begin{lemma}[Properties of the linearized operator $\mathcal{L}$]
The self-adjoint operator $\mathcal{L}:H^2(\mathbb R) \subseteq L^2(\mathbb R) \to L^2(\mathbb R)$ defined in \eqref{def:L} satisfies the following properties.
\begin{itemize} 
\item[(i)] \emph{Spectrum of $\mathcal{L}$:} the operator $\mathcal{L}$ has only one negative eigenvalue $-8$ associated to the eigenfunction $Q^3$; $\ker \mathcal{L}=\{aQ' : a \in \mathbb R\}$; and $\sigma_{ess}( \mathcal{L})=[1,+\infty)$.
\item[(ii)] \emph{Scaling:} $\mathcal{L}\Lambda Q=-2Q$ and $(Q,\Lambda Q)=0$, where $\Lambda$ is defined in \eqref{def:lambda}.
\item[(iii)] \emph{Coercivity of $\mathcal{L}$:} 
for all $\phi \in H^1(\mathbb R)$,
\begin{equation*}
(\phi,Q^3)=(\phi,Q')=0 \implies (\mathcal{L}\phi,\phi) \ge \|\phi\|_{H^1}^2  .
\end{equation*}
Moreover, there exists $\nu_0>0$ such that, for all $f \in H^1(\mathbb R)$,
\begin{equation} \label{coercivity.2}
(\mathcal{L}\phi,\phi) \ge \nu_0\|\phi\|_{H^1}^2-\frac1\nu_0\Big((\phi,Q)^2+(\phi,y\Lambda Q)^2 +(\phi,\Lambda Q)^2 \Big)  .
\end{equation}
\item[(iv)] \emph{Invertibility:} there exists a unique $R \in \mathcal{Y}$, even, such that 
\begin{equation} \label{def:R}
\mathcal{L}R=5Q^4; \quad \text{moreover} \quad (Q,R)=-\frac34 \int Q  .
\end{equation}
\item[(v)] \emph{Invertibility (bis):} there exists a unique function $P \in \mathcal{C}^{\infty}(\mathbb R) \cap L^{\infty}(\mathbb R)$ such that $P' \in \mathcal{Y}$ and 
\begin{equation*}
(\mathcal{L}P)'=\Lambda Q, \ \lim_{y \to -\infty}P(y)=\frac12 \int Q, \ \lim_{y \to +\infty} P(y)=0  .
\end{equation*}
Moreover, 
\begin{equation} \label{prop:P}
(P,Q)=\frac1{16}\left( \int Q\right)^2>0, \quad (P,Q')=0  .
\end{equation}
\end{itemize}
\end{lemma}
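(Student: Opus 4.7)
The plan is to handle items (i)--(v) in turn, relying on spectral theory for one-dimensional Schrödinger operators and on identities derived from $-Q''+Q-Q^5=0$ together with its first integral $(Q')^2=Q^2-\tfrac13 Q^6$.

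For (i), I would first verify the explicit identity $\mathcal{L}Q^3=-8Q^3$ by differentiating $Q^3$ twice and substituting both the equation for $Q$ and the first integral. Since $Q^3>0$, it is the ground state of the Schrödinger operator $\mathcal{L}$ by Sturm--Liouville theory, so $-8$ is the unique negative eigenvalue and is simple. Differentiating the equation for $Q$ in $x$ gives $\mathcal{L}Q'=0$, and because $Q'$ has exactly one zero, it is the first excited state; this forces $\ker\mathcal{L}=\mathbb R Q'$ and rules out any eigenvalue in $(-8,0)\cup(0,1)$. Weyl's theorem applied to the exponentially decaying potential $-5Q^4$ then yields $\sigma_{\mathrm{ess}}(\mathcal{L})=[1,+\infty)$. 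For (ii), differentiating the rescaled equation $-\lambda^{-2}Q_\lambda''+Q_\lambda-Q_\lambda^5=0$ in $\lambda$ at $\lambda=1$ gives $\mathcal{L}\Lambda Q=-2Q$, while $(Q,\Lambda Q)=\tfrac12\int Q^2+\int yQQ'=0$ by one integration by parts.

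For (iii), write $\phi=a\psi_0+bQ'+\phi^\perp$ with $\psi_0\propto Q^3$ normalized and $\phi^\perp\perp\psi_0,Q'$. By (i), $(\mathcal{L}\phi,\phi)=-8a^2+(\mathcal{L}\phi^\perp,\phi^\perp)$, and the spectral gap $\mathrm{dist}(0,\sigma(\mathcal{L})\setminus\{0\})\geq 1$ combined with a standard elliptic argument gives $(\mathcal{L}\phi^\perp,\phi^\perp)\gtrsim\|\phi^\perp\|_{H^1}^2$; the case $a=b=0$ is then the first coercivity. For the general estimate \eqref{coercivity.2}, I would invert the $2\times 3$ linear relation connecting $(a,b)$ to $(\phi,Q)$, $(\phi,y\Lambda Q)$, $(\phi,\Lambda Q)$. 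Parity reduces this to checking non-vanishing of $(\psi_0,Q)$, $(\psi_0,\Lambda Q)$, and $(Q',y\Lambda Q)$, each of which is explicit using $\mathcal{L}\Lambda Q=-2Q$ and the sign of $Q^3$. This controls $a^2+b^2$ by $\sum_j(\phi,\eta_j)^2+\|\phi^\perp\|_{L^2}^2$, and a fraction of $(\mathcal{L}\phi^\perp,\phi^\perp)$ absorbs the $\|\phi^\perp\|_{L^2}^2$ term, yielding \eqref{coercivity.2}.

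For (iv), $5Q^4$ is even and $Q'$ is odd, so $5Q^4\in(\ker\mathcal{L})^\perp$; Fredholm theory produces a unique even $R$ with $\mathcal{L}R=5Q^4$, and membership in $\mathcal{Y}$ follows from the asymptotic equation $-R''+R=5Q^4$ at $\pm\infty$ with exponentially decaying right-hand side. Then $(Q,R)=-\tfrac12(\mathcal{L}\Lambda Q,R)=-\tfrac12(\Lambda Q,\mathcal{L}R)=-\tfrac52(\Lambda Q,Q^4)$, and one IBP gives $(\Lambda Q,Q^4)=\tfrac12\int Q^5-\tfrac15\int Q^5=\tfrac{3}{10}\int Q^5=\tfrac{3}{10}\int Q$ (the last equality from integrating $Q''=Q-Q^5$ over $\mathbb R$), whence $(Q,R)=-\tfrac34\int Q$. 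For (v), set $G(y):=-\int_y^{+\infty}\Lambda Q$, so $G'=\Lambda Q$, $G(+\infty)=0$, and $G(-\infty)=-\int\Lambda Q=\tfrac12\int Q$. The ODE $\mathcal{L}P=G$ has a bounded solution because the asymptotic equation $-P''+P=G$ at $\pm\infty$ forces $P$ to share the limits of $G$; uniqueness modulo $\mathbb R Q'$ is resolved by the normalization $(P,Q')=0$ (which can also be read from the parity of a judiciously chosen representative). The identity $(P,Q)=\tfrac{1}{16}(\int Q)^2$ follows from two evaluations of $(\mathcal{L}P,\Lambda Q)$: on one hand $(\mathcal{L}P,\Lambda Q)=(P,\mathcal{L}\Lambda Q)=-2(P,Q)$ after IBP (the boundary terms vanish since $P',(\Lambda Q),(\Lambda Q)'\to 0$ at $\pm\infty$), and on the other $(\mathcal{L}P,\Lambda Q)=\int GG'=-\tfrac12 G(-\infty)^2=-\tfrac18(\int Q)^2$.

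The main obstacle will be the refined coercivity \eqref{coercivity.2} in (iii), which is crucial for the later bootstrap: it requires not only the spectral decomposition from (i) but also the careful quantitative non-degeneracy of the Gram matrix that links $(a,b)$ to the three prescribed inner products. The remaining items reduce, once $\mathcal{L}Q^3=-8Q^3$ and $\mathcal{L}\Lambda Q=-2Q$ are in hand, to routine ODE/Fredholm arguments plus the integration-by-parts computations outlined above.
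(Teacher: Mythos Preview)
The paper gives no proof; it simply cites \cite{MaMeRa1} for (i)--(iii) and (v), and \cite{MaMeRa3} for (iv). Your sketch is therefore far more detailed than what the paper does, and your computations for (i), (ii), (iv), (v) are correct --- in particular the evaluations of $(Q,R)$ and $(P,Q)$ via $\mathcal L\Lambda Q=-2Q$ and $\int GG'=-\tfrac12 G(-\infty)^2$ are exactly right.

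There is, however, a genuine gap in your argument for the refined coercivity \eqref{coercivity.2}. Your plan is to decompose $\phi=a\psi_0+bQ'+\phi^\perp$ along the spectral basis, bound $a^2+b^2\lesssim\sum_j(\phi,\eta_j)^2+\|\phi^\perp\|_{L^2}^2$ from the Gram relations, and then ``absorb the $\|\phi^\perp\|_{L^2}^2$ term'' using $(\mathcal L\phi^\perp,\phi^\perp)\ge c_1\|\phi^\perp\|^2$. But the absorption requires $c_1$ to dominate $8$ times the Gram constant, and this fails: a crude check via $(\int Q^4)^2\le\int Q^2\int Q^6$ shows the constant you must absorb is at least $16$, while the spectral gap on $(Q^3,Q')^\perp$ is only $1$. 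The argument in the references does \emph{not} decompose along $Q^3,Q'$. Instead one first proves (Weinstein, then Martel--Merle) by a Lagrange-multiplier argument exploiting $\mathcal L\Lambda Q=-2Q$ that $(\mathcal L\phi,\phi)\ge 0$ whenever $(\phi,Q)=0$, with null cone exactly $\mathrm{span}(Q',\Lambda Q)$; the two remaining conditions $(\phi,\Lambda Q)=(\phi,y\Lambda Q)=0$ then kill this null cone (using $(Q',y\Lambda Q)=\|\Lambda Q\|^2\neq 0$, which you correctly compute), and a compactness argument upgrades strict positivity to the uniform $H^1$ lower bound. The penalized form \eqref{coercivity.2} then follows routinely.

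One minor point on (v): your parenthetical about reading $(P,Q')=0$ from parity does not work, since $G=\mathcal LP$ is neither even nor odd; as you correctly say first, $(P,Q')=0$ is the normalization that resolves the $Q'$-ambiguity.
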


\begin{proof} 
The properties (i), (ii) and (iii) are standard and we refer to Lemma 2.1 of \cite{MaMeRa1} and the references therein for their proof. Property (iv) is proved in Lemma 2.1 in \cite{MaMeRa3}, while property (v) is proved in Proposition 2.2 in \cite{MaMeRa1}.
\end{proof}

\subsection{Refined profile} We follow \cite{MaMeRa1} to define the one parameter family of approximate self similar profiles $:b\mapsto Q_b$, $|b| \ll 1$ which will provide the leading order deformation of the ground state profile $Q=Q_{b=0}$ in our construction. 

More precisely, we need to localize $P$ on the left hand side. Let $\chi \in \mathcal{C}^{\infty}(\mathbb R)$ be such that 
\begin{equation} \label{def:chi}
0\le \chi \le 1,  \quad  0 \le (\chi'')^2 \lesssim \chi' \quad \text{on} \ \mathbb R,  \quad \chi_{|_{(-\infty,-2)}} \equiv 0 \quad \text{and} \quad \chi_{|_{(-1,+\infty)}}\equiv 1  .
\end{equation}

\begin{definition} 
Let $\gamma=\frac34$. The localized profile $Q_b$ is defined by
\begin{equation} \label{def:Qb} 
Q_b(y)=Q(y)+bP_b(y)  ,
\end{equation}
where 
\begin{equation} \label{def:Pb} 
P_b(y)=\chi_b(y)P(y) \quad \text{and} \quad \chi_b(y)=\chi(|b|^{\gamma}y)  .
\end{equation}
\end{definition}

The properties of $Q_b$ are stated in the next lemma. 
\begin{lemma}[Approximate self-similar profiles $Q_b$ , \cite{MaMeRa1}] \label{refined:profile}
There exists $b^{\star}>0$ such that for $|b|<b^{\star}$, the following properties hold. 
\begin{itemize} 
\item[(i)] \emph{Estimate of $Q_b$:} for all $y \in \mathbb R$, 
\begin{equation} \label{est:Qb} \begin{aligned} 
\big|Q_b(y) \big| &\lesssim e^{-|y|}+|b| \left(\boldsymbol{1}_{[-2,0]}(|b|^{\gamma}y)+e^{-\frac{|y|}2} \right)  ; \\
\big|Q_b^{(k)}(y) \big| &\lesssim e^{-|y|}+|b|e^{-\frac{|y|}2}+ |b|^{1+k\gamma}\boldsymbol{1}_{[-2,-1]}(|b|^{\gamma}y), \quad \forall \, k \ge 1  .  
\end{aligned}\end{equation}
\item[(ii)] \emph{Equation of $Q_b$:} the error term
\begin{equation*}
-\Psi_b=\big(Q_b''-Q_b+Q_b^5 \big)'+b\Lambda Q_b-2b^2\frac{\partial Q_b}{\partial b} 
\end{equation*}
satisfies, for all $y \in \mathbb R$,
\begin{align} 
\big|\Psi_b(y) \big| &\lesssim |b|^{1+\gamma}\boldsymbol{1}_{[-2,-1]}(|b|^{\gamma}y)+b^2 \left(e^{-\frac{|y|}2} +\boldsymbol{1}_{[-2,0]}(|b|^{\gamma}y)\right)  ; \label{est:Psib} \\
\big|\Psi_b^{(k)}(y) \big| &\lesssim |b|^{1+(k+1)\gamma}\boldsymbol{1}_{[-2,-1]}(|b|^{\gamma}y)+ b^2 e^{-\frac{|y|}2}, \quad \forall \, k \ge 1  . \label{est:derivPsib} 
\end{align}
Moreover, 
\begin{equation}\label{est:PsibY}
\left|\big(\Psi_b,\phi\big)\right| \lesssim b^2, \quad \forall \phi \in \mathcal{Y}  ,
\end{equation}
and $($recalling the definition of $L^2_B$ in \eqref{def:L2B}$)$,
\begin{equation} \label{est:PsibL2}
 \big\|\Psi_b^{(k)}(y) \big\|_{L^2_B} \lesssim b^2, \ \forall \, k \ge 0  .
\end{equation}
Note that the implicit constant in \eqref{est:PsibL2} depends on the constant $B \ge 100$.
\item[(iii)] \emph{Projection of $\Psi_b$ in the direction $Q$:} 
\begin{equation} \label{est:PsibQ}
\left| \big( \Psi_b,Q \big) \right| \lesssim |b|^3  .
\end{equation}
\item[(iv)] \emph{Mass and energy properties of $Q_b$:}
\begin{align} 
 \left| \int Q_b^2-\left(\int Q^2+2b\int PQ \right) \right| &\lesssim |b|^{2-\gamma} ; \label{est:massQb}\\
 \left|E(Q_b)+b\int P Q \right| &\lesssim b^2  .\label{est:energyQb}
\end{align}
\end{itemize}
\end{lemma}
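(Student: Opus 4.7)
The plan is a direct computation: substitute $Q_b=Q+b\chi_bP$ into the defining expression of $\Psi_b$, use the equations $-Q''+Q-Q^5=0$ and $(\mathcal L P)'=\Lambda Q$, and classify the remaining terms by location (transition zone of $\chi_b$ versus bulk) and by the power of $b$ they carry. Every term is polynomial in $b$ and in the derivatives of $\chi_b$ multiplying functions that are either bounded or belong to $\mathcal Y$, so each claim reduces to a pointwise case split.

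\emph{Step 1: Expansion.} Writing $P_b=\chi_b P$, from $Q''-Q=-Q^5$ one gets
\begin{equation*}
Q_b''-Q_b+Q_b^5 = b\bigl(\chi_b''P+2\chi_b'P'+\chi_b(P''-P)\bigr) + (Q_b^5-Q^5).
\end{equation*}
Substitute $P''-P = -\mathcal L P - 5Q^4 P$ and expand $Q_b^5-Q^5 = 5bQ^4P_b+10b^2Q^3P_b^2+\cdots+b^5P_b^5$. Taking one further derivative and adding $b\Lambda Q_b$, the identity $(\mathcal L P)'=\Lambda Q$ cancels the $O(b)$ contribution coming from $-b\chi_b(\mathcal L P)'$ against $b\chi_b\Lambda Q$. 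The remaining $-2b^2\partial_b Q_b$ in the definition of $\Psi_b$ is precisely the correction exploited in Step~4 below.

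\emph{Step 2: Three types of remainders.} After cancellation, $-\Psi_b$ decomposes into: (a) \emph{localization errors} produced whenever $\chi_b^{(k)}$ appears; these are supported on $-2|b|^{-\gamma}\le y\le -|b|^{-\gamma}$, where $|\chi_b^{(k)}|\lesssim|b|^{k\gamma}$ and $P$, $\mathcal L P$ are bounded, so the dominant one is $-b\chi_b'\mathcal L P$ of magnitude $|b|^{1+\gamma}$; (b) \emph{far-field errors} of the form $b(1-\chi_b)\Lambda Q$, which are super-exponentially small in $b$ because $\Lambda Q$ decays exponentially and $1-\chi_b$ vanishes on $y>-|b|^{-\gamma}$; (c) \emph{quadratic errors of size $b^2$} coming from the higher-order Taylor terms of $Q_b^5-Q^5$, from the $b^2\Lambda P_b$ piece of $b\Lambda Q_b$, and from $-2b^2\partial_bQ_b$; on the support of $\chi_b$ they reduce to bounded functions times $b^2$, plus pieces of the form $b^2e^{-|y|/2}$ away from the transition. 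The pointwise estimates \eqref{est:Psib}--\eqref{est:derivPsib} follow directly, each derivative in $y$ costing an extra factor $|b|^{\gamma}$ whenever it hits $\chi_b$.

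\emph{Step 3: Weighted $L^2$ and dual estimates.} For \eqref{est:PsibL2} and \eqref{est:PsibY} the key observation is that the large localization contribution of magnitude $|b|^{1+\gamma}$ is concentrated on $y\sim-|b|^{-\gamma}$, where the weight $e^{y/B}$ (respectively any function in $\mathcal Y$) is super-exponentially small in $b$. Hence only the $b^2$ bulk errors from (c) contribute, and the bounds follow from the pointwise estimates of Step~2.

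\emph{Step 4: Projection on $Q$.} This is the delicate point. Integrate by parts:
\begin{equation*}
-(\Psi_b,Q) = -(Q_b''-Q_b+Q_b^5,Q') + b(\Lambda Q_b,Q) - 2b^2(\partial_b Q_b,Q).
\end{equation*}
At order $b$: $(\Lambda Q,Q)=0$, and the $O(b)$ part of $Q_b''-Q_b+Q_b^5$ is supported either on the transition zone of $\chi_b$, where $Q'$ is exponentially small in $|b|^{-\gamma}$, or is of the form $-b\chi_b\mathcal L P$ integrated against $Q'$, which vanishes by self-adjointness of $\mathcal L$ and $\mathcal L Q'=0$. At order $b^2$ three contributions remain, from $10b^2(Q^3P_b^2,Q')$, from $b^2(\Lambda P_b,Q)$, and from $-2b^2(\partial_bQ_b,Q)$. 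Using $\partial_bQ_b = \chi_bP+\gamma(|b|^\gamma y)\chi'(|b|^\gamma y)P$ and $(P,Q)=\frac1{16}(\int Q)^2$, the coefficient $-2b^2$ in front of $\partial_bQ_b$ is precisely the one that kills the combined $b^2$ contribution, leaving the desired $O(|b|^3)$ bound \eqref{est:PsibQ}.

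\emph{Step 5: Mass and energy.} From $Q_b^2 = Q^2+2b\chi_bQP+b^2\chi_b^2P^2$, the cross term equals $2b\int QP$ up to a super-exponentially small tail from $1-\chi_b$, while $b^2\int\chi_b^2P^2 = O(|b|^{2-\gamma})$ because $P\to\frac12\int Q$ at $-\infty$ and $\chi_b$ lives on a zone of length $|b|^{-\gamma}$; this proves \eqref{est:massQb}. For the energy, use $E(Q)=0$ and Taylor expand $b\mapsto E(Q_b)$, so that $E(Q_b) = bE'(Q)[P_b]+O(b^2) = -b\int(Q''+Q^5)\chi_bP+O(b^2) = -b\int QP+O(b^2)$ since $Q''+Q^5=Q$, proving \eqref{est:energyQb}.

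The main obstacle is Step~4: the bounds in Steps~1--3 and~5 are essentially bookkeeping once the decomposition is set up, but the $|b|^3$ bound in \eqref{est:PsibQ} requires a nontrivial cancellation at order $b^2$ that dictates the coefficient of $\partial_b Q_b$ in the definition of $\Psi_b$.
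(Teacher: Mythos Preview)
Your sketch is correct and is essentially the argument of the paper's cited reference \cite{MaMeRa1}; the paper itself proves almost nothing here, deriving only \eqref{est:PsibY}--\eqref{est:PsibL2} from the pointwise bounds (your Step~3) and otherwise pointing to \cite{MaMeRa1}, while noting that the extra $-2b^2\partial_bQ_b$ term was inserted precisely to obtain \eqref{est:PsibQ}.

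The one point you assert but do not verify---the $b^2$ cancellation in Step~4---is worth making explicit, since it hides a subtlety. The three $b^2$ contributions combine to $b^2\bigl[10(Q^3P^2,Q')+(P,\Lambda Q)+2(P,Q)\bigr]$, and one must check that the bracket vanishes. Writing $(P,\Lambda Q)=\int P(\mathcal LP)'$ and integrating by parts, a nonzero boundary term $-[P\,\mathcal LP]_{y\to-\infty}=-\tfrac14(\int Q)^2$ appears because $P$ and $\mathcal LP$ both tend to $\tfrac12\int Q$ at $-\infty$; together with $(\mathcal LP',P)=(P,\Lambda Q)+20\int Q^3Q'P^2$ this gives $(P,\Lambda Q)=-\tfrac18(\int Q)^2-10(Q^3P^2,Q')$. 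Substituting and using $(P,Q)=\tfrac1{16}(\int Q)^2$ from \eqref{prop:P} then yields the cancellation, confirming that $-2b^2$ is indeed the correct coefficient.
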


\begin{proof} The proof of Lemma \ref{refined:profile} can be found in \cite{MaMeRa1}. Actually, the properties (i), (ii) and (iv) are proved in Lemma 2.4 of \cite{MaMeRa1}. The estimate \eqref{est:PsibY} follow directly from \eqref{est:Psib} and \eqref{est:derivPsib}. Now, we explain how to prove \eqref{est:PsibL2} in the case $k=0$. It follows from \eqref{def:L2B}, \eqref{est:Psib} and the fact that $B \ge 100$ that
\begin{equation*} 
\|\Psi_b\|_{L^2_B} \lesssim |b|^{1+\gamma} \left( \int_{-2|b|^{-\gamma}}^{-|b|^{-\gamma}} e^{\frac{y}B}\right)^{\frac12}+b^2\left(\int e^{\frac{y}B-\frac{|y|}2} \right)^{\frac12}+b^2\left( \int_{-2|b|^{-\gamma}}^0 e^{\frac{y}B}\right)^{\frac12} \lesssim b^2  ,
\end{equation*}
for $|b|$ small enough. The proof of \eqref{est:PsibL2} in the case $k\ge 1$ follows in a similar way by using \eqref{est:derivPsib} instead of \eqref{est:Psib}.

Note  that we have added the term $-2b^2\frac{\partial Q_b}{\partial b}$ to the definition of $\Psi_b$ compared to the definition in \cite{MaMeRa1} in order to get a better estimate for the projection of $\Psi_b$ on $Q$. The property (iii) follows from the computation in the first formula of page 80 in \cite{MaMeRa1} together with \eqref{prop:P}.
\end{proof}

\begin{remark} For future reference, we also observe that 
\begin{equation} \label{eq:dQb:db}
\frac{\partial Q_b}{\partial b}=\chi_b P+\gamma |b|^{\gamma}y\chi'(|b|^{\gamma}y)=P_b+\gamma y\chi_b'P  .
 \end{equation}
\end{remark}

\subsection{Definition of the approximate solution} \label{sec:AS}
Let any $\frac 12 < \theta <1$.
Following \eqref{def:c0}, set
\begin{equation} \label{def:c0bis}
c_0=\frac12 (1-\theta)(2\theta-1)^{-(1-\theta)} {\int Q}>0.
\end{equation}
For such $c_0$, for $x_0$ large enough and for 
\begin{equation}\label{def:t0}
t_0=(2x_0)^{1/\beta},
\end{equation}
(our intention is to use Proposition~\ref{decay:q_0} with the value $\kappa_0=\frac 12$),
we consider $f(t,x)$ the solution of \eqref{eq:q_0}.
Let
\begin{equation*}v(t,x)=U(t,x)-f(t,x).\end{equation*}
Note that $U$ is solution \eqref{gkdv} if and only if $v$ satisfies
\begin{equation*}
\partial_t v +\partial_x(\partial_x^2v+(v+f)^5-f^5)=0 .
\end{equation*}
We renormalize the flow using $\mathcal C^1$ functions $\lambda(t)$ and $\sigma(t)$, defining
$V$, $F_0$ and $F$ as follows
\begin{equation*}
v(t,x)=\lambda^{-\frac 12}(t)  V(t,y), \quad y=\frac{x-\sigma(t)}{\lambda(t)},
\end{equation*}
\begin{equation*}
f_0(x)=\lambda^{-\frac 12}(t) F_0\left(t,y\right),\quad f(t,x)=\lambda^{-\frac 12}(t) F\left(t,y\right).
\end{equation*}
We introduce the rescaled time variable
\begin{equation}\label{eq:sbis}
s(t)= s_0+ \int_{t_0}^t \frac {d\tau}{\lambda^3(\tau)}\quad \hbox{with} \quad t_0= \frac{2\theta-1}{5-4\theta} s_0^{\frac{5-4\theta}{2\theta-1}} 
=\frac{3\beta-1}{2}s_0^{\frac2{3\beta-1}}  .
\end{equation}
Note that from \eqref{def:t0} relating $t_0$ and $x_0$, 
$s_0$ can be taken arbitrarily large provided $x_0$ is large.

From now, any time-dependent function can be seen as a function of $t \in \mathcal{I}$ or $s \in \mathcal{J}$, where $\mathcal{I}$ is an interval of the form $[s_0,s^{\star}]$
and $\mathcal{J}=s(\mathcal{I})$.
In view of the resolution of the ODE system in \eqref{resol:la:si:b}, we will work under the following assumptions on the parameters $(\lambda,\sigma,b)$:
\begin{equation}\label{BS:param}
\left\{
\begin{aligned}
&\left| \lambda (s) - s^{\frac{2(1-\theta)}{2\theta-1}}\right|\leq s^{\frac{2(1-\theta)}{2\theta-1}-\rho}  ;\\
&\left| \sigma(s)- (2\theta-1) s^{\frac1{2\theta-1}}\right|\leq s^{\frac1{2\theta-1}-\rho}  ;\\
&\left| b(s)+\frac{2(1-\theta)}{2\theta-1} s^{-1}\right|\leq s^{-1-\rho} ;
\end{aligned}
\right.
\end{equation}
where $\rho$ is a positive number satisfying 
\begin{equation} \label{def:rho}
0<\rho<\min\left\{\frac1{12},\frac{1-\theta}{3(2\theta-1)} \right\}  .
\end{equation} 

We set
\begin{equation} \label{def:EV}
\mathcal E(V)=
V_s+\partial_y(\partial_y^2 V-V+(V+F)^5-F^5) 
-\frac{\lambda_s}{\lambda}\Lambda V - \left(\frac{\sigma_s}{\lambda}-1\right) \partial_y V .
\end{equation}
Note that $u$ is solution of \eqref{gkdv} if and only if $\mathcal E(V)=0$.
We look for an approximate solution $W$ of the form
\begin{equation*}
W(s,y) = Q_{b(s)}(y) + r(s) R(y) ,
\end{equation*}
where $b(s)$ is a $\mathcal C^1$ function to be determined and where we set
\begin{equation*}
r(s)=F(s,0)=\lambda^{\frac 12}(s)f(s,\sigma(s)).
\end{equation*}
We also define (see Lemma~\ref{le:3.6})
\begin{equation} \label{def:mM}
 \vec m =\begin{pmatrix} \frac{\lambda_s}{\lambda}+b \\ \frac{\sigma_s}{\lambda}-1 \end{pmatrix} \quad \text{and} \quad \vec \MM =\begin{pmatrix} \Lambda \\ \partial_y \end{pmatrix}  .
\end{equation}

First, we gather some useful estimates for $r$ and $F$. 
\begin{lemma} \label{lemma:est:rF}
Under the assumptions \eqref{BS:param} and for $s$ large enough, it holds
\begin{equation}\label{e:r}
\left|r-c_0\lambda^{\frac 12} \sigma^{-\theta}\right|
\lesssim \lambda^{\frac 12}\sigma^{-(5\theta-2)}\lesssim s^{-3},\quad
|r|\lesssim s^{-1}  ,
\end{equation}
\begin{equation}\label{e:dr}
\left| r_s - c_0\lambda^{\frac 12}\sigma^{-\theta} \left( \frac 12 \frac{\lambda_s}{\lambda} 
-\theta\frac{\sigma_s}{\sigma} \right)\right|\lesssim |\vec m|s^{-3} + s^{-4},\quad
|r_s|\lesssim |\vec m| s^{-1}+s^{-2}  ,
\end{equation} 
\begin{equation} \label{e:F}
\sup_{y \in \mathbb R} \left\{e^{-\frac{|y|}{10}}\big|F(s,y)\big|\right\} \lesssim s^{-1}, \quad \sup_{y \in \mathbb R} \left\{e^{-\frac{|y|}{10}}\big|\partial_yF(s,y)\big|\right\} \lesssim s^{-2}  ,
\end{equation}
\begin{equation} \label{e:FLinfty}
\|F\|_{L^{\infty}(y>-2|b|^{\gamma})} \lesssim s^ {-1}, \quad \|\partial_yF\|_{L^{\infty}(y>-2|b|^{\gamma})} \lesssim s^{-2}  ,
\end{equation}
\begin{equation} \label{e:r-F}
\sup_{y \in \mathbb R} \left\{e^{-\frac{3|y|}4}\big|r(s)-F(s,y)\big|\right\} \lesssim s^{-2}  
\end{equation}
and 
\begin{equation} \label{e2:r-F}
\left| \left(\partial_y \big( 5Q^4(r-F)\big),Q\right)-c_0\theta \left(\int Q\right) \lambda^{\frac32}\sigma^{-\theta-1} \right| \lesssim s^{-3}  .
\end{equation}
\end{lemma}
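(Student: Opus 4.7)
\smallskip

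\noindent\textbf{Plan.} The proof is essentially a bookkeeping exercise translating the pointwise decay bounds of Proposition~\ref{decay:q_0} through the renormalization $F(s,y)=\lambda^{1/2} f(t,\lambda y+\sigma)$ into statements about $r=F(s,0)$ and its derivatives. Throughout I will use the bootstrap bounds \eqref{BS:param}, which give $\lambda\sim s^{2(1-\theta)/(2\theta-1)}$, $\sigma\sim(2\theta-1)s^{1/(2\theta-1)}$, and crucially that $\sigma(s)\sim t(s)^\beta$ up to constants (this follows from $1/(2\theta-1)=2\beta/(3\beta-1)$ and the choice of $t_0$ in \eqref{eq:sbis}), so that the point $\sigma(s)$ lies in the region $x>\kappa_0 t^\beta$ where Proposition~\ref{decay:q_0} applies. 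The key arithmetic identity to have in mind is
\begin{equation*}
\lambda^{\frac12}\sigma^{-(5\theta-2)}\sim s^{-3},\qquad \lambda^{\frac32}\sigma^{-(\theta+1)}\sim s^{-2},\qquad \lambda^{\frac52}\sigma^{-(\theta+2)}\sim s^{-3},
\end{equation*}
coming from $(2\theta-1)\cdot 3=6\theta-3$, etc.

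\smallskip

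\noindent\textbf{Estimates \eqref{e:r} and \eqref{e:F}--\eqref{e:FLinfty}.} Writing $f=f_0+q$ and applying Proposition~\ref{decay:q_0} with $k=0$ at $x=\sigma(s)$ gives
\begin{equation*}
r=\lambda^{\frac12}f_0(\sigma)+\lambda^{\frac12}q(t,\sigma)=c_0\lambda^{\frac12}\sigma^{-\theta}+O\!\left(\lambda^{\frac12}\sigma^{-(5\theta-2)}\right),
\end{equation*}
which yields \eqref{e:r} by the first identity above. For \eqref{e:F}--\eqref{e:FLinfty} I would split according to whether $\lambda y+\sigma$ lies in the tail region $x>\kappa_0 t^\beta$ (where Proposition~\ref{decay:q_0} yields $|f|\lesssim|x|^{-\theta}\lesssim\sigma^{-\theta}$, and similarly $|\partial_xf|\lesssim|x|^{-\theta-1}$) or outside (where one uses $\|f\|_{H^s}\lesssim\delta(x_0^{-1})$ from \eqref{bound:q_0} and the Sobolev embedding, which creates no difficulty in the tail region $y>-2|b|^{-\gamma}$ but needs the exponential weight $e^{-|y|/10}$ to absorb the growth in $y$ on the left).

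\smallskip

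\noindent\textbf{Estimates \eqref{e:dr} and \eqref{e:r-F}.} For \eqref{e:dr}, I differentiate $r(s)=\lambda^{1/2}(s)f(t(s),\sigma(s))$ using $t_s=\lambda^3$:
\begin{equation*}
r_s=\frac{\lambda_s}{2\lambda}r+\lambda^{\frac12}\bigl(\lambda^3\partial_t f+\sigma_s\,\partial_xf\bigr)(t,\sigma).
\end{equation*}
The term $\lambda^{7/2}\partial_tf$ is absorbed in $s^{-4}$ by \eqref{decay:q_0.2}, and $\partial_xf(t,\sigma)=-c_0\theta\sigma^{-\theta-1}+O(\sigma^{-(5\theta-1)})$ by \eqref{decay:q_0.1}. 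Substituting, using $\sigma_s/\lambda=1+O(|\vec m|)$ and $\lambda_s/\lambda=-b+O(|\vec m|)$, and regrouping the leading contributions into $c_0\lambda^{1/2}\sigma^{-\theta}(\tfrac12\lambda_s/\lambda-\theta\,\sigma_s/\sigma)$ gives \eqref{e:dr}. For \eqref{e:r-F}, the fundamental theorem of calculus gives
\begin{equation*}
r-F(s,y)=-\lambda^{\frac32}\int_0^y\partial_xf(t,\sigma+\lambda z)\,dz,
\end{equation*}
and using $|\partial_xf(t,x)|\lesssim|x|^{-\theta-1}$ on the tail side combined with the bound $|\partial_xf|\lesssim 1$ via \eqref{bound:df:dx} on the exponentially small region completes the estimate after noting $|y|e^{-3|y|/4}\lesssim 1$.

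\smallskip

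\noindent\textbf{Estimate \eqref{e2:r-F}.} Since $\partial_y r=0$, I compute $\partial_y(5Q^4(r-F))=20Q^3Q'(r-F)-5Q^4\partial_yF$ and integrate the first piece by parts against $Q$ using $20Q^4Q'=4(Q^5)'$, which collapses the identity to
\begin{equation*}
\bigl(\partial_y(5Q^4(r-F)),Q\bigr)=-\int Q^5\,\partial_yF.
\end{equation*}
Then $\partial_yF(s,y)=\lambda^{3/2}\partial_xf(t,\lambda y+\sigma)$; Taylor-expanding $\partial_xf$ around $x=\sigma$, with remainder controlled by \eqref{decay:q_0.1} with $k=2$, and using $\int Q^5=\int Q$ (from $Q^5=Q-Q''$) together with the identities in the first paragraph, isolates the leading order $c_0\theta(\int Q)\lambda^{3/2}\sigma^{-\theta-1}$ with error $O(\lambda^{5/2}\sigma^{-\theta-2})+O(\lambda^{3/2}\sigma^{-(5\theta-1)})\lesssim s^{-3}$. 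The main technical obstacle is keeping the various powers of $s$ straight and being careful with the exponentially small left-region contributions; this is purely bookkeeping and no new idea beyond Proposition~\ref{decay:q_0} is required.
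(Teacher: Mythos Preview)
The proposal is correct and follows essentially the same approach as the paper: both proofs reduce every estimate to the pointwise bounds of Proposition~\ref{decay:q_0} on $f$, $\partial_x f$, $\partial_t f$ together with the bootstrap asymptotics \eqref{BS:param}, split the $y$-domain into the tail region $\lambda y+\sigma\gtrsim\sigma$ versus the far left (handled by the exponential weight and the global $H^s$ bound \eqref{bound:q_0}), and obtain \eqref{e2:r-F} via the same integration by parts yielding $-\int Q^5\,\partial_y F$ combined with $\int Q^5=\int Q$.
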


\begin{proof} 
Estimate \eqref{e:r} follows from \eqref{decay:q_0.1} and then \eqref{BS:param}.

Next, we compute $r_s$:
\begin{equation*}
r_s = \frac 12 \frac{\lambda_s}{\lambda} r +\lambda^{\frac 12} \sigma_s \partial_y f(s,\sigma)
+\lambda^{\frac 12}\partial_s f(s,\sigma).\end{equation*}
Note that by \eqref{e:r} and then \eqref{BS:param}
\begin{equation*}
\left| \frac 12 \frac{\lambda_s}{\lambda} r - \frac 12 c_0\frac{\lambda_s}{\lambda^{\frac 12}} \sigma^{-\theta}\right|
\lesssim \left|\frac{\lambda_s}{\lambda}\right| s^{-3}\lesssim |\vec m|s^{-3} + s^{-4}.
\end{equation*}
By \eqref{decay:q_0.1} for $k=1$, we have $|\partial_y f(s,\sigma)+c_0\theta\sigma^{-\theta-1}|\lesssim \sigma^{-5\theta+1}$
and so
\begin{equation*}
\left|\lambda^{\frac 12} \sigma_s \partial_y f(s,\sigma)+c_0\theta\lambda^{\frac 12}\sigma_s \sigma^{-\theta-1} \right|\lesssim 
\lambda^{\frac 12}|\sigma_s |\sigma^{-5\theta+1}
\lesssim (|\vec m|+1)\lambda^{\frac 32} \sigma^{-5\theta+1}
\lesssim (|\vec m|+1) s^{-4}.
\end{equation*}
Last, from \eqref{decay:q_0.2} and $ds = \lambda^{-3} dt$, we have
\begin{equation*}
|\lambda^{\frac 12}\partial_s f(s,\sigma)|
=\lambda^{\frac 72} |\partial_t f(s,\sigma)|
\lesssim \lambda^{\frac 72} \sigma^{-\theta-3}\lesssim s^{-4}.
\end{equation*}
We deduce the proof of \eqref{e:dr} gathering the above estimates.

We recall that $F(s,y)=\lambda^{\frac12}(s)f(s,\lambda(s)y+\sigma(s))$ and $\partial_yF(s,y)=\lambda^{\frac32}(s)\partial_yf(s,\lambda(s)y+\sigma(s))$. Thus, splitting the integration domain into the two cases $\lambda(s)y>-\frac14\sigma(s)\iff \lambda(s)y+\sigma(s)>\frac34 \sigma(s)$ and $\lambda(s)y<-\frac14\sigma(s) \implies y<-c s$, and then using \eqref{def:f_0}, \eqref{decay:q_0.1} (with $\kappa_0=\frac12$), we deduce that, for $s$ large enough,
\begin{align*} 
e^{-\frac{|y|}{10}}\big|F(s,y)\big| &\lesssim \lambda(s)^{\frac12}\left( e^{-\frac{|y|}{10}}\|f(s,\cdot)\|_{L^{\infty}(x>\frac34 \sigma(s))}+e^{-\frac{\sigma(s)}{40\lambda(s)}}\|f(s,\cdot)\|_{L^{\infty}}\right)  \\
 & \lesssim \lambda(s)^{\frac12} \left( \sigma(s)^{-\theta}+e^{-cs} \right),
\end{align*}
and respectively,
\begin{align*} 
e^{-\frac{|y|}{10}}\big|\partial_yF(s,y)\big| &\lesssim \lambda(s)^{\frac32}\left( e^{-\frac{|y|}{10}}\|\partial_yf(s,\cdot)\|_{L^{\infty}(x>\frac34 \sigma(s))}+e^{-\frac{\sigma(s)}{40\lambda(s)}}\|\partial_yf(s,\cdot)\|_{L^{\infty}}\right)\nonumber \\
 &   \lesssim \lambda(s)^{\frac32} \left( \sigma(s)^{-\theta-1}+e^{-cs} \right),
\end{align*}
which, together with \eqref{BS:param}, concludes the proof of \eqref{e:F}. Note that in the case where $y>-2|b|^{-\gamma}$, we get from \eqref{BS:param} and the choice $\gamma<1$ that $\lambda(s)y+\sigma(s)>\frac34 \sigma(s)$, so that \eqref{e:FLinfty} follows from \eqref{def:f_0} and \eqref{decay:q_0.1}.

From the definition of $F$ and $r$, we have
\begin{equation*}r(s)-F(s,y)=F(s,0)-F(s,y)=\lambda^{\frac12}(s)\big(f(s,\sigma(s))-f(s,\lambda(s)y+\sigma(s)\big)  .\end{equation*}
Thus, it follows applying the mean value theorem, splitting into the two cases $\lambda(s)y>-\frac14\sigma(s)$ and $\lambda(s)y<-\frac14\sigma(s)$ as above, and then using \eqref{def:f_0} and \eqref{decay:q_0.1} that, for $s$ large enough,
\begin{align*} 
e^{-\frac{3|y|}4}\big|r(s)-F(s,y)\big| &\lesssim e^{-\frac{|y|}2} |y| \lambda(s)^{\frac32}\left( e^{-\frac{|y|}4}\|\partial_yf(s,\cdot)\|_{L^{\infty}(x>\frac34 \sigma(s))}+e^{-\frac{\sigma(s)}{16\lambda(s)}}\|\partial_yf(s,\cdot)\|_{L^{\infty}}\right) \\
& \lesssim e^{-\frac{|y|}4} \lambda(s)^{\frac32} \left( \sigma(s)^{-\theta-1}+e^{-cs} \right),
\end{align*}
which implies \eqref{e:r-F} by using \eqref{BS:param}.

Finally, by using the identities
\begin{equation*} \left(\partial_y\big( 5Q^4(r-F)\big),Q\right)=-5\int Q^4Q'(r-F)=-\int Q^5 \partial_yF(s,\cdot)=-\lambda^{\frac32}(s)\int Q^5\partial_yf(s,\lambda\cdot+\sigma) \end{equation*}
and $ \int Q^5=\int Q$, we deduce arguing as in \eqref{e:r-F} that
\begin{align*} 
&\left| \left(\partial_y \big( 5Q^4(r-F)\big),Q\right)-c_0\theta \left(\int Q\right) \lambda^{\frac32}\sigma^{-\theta-1} \right| \\ &\lesssim 
\lambda^{\frac32} \int Q^5 \left( \left| f_0'(\lambda y+\sigma)-f_0'(\sigma) \right|+\left| f_0'(\lambda y+\sigma)-\partial_y f(s,\lambda y+\sigma) \right|\right)
 \lesssim \lambda^{\frac52}\left( \sigma(s)^{-\theta-2}+e^{-cs} \right). 
\end{align*}
which, together with \eqref{BS:param}, concludes the proof of \eqref{e2:r-F}.
\end{proof}

In the next lemma, we derive an estimate for the mass and the energy of $W+F$.
\begin{lemma}[Mass and energy of $W+F$]
Under the assumptions \eqref{BS:param}, it holds for $s$ large enough
\begin{equation} \label{mass:W}
\left| \int (W+F)^2-\left(\int Q^2+2b\int PQ+\frac12r\int Q \right) \right| \lesssim s^{-(2-\gamma)}+x_0^{-(2\theta-1)}
\end{equation}
and
\begin{equation} \label{ener:W}
\left|\lambda^{-2}E(W+F)+c_1\left( \frac{b}{\lambda^2}+\frac{4c_0}{\int Q}\lambda^{-\frac32}\sigma^{-\theta}\right) \right| \lesssim \lambda^{-2}s^{-2}+x_0^{-(2\theta+1)}  ,
\end{equation}
where $c_1=\frac1{16}\left(\int Q \right)^2$.
\end{lemma}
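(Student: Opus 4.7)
The plan is to expand $\int(W+F)^2$ and $E(W+F)$ around the profile $Q$, using the known identities for $Q_b$ from Lemma~\ref{refined:profile} and for $R$ from \eqref{def:R}, together with the pointwise approximation $F(s,y)\approx r(s)$ on bounded $y$ provided by \eqref{e:r-F}.

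\textbf{Mass.} Expanding $\int(W+F)^2=\int W^2+2\int WF+\int F^2$ with $W=Q_b+rR$, I write
\[
\int W^2=\int Q_b^2+2r(Q_b,R)+r^2\int R^2.
\]
Estimate \eqref{est:massQb} gives $\int Q_b^2=\int Q^2+2b\int PQ+O(|b|^{2-\gamma})$, and \eqref{def:R} together with the smallness of $b$ gives $2r(Q_b,R)=-\tfrac{3}{2}r\int Q+O(|br|)$. For $\int WF$, I decompose it as $\int QF+b\int P_bF+r\int RF$. The leading piece equals $r\int Q+\int Q(F-r)$, where the error is $O(s^{-2})$ by \eqref{e:r-F} against the exponential decay of $Q$. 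For $b\int P_bF$, I use $|P_b|\lesssim 1$, $\mathrm{supp}(P_b)\subset\{y>-2|b|^{-\gamma}\}$ (of measure $O(|b|^{-\gamma})$), the bound $\|F\|_{L^\infty(y>-2|b|^{-\gamma})}\lesssim s^{-1}$ from \eqref{e:FLinfty}, and the exponential decay of $P$ on the right, obtaining $O(|b|^{1-\gamma}s^{-1})=O(s^{-(2-\gamma)})$. The term $r\int RF$ is $O(s^{-2})$ by exponential decay of $R$ and \eqref{e:r-F}. Finally $\int F^2=\int f^2\lesssim x_0^{-(2\theta-1)}$ by the scale-invariance of the $L^2$ norm, \eqref{est:f_0} and Proposition~\ref{decay:q_0}. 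Summing yields $-\tfrac{3}{2}r\int Q+2r\int Q=\tfrac{1}{2}r\int Q$, proving \eqref{mass:W}.

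\textbf{Energy.} Writing
\[
E(W+F)=E(W)+E(F)+\int W'F'-\int W^5F-\tfrac{5}{2}\int W^4F^2-\tfrac{10}{3}\int W^3F^3-\tfrac{5}{2}\int W^2F^4-\int WF^5,
\]
I first compute $E(W)-E(Q_b)=r\bigl[(Q_b',R')-(Q_b^5,R)\bigr]+O(r^2)$. The identities $\mathcal{L}R=5Q^4$, $(Q,R)=-\tfrac{3}{4}\int Q$, and $(\mathcal{L}Q,R)=(Q,\mathcal{L}R)$ with $\mathcal{L}Q=-4Q^5$ combine to give $(Q^5,R)=-\tfrac{5}{4}\int Q$, and then integration by parts yields $(Q',R')-(Q^5,R)=\tfrac{3}{4}\int Q$. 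Combined with $E(Q_b)=-c_1b+O(b^2)$ from \eqref{est:energyQb} and $(P,Q)=c_1$ from \eqref{prop:P}, this gives $E(W)=-c_1b+\tfrac{3}{4}r\int Q+O(b^2+br+r^2)$. The term $E(F)=\lambda^2E(f)$ satisfies $|E(F)|\lesssim\lambda^2x_0^{-(2\theta+1)}$ using \eqref{bound:df:dx} and Gagliardo--Nirenberg on $\int f^6$. For the leading cross terms, integration by parts and $Q''=Q-Q^5$ yield $\int Q'F'=-\int Q''(F-r)=O(s^{-2})$ and $\int Q^5F=r\int Q+O(s^{-2})$ via \eqref{e:r-F}, so $\int W'F'-\int W^5F=-r\int Q+O(s^{-2})$. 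The higher-power cross terms $\int W^{6-k}F^k$ with $k\geq 2$ are each $O(r^2)=O(s^{-2})$ by the exponential localization of $W$ and $|F|\lesssim r$ on that support. Summing gives $E(W+F)=-c_1b-\tfrac{1}{4}r\int Q+O(s^{-2})+E(F)$; dividing by $\lambda^2$ and using \eqref{e:r} to replace $r$ by $c_0\lambda^{1/2}\sigma^{-\theta}$ (the correction $O(\lambda^{1/2}\sigma^{-(5\theta-2)})$ is absorbed by $\lambda^{-2}s^{-2}$ since $\theta>\tfrac{1}{2}$), together with the arithmetic identity $\tfrac{1}{4}c_0\int Q=c_1\cdot\tfrac{4c_0}{\int Q}$ that follows from $c_1=\tfrac{1}{16}(\int Q)^2$, produces \eqref{ener:W}.

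\textbf{Main obstacle.} The subtle point is the bookkeeping of cross terms between $F$ and the non-localized component $bP_b$ of $W$: since $P_b$ is bounded but supported on a set of diameter $\sim|b|^{-\gamma}=s^\gamma$, the estimate $|b\int P_bF|\lesssim s^{-(2-\gamma)}$ is of borderline size and relies on the full strength of \eqref{e:FLinfty}; this is precisely the reason the error $s^{-(2-\gamma)}$ appears in the mass estimate and cannot be improved. For the energy, the delicate cancellation $\tfrac{3}{4}\int Q$ (from $E(W)-E(Q_b)$) combined with the $-\int Q$ from the cross terms yields exactly $-\tfrac{1}{4}\int Q$, which matches the right-hand side constant $\tfrac{4c_0}{\int Q}$ only because $c_0$ is chosen as in \eqref{def:c0bis}; this is the formal basis for fixing the constant $c_0$ in the definition of the slowly decaying tail.
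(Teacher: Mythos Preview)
Your proof is correct and follows essentially the same approach as the paper (the paper groups $rR$ with $F$ and expands $E(W+F)$ around $Q_b$ directly, reaching the same $-\tfrac14 r\int Q$ via $-\int(Q''+Q^5)(rR+F)=-\int Q(rR+F)$, but the algebra is equivalent to yours). Two minor caveats: $W=Q+bP_b+rR$ is \emph{not} exponentially localized because of the $bP_b$ tail, so for the higher cross terms $\int W^{6-k}F^k$ with $k\ge 2$ you should treat the $P_b$ contribution separately using \eqref{e:FLinfty} and the support size $|b|^{-\gamma}$ (it is still negligible); and the identity $\tfrac14\int Q=c_1\cdot\tfrac{4}{\int Q}$ is simply the definition $c_1=\tfrac1{16}(\int Q)^2$ and holds for any $c_0$, so your closing remark attributing it to \eqref{def:c0bis} is misplaced.
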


\begin{proof} 
Observe by using the decomposition in \eqref{def:Qb} that
\begin{equation*} 
\int (W+F)^2=\int Q_b^2+2r\int Q_bR+r^2\int R^2+2\int Q_bF+2r\int RF+\int F^2  , 
\end{equation*}
From the definition of $F$ and the $L^2$ conservation for \eqref{eq:q_0}, we have that 
\begin{equation} \label{mass:F}
\int F^2 =\int f^2=\int f_0^2=c_0x_0^{-(2\theta-1)}  .
\end{equation}
Moreover, we get from \eqref{e:FLinfty} that 
\begin{equation*} 
\left| \int P_b F \right| \lesssim \int_{-2|b|^{\gamma}<y<0} |F(y)|+\int_{y>0} |F(y)|e^{-\frac34{|y|}} \lesssim s^{-1+\gamma}  .
\end{equation*}
Hence, it follows from \eqref{def:Qb} and \eqref{e:r-F} that 
\begin{equation*} 
2\int Q_bF=2r\int Q+2\int Q(F-r)+2b\int P_bF=2\int Qr +\mathcal{O}(s^{-2+\gamma})  ,
\end{equation*}
which together with \eqref{def:R}, \eqref{est:massQb}, \eqref{BS:param} and \eqref{e:r} imply \eqref{mass:W}.

Now, we compute the energy of $W$: 
\begin{align*}
E(W+F)&=E(Q_b)+E(F)+\int Q_b'\partial_y(rR+F)+r\int R'\partial_yF+\frac12 r^2 \int (R')^2 \\ 
&\quad-\int Q_b^5(rR+F) -\frac16 \int \left( (Q_b+rR+F)^6-Q_b^6-F^6-6Q_b^5(rR+F) \right) .
\end{align*}
Moreover, it follows from the definition of $F$, the conservation of the energy for \eqref{eq:q_0} and \eqref{est:f_0} that 
\begin{equation*}
E(F)=\lambda^2 E(f)=\lambda^2 E(f_0) \sim \lambda^2x_0^{-(2\theta+1)}  .
\end{equation*}
Thus, we deduce then from the definition of $Q_b$ in \eqref{def:Qb}, \eqref{BS:param}, \eqref{e:r}, \eqref{e:F}, \eqref{e:FLinfty}, \eqref{e:r-F} and then \eqref{est:energyQb}, \eqref{prop:P}, \eqref{eq:Q} and \eqref{def:R} that 
\begin{align*} 
E(W+F)&=E(Q_b)-\int (Q''+Q^5)(rR+F)+\mathcal{O}\left(s^{-2}+\lambda^2x_0^{-(2\theta-1)}\right)\\
&=-\frac{1}{16}\left(\int Q \right)^2b-\frac14 \left(\int Q\right) r+\mathcal{O}\left(s^{-2}+\lambda^2x_0^{-(2\theta-1)}\right).
\end{align*}
This last estimate combined with \eqref{e:r} implies \eqref{ener:W}.
\end{proof}

We compute $\mathcal E(W)$ in the next lemma.
\begin{lemma} \label{le:3.6}
Under the assumptions \eqref{BS:param}, it holds 
\begin{equation} \label{def:EW}
\mathcal E(W) = 
- \vec m\cdot \vec \MM Q + \cR,\quad 
\vec m =\begin{pmatrix} \frac{\lambda_s}{\lambda}+b \\ \frac{\sigma_s}{\lambda}-1 \end{pmatrix},\quad
\vec \MM =\begin{pmatrix} \Lambda \\ \partial_y \end{pmatrix},
\end{equation}
where, for $s$ large enough, 
\begin{align} 
&\left|\big(\mathcal{R},\phi\big)\right| \lesssim |\vec{m}|s^{-1}+|b_s| +s^{-2}, \quad \forall \, \phi \in \mathcal{Y}  , \label{est:R:Y} \\
&\|\cR\|_{L^2_B}+\|\partial_y\cR\|_{L^2_B} \lesssim |\vec{m}|s^{-1}+|b_s|+s^{-2}  , \label{est:R:L2B} 
\end{align}
where the norm $L^2_B$ is defined in \eqref{def:L2B} ,
and
\begin{equation} \label{est:RQ}
\left| (\cR,Q)- c_1 \left[ b_s+2b^2-\frac{4c_0}{\int Q}\lambda^{\frac 12}\sigma^{-\theta} \left(\frac 32 \frac {\lambda_s}{\lambda}+\theta \frac{\sigma_s}{\sigma} \right)\right]
\right|\lesssim |\vec m| s^{-1}+s^{-3}  ,
\end{equation}
with $c_1=\frac1{16}\left(\int Q \right)^2$.
\end{lemma}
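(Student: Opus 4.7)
The plan is to expand $\mathcal E(W)$ directly from \eqref{def:EV} with $W = Q_b + rR$, identify the leading $-\vec m\cdot\vec M Q$ piece, and collect everything else into $\mathcal R$. Using $W_s = b_s\partial_b Q_b + r_s R$ (with $\partial_b Q_b$ given by \eqref{eq:dQb:db}), the profile equation in Lemma~\ref{refined:profile}(ii), and the identity $R''-R = -5Q^4(R+1)$ coming from $\mathcal L R = 5Q^4$, the definition gives
\[
\mathcal E(W) = -\Big(\frac{\lambda_s}{\lambda}+b\Big)\Lambda Q_b - \Big(\frac{\sigma_s}{\lambda}-1\Big)Q_b' + r(R''-R)' + \partial_y\big((W+F)^5-Q_b^5-F^5\big) + \widetilde{\mathcal R},
\]
where $\widetilde{\mathcal R}$ collects $b_s\partial_b Q_b$, $r_sR$, $2b^2\partial_b Q_b$, $-\Psi_b$, and the symmetry-action terms $-\frac{\lambda_s}{\lambda}r\Lambda R$, $-(\frac{\sigma_s}{\lambda}-1)rR'$. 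Writing $Q_b = Q + bP_b$ extracts $-\vec m\cdot\vec M Q$ exactly and moves the admissible correction $-\vec m\cdot\vec M(bP_b)$ into the remainder.

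The key algebraic step is the cancellation of the linear-in-$(rR,F)$ part of the nonlinearity with $r(R''-R)'$. Expanding $(W+F)^5-Q_b^5-F^5 = 5Q^4(rR+F) + 5(Q_b^4-Q^4)(rR+F) + N(rR,F)$, where $N$ gathers the terms of order $\geq 2$ in $(rR,F)$, one obtains
\[
r(R''-R)' + \partial_y\big(5Q^4(rR+F)\big) = \partial_y\big(5Q^4(F-r)\big),
\]
and the right-hand side is small by \eqref{e:r-F}. Thus $\mathcal R$ consists of $-\Psi_b$, $b_s\partial_b Q_b$, $r_sR$, $2b^2\partial_b Q_b$, $\partial_y(5Q^4(F-r))$, $\partial_y(5(Q_b^4-Q^4)(rR+F))$, $\partial_y N(rR,F)$, the parameter-action terms on $rR$, and $-\vec m\cdot\vec M(bP_b)$.

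For \eqref{est:R:Y} and \eqref{est:R:L2B}, I would bound each term: $\Psi_b$ by \eqref{est:PsibY}--\eqref{est:PsibL2}; $b_s\partial_b Q_b$ by $O(|b_s|)$; $r_sR$ by $O(|\vec m|s^{-1}+s^{-2})$ via \eqref{e:dr} since $R\in\mathcal Y$; $\partial_y(5Q^4(F-r))$ by $O(s^{-2})$ using \eqref{e:r-F} and the exponential decay of $Q^4$; the $(Q_b^4-Q^4)$-correction by $|b|s^{-1}\lesssim s^{-2}$; the quadratic nonlinearity $N(rR,F)$ by $O(s^{-2})$; and the parameter-action terms by $|r|(|b|+|\vec m|) = O(|\vec m|s^{-1}+s^{-2})$. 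The $L^2_B$ estimates follow identically because any non-decaying $F$ on the right always appears multiplied by a factor ($Q^4$, $Q_b$, or $R$) with exponential right-decay. Bounds for $\partial_y\mathcal R$ are obtained by differentiating each factor and invoking \eqref{est:derivPsib}, \eqref{e:dr}, \eqref{e:F}.

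For the projection \eqref{est:RQ}, I would combine the exact identities $(P,Q)=c_1$ from \eqref{prop:P}, $(R,Q) = -\frac{3}{4}\int Q$ (from $(R,\mathcal L\Lambda Q) = (\mathcal L R,\Lambda Q)$ together with $\int Q^5=\int Q$), $(\Psi_b,Q) = O(|b|^3)$ from \eqref{est:PsibQ}, $(\partial_b Q_b - P, Q) = O(|b|^N)$ (localization of $\chi_b$), and the crucial identity $(\partial_y(5Q^4(F-r)),Q) = -c_0\theta(\int Q)\lambda^{3/2}\sigma^{-\theta-1}+O(s^{-3})$ from \eqref{e2:r-F}. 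Parity arguments ($Q,P,R$ even, $Q'$ odd) refine the nonlinear projections to $O(|\vec m|s^{-1}+s^{-3})$, since only the $y$-odd part of $F-r$ contributes and produces an additional factor $\lambda\sigma^{-\theta-1}\sim s^{-2}$. Substituting \eqref{e:dr} for $r_s$ and using $\sigma_s/\lambda = 1+(\vec m)_2$ to replace $\lambda/\sigma$ by $\sigma_s/\sigma$ at leading order (error $|\vec m|s^{-1}$), the $\lambda_s/\lambda$ and $\sigma_s/\sigma$ contributions organize themselves into $\frac{c_0\int Q}{4}\big(\frac{3}{2}\frac{\lambda_s}{\lambda}+\theta\frac{\sigma_s}{\sigma}\big) = \frac{4c_0c_1}{\int Q}\big(\frac{3}{2}\frac{\lambda_s}{\lambda}+\theta\frac{\sigma_s}{\sigma}\big)$, yielding \eqref{est:RQ}. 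The hard part will be the precise bookkeeping at this last step, where the tail-induced term from \eqref{e2:r-F} conspires with $-\frac{3}{4}r_s\int Q$ via \eqref{e:dr} and the parity-selected nonlinear projections to produce exactly the balanced combination.
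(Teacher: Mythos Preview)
Your overall strategy matches the paper's: the decomposition of $\mathcal R$ and the bounds \eqref{est:R:Y}--\eqref{est:R:L2B} go through essentially as you describe. The genuine gap is in the projection \eqref{est:RQ}.

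First, the claim ``$P$ even'' is false: by Lemma~(v), $P$ has the asymptotics $\lim_{y\to-\infty}P=\tfrac12\int Q$ and $\lim_{y\to+\infty}P=0$, so $P$ is not even and your parity shortcut cannot be applied to any term containing $P$.

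Second, and more importantly, there are two contributions to $(\mathcal R,Q)$ of exact size $O(br)\sim s^{-2}$ that your outline does not control to the required $O(|\vec m|s^{-1}+s^{-3})$ precision. From the symmetry-action term,
\[
\bigl(-\tfrac{\lambda_s}{\lambda}\,r\,\Lambda R,\;Q\bigr)
= \tfrac{\lambda_s}{\lambda}\,r\,(R,\Lambda Q)
= -br\,(R,\Lambda Q) + O(|\vec m|s^{-1}),
\]
and $(R,\Lambda Q)\neq 0$ (both $R$ and $\Lambda Q$ are even, so parity gives nothing here). From the $(Q_b^4-Q^4)$-correction, the leading piece of $-\bigl(5(Q_b^4-Q^4)(rR+F),Q'\bigr)$ is $-20br\bigl(Q^3P(R+1),Q'\bigr)$, which is again $O(s^{-2})$ and does not vanish by parity since $P$ is not even. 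Neither term is individually $O(s^{-3})$; the paper handles them by invoking the exact algebraic identity
\[
-(R,\Lambda Q)-20\bigl(Q^3(R+1)P,\,Q'\bigr)=0
\]
(identity (2.52) in \cite{MaMeRa3}), which makes their sum vanish. Without this cancellation the bookkeeping you describe cannot close at the $s^{-3}$ level. The remaining refinements (for the quadratic part $\partial_y\mathcal R_2$) do use structure rather than mere parity: one integrates by parts and uses $5Q^4(R+1)=-R''+R$ to see that the dangerous $r^2$ term $\bigl(5Q^4(R+1),R'\bigr)$ is an exact derivative and hence vanishes.
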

\begin{proof}
We compute $\mathcal E(W)$ from the definition of $W$:
\begin{align*}
\mathcal E(W) &= b_s \frac{\partial Q_b}{\partial b} + r_s R 
-\frac{\lambda_s}{\lambda} \Lambda W -\left(\frac {\sigma_s}{\lambda}-1\right) \partial_y W\\
&\quad + ( Q_b'' -Q_b+Q_b^5)'
+r (R''-R)' + \partial_y((Q_b+rR+F)^5-Q_b^5-F^5).
\end{align*}
Using the definition of $Q_b$ and $\Psi_b$, the definitions of $\vec m$ and $\vec \MM$ and the equation of $R$, 
we rewrite the previous identity as follows
\begin{align}
\mathcal E(W) &= -\vec m \cdot \vec \MM Q-b\vec m \cdot \vec \MM P_b-r\vec m \cdot \vec \MM R
+ (b_s+2b^2) \frac{\partial Q_b}{\partial b} + r_s R +br\Lambda R-\Psi_b
\nonumber \\
&\quad + \partial_y\big((Q_b+rR+F)^5-Q_b^5-F^5-5 rQ^4 (R+1)\big) \nonumber \\ & =-\vec m \cdot \vec \MM Q + \cR, \label{decomp:EW}
\end{align}
where
\begin{align*}
 \cR&=\cR_1+\partial_y\cR_2+\cR_3-\Psi_b,
\\
\cR_1&= r_s R + br \Lambda R 
+5\partial_y\left[Q_b^4(rR+F)-Q^4 (rR+r)\right],
\\
\cR_2&=(Q_b+rR+F)^5-Q_b^5-F^5-5Q_b^4(rR+F),
\end{align*}
and 
\begin{equation*}
\cR_3=(b_s+2b^2) \frac {\partial Q_b}{\partial b}-b\, \vec m \cdot \vec \MM P_b-r\, \vec m \cdot \vec \MM R  .
\end{equation*}

\noindent \textit{Estimates for $\cR_1$.}
First we deduce from Lemma \ref{lemma:est:rF} that 
\begin{equation} \label{est:R1:L2}
\left| \left( \mathcal{R}_1,\phi\right) \right|+\|\cR_1\|_{L^2_B}+\|\partial_y\cR_1\|_{L^2_B} \lesssim |\vec{m}|s^{-1}+s^{-2}, \quad \forall \phi \in \mathcal{Y}  .
\end{equation}

Now, we estimate $(\cR_1,Q)$.
First, by \eqref{e:dr} and $(R,Q)=-\frac 34 \int Q$, 
\begin{equation*}
\left| (r_s R,Q) +\frac 34 c_0 \left(\int Q\right) \lambda^{\frac 12}\sigma^{-\theta} \left( \frac 12 \frac{\lambda_s}{\lambda} 
-\theta\frac{\sigma_s}{\sigma} \right)\right|
\lesssim |\vec m| s^{-3}+s^{-4}.
\end{equation*}
Second,
$(br\Lambda R,Q)=-br(R,\Lambda Q)=:I$.

Third, we write
\begin{align*}
5&\left(\partial_y\left[Q_b^4(rR+F)-Q^4 (rR+r)\right],Q\right)
\\&=-5 \left( Q^4(F-r),Q'\right)-5\left((Q_b^4-Q^4)(rR+F),Q'\right) \\
&=-5 \left( Q^4(F-r),Q'\right)-20rb\left(Q^3P(R+1),Q'\right)-20b\left(Q^3P(F-r),Q'\right) \\ & 
\quad -5\left((Q_b^4-Q^4-4bQ^3P)(rR+F),Q'\right) \\ & =: II_1+II_2+II_3+II_4  .
\end{align*}
Moreover, by using the identity (2.52) in \cite{MaMeRa3}
\begin{equation*}-\left(R,\Lambda Q \right)-20 \left( Q^3(R+1)P,Q'\right)=0,\end{equation*}
 we get that $I+II_2=0$. To deal with $II_1$, we deduce from \eqref{e2:r-F}, $|\sigma_s-\lambda| \le \lambda |\vec{m}|$ and \eqref{BS:param} that 
\begin{equation*}
\left| II_1+c_0\theta \left(\int Q\right) \lambda^{\frac12}\sigma^{-\theta-1}\sigma_s \right| \lesssim |\vec{m}|s^{-2}+s^{-3}  .
\end{equation*}
Next, it is clear from \eqref{BS:param} and \eqref{e:r-F} that $\big| II_3 \big| \lesssim s^{-3}$. 

Finally we also claim that $\big| II_4 \big| \lesssim s^{-3}$. Indeed, a direct computation gives 
\begin{equation*}
II_4=-5\left(\big(4Q^3bP(\chi_b-1)+6Q^2b^2P_b^2+4Qb^3P_b^3+b^4P_b^4\big)(rR+F),Q'\right)
\end{equation*}
which implies the claim by using the definition of $\chi_b$ in \eqref{def:Pb}, \eqref{BS:param}, \eqref{e:r} and \eqref{e:F}.

Therefore, we deduce gathering those estimates that 
\begin{equation} \label{est:R1}
\left| (\cR_1,Q) +\frac{c_0}4 \left(\int Q\right) \lambda^{\frac 12}\sigma^{-\theta} \left( \frac 32 \frac{\lambda_s}{\lambda} 
+\theta\frac{\sigma_s}{\sigma} \right)\right| \lesssim |\vec{m}|s^{-2}+s^{-3} .
\end{equation}

\smallskip

\noindent \textit{Estimates for $\partial_y\cR_2$.}
We claim that 
\begin{equation} \label{est:R2}
\big|\big( \partial_y\mathcal{R}_2, \phi\big)\big|+\big\| \partial_y\mathcal{R}_2\big\|_{L^2_B}+\big\| \partial_y^2\mathcal{R}_2\big\|_{L^2_B} \lesssim s^{-2}, \ \forall \, \phi \in \mathcal{Y}, \quad \big|\big( \partial_y\mathcal{R}_2, Q\big)\big| \lesssim s^{-3} .
\end{equation}
We first develop $\mathcal{R}_2$:
\begin{align*} 
\mathcal{R}_2&=10Q^3(rR+F)^2+10(Q_b^3-Q^3)(rR+F)^2+10Q_b^2(rR+F)^3\\
&\quad +5Q_b(rR+F)^4+ (rR+F)^5-F^5  .
\end{align*}
We deduce easily that $\big|\big( \partial_y\mathcal{R}_2, \phi\big)\big|+\big\| \partial_y\mathcal{R}_2\big\|_{L^2_B}+\big\| \partial_y^2\mathcal{R}_2\big\|_{L^2_B} \lesssim s^{-2}$, $\forall \, \phi \in \mathcal{Y}$, arguing as in the proof of Lemma \ref{lemma:est:rF}. 

Now, we prove the second estimate in \eqref{est:R2}.
On the one hand, we get easily from the definition of $Q_b$ in \eqref{def:Qb}, \eqref{e:r} and \eqref{e:F} that 
\begin{equation*} 
\big|\big( \partial_y\mathcal{R}_2, Q\big)+10(Q^3(rR+F)^2,Q')\big| \lesssim s^{-3} .
\end{equation*}
On the other hand, we see by integration by parts that 
\begin{align*}
10(Q^3(rR+F)^2,Q')&=-5r^2(Q^4(R+1)\partial_yR)
\\&\quad -5r(Q^4,(F-r) \partial_yR)-5r(Q^4,R\partial_yF)-5(Q^4,F\partial_yF)  .
\end{align*}
Observe from the definition of $R$ in \eqref{def:R} that $5Q^4(R+1)=-\partial_y^2R+R$, so that the first term on the right-hand side of the above identity cancels out by symmetry.
Hence, it follows from \eqref{e:r}, \eqref{e:F} and \eqref{e:r-F} that 
\begin{equation*} 
\big|(Q^3(rR+F)^2,Q')\big|\lesssim s^{-3}  ,
\end{equation*}
which yields the second estimate in \eqref{est:R2}.

\smallskip

\noindent \textit{Estimates for $\mathcal{R}_3$.} First, we deduce from \eqref{eq:dQb:db} and \eqref{BS:param} that 
\begin{equation*} 
|b_s+2b^2| \left| \left( \frac {\partial Q_b}{\partial b}, \phi\right) \right| \lesssim |b_s+2b^2| \lesssim |b_s|+s^{-2}, \quad \forall \phi \in \mathcal{Y}  ,
\end{equation*}
\begin{equation*} 
|b_s+2b^2| \left(\left\| \frac {\partial Q_b}{\partial b}\right\|_{L^2_B}+\left\| \partial_y\left(\frac {\partial Q_b}{\partial b}\right)\right\|_{L^2_B}\right) \lesssim |b_s+2b^2| \lesssim |b_s|+s^{-2}  .
\end{equation*}
Arguing similarly, we get from \eqref{def:Pb}, \eqref{BS:param} and \eqref{e:r} that 
\begin{equation*} 
\left|b \left(\vec m \cdot \vec \MM P_b, \phi \right) \right|+\left|r \left(\vec m \cdot \vec \MM R, \phi \right) \right| \lesssim s^{-1}|\vec{m}|\quad \forall \phi \in \mathcal{Y} ,
\end{equation*}
and
\begin{equation*} 
|b| \left\|\vec m \cdot \partial_y^k\vec \MM P_b\right\|_{L^2_B} +|r| \left\|\vec m \cdot \partial_y^k\vec \MM R\right\|_{L^2_B} \lesssim s^{-1}|\vec{m}|, \quad k=0,1  .
\end{equation*}
It follows combining those estimates that 
\begin{equation} \label{est:R3}
(\mathcal{R}_3,\phi)+\|\mathcal{R}_3 \|_{L^2_B}+\|\partial_y\mathcal{R}_3 \|_{L^2_B} \lesssim s^{-1}|\vec{m}|+|b_s|+s^{-2}, \quad \forall \phi \in \mathcal{Y}  .
\end{equation}

By using the identity \eqref{eq:dQb:db}, we get that 
\begin{equation*}
\left( \frac {\partial Q_b}{\partial b},Q\right)=\left(Q,P\right)+\left((\chi_b-1)P,Q\right)+
\gamma\left(yP\chi_b',Q\right).
\end{equation*}
Thus, it follows from the definition of $\chi_b$ in \eqref{def:Pb}, the properties of $Q$, \eqref{prop:P} and \eqref{BS:param}, we deduce that 
\begin{equation} \label{est:dQb:db}
\left|(b_s+2b^2)\left( \frac {\partial Q_b}{\partial b},Q\right)-\frac1{16}\left(\int Q \right)^2(b_s+2b^2)\right| \lesssim e^{-\frac{s^{\gamma}}2}  .
\end{equation}

\smallskip

Therefore, we conclude the proof of \eqref{est:R:Y} gathering \eqref{est:PsibY}, \eqref{est:R1:L2}, \eqref{est:R2} and \eqref{est:R3}, the proof of \eqref{est:R:L2B} gathering \eqref{est:PsibL2}, \eqref{est:R1:L2}, \eqref{est:R2} and \eqref{est:R3}, and the proof of \eqref{est:RQ} gathering \eqref{est:PsibQ}, \eqref{BS:param}, \eqref{est:R1}, \eqref{est:R2}, \eqref{est:R3} and \eqref{est:dQb:db}.
\end{proof}

We define
\begin{equation} \label{def:gh}
g(s)=\frac{b(s)}{\lambda^2(s)}+\frac{4}{\int Q}c_0 \lambda^{-\frac 32}(s)\sigma^{-\theta}(s) \quad \text{and} \quad
h(s)=\lambda^{\frac 12}(s)-\frac{1}{\int Q} \frac{2c_0}{1-\theta} \sigma^{-\theta+1}(s)  .
\end{equation}
\begin{lemma} \label{lemma:gh}
It holds
\begin{align}
\left| c_1 \lambda^{2} g_s - (\cR,Q)\right|
&\lesssim |\vec m| s^{-1}+s^{-3}  ; \label{est:g} \\
\left| \lambda^{-\frac12}h_s +\frac 12 \lambda^2g\right|&\lesssim |\vec m|  . \label{est:h}
\end{align}
\end{lemma}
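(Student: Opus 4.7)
Both estimates are essentially a direct differentiation of the explicit expressions defining $g$ and $h$, followed by algebraic manipulations that replace the parameter derivatives $\lambda_s/\lambda$ and $\sigma_s/\lambda$ by their modulation counterparts, using the definition $\vec m=(\frac{\lambda_s}{\lambda}+b,\frac{\sigma_s}{\lambda}-1)^T$ from \eqref{def:mM}, and absorbing the remainders thanks to the bootstrap assumptions \eqref{BS:param}. The strategy is to expand both sides and show that the difference reduces to multiples of $\vec m$ (plus lower-order terms from \eqref{est:RQ}).

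For \eqref{est:g}, the plan is to first differentiate $g$ from \eqref{def:gh}, obtaining
\begin{equation*}
\lambda^2 g_s = b_s -2b\frac{\lambda_s}{\lambda}-\frac{4c_0}{\int Q}\lambda^{\frac 12}\sigma^{-\theta}\left(\frac 32 \frac{\lambda_s}{\lambda}+\theta \frac{\sigma_s}{\sigma}\right).
\end{equation*}
After multiplying by $c_1$, the last group of terms matches exactly the corresponding combination in the formula \eqref{est:RQ} for $(\cR,Q)$. Subtracting, we are left with
\begin{equation*}
c_1 \lambda^2 g_s - (\cR,Q) = -2c_1 b\Bigl(\frac{\lambda_s}{\lambda}+b\Bigr)+O(|\vec m|s^{-1}+s^{-3}) = -2c_1 b\, \vec m_1 +O(|\vec m|s^{-1}+s^{-3}),
\end{equation*}
and since $|b|\lesssim s^{-1}$ by \eqref{BS:param}, the right-hand side is $O(|\vec m|s^{-1}+s^{-3})$, as required.

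For \eqref{est:h}, I will differentiate $h$ in \eqref{def:gh} to obtain
\begin{equation*}
\lambda^{-\frac12} h_s = \frac12\frac{\lambda_s}{\lambda}-\frac{2c_0}{\int Q}\lambda^{-\frac12}\sigma^{-\theta}\sigma_s,
\end{equation*}
and add $\frac 12 \lambda^2 g = \frac 12 b + \frac{2c_0}{\int Q}\lambda^{\frac 12}\sigma^{-\theta}$. Grouping the first and third terms gives $\frac 12 \vec m_1$, and combining the remaining two produces a factor $(\lambda-\sigma_s) = -\lambda\,\vec m_2$, so
\begin{equation*}
\lambda^{-\frac12} h_s + \frac 12 \lambda^2 g = \frac12 \vec m_1 - \frac{2c_0}{\int Q}\lambda^{\frac 12}\sigma^{-\theta}\, \vec m_2.
\end{equation*}
A direct computation from \eqref{BS:param} shows that $\lambda^{\frac 12}\sigma^{-\theta}\sim s^{-1}$, hence the whole expression is bounded by $|\vec m|$.

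The only mildly delicate point is bookkeeping the cross terms to ensure they indeed match the $c_1$-weighted combination in \eqref{est:RQ}; this is routine once one notes that \eqref{BS:param} together with \eqref{e:r} gives $|b|+|\lambda^{1/2}\sigma^{-\theta}|\lesssim s^{-1}$, which is what makes the $-2b^2$ term in \eqref{est:RQ} and the $\sigma_s/\sigma$ versus $\sigma_s/\lambda$ mismatch harmless. No further estimates from Section~\ref{section:decomp_sol} are needed beyond \eqref{est:RQ} itself.
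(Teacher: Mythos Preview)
Your proposal is correct and follows essentially the same approach as the paper: both estimates are obtained by direct differentiation of $g$ and $h$, rewriting $-2b\frac{\lambda_s}{\lambda}=2b^2-2b\bigl(\frac{\lambda_s}{\lambda}+b\bigr)$ to match the bracket in \eqref{est:RQ}, and then absorbing the residual $\vec m$-terms via $|b|+\lambda^{1/2}\sigma^{-\theta}\lesssim s^{-1}$ from \eqref{BS:param}. The paper's proof is nothing more than this computation written out.
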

\begin{proof}
First, observe by a direct computation that 
\begin{equation*} 
\lambda^2g_s=b_s+2b^2-2\left(\frac{\lambda_s}{\lambda}+b\right)b-\frac{4c_0}{\int Q}\lambda^{\frac 12}\sigma^{-\theta} \left(\frac 32 \frac {\lambda_s}{\lambda}+\theta \frac{\sigma_s}{\sigma} \right)
\end{equation*}
Thus, estimate \eqref{est:g} follows from \eqref{BS:param}, \eqref{def:EW}, and \eqref{est:RQ}.

Another direct computation yields 
\begin{equation*} 
\lambda^{-\frac12}h_s +\frac 12 \lambda^2g=\frac12 \left( b+\frac{\lambda_s}{\lambda}\right) +\frac{2c_0}{\int Q} \left( 1-\frac{\sigma_s}{\lambda}\right)\lambda^{\frac12} \sigma^{-\theta}  .
\end{equation*}
Hence, we deduce from \eqref{BS:param} and \eqref{def:EW} that 
\begin{equation*} 
\left| \lambda^{-\frac12}h_s +\frac 12 \lambda^2g\right| \lesssim |\vec{m}| (1+s^{-1}) ,
\end{equation*}
which implies \eqref{est:h} by choosing $s$ large enough.
\end{proof}

\subsection{Modulation and parameter estimates}

Let $U$ be a solution of \eqref{gkdv} defined on a time interval $\mathcal{I}\subset[t_0,+\infty)$ and set 
\begin{equation} \label{def:v}
v(t,x)=U(t,x)-f(t,x)  ,
\end{equation}
where $f$ is defined in Section~\ref{sec:AS}.
We assume that there exists $(\lambda_\sharp(t),\sigma_\sharp(t)) \in (0,+\infty)^2$ and $\zz \in \mathcal{C}\big(\mathcal{I} : L^2(\mathbb R)\big)$ such that 
\begin{equation} \label{tube.1} 
v(t,x)=\lambda_\sharp^{-\frac12}(t)(Q+r_\sharp R+ \zz)(t,y), \quad y=\frac{x-\sigma_\sharp(t)}{\lambda_\sharp(t)},
\quad 
r_\sharp(t)=\lambda_\sharp^{\frac 12}(t) f(t,\sigma_\sharp(t)),
\end{equation}
with 
\begin{equation} \label{tube.2} 
\|\zz(t)\|_{L^2} + \lambda_\sharp (t) \sigma_\sharp^{-1}(t)+ \sigma_\sharp^{-1}(t)\le \astar  ,
\end{equation}
for all $t \in \mathcal{I}$ and 
where $\astar$ is small positive universal constant. 
For future use, remark that \eqref{tube.2} implies (using $\frac 12<\theta<1$)
\begin{equation}\label{tube.3}
\frac{\lambda_\sharp^{\frac 32}}{\sigma_\sharp^{\theta+1}}
= \left( \frac{\lambda_\sharp}{\sigma_\sharp}\right)^{\frac 32} \sigma_\sharp^{\frac 32-\theta-1}
\lesssim (\astar)^{\frac 32}\quad\mbox{and}\quad
\frac{\lambda_\sharp^{\frac 12}}{\sigma_\sharp^{\theta}}
= \left( \frac{\lambda_\sharp}{\sigma_\sharp}\right)^{\frac 12} \sigma_\sharp^{\frac 12-\theta}
\lesssim (\astar)^{\frac 12}.
\end{equation}
We collect in the next lemma the standard preliminary estimates on this decomposition related to the choice of suitable orthogonality conditions for the remainder term.

\begin{lemma} \label{lemma:decomp}
Assume \eqref{tube.1}-\eqref{tube.2} for $\astar>0$ small enough.
Then, there exist unique continuous functions $(\lambda,\sigma,b):\mathcal{I} \to (0,\infty)\times \mathbb R^2$ such that
\begin{equation} \label{decomp:v}
\lambda^{\frac 12}(t) v(t,\lambda(t) y +\sigma(t)) = Q_{b(t)}(y) + r(t) R(y)+\varepsilon(t,y)=W(t,y)+\varepsilon(t,y) ,
\end{equation}
\begin{equation}\label{lambdacheck}
\left|\frac{\lambda(t)}{\lambda_\sharp(t)}-1\right|
+|b(t)|+\left|\frac{\sigma(t)-\sigma_\sharp(t)}{\lambda_\sharp(t)}\right|
\lesssim \|\zz(t)\|_{L_\loc^2},
\end{equation}
and where $\varepsilon$ satisfies, for all $t\in \mathcal{I}$,
\begin{equation}\label{ortho}
(\varepsilon(t),\Lambda Q)=(\varepsilon(t),y\Lambda Q)=(\varepsilon(t),Q)=0,
\end{equation}
\begin{equation}\label{1est:eps} 
\|\varepsilon(t)\|_{L_\loc^2} \lesssim \|\zz(t)\|_{L_\loc^2},\quad 
\|\varepsilon(t)\|_{L^2} \lesssim \|\zz(t)\|_{L^2}^{\frac 58} .
\end{equation}
\end{lemma}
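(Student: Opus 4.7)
The plan is to apply a quantitative implicit function theorem in the spirit of the standard modulation setup of \cite{MaMeRa1}. First, I would fix $t\in \mathcal{I}$ and parametrize candidate modulation parameters by $\alpha=(\alpha_1,\alpha_2,\alpha_3)\in\mathbb{R}^3$ via
$$\lambda = \lambda_\sharp(1+\alpha_1), \quad \sigma = \sigma_\sharp + \lambda_\sharp \alpha_2, \quad b = \alpha_3, \quad r(\alpha) = \lambda^{1/2} f(t,\sigma),$$
and define
$$\varepsilon_\alpha(y) := \lambda^{1/2} v(t,\lambda y+\sigma) - Q_b(y) - r(\alpha) R(y), \quad \Phi(\alpha) := \bigl((\varepsilon_\alpha,\Lambda Q),\,(\varepsilon_\alpha,y\Lambda Q),\,(\varepsilon_\alpha,Q)\bigr).$$
At $\alpha=0$ one has $Q_0=Q$ and $r(0)=r_\sharp$, so hypothesis \eqref{tube.1} gives $\varepsilon_0=\zz$ and $|\Phi(0)|\lesssim \|\zz\|_{L^2_\loc}$. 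The goal is to solve $\Phi(\alpha)=0$.

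The main step is the invertibility of $D\Phi(0)$ with a uniform bound on the inverse. Direct chain-rule differentiation yields
$$\partial_{\alpha_1}\varepsilon_\alpha\big|_0 = \Lambda Q + O(r_\sharp+\|\zz\|_{L^2_\loc}), \quad \partial_{\alpha_2}\varepsilon_\alpha\big|_0 = \partial_y Q + O(r_\sharp+\|\zz\|_{L^2_\loc}),$$
and by \eqref{eq:dQb:db}, $\partial_{\alpha_3}\varepsilon_\alpha|_0 = -P$. The parity of $Q$ (even) kills $(\partial_y Q,\Lambda Q)$ and $(\Lambda Q,y\Lambda Q)$, while (ii) gives $(\Lambda Q,Q)=(\partial_y Q,Q)=0$. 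Hence the leading-order Jacobian is upper triangular with diagonal $\bigl(\|\Lambda Q\|_{L^2}^2,\,\|\Lambda Q\|_{L^2}^2,\,-\tfrac{1}{16}(\int Q)^2\bigr)$, using the identity $(\partial_y Q,y\Lambda Q)=\|\Lambda Q\|_{L^2}^2$ (obtained by integration by parts) together with \eqref{prop:P}. The $f$- and $r$-dependent corrections in the Jacobian are $O((\astar)^{1/2})$ thanks to \eqref{tube.3}, so for $\astar$ small the Jacobian is invertible with uniform bound. The quantitative IFT then produces a unique continuous $\alpha(t)$ with $\Phi(\alpha(t))=0$ and $|\alpha(t)|\lesssim \|\zz(t)\|_{L^2_\loc}$, which is exactly \eqref{lambdacheck} and the orthogonality conditions \eqref{ortho}.

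Finally, for \eqref{1est:eps}, I would subtract the two forms of $\lambda^{1/2} v(\lambda y+\sigma)$ after the affine change of variable to obtain
$$\varepsilon = \tilde\zz - (Q_b - Q) - (r - r_\sharp) R + O(\alpha),$$
where $\tilde\zz$ denotes $\zz$ in the rescaled variable and the $O(\alpha)$ term collects smoothly localized corrections bounded by $|\alpha|\lesssim \|\zz\|_{L^2_\loc}$. The $L^2_\loc$ bound is then immediate. For the sharper $L^2$ bound, the only non-Schwartz contribution is $Q_b-Q=bP_b$: from \eqref{def:Pb} the cutoff $\chi_b$ is supported in $\{y\ge -2|b|^{-\gamma}\}$, on which $P$ is bounded on the left and exponentially decaying on the right, so $\|bP_b\|_{L^2}\lesssim |b|^{1-\gamma/2}=|b|^{5/8}$ with $\gamma=3/4$. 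Combined with $|b|\lesssim \|\zz\|_{L^2_\loc}\lesssim \|\zz\|_{L^2}$, this yields $\|\varepsilon\|_{L^2}\lesssim \|\zz\|_{L^2}^{5/8}$. The main obstacle is the careful bookkeeping of the $f$-induced perturbations entering the Jacobian, but \eqref{tube.3} ensures they are uniformly $O((\astar)^{1/2})$ and so never spoil the leading-order invertibility.
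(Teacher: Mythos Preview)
Your proposal is correct and follows essentially the same approach as the paper's proof: an implicit function theorem argument in which the leading-order Jacobian has determinant $\|\Lambda Q\|_{L^2}^4\cdot\frac1{16}\bigl(\int Q\bigr)^2$ (via the parity cancellations and $(Q',y\Lambda Q)=\|\Lambda Q\|_{L^2}^2$, $(P,Q)=\frac1{16}(\int Q)^2$), with the $f$- and $r_\sharp$-induced perturbations controlled by \eqref{tube.3}, and the $\|\varepsilon\|_{L^2}\lesssim\|\zz\|_{L^2}^{5/8}$ bound coming from $\|bP_b\|_{L^2}^2\lesssim|b|^{2-\gamma}=|b|^{5/4}$. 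The only cosmetic difference is that the paper computes the Jacobian at the reference profile $v_\sharp=Q+r_\sharp R$ (treating $v_1$ itself as a parameter of the IFT), whereas you compute it at $\alpha=0$ with $v$ already containing $\zz$; this adds a harmless $O(\|\zz\|_{L^2_\loc})$ correction to your Jacobian and is otherwise equivalent.
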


\begin{proof} First, the decomposition is performed for fixed $t \in \mathcal{I}$. Let us define the map
\begin{equation*} 
\Theta: ( \lamc,\sigc,\bc,v_1 ) \in (0,+\infty)\times \mathbb R^2 \times L^2 \mapsto \left( (\varc,y\Lambda Q),(\varc,\Lambda Q),(\varc, Q)\right) \in \mathbb R^3  ,
\end{equation*} 
where
\begin{align*} 
\varc(y)=\varc_{ ( \lamc,\sigc,\bc,v_1)}(y)&
=\lamc^{\frac12}v_1\left(t,\lamc y+\sigc\right) -Q_{\bc}(y)
-\lamc^{\frac12}\lambda_\sharp^{\frac12}(t)f\left(t,\sigma_\sharp(t)+\lambda_\sharp(t)\sigc\right)R(y)
 . \end{align*}
Let $v_\sharp=Q+r_\sharp R$
and $\theta_0:=(1,0,0,v_\sharp)$.
We see that $\varc_{\theta_0}=0$, so that $\Theta(\theta_0)=0$. Moreover, it follows from explicit computations that
\begin{align*}
&\partial_{\lamc} \varc_{|_{\theta_0}}=\Lambda Q+r_\sharp yR',\\
&\partial_{\sigc} \varc_{|_{\theta_0}}=Q'+r_\sharp R'-\lambda_\sharp^{\frac 32} \partial_x f(t,\sigma_\sharp) R,\\
&\partial_{\bc} \varc_{|_{\theta_0}}=-P .
\end{align*}
In particular, by parity properties, the identity $(Q',y\Lambda Q)=\frac12(Q,\Lambda Q)+(yQ',\Lambda Q)=\| \Lambda Q\|_{L^2}^2$,
and then (from \eqref{decay:q_0.1} and \eqref{tube.3})
\begin{equation*}
|\lambda_\sharp^{\frac 32} \partial_x f(t,\sigma_\sharp)|\lesssim \lambda_\sharp^{\frac 32} \sigma_\sharp^{-\theta-1}
\lesssim (\astar)^{\frac 32},\quad
|r_\sharp|\lesssim \lambda_\sharp^{\frac 12}\sigma_\sharp^{-\theta}
\lesssim (\astar)^{\frac 12},
\end{equation*}
we obtain
\begin{align*} 
\frac{\partial \Theta}{\partial (\lamc,\sigc,\bc)}(\theta_0)
&=
\begin{pmatrix}
(\Lambda Q,y\Lambda Q) & (Q',y\Lambda Q)+r_\sharp (R',y\Lambda Q) & -(P,y\Lambda Q) \\
\|\Lambda Q\|_{L^2}^2 +r_\sharp (yR',\Lambda Q)& (Q',\Lambda Q)-\lambda_\sharp^{\frac 32} \partial_x f(t,\sigma_\sharp)( R,\Lambda Q) & -(P,\Lambda Q)\\
(\Lambda Q,Q) +r_\sharp (yR',Q)& (Q',Q)-\lambda_\sharp^{\frac 32} \partial_x f(t,\sigma_\sharp) (R,Q) & -(P,Q) 
\end{pmatrix}
\\
&=
\begin{pmatrix}
0 & \|\Lambda Q\|_{L^2}^2 & -(P,y\Lambda Q) \\
\|\Lambda Q\|_{L^2}^2 & 0 & -(P,\Lambda Q)\\
0 & 0 & -\frac1{16}\|Q\|_{L^1}^2
\end{pmatrix} + O\left((\astar)^{\frac 12}\right).
\end{align*}
Thus, the Jacobian satisfies for $\astar$ small enough
\begin{equation*} 
\det \left(\frac{\partial \Theta}{\partial (\lamc,\sigc,\bc)}(\theta_0)\right)=
\frac1{16}\|\Lambda Q\|_{L^2}^4 \|Q\|_{L^1}^2+O\left((\astar)^{\frac 12}\right) >\frac1{17}\|\Lambda Q\|_{L^2}^4 \|Q\|_{L^1}>0  . 
\end{equation*}

Therefore, possibly taking a smaller constant $\astar>0$, it follows from the implicit function theorem that for 
any $v_1=Q+r_\sharp R+\zz$ where $z$ satisfies $\|\zz\|_{L^2} <\astar$, there exist unique $(\lamc,\sigc,\bc)=(\lamc,\sigc,\bc)(v_1)$ such that $\Theta(\lamc,\sigc,\bc,v_1)=0$, where $\lamc$ is close to $1$ and $\sigc,\bc$ are small.
Moreover, the map $v_1\mapsto (\lamc,\sigc,\bc)(v_1)$ is continuous.

Now, for a function $v$ satisfying \eqref{tube.1}, we consider
\begin{equation*}
v_1(t,y)=\lambda_\sharp^{\frac 12}(t)v(t,\lambda_\sharp(t) y + \sigma_\sharp(t))=Q(y)+r_\sharp(t) R(y)+ \zz(t,y),
\end{equation*}
and we define
\begin{align*}
&\lambda(t)=\lambda_\sharp(t)\lamc(v_1(t)), \quad \sigma(t)=\lambda_\sharp(t)\sigc(v_1(t))+\sigma_\sharp(t), \quad b(t)=\bc(v_1(t)),
\\
&\varepsilon(t,y)=\varc_{ \left( \lamc(v_1(t)),\sigc(v_1(t)),\bc(v_1(t)),v_1(t)\right)}(y)  .
\end{align*}
In particular, $\varepsilon$ satisfies the orthogonality conditions \eqref{ortho}
and it holds
\begin{equation*} 
\lambda^{\frac 12}(t) v(t,\lambda(t) y +\sigma(t)) 
=Q_{b(t)}(y)+\lambda^{\frac12}(t)f(t,\sigma(t))R(y)+\varepsilon(t,y)  ,
\end{equation*}
which is the desired decomposition for $v$.

Now, we prove \eqref{lambdacheck} and \eqref{1est:eps}.
We omit mentioning the time dependency for simplicity.
Note from the above, the identity
\begin{equation}\label{id:eps}\begin{aligned}
 \varepsilon(y)
&=\lamc^{\frac12} Q(\lamc y+\sigc)
- Q_{\bc}(y)
\\&\quad +\lamc^{\frac 12}\lambda_\sharp^{\frac 12}\left[f(\sigma_\sharp)R(\lamc y +\sigc)- f(\sigma_\sharp+\lambda_\sharp \sigc)R(y)\right]+\lamc^{\frac12}\zz(\lamc y+\sigc).
\end{aligned}\end{equation}
We will project this identity on the three orthogonality directions $y\Lambda Q$, $\Lambda Q$ and $Q$ 
of $\varepsilon$.
First, by direct computations, it holds
\begin{align*}
&(\lamc^{\frac12} Q(\lamc \cdot +\sigc)-Q, y\Lambda Q )
= \sigc\left( \|\Lambda Q\|_{L^2}^2 + O(|\lamc-1|+|\sigc|)\right),\\
&(\lamc^{\frac12} Q(\lamc \cdot +\sigc)-Q, \Lambda Q)
=(\lamc-1) \|\Lambda Q\|_{L^2}^2 + O(|\lamc-1|^2+|\sigc|^2),\\
&(\lamc^{\frac12} Q(\lamc \cdot +\sigc)-Q, Q)
=O(|\lamc-1|^2+|\sigc|^2),\quad
(P_{\bc},Q)=\frac {\bc}{16}\|Q\|_{L^1}^2+O(\bc^{10}).
\end{align*}
Second, by the triangle inequality and \eqref{decay:q_0.1} and \eqref{tube.3},
\begin{align*}
&\lambda_\sharp^{\frac 12}\left\|f(\sigma_\sharp)R(\lamc \cdot +\sigc)- f(\sigma_\sharp+\lambda_\sharp \sigc)R(\cdot)\right\|_{L^2}
\\
&\quad \lesssim \lambda_\sharp^{\frac 12} \left|f(\sigma_\sharp)- f(\sigma_\sharp+\lambda_\sharp \sigc) \right| \left\| R(\lamc \cdot +\sigc)\right\|_{L^2}
+\lambda_\sharp^{\frac 12} |f(\sigma_\sharp)| \left\|R(\lamc \cdot +\sigc)- R(\cdot)\right\|_{L^2}
\\&\quad \lesssim \lambda_\sharp^{\frac 32} \sigma_\sharp^{-\theta-1} |\sigc| + 
\lambda_\sharp^{\frac 12}\sigma_\sharp^{-\theta} \left( |\lamc-1|+|\sigc|\right)
\lesssim (\astar)^{\frac 12} \left(|\lamc-1|+|\sigc|\right).
\end{align*}
Therefore, the projections yield the following estimates
\begin{equation*}
|\lamc-1|+|\sigc|\lesssim \|\zz\|_{L^2_\loc} + |\bc|+(\astar)^{\frac 12} \left(|\lamc-1|+|\sigc|\right),\quad
|\bc|\lesssim \|\zz\|_{L^2_\loc}+(\astar)^{\frac 12} \left(|\lamc-1|+|\sigc|\right). 
\end{equation*}
Combining these estimates, for $\astar$ small enough, we obtain \eqref{lambdacheck}.
Then, \eqref{1est:eps} follows using the above estimates and \eqref{def:Pb} back into \eqref{id:eps}
(note in particular that from \eqref{def:Pb} and $\gamma=\frac 34$, $\|P_{\bc}\|_{L^2}^2 \lesssim |\bc|^{2-\gamma}
\lesssim \|\zz\|_{L^2_\loc}^{5/4}\lesssim \|\zz\|_{L^2}^{5/4}$).
\end{proof}

\begin{remark}
The $\mathcal C^1$ regularity of $t\mapsto (\lambda(t),\sigma(t),b(t))$ and the equation 
\begin{equation}\label{eq:eps} 
\partial_s \varepsilon = \partial_y \left[ -\partial_y^2\varepsilon+\varepsilon-\left((W+F+\varepsilon)^5-(W+F)^5\right)\right]
 - \mathcal E(W) +\vec m \cdot \vec\MM \varepsilon-b\Lambda \varepsilon  ,
 \end{equation}
 (where we have used the notation in \eqref{def:EV} and \eqref{def:mM})
 follow from classical arguments and computations. We refer for example to the proof of Lemma 2.7. in \cite{CoMa2}.
\end{remark}

Next, we derive some estimates for $\varepsilon$ in $H^1$ related to the conservation of mass and energy. 
\begin{lemma}[Mass and energy estimates for $\varepsilon$]\label{le:3.9}
Under the bootstrap assumptions \eqref{BS:param}, it holds
\begin{equation} \label{mass:eps}
\|\varepsilon\|_{L^2}^2 \lesssim \left| \int U_0^2-\int Q^2 \right| +s^{-1}+x_0^{-(2\theta-1)} 
\end{equation}
and 
\begin{equation} \label{ener:eps}
\lambda^{-2}\|\partial_y\varepsilon\|_{L^2}^2 \lesssim |E(u_0)|+|g(s)|+\lambda^{-2}s^{-2}+\lambda^{-2}\int Q\varepsilon^2+x_0^{-(2\theta+1)}  ,
\end{equation}
for $s$ large enough, where $g$ is defined in Lemma \ref{lemma:gh}.
\end{lemma}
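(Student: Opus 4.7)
The strategy is to exploit the conservation of mass and energy of $U$. Writing $U(t,x) = \lambda^{-1/2}(t)(W+F+\varepsilon)(t,y)$ with $y=(x-\sigma(t))/\lambda(t)$ and changing variables in the integrals yields the scaling identities
\begin{equation*}
\int U^2(t,x)\,dx = \int (W+F+\varepsilon)^2(t,y)\,dy,
\qquad
E(U)(t) = \lambda^{-2}(t)\,E(W+F+\varepsilon)(t),
\end{equation*}
so that by the conservation laws these coincide with $\int U_0^2$ and $E(u_0)$ respectively. Both \eqref{mass:eps} and \eqref{ener:eps} will be obtained by expanding the right-hand sides in $\varepsilon$ and using Lemmas on $W+F$.

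For \eqref{mass:eps}, I would expand the square and rearrange to obtain
\begin{equation*}
\|\varepsilon\|_{L^2}^2 = \Bigl(\int U_0^2-\int Q^2\Bigr) - \Bigl(\int (W+F)^2-\int Q^2\Bigr) - 2\,(W+F,\varepsilon).
\end{equation*}
The middle bracket is $O(s^{-1}+x_0^{-(2\theta-1)})$ by \eqref{mass:W} and the bounds $|b|,|r|\lesssim s^{-1}$. The cross term is handled by splitting $W+F = Q + bP_b + rR + F$: the $(Q,\varepsilon)$ piece vanishes by the orthogonality \eqref{ortho}, while Cauchy--Schwarz applied to the three remaining pieces, using $\|P_b\|_{L^2}\lesssim |b|^{-\gamma/2}$, $R\in\mathcal{Y}$, and $\|F\|_{L^2}=\|f_0\|_{L^2}\lesssim x_0^{-(\theta-\frac12)}$, produces after Young $\tfrac12\|\varepsilon\|_{L^2}^2$ plus error terms of size $s^{-1}+x_0^{-(2\theta-1)}$. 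Absorbing the quadratic piece on the left gives \eqref{mass:eps}.

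For \eqref{ener:eps}, I would Taylor expand
\begin{equation*}
E(W+F+\varepsilon)-E(W+F) = \bigl(-\partial_y^2(W+F)-(W+F)^5,\varepsilon\bigr) + \tfrac12\|\partial_y\varepsilon\|_{L^2}^2 - \tfrac52\!\int (W+F)^4\varepsilon^2 + \mathcal{H},
\end{equation*}
where $\mathcal{H}$ collects the terms cubic and higher in $\varepsilon$. Solving for $\tfrac12\|\partial_y\varepsilon\|_{L^2}^2$, using $\lambda^2 E(u_0)=E(W+F+\varepsilon)$, and dividing by $\lambda^2$, I would bound: (a) $\lambda^{-2}E(W+F)$ by $|g|+\lambda^{-2}s^{-2}+x_0^{-(2\theta+1)}$ via \eqref{ener:W}; (b) $\lambda^{-2}\!\int(W+F)^4\varepsilon^2$ by $\lambda^{-2}\!\int Q\varepsilon^2$ plus negligible corrections, using $(W+F)^4 = Q^4 + O(Q^3(|b|+|r|+|F|))$ and $\|Q^3\|_{L^\infty}<\infty$; (c) $\mathcal{H}$ via the Gagliardo--Nirenberg bound $\int\varepsilon^6\lesssim \|\varepsilon\|_{L^2}^4\|\partial_y\varepsilon\|_{L^2}^2$ combined with interpolation for the intermediate powers $\int(W+F)^k\varepsilon^{6-k}$ and smallness of $\|\varepsilon\|_{L^2}$ from \eqref{1est:eps}, which is absorbed into $\tfrac14\|\partial_y\varepsilon\|_{L^2}^2$.

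The main obstacle is the linear-in-$\varepsilon$ term. Using \eqref{eq:Q} one has $-Q''-Q^5=-Q$, so the orthogonality $(\varepsilon,Q)=0$ kills the principal contribution. What remains splits into an internal piece involving $\tilde W = bP_b+rR$ and an external piece involving $F$. The internal piece is controlled via the $L^2_{\loc}$-estimate \eqref{1est:eps} and the bounds of Lemma~\ref{lemma:est:rF}. The $F$-contribution, of the form $(-\partial_y^2 F - 5Q^4 F-\ldots,\varepsilon)$, is the delicate one because $F$ decays only slowly in $x$; however the exponential decay of $Q^4$ in $y$ together with the pointwise bounds \eqref{e:F}--\eqref{e:FLinfty} allow a Cauchy--Schwarz estimate that trades the slow decay for an acceptable $x_0^{-(2\theta+1)}$ error together with a small multiple of $\lambda^{-2}\|\partial_y\varepsilon\|_{L^2}^2$ which is absorbed. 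Collecting all these pieces yields \eqref{ener:eps}.
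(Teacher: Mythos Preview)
Your approach is essentially the same as the paper's: expand the conserved mass and energy around $W+F$, kill the leading linear piece via $(\varepsilon,Q)=0$, absorb cross terms by Cauchy--Schwarz/Young, and invoke \eqref{mass:W}, \eqref{ener:W}. Two points in the energy part are imprecise enough to fail as written.

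First, in step (b) the expansion $(W+F)^4 = Q^4 + O\bigl(Q^3(|b|+|r|+|F|)\bigr)$ is wrong: the term $F^4$ carries no factor of $Q$ and is not localized. The paper handles $\int F^4\varepsilon^2$ (and likewise $\int F^5\varepsilon$) by Young's inequality $\int F^4\varepsilon^2\lesssim \int F^6+\int\varepsilon^6$ followed by Gagliardo--Nirenberg on each piece; this produces $\tfrac{1}{32}\|\partial_y\varepsilon\|_{L^2}^2 + c\lambda^2 x_0^{-(2\theta+1)}$ using the \emph{global} bound $\|\partial_y F\|_{L^2}=\lambda\|\partial_x f\|_{L^2}\lesssim\lambda\, x_0^{-(\theta+\frac12)}$ from \eqref{bound:df:dx}.

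Second, for the linear $F$-contribution you invoke the local pointwise bounds \eqref{e:F}--\eqref{e:FLinfty} together with the decay of $Q^4$. That suffices for localized pieces such as $(5Q^4F,\varepsilon)$, but the term $(-\partial_y^2F,\varepsilon)$ carries no $Q$-weight. After one integration by parts it becomes $(\partial_yF,\partial_y\varepsilon)$, and Cauchy--Schwarz then requires the \emph{global} norm $\|\partial_yF\|_{L^2}$, again supplied by \eqref{bound:df:dx}. The local bounds do not control this quantity: \eqref{e:F} comes with an exponentially growing weight $e^{|y|/10}$, and \eqref{e:FLinfty} only covers $y>-2|b|^{-\gamma}$. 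This global $L^2$ estimate on $\partial_x f$ is precisely what produces the $x_0^{-(2\theta+1)}$ error on the right of \eqref{ener:eps}.
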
 

\begin{proof} By using the conservation of the $L^2$ norm for $u$, we obtain that 
\begin{equation*} 
\int U_0^2=\int U^2=\|W+F+\varepsilon\|_{L^2}^2=\int (W+F)^2+2\int (W+F)\varepsilon+\int \varepsilon^2  .
\end{equation*}
We observe by using the third orthogonality condition in \eqref{ortho} and then the Cauchy-Schwarz inequality, \eqref{mass:F} and \eqref{BS:param} that 
\begin{align*}
\left|2\int (W+F)\varepsilon\right| &\le 2|r|\left|\int R \varepsilon\right|+2|b|\left|\int P_b \varepsilon\right|+2\left|\int F \varepsilon \right| \\
& \le \frac12 \int \varepsilon^2+\mathcal{O}(s^{-2+\gamma})+\mathcal{O}(x_0^{-(2\theta-1)})  ,
\end{align*}
which combined with \eqref{mass:W} implies \eqref{mass:eps}.

We turn to the proof of \eqref{ener:eps}. By using the conservation of the energy and the scaling properties, we get that 
\begin{align*}
\lambda^2E(U_0)&=\lambda^2E(U)=E(W+F+\varepsilon)
\\ & =E(W+F)+\int \partial_y(W+F)\partial_y \varepsilon+\frac12\int (\partial_y \varepsilon)^2-\frac16 \int \big( (W+F+\varepsilon)^6-(W+F)^6\big)  .
\end{align*}
Thus, it follows by using the identity $\int (-Q''-Q^5)\varepsilon=\int Q \varepsilon=0$ that 
\begin{align*}
\lambda^2E(u_0)&=E(W+F)+b\int \partial_yP_b\partial_y \varepsilon+r\int \partial_yR\partial_y \varepsilon-\int( \partial_y^2 F+F^5) \varepsilon+\frac12\int (\partial_y \varepsilon)^2
\\ & \quad -\frac16 \int \big( (W+F+\varepsilon)^6-(W+F)^6-6Q^5\varepsilon-6F^5\varepsilon \big)  .
\end{align*}
We get from the Cauchy-Schwarz inequality, \eqref{BS:param} and \eqref{e:r} that 
\begin{equation*} 
|b|\left|\int \partial_yP_b\partial_y \varepsilon\right|+|r|\left|\int \partial_yR\partial_y \varepsilon\right| \le \frac14\int (\partial_y\varepsilon)^2+\mathcal{O}(s^{-2})  .
\end{equation*}
Now, we deal with the term $\int( \partial_y^2 F+F^5) \varepsilon$. We get from the Cauchy-Schwarz inequality and \eqref{bound:df:dx} that 
\begin{equation*} 
\left| \int \partial_y^2 F \varepsilon \right|= \left| \int \partial_y F \partial_y \varepsilon \right| \le \frac18 \| \partial_y \varepsilon\|_{L^2}^2+c \| \partial_yF\|_{L^2}^2 \le \frac18 \| \partial_y \varepsilon\|_{L^2}^2+c\lambda^2 x_0^{-(2\theta+1)}  .
\end{equation*}
Similarly, we get from H\"older's inequality, the Gagliardo-Niremberg inequality \eqref {sharpGN} and then \eqref{1est:eps} and \eqref{bound:df:dx} that
\begin{equation} \label{est:F5eps}
\left| \int F^5 \varepsilon \right| \lesssim \int \varepsilon^6+\int F^6 \lesssim \| \varepsilon \|_{L^2}^4 \| \partial_y \varepsilon\|_{L^2}^2+\| F \|_{L^2}^4 \| \partial_y F\|_{L^2}^2 \le \frac1{16}\| \partial_y \varepsilon\|_{L^2}^2+c\lambda^2 x_0^{-(2\theta+1)}  .
\end{equation}
Moreover, from interpolation, the Gagliardo-Nirenberg inequality \eqref {sharpGN} and \eqref{BS:param}, \eqref{e:r}, \eqref{e:F}, it holds
\begin{align*}
&\left| \int \big( (W+F+\varepsilon)^6-(W+F)^6-6Q^5\varepsilon-6F^5\varepsilon \big)\right| 
\\ & \quad \quad \quad \quad \lesssim \left| \int \big( (W+F)^5-6Q^5-6F^5\big)\varepsilon\right|+\int (W+F)^4\varepsilon^2+\int \varepsilon^6 
\\& \quad \quad \quad \quad \lesssim s^{-2}+\int Q^2\varepsilon^2+\int F^4\varepsilon^2+\|\varepsilon\|_{L^2}^4\int (\partial_y\varepsilon)^2  .
\end{align*}
arguing as in \eqref{est:F5eps}, we estimate the term $\int F^4\varepsilon^2$ as follows 
\begin{equation*}
 \left| \int F^4 \varepsilon^2 \right| \lesssim \int \varepsilon^6+\int F^6 \le \frac1{32}\| \partial_y \varepsilon\|_{L^2}^2+c\lambda^2 x_0^{-(2\theta+1)}  .
\end{equation*} 
We conclude the proof of \eqref{ener:eps} by combining those estimates with \eqref{ener:W} and \eqref{1est:eps}. 
\end{proof}
Next, the equation of the parameters $\lambda$, $\sigma$ and $b$ are deduced from modulation estimates.
\begin{lemma}[Modulation estimates]
Under the bootstrap assumptions \eqref{BS:param}, it holds
\begin{align} 
|\vec m|&\lesssim \| \varepsilon \|_{L^2_\loc} +s^{-2}  ,\label{mod:est:m}
\\
|b_s|&\lesssim \| \varepsilon \|_{L^2_\loc}^2 +s^{-2}  , \label{mod:est:bs}
\\
\lambda^2|g_s|&\lesssim s^{-3}+s^{-1}\| \varepsilon \|_{L^2_\loc}+\| \varepsilon \|_{L^2_\loc}^{2}  , \label{mod:est:g}
\end{align}
for $s$ large enough, where the $L^2_\loc$-norm is defined in \eqref{def:L2B}.
\end{lemma}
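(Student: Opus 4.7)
The plan is to differentiate each of the three orthogonality conditions \eqref{ortho} in $s$ and substitute the equation \eqref{eq:eps} for $\partial_s\varepsilon$ together with the decomposition $\mathcal E(W)=-\vec m\cdot\vec\MM Q+\cR$ from Lemma~\ref{le:3.6}. For each test function $\phi\in\{\Lambda Q,\,y\Lambda Q,\,Q\}$, the identity $\frac{d}{ds}(\varepsilon,\phi)=0$ rearranges as
\begin{equation*}
\vec m\cdot (\vec\MM Q,\phi) = (\cR,\phi) + \big(\partial_y[-\partial_y^2\varepsilon+\varepsilon-((W+F+\varepsilon)^5-(W+F)^5)],\phi\big) + (\vec m\cdot\vec\MM\varepsilon,\phi) - b(\Lambda\varepsilon,\phi).
\end{equation*}
I would handle the right-hand side piece by piece. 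The linear-in-$\varepsilon$ dispersive contribution becomes $-(\varepsilon,\mathcal L\phi')$ after one integration by parts, plus a potential-difference term with $5(Q^4-(W+F)^4)\varepsilon$; both are $\lesssim\|\varepsilon\|_{L^2_{\loc}}$, the first because $\phi,\mathcal L\phi'\in\mathcal Y$, the second because $|Q^4-(W+F)^4|\lesssim s^{-1}e^{-|y|/2}$ on the support of $\phi$ by \eqref{est:Qb} and Lemma~\ref{lemma:est:rF}. The fully nonlinear remainder contributes $\lesssim\|\varepsilon\|_{L^2_{\loc}}^2$, while $(\vec m\cdot\vec\MM\varepsilon,\phi)$ and $b(\Lambda\varepsilon,\phi)$ are bounded by $(|\vec m|+|b|)\|\varepsilon\|_{L^2_{\loc}}$.

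Next, for $\phi=\Lambda Q$ and $\phi=y\Lambda Q$, the Gram matrix $((\vec\MM Q)_i,\phi_j)$ reduces, by the identity $(Q',y\Lambda Q)=\|\Lambda Q\|_{L^2}^2$ already used in Lemma~\ref{lemma:decomp} and by parity-based vanishing of $(\Lambda Q,y\Lambda Q)$ and $(Q',\Lambda Q)$, to $\|\Lambda Q\|_{L^2}^2\,\mathrm{Id}$, hence is invertible. Combining with \eqref{est:R:Y}, which gives $|(\cR,\phi)|\lesssim|\vec m|s^{-1}+|b_s|+s^{-2}$, and absorbing the $s^{-1}|\vec m|$ contribution on the left for $s$ large, I obtain
\begin{equation*}
|\vec m|\lesssim\|\varepsilon\|_{L^2_{\loc}}+|b_s|+s^{-2}.
\end{equation*}

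For $\phi=Q$ the algebra collapses twice: $(\vec\MM Q,Q)=\vec 0$ since $(\Lambda Q,Q)=0$ by Lemma~(ii) and $(Q',Q)=0$ by parity; and the linear dispersive piece $-(\varepsilon,\mathcal LQ')$ vanishes because $Q'\in\ker\mathcal L$. The $Q$-projection is thus driven only by $(\cR,Q)$, the potential-difference term ($\lesssim s^{-1}\|\varepsilon\|_{L^2_{\loc}}$), the quadratic nonlinear term ($\lesssim\|\varepsilon\|_{L^2_{\loc}}^2$), and the transport terms ($\lesssim|\vec m|\|\varepsilon\|_{L^2_{\loc}}$). Reading $b_s$ off $(\cR,Q)$ via \eqref{est:RQ} (up to $|\vec m|s^{-1}+s^{-3}$), substituting the previous $\vec m$ bound, and applying Young to $s^{-1}\|\varepsilon\|_{L^2_{\loc}}$ and $|\vec m|\|\varepsilon\|_{L^2_{\loc}}$ yields $|b_s|\lesssim\|\varepsilon\|_{L^2_{\loc}}^2+s^{-2}$. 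Feeding this back gives $|\vec m|\lesssim\|\varepsilon\|_{L^2_{\loc}}+s^{-2}$. Finally, \eqref{mod:est:g} follows from \eqref{est:g}, which bounds $|c_1\lambda^2 g_s-(\cR,Q)|\lesssim|\vec m|s^{-1}+s^{-3}$, combined with the control of $(\cR,Q)$ just obtained.

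The delicate point is the bookkeeping of the coupled inequalities for $|\vec m|$ and $|b_s|$: the quadratic bound for $b_s$ only emerges because the $Q$-projection simultaneously annihilates the leading $\vec m\cdot\vec\MM Q$ and the leading linear-in-$\varepsilon$ contribution, so one must verify that the surviving cross-terms $s^{-1}\|\varepsilon\|_{L^2_{\loc}}$ and $|\vec m|\|\varepsilon\|_{L^2_{\loc}}$ are genuinely absorbable by Young's inequality—this is what forces the use of the $s^{-1}$ smallness built into Lemma~\ref{le:3.6} and the bootstrap \eqref{BS:param}.
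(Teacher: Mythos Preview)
Your proposal is correct and follows essentially the same approach as the paper's proof: differentiate the three orthogonality conditions, use the diagonal Gram structure $(\Lambda Q,\Lambda Q)=(Q',y\Lambda Q)=\|\Lambda Q\|_{L^2}^2$ with the off-diagonal parity cancellations to isolate $\vec m$ from the first two, exploit the double vanishing $(\vec\MM Q,Q)=0$ and $\mathcal L Q'=0$ on the third to extract $b_s$ via \eqref{est:RQ}, close the coupled system, and finish with \eqref{est:g}. The only cosmetic difference is that the paper also records the exact cancellation $(\Lambda\varepsilon,Q)=-(\varepsilon,\Lambda Q)=0$ in the $Q$-projection, whereas you bound that transport term by $|b|\|\varepsilon\|_{L^2_{\loc}}$ and absorb it; either route gives the same final inequalities.
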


\begin{proof} 
First, we differentiate the first orthogonality condition in \eqref{ortho} with respect to $s$, use the equation \eqref{eq:eps}, follow the computations in the proof of Lemma 2.7 in \cite{MaMeRa1} and use the estimate \eqref{est:R:Y} to get that 
\begin{equation} \label{mod:est:m1}
\left| \left(\frac{\lambda_s}{\lambda}+b \right) +\frac{\big( \varepsilon, \mathcal{L}(\Lambda Q)'\big)}{\|\Lambda Q\|_{L^2}^2}\right| \lesssim 
|\vec{m}|\big(s^{-1}+\|\epsilon\|_{L^2_\loc}\big)+|b_s|+s^{-2}+\|\epsilon\|_{L^2_\loc}^2  .
\end{equation}
Now, we derive the second orthogonality condition in \eqref{ortho} with respect to $s$. By combining similar estimates with the identity $(Q',y\Lambda Q)=\| \Lambda Q\|_{L^2}^2$, we also get that 
\begin{equation} \label{mod:est:m2}
\left| \left(\frac{\sigma_s}{\lambda}-1 \right) +\frac{\big( \varepsilon, \mathcal{L}(y\Lambda Q)'\big)}{\|\Lambda Q\|_{L^2}^2}\right| \lesssim 
|\vec{m}|\big(s^{-1}+\|\epsilon\|_{L^2_\loc}\big)+|b_s|+s^{-2}+\|\epsilon\|_{L^2_\loc}^2  .
\end{equation}
Next, we derive the third orthogonality condition in \eqref{ortho} with respect to $s$. It follows that 
\begin{align*}
0=\big(\partial_s\varepsilon,Q \big)&=\big( \partial_y\mathcal{L}\varepsilon,Q\big)-\left(\partial_y\big((W+F+\varepsilon)^5-(W+F)^5-5Q^4\varepsilon \big), Q \right)-\big(\mathcal{E}(W),Q\big)\\ 
&\quad +\left( \frac{\lambda_s}{\lambda}+b\right)\big( \Lambda \varepsilon,Q\big)+
\left( \frac{\sigma_s}{\lambda}-1\right)\big( \partial_y \varepsilon,Q\big)-b\big( \Lambda \varepsilon,Q\big)  .
\end{align*}
We observe the cancellations $\big( \partial_y\mathcal{L}\varepsilon,Q\big)=-(\varepsilon, \mathcal{L}(Q'))=0$ and $\big( \Lambda \varepsilon,Q\big)=-\big( \varepsilon,\Lambda Q\big)=0$. We also get by using \eqref{BS:param}, \eqref{e:r} and \eqref{e:F} that for $s$ large enough 
\begin{equation*} 
\left|\left(\partial_y\big((W+F+\varepsilon)^5-(W+F)^5-5Q^4\varepsilon \big), Q \right)\right| \lesssim s^{-1}\|\varepsilon\|_{L^2_\loc}+\|\varepsilon\|_{L^2_\loc}^2  .
\end{equation*}
Moreover, we have from \eqref{def:EW} that 
\begin{equation*} 
\big(\mathcal{E}(W),Q\big)=-\left( \frac{\lambda_s}{\lambda}+b\right)\big( \Lambda Q,Q\big)-\left( \frac{\sigma_s}{\lambda}-1\right)\big( \partial_y Q,Q\big)+(\mathcal{R},Q)  .
\end{equation*}
Hence, it follows from the cancellations $(\Lambda Q,Q)=(\partial_yQ,Q)=0$ and the estimate \eqref{est:RQ} that 
\begin{equation*}
\left|\big(\mathcal{E}(W),Q\big)- c_1 \left[ b_s+2b^2-4c_0\left(\int Q \right)^{-1}\lambda^{\frac 12}\sigma^{-\theta} \left(\frac 32 \frac {\lambda_s}{\lambda}+\theta \frac{\sigma_s}{\sigma} \right)\right]\right| \lesssim |\vec{m}|s^{-1}+s^{-3}  .
\end{equation*}
We deduce combining those estimates and using \eqref{BS:param} the following rough estimate on $|b_s|$
\begin{equation*}
|b_s| \lesssim s^{-2}+|\vec{m}|\big(s^{-1}+\|\epsilon\|_{L^2_\loc}\big)+s^{-1}\|\varepsilon\|_{L^2_\loc}+\|\varepsilon\|_{L^2_\loc}^2  ,
\end{equation*}
which combined with \eqref{mod:est:m1} and \eqref{mod:est:m2} yields \eqref{mod:est:m} and \eqref{mod:est:bs} by taking $s$ large enough. 

Finally, by combining the previous estimates with \eqref{est:g}, we deduce that 
\begin{equation*} 
\lambda^2 |g_s| \lesssim |\vec{m}| \big( s^{-1}+\|\varepsilon\|_{L^2_\loc}\big)+s^{-3}+s^{-1}\|\varepsilon\|_{L^2_\loc}+\|\varepsilon\|_{L^2_\loc}^2  ,
\end{equation*}
which conclude the proof of \eqref{mod:est:g} by taking $s$ large enough thanks to \eqref{mod:est:m}.
\end{proof}

\subsection{Bootstrap estimates}

Let $\psi \in\Cinfini$ be a nondecreasing function such that
\begin{equation*}
\psi(y) = \left\{ \begin{aligned} e^{2y} \quad & \mbox{for } y < -1, \\ 1 \quad & \mbox{for } y > -\frac 12, \end{aligned} \right.
\end{equation*}
For $B > 100$ large to be chosen later, we define
\begin{equation*}
\psi_B(y) = \psi\left( \frac yB \right) \quad \text{and} \quad \varphi_B(y)=e^{\frac{y}B}  .
\end{equation*}
Note that, directly from the definitions of $\psi$ and $\ph$, we have, for all $y\in\mathbb{R}$ and
\begin{equation} \label{cut:onR}
\left\{
\begin{gathered}
\psi_B(y) +\sqrt{\psi_B(y)}\lesssim \ph_B(y), \quad |y|^4\psi_B(y) \lesssim B^4\varphi_B(y),\\
\psi_B'(y) + B^2|\psi_B'''(y)| + B^2|\ph_B'''(y)| \lesssim \ph_B'(y), \\
\varphi_B(y)+\psi_B(y)+\varphi_B^2(y)e^{-\frac{y}B}+\psi_B^2(y)e^{-\frac{y}B}+|y\psi_B'(y)|\lesssim B \ph_B'(y) ,
\end{gathered}
\right.
\end{equation}
and $\frac 12 e^{\frac{2y}B } \leq \psi_B(y) \leq 3 e^{\frac{2y}B} $, for all $y<0$.

Let $0<\rho\ll 1$ and $B>100$ to be chosen later.
In addition of \eqref{BS:param}, we will work under the following bootstrap assumptions.
\begin{equation}\label{BS:eps}
\mathcal{N}_B(\varepsilon):= \left(\int \varepsilon^ 2 \varphi_B+\int (\partial_y\varepsilon)^ 2 \psi_B\right)^{\frac12} \le |s|^{-\frac 54} 
\end{equation} 
In particular, from the definition of the $L^2_\loc$-norms in \eqref{def:L2B} and $B>100$, it holds
\begin{equation} \label{est:L2loc}
\|\varepsilon\|_{L^2_\loc}+\|\partial_y\varepsilon\|_{L^2_\loc} \lesssim \mathcal{N}_B(\varepsilon) \quad \text{and} \quad \|\varepsilon\|_{L^2_\loc}^2 \lesssim B \int \varepsilon^2 \ph_B'  .
\end{equation}
 
For future reference, we state here some consequences of the bootstrap assumptions.
\begin{lemma} Under the bootstrap assumptions \eqref{BS:param} and \eqref{BS:eps}, it holds
\begin{gather} \label{BS:m}
|\vec{m}|=\left| \frac{\lambda_s}{\lambda}+b\right|+\left| \frac{\sigma_s}{\lambda}-1\right| \lesssim \|\varepsilon \|_{L^2_\loc} +s^{-2} \lesssim s^{-\frac54}  ;
\\ \label{BS:bs}
|b_s| \lesssim \| \varepsilon \|_{L^2_\loc}^2 +s^{-2} \lesssim s^{-2}  ;
\\ \label{BS:gs}
\lambda^2|g_s| \lesssim s^{-3}+s^{-1}\| \varepsilon \|_{L^2_\loc}+\| \varepsilon \|_{L^2_\loc}^{2} \lesssim s^{-\frac94}  ;
\\\label{BS:hs}
\left| \lambda^{-\frac12}h_s +\frac 12 \lambda^2g\right| \lesssim s^{-\frac54}  .
\end{gather}
\end{lemma}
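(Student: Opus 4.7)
The plan is to deduce the four bounds directly from the modulation estimates \eqref{mod:est:m}--\eqref{mod:est:g}, the identity \eqref{est:h}, and the bootstrap hypothesis \eqref{BS:eps} on $\mathcal{N}_B(\varepsilon)$. No new analysis is required: everything is a substitution.

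First I would translate \eqref{BS:eps} into an $L^2_\loc$ bound. Since $B>100$, the elementary weight comparison stated in \eqref{est:L2loc} yields
\begin{equation*}
\|\varepsilon\|_{L^2_\loc}\lesssim \mathcal{N}_B(\varepsilon)\lesssim s^{-\frac54}.
\end{equation*}
This is the only input needed in addition to the lemmas already proved.

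Next, plug this into \eqref{mod:est:m}: $|\vec m|\lesssim \|\varepsilon\|_{L^2_\loc}+s^{-2}\lesssim s^{-5/4}+s^{-2}\lesssim s^{-5/4}$, which is \eqref{BS:m}. Plug it into \eqref{mod:est:bs}: $|b_s|\lesssim \|\varepsilon\|_{L^2_\loc}^2+s^{-2}\lesssim s^{-5/2}+s^{-2}\lesssim s^{-2}$, giving \eqref{BS:bs}. For \eqref{BS:gs}, insert the bound into \eqref{mod:est:g}: the three contributions are $s^{-3}$, $s^{-1}\cdot s^{-5/4}=s^{-9/4}$, and $s^{-5/2}$; the dominant one for $s$ large is $s^{-9/4}$, so $\lambda^2|g_s|\lesssim s^{-9/4}$. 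Finally, combining the already-established \eqref{BS:m} with \eqref{est:h} gives
\begin{equation*}
\left|\lambda^{-\frac12}h_s+\frac12\lambda^2 g\right|\lesssim |\vec m|\lesssim s^{-\frac54},
\end{equation*}
which is \eqref{BS:hs}.

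There is no genuine obstacle here; the lemma is a direct consequence of the modulation estimates and the bootstrap assumption, and its role is merely to record the quantitative decay rates of $\vec m$, $b_s$, $g_s$, and $h_s$ that will be fed into the energy--virial argument of Section~\ref{section:energy}.
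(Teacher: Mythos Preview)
Your proposal is correct and follows exactly the same approach as the paper, which simply records that \eqref{BS:m}--\eqref{BS:hs} follow by inserting the bound $\|\varepsilon\|_{L^2_\loc}\lesssim \mathcal{N}_B(\varepsilon)\le s^{-5/4}$ from \eqref{est:L2loc} and \eqref{BS:eps} into the modulation estimates \eqref{mod:est:m}, \eqref{mod:est:bs}, \eqref{mod:est:g} and into \eqref{est:h}. Your write-up is in fact more explicit than the paper's one-line proof.
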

\begin{proof} 
Estimates \eqref{BS:m}-\eqref{BS:hs} follow from \eqref{BS:param}, \eqref{est:h}, \eqref{mod:est:m}, \eqref{mod:est:bs}, \eqref{mod:est:g} and \eqref{BS:eps}.
\end{proof}

\section{Energy estimates}\label{section:energy}
We work  with the notation introduced in Section \ref{section:decomp_sol}. In particular, we assume that $\varepsilon$ satisfies \eqref{decomp:v}-\eqref{eq:eps} and that $(\lambda,\sigma,b,\varepsilon)$ satisfy \eqref{BS:param} and \eqref{BS:eps} on $\mathcal{J}=[s_0,s^{\star}]$ for some $s^{\star} \ge s_0$.

We define the mixed energy-virial functional
\begin{equation} \label{def:F}
\F = \int \left[(\partial_y\varepsilon)^2 \psi_B + \varepsilon^2 \ph_B
- \frac 13 \left( (W +F+ \varepsilon)^6 - (W+F)^6 - 6(W+F)^5\varepsilon \right) \psi_B\right].
\end{equation}
Set
\begin{equation} \label{def:kappa} 
\kappa=\frac{2(2\theta-1)}{1-\theta} \quad \text{so that} \quad \lambda^{\kappa} \sim s^{4}  .
\end{equation}

\begin{proposition}
There exists $\astar>0$, $\mu_0>0$ and $B_0>100$ such that, for all $B \ge B_0$ and for all $s_0$ large enough (possibly depending on $B$), the following hold on $[s_0,s^\star]$.
\begin{enumerate}
\item Time derivative of the energy functional.
\begin{equation}\label{dsF}
\lambda^{-\kappa} \big(\lambda^{\kappa}\F\big)_s + \mu_0 \int \left[(\partial_y \varepsilon)^2+\varepsilon^2\right] \ph_B' 
\lesssim s^{-4}.
\end{equation}
\item Coercivity of $\mathcal F$.
\begin{equation}\label{coF}
 \mathcal{N}_B(\varepsilon)^2
\lesssim \F +s^{-100}  .
\end{equation}
\end{enumerate}
\end{proposition}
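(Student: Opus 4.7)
The plan is to follow the mixed energy--virial template from \cite{MaMeRa1}, streamlined in our defocusing regime $b<0$. For \eqref{dsF}, I would differentiate $\mathcal F$ along \eqref{eq:eps}, using the shorthand $\mathcal N_W\varepsilon := -\partial_y^2\varepsilon+\varepsilon-\bigl((W+F+\varepsilon)^5-(W+F)^5\bigr)$, so that $\partial_s\varepsilon=\partial_y\mathcal N_W\varepsilon-\mathcal E(W)+\vec m\cdot\vec\MM\varepsilon-b\Lambda\varepsilon$. Standard integration by parts on the ``free'' part produces a leading negative contribution $-\int(\partial_y\mathcal N_W\varepsilon)^2\psi_B'$ together with a sign-definite virial quadratic form in $\partial_y\varepsilon,\varepsilon$ weighted by $\varphi_B',\psi_B'$, plus nonlinear cross terms; the weight estimates \eqref{cut:onR} both fix the correct signs and absorb the $\psi_B''',\varphi_B'''$ contributions. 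The principal nonlinear cross term $\int[(W+F+\varepsilon)^5-(W+F)^5](\partial_y\varepsilon\,\psi_B'+\varepsilon\,\varphi_B')$, up to errors $O(s^{-1})$ from \eqref{est:Qb} and \eqref{e:FLinfty}, reduces to its $Q^4$-counterpart $\int Q^4\varepsilon^2\psi_B'$, which is dominated by the good virial quantity through a localized coercivity argument for $\mathcal L$ under \eqref{ortho}, provided $B$ is taken large, exactly as in Section~3 of \cite{MaMeRa1}.

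For the remaining pieces of $\partial_s\varepsilon$: using $\mathcal E(W)=\vec m\cdot\vec\MM Q-\mathcal R$, the remainder $\mathcal R$ is handled by Cauchy--Schwarz against $\varepsilon\varphi_B,\partial_y\varepsilon\psi_B$ combined with \eqref{est:R:L2B}, yielding $(|\vec m|s^{-1}+|b_s|+s^{-2})\mathcal N_B(\varepsilon)\lesssim\tfrac{\mu_0}{10}\!\int\!\varepsilon^2\varphi_B'+s^{-4}$ via \eqref{BS:m}--\eqref{BS:bs}. The $\vec m\cdot\vec\MM Q$ piece integrates against exponentially decaying factors and is controlled similarly by \eqref{BS:m}. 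The transport terms $\vec m\cdot\vec\MM\varepsilon$ and $-b\Lambda\varepsilon$, after integrating by parts with the weights and using $|\vec m|+|b|\lesssim s^{-1}$, produce integrals of size $s^{-1}\mathcal N_B(\varepsilon)^2$, absorbable into the virial for $s_0$ large because $\int\varepsilon^2\varphi_B'\gtrsim B^{-1}\mathcal N_B(\varepsilon)^2$ on the $L^2$-part. Differentiating $W+F$ inside the sextic generates prefactors $|b_s|,|r_s|,|\partial_s F|$ in front of compactly supported $\varepsilon^2$-integrals, each $O(s^{-4})$ via \eqref{BS:bs}, \eqref{e:dr}, \eqref{decay:q_0.2}. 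Finally the scaling factor gives $\kappa\lambda^{-1}\lambda_s\mathcal F=-\kappa b\mathcal F+O(s^{-9/4})\mathcal F = O(s^{-1}\mathcal N_B(\varepsilon)^2)$ by the trivial upper bound $\mathcal F\lesssim\mathcal N_B(\varepsilon)^2$, again absorbed. The hardest piece is the algebraic bookkeeping of the nonlinear virial identity and the systematic tracking of which error terms are absorbable into $\int\varepsilon^2\varphi_B'$ versus which feed the $s^{-4}$ source.

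For the coercivity \eqref{coF}, I would Taylor-expand
\begin{equation*}
-\tfrac13\bigl[(W+F+\varepsilon)^6-(W+F)^6-6(W+F)^5\varepsilon\bigr]=-5(W+F)^4\varepsilon^2+O(|\varepsilon|^3+|\varepsilon|^6),
\end{equation*}
and replace $(W+F)^4$ by $Q^4$ up to errors $O\bigl((|b|+|r|+\|F\|_{L^\infty(y>-2|b|^{-\gamma})})\varepsilon^2\bigr)=O(s^{-1}\mathcal N_B(\varepsilon)^2)$ using \eqref{est:Qb} and \eqref{e:FLinfty}; the $\varepsilon^3$ and $\varepsilon^6$ remainders are $o(\mathcal N_B(\varepsilon)^2)$ by Sobolev embedding and \eqref{BS:eps}. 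This reduces $\mathcal F$, modulo lower order, to $\int[(\partial_y\varepsilon)^2\psi_B+\varepsilon^2\varphi_B-5Q^4\varepsilon^2\psi_B]$. A standard cut-off (truncate $\varepsilon$ to $|y|\leq B^{1/2}$) reduces matters to the coercivity \eqref{coercivity.2} applied to the truncated function, whose three obstruction directions $Q,\Lambda Q,y\Lambda Q$ are ruled out by \eqref{ortho}; on $|y|>B^{1/2}$ the term $Q^4\psi_B$ is exponentially small and $\varepsilon^2\varphi_B$ alone dominates, while the cut-off cross terms are $O(e^{-cB^{1/2}})\|\varepsilon\|_{L^2}^2\lesssim s^{-100}$ for $B$ large.
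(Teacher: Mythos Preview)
Your overall architecture matches the paper's (mixed energy--virial functional, split into a main virial piece plus remainder terms from $\mathcal E(W)$, transport, and $(W+F)_s$), and the coercivity argument for \eqref{coF} is essentially correct. However, two steps in your treatment of \eqref{dsF} are genuine gaps.

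\textbf{The $\vec m\cdot\vec\MM Q$ term.} Saying it ``integrates against exponentially decaying factors and is controlled similarly by \eqref{BS:m}'' is not enough. Without further structure, $\bigl|\int\Lambda Q\,G_B(\varepsilon)\bigr|$ is only $O(\|\varepsilon\|_{L^2_\loc})$, so the whole contribution is $|\vec m|\cdot O(\|\varepsilon\|_{L^2_\loc})$. By \eqref{mod:est:m} this contains a term of size $\|\varepsilon\|_{L^2_\loc}^2\lesssim B\int\varepsilon^2\varphi_B'$, with a prefactor of order $B$, which cannot be absorbed into $\mu_0\int\varepsilon^2\varphi_B'$ for $B$ large. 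The paper uses the orthogonality relations \eqref{ortho} in an essential way here: e.g.\ $\int\psi_B\,\mathcal L(\Lambda Q)\,\varepsilon=-2\int(\psi_B-1)Q\varepsilon$ via $(\varepsilon,Q)=0$, which gains a factor $e^{-B/4}$; similarly $(\varepsilon,\Lambda Q)=(\varepsilon,y\Lambda Q)=0$ are used to replace $\varphi_B-\psi_B$ by $\varphi_B-\psi_B-y/B$ against $\Lambda Q$ and $Q'$, gaining $B^{-2}$. These gains are what make the term absorbable plus $O(s^{-4})$.

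\textbf{The $\Lambda\varepsilon$ transport term.} Your claim that ``$\vec m\cdot\vec\MM\varepsilon$ and $-b\Lambda\varepsilon$ \dots\ produce integrals of size $s^{-1}\mathcal N_B(\varepsilon)^2$'' is false as stated. After integration by parts, the combined $\Lambda\varepsilon$ contribution to $2\int\partial_s\varepsilon\,G_B$ contains $-\tfrac{\lambda_s}{\lambda}\int y\varphi_B'\varepsilon^2$. For $y>0$ one has $y\varphi_B'(y)=\tfrac{y}{B}e^{y/B}$, which is \emph{not} bounded by $\varphi_B(y)$; hence $\int_{y>0}y\varphi_B'\varepsilon^2$ is not controlled by $\mathcal N_B(\varepsilon)^2$. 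The paper disposes of this piece by using the sign $\tfrac{\lambda_s}{\lambda}>0$ (the defocusing regime you invoke in your opening sentence but never actually use): the $y>0$ part is then $\leq 0$ and can be dropped, while the $y<0$ part is handled by a H\"older--Young interpolation yielding $\tfrac{\mu_0}{16}\int\varphi_B'\varepsilon^2+s^{-100}\|\varepsilon\|_{L^2}^2$. The same sign argument is needed for the analogous $(2+\kappa)\int\psi_B(\partial_y\varepsilon)^2$ and $\kappa\int\varphi_B\varepsilon^2$ pieces arising from $\kappa\tfrac{\lambda_s}{\lambda}\mathcal F$ combined with $2\tfrac{\lambda_s}{\lambda}\int\Lambda\varepsilon\,G_B$; these are positive and of size $s^{-1}B\int\varphi_B'[(\partial_y\varepsilon)^2+\varepsilon^2]$, which is absorbable only because $s_0$ may depend on $B$.
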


\begin{proof}
Let
\begin{equation} \label{def:GBeps}
G_B(\varepsilon)=
-\partial_y(\Psi_B \partial_y\varepsilon) + \varepsilon \ph_B-\psi_B\left((W+F+\varepsilon)^5-(W+F)^5\right).
\end{equation}
We compute using \eqref{eq:eps},
\begin{align*}
\lambda^{-\kappa} \big(\lambda^{\kappa}\F\big)_s 
& = 2 \int \left( \varepsilon_s-\frac{\lambda_s}{\lambda} \Lambda \varepsilon\right) G_B(\varepsilon)
+ \frac{\lambda_s}{\lambda} \left( 2\int \Lambda \varepsilon G_B(\varepsilon) +\kappa\F\right)\\
& \quad - 2\int (W+F)_s \left[(W+F+\varepsilon)^5-(W+F)^5-5(W+F)^4\varepsilon\right]\psi_B \\ 
&= 2\int \partial_y \left[ -\partial_y^2\varepsilon+\varepsilon-\left((W+F+\varepsilon)^5-(W+F)^5\right)\right]
 G_B(\varepsilon) 
\\&\quad - 2 \int \mathcal E(W)G_B(\varepsilon) + \left(\frac{\sigma_s}{\lambda}-1\right) \int \partial_y\varepsilon G_B(\varepsilon)
+ \frac{\lambda_s}{\lambda} \left( 2\int \Lambda \varepsilon G_B(\varepsilon) +\kappa \F\right)\\
& \quad - 2\int (W+F)_s \left[(W+F+\varepsilon)^5-(W+F)^5-5(W+F)^4\varepsilon\right]\psi_B\\
&=:\mathrm{f}_1+\mathrm{f}_2+\mathrm{f}_3+\mathrm{f}_4+\mathrm{f}_5.\end{align*}

\noindent \emph{Estimate of $\mathrm{f}_1$.} We claim, by choosing $\astar$ small enough, $B$ large enough and then $s$ large enough (possibly depending on $B$), that
\begin{equation} \label{est:f1}
\mathrm{f}_1+2\mu_0\int \varphi_B'\varepsilon^2 \lesssim s^{-100}  ,
\end{equation}
where $\mu_0$ is a small positive constant which will be fixed below. 

To prove \eqref{est:f1}, we compute following Step 3 of Proposition 3.1 in \cite{MaMeRa1},
\begin{align*}
\mathrm{f}_1&= -\int \left[3\psi_B'(\partial_y^2 \varepsilon)^2+
(3\ph_B'+\psi_B'-\psi_B''')(\partial_y \varepsilon)^2 + (\varphi_B'-\varphi_B''')\varepsilon^2\right]\\
&\quad -\frac13\int \left[(W+F+\varepsilon)^6-(W+F)^6 - 6(W+F+\varepsilon)^5\varepsilon\right]
(\ph_B'-\psi_B')\\
& \quad + 2\int \left[(W+F+\varepsilon)^5-(W+F)^5-5(W+F)^4\varepsilon\right] \partial_y(W+F) (\psi_B-\ph_B)\\
& \quad + 10 \int \psi_B' \partial_y \varepsilon \left[\partial_y(W+F) \left((W+F+\varepsilon)^4-(W+F)^4\right)
+(W+F+\varepsilon)^4 \partial_y \varepsilon\right]
\\
&\quad -\int \psi_B' \left[\left((-\partial_y^2\varepsilon+\varepsilon-\left((W+F+\varepsilon)^5-(W+F)^5\right)\right)^2-\left(-\partial_y^2\varepsilon+\varepsilon\right)^2\right]
\\ &=\mathrm{f}_1^<+\mathrm{f}_1^>,
\end{align*}
where $\mathrm{f}_1^{<,>}$ correspond respectively to integration on the three regions
$y<-\frac B2$ and $y>-\frac B2$.

\smallskip

\noindent \emph{Estimate of $\mathrm{f}_1^<$.}
In the region $y<-\frac B2$, we use the properties \eqref{cut:onR} and take $B$ large enough to deduce 
\begin{align*}
\mathrm{f}_1^<&+3\int_{y<-\frac{B}2} \psi_B'(\partial_y^2\varepsilon)^2+\frac12\int_{y<-\frac{B}2} \varphi_B'\big((\partial_y\varepsilon)^2+\varepsilon^2 \big) \\ 
& \lesssim \int_{y<-\frac{B}2}\varphi_B' \left( \big(W^4+F^4\big)\varepsilon^2+\varepsilon^6\right)
+B\int_{y<-\frac{B}2}\varphi_B' |\partial_y(W+F)|\left(|W+F|^3|\varepsilon|^2+|\varepsilon|^5 \right)
\\ &\quad +\int_{y<-\frac{B}2} \psi_B'|\partial_y \varepsilon| \left[ |\partial_y(W+F)|\left(|W+F|^3|\varepsilon|+\epsilon^4 \right)+|\partial_y\varepsilon|\left(|W+F|^4+\epsilon^4 \right)\right] 
\\ & \quad +\int_{y<-\frac{B}2} \psi_B' \left(-2(-\partial_y^2\varepsilon+\varepsilon) +(W+F+\varepsilon)^5-(W+F)^5\right)\left((W+F+\varepsilon)^5-(W+F)^5\right) 
\\ &=:\mathrm{f}_{1,1}^<+\mathrm{f}_{1,2}^<+\mathrm{f}_{1,3}^<+\mathrm{f}_{1,4}^<  .
\end{align*}
Observe from \eqref{est:Qb} and \eqref{e:r} that 
\begin{equation*} 
\int_{y<-\frac{B}2} \varphi_B'W^4\varepsilon^2 \lesssim \big(s^{-4}+e^{-\frac{B}2}\big) \int_{y<-\frac{B}2} \varphi_B'\varepsilon^2  . 
\end{equation*}
To deal with $\int F^4\varepsilon^2$, recall from the definition of $F$ that $F(s,y)=\lambda^{\frac12}(s)f(s,\lambda(s)y+\sigma(s))$. By splitting the integration domain into the two cases $\lambda(s)y>-\frac14\sigma(s) \iff \lambda(s)y+\sigma(s)>\frac34 \sigma(s)$ and $\lambda(s)y<-\frac14\sigma(s)$ (from \eqref{BS:param}) and using \eqref{def:f_0}, \eqref{decay:q_0.1} and \eqref{BS:param} we get that, 
\begin{equation} \label{est:Feps} 
\int_{y<-\frac{B}2} \varphi_B'|F|^j\varepsilon^2 \lesssim s^{-j} \int_{y<-\frac{B}2} \varphi_B'\varepsilon^2+B^{-1}\lambda(s)^{\frac{j}2}e^{-\frac{c}{B}s}\int \varepsilon^2 \
\end{equation}
and
\begin{equation} \label{est:Feps_deriv} 
\int_{y<-\frac{B}2} \varphi_B'|\partial_yF|^j\varepsilon^2 \lesssim s^{-2j}\int_{y<-\frac{B}2} \varphi_B'\varepsilon^2+B^{-1}\lambda(s)^{\frac{3j}2} e^{-\frac{c}{B}s}\int \varepsilon^2  ,
\end{equation}
for all $j \in \mathbb N$, $j \ge 1$. 

To control, the purely nonlinear term in $\mathrm{f}_{1,1}$, we recall the following version of the Sobolev embedding (see Lemma 6 of \cite{Mjams} and also \eqref{est:infty}): 
\begin{align*}
\left\|\varepsilon^2\sqrt{\varphi_B'}\right\|_{L^\infty(y<-\frac{B}2)}^2
&\lesssim \|\varepsilon\|_{L^2}^2\left(\int_{y<-\frac{B}2} (\partial_x\varepsilon)^2 \varphi_B'
 +\int_{y<-\frac{B}2} \varepsilon^2\frac{\big(\varphi_B''\big)^2}{\varphi_B'}\right)\\ &\lesssim \delta(\astar)\int_{y<-\frac{B}2} \varphi_B'\left(\varepsilon^2+(\partial_y \varepsilon)^2\right).
 \end{align*}
Thus, it follows that 
\begin{equation*} 
\int_{y<-\frac{B}2} \varphi_B'\varepsilon^6 \le \left\|\varepsilon^2\sqrt{\varphi_B'}\right\|_{L^\infty(y<-\frac{B}2)}^2 \left(\int \varepsilon^2 \right)\lesssim \delta(\astar)\int_{y<-\frac{B}2} \varphi_B'\left(\varepsilon^2+(\partial_y \varepsilon)^2\right).
\end{equation*}
Note also for future reference that the same proof yields
\begin{equation} \label{est:eps6}
 \int \varphi_B'\varepsilon^6 \le \left\|\varepsilon^2\sqrt{\varphi_B'}\right\|_{L^\infty}^2 \left(\int \varepsilon^2 \right)\lesssim \delta(\astar)\int \varphi_B'\left(\varepsilon^2+(\partial_y \varepsilon)^2\right).
 \end{equation}
Hence, we deduce gathering those estimates and choosing $s$ and $B$ large enough and $\astar$ small enough that
\begin{equation*} 
\mathrm{f}_{1,1}^< \le cs^{-100}+\frac18\int_{y<-\frac{B}2} \varphi_B'\left(\varepsilon^2+(\partial_y \varepsilon)^2\right).
\end{equation*}
Observe from Young's inequality that 
\begin{equation*} 
\mathrm{f}_{1,2}^< \lesssim \int_{y<-\frac{B}2} \varphi_B' |\partial_y(W+F)| \left(|W+F)^3+|\partial_y(W+F)|^3 \right)\varepsilon^2+\int_{y<-\frac{B}2} \varphi_B' \varepsilon^6  ,
\end{equation*}
so that it follows arguing as for $\mathrm{f}_{1,1}^<$ that, for $s$ and $B$ large enough and $\astar$ small enough, 
\begin{equation*} 
\mathrm{f}_{1,2}^< \le cs^{-100}+\frac1{2^4}\int_{y<-\frac{B}2} \varphi_B'\left(\varepsilon^2+(\partial_y \varepsilon)^2\right).
\end{equation*}
By using again Young's inequality, we have 
\begin{equation*} 
\mathrm{f}_{1,3}^< \lesssim \int_{y<-\frac{B}2} \psi_B' \left(|W+F|^4+|\partial_y(W+F)|^4 \right) (\varepsilon^2+(\partial_y \varepsilon)^2)+\int_{y<-\frac{B}2} \psi_B'\varepsilon^6+\int_{y<-\frac{B}2} \psi_B' \varepsilon^4(\partial_y\varepsilon)^2
\end{equation*}
The first two terms on the right-hand side of the above inequality are estimated as before. For the third one, we deduce arguing as in \eqref{Sobo:firstderiv} that 
\begin{equation*} 
\int_{y<-\frac{B}2} \psi_B' \varepsilon^4(\partial_y\varepsilon)^2 \lesssim \left\| \varepsilon \partial_y\varepsilon \sqrt{\psi_B'}\right\|_{L^{\infty}(y<-\frac{B}2)}^2 \left(\int \varepsilon^2 \right) \lesssim \delta(\astar)\int_{y<-\frac{B}2} \psi_B' \left((\partial_y\varepsilon)^2+(\partial_y^2\varepsilon)^2 \right).
\end{equation*}
Thus it follows by taking $s$ and $B$ large enough and $\astar$ small enough that 
\begin{equation*} 
\mathrm{f}_{1,3}^< \le cs^{-100}+\frac1{2^5}\int_{y<-\frac{B}2} \varphi_B'(\partial_y \varepsilon)^2+\frac1{2}\int_{y<-\frac{B}2} \psi_B'(\partial_y^2 \varepsilon)^2  .
\end{equation*}
Finally, we get from Young's inequality and \eqref{cut:onR} that 
\begin{equation*} 
\mathrm{f}_{1,4}^< \le \frac1{2^7}\int_{y<-\frac{B}2} \varphi_B'\varepsilon^2+\frac1{8}\int_{y<-\frac{B}2} \psi_B'(\partial_y^2 \varepsilon)^2+c\int_{y<-\frac{B}2} \psi_B' |W+F|^8 \varepsilon^2+c\int_{y<-\frac{B}2} \psi_B'\varepsilon^{10}  .
\end{equation*}
Since $\psi_B'\sim (\varphi_B')^2$ in the region $y<-\frac{B}2$, we have 
\begin{equation*} 
\int_{y<-\frac{B}2} \varphi_B'\varepsilon^{10} \le \left\|\varepsilon^2\sqrt{\varphi_B'}\right\|_{L^\infty(y<-\frac{B}2)}^4 \left(\int \varepsilon^2 \right)\lesssim \delta(\astar)\int_{y<-\frac{B}2} \varphi_B'\left(\varepsilon^2+(\partial_y \varepsilon)^2\right).
\end{equation*}
Hence, we deduce that 
\begin{equation*} 
\mathrm{f}_{1,4}^< \le cs^{-100}+\frac1{2^6}\int_{y<-\frac{B}2} \varphi_B'(\partial_y \varepsilon)^2+\frac1{4}\int_{y<-\frac{B}2} \psi_B'(\partial_y^2 \varepsilon)^2  .
\end{equation*}

Therefore, we conclude gathering all those estimates that 
\begin{equation} \label{est:f1<}
\mathrm{f}_1^<+\int_{y<-\frac{B}2} \psi_B'(\partial_y^2\varepsilon)^2+\frac14\int_{y<-\frac{B}2} \varphi_B'\big((\partial_y\varepsilon)^2+\varepsilon^2 \big) \lesssim s^{-100}  .
\end{equation}

\noindent \emph{Estimate of $\mathrm{f}_1^>$.}
In the region $y>-\frac B2$, one has $\ph_B(y)=e^{\frac{y}B}$ and $\psi_B(y)=1$. Thus,
\begin{align}
\mathrm{f}_1^>&=
-\frac 1B \int_{y>-\frac{B}2} \left[ 3(\partial_y \varepsilon)^2 + (1-\frac1{B^2})\varepsilon^2 \right] e^{\frac{y}B}
\nonumber \\ & \quad -\frac{1}{3B}\int_{y>-\frac{B}2} \left[(W+F+\varepsilon)^6-(W+F)^6- 6(W+F+\varepsilon)^5\varepsilon\right]e^{\frac{y}B}
\nonumber \\
& \quad - 2\int_{y>-\frac{B}2} \left[(W+F+\varepsilon)^5-(W+F)^5-5(W+F)^4\varepsilon\right] \partial_y(W+F) (e^{\frac{y}B}-1) \nonumber \\
&=-\frac 1B\int_{y>-\frac{B}2} \left[3(\partial_y \varepsilon)^2 + \varepsilon^2 -5Q^4\varepsilon^2+20y Q'Q^3\varepsilon^2\right]e^{\frac{y}B}
+\cR_{\rm Vir}(\varepsilon)  ,\label{decomp:f1>}
\end{align}
where 
\begin{equation*}
\cR_{\rm Vir}=\cR_{\rm Vir,1}+ \cR_{\rm Vir,2}+ \cR_{\rm Vir,3}+ \cR_{\rm Vir,4}+\cR_{\rm Vir,5} 
\end{equation*} 
and 
\begin{align*} 
\cR_{\rm Vir,1}&=\frac1{B^3} \int_{y>-\frac{B}2} \varepsilon^2 e^{\frac{y}B} ; \\
\cR_{\rm Vir,2}&=- \frac1{3B}\int_{y>-\frac{B}2} \left[ (W+F+\varepsilon)^6-(W+F)^6 - 6 (W+F+\varepsilon)^5\varepsilon+15Q^4\varepsilon^2 \right]e^{\frac{y}B} ;\\
\cR_{\rm Vir,3}&= -2\int_{y>-\frac{B}2} \left[(W+F+\varepsilon)^5-(W+F)^5-5(W+F)^4\varepsilon-10(W+F)^3 \varepsilon^2\right] \\ & \quad \quad \quad \quad \quad \quad \times \partial_y(W+F)(e^{\frac{y}B}-1) ; \\
\cR_{\rm Vir,4}&=- 20 \int_{y>-\frac{B}2} \left[(W+F)^3\partial_y(W+F)-Q^3Q'\right](e^{\frac{y}B}-1)\varepsilon^2  ; \\ 
\cR_{\rm Vir,5}&=- 20 \int_{y>-\frac{B}2}Q^3Q'\left( e^{\frac{y}B}-1-\frac{y}B \right) \varepsilon^2 .
\end{align*}

To handle the first term on the right-hand side of \eqref{decomp:f1>}, we rely on the following coercivity property of the virial quadratic form (under the orthogonality conditions \eqref{ortho} ) proved in Lemma 3.5 in \cite{CoMa1} and which is a variant of Lemma 3.4 in \cite{MaMeRa1} based on Proposition 4 in \cite{MaMejmpa}. 
\begin{lemma}[Localized virial estimate]
There exist $B_{0}>100$ and $\mu_1>0$ such that
for all $B\geq B_{0}$, 
\begin{equation} \label{est:viriel}
\int_{y>-\frac{B}2} \left[ 3 (\partial_y\varepsilon)^2 + \varepsilon^2 - 5 Q^4 \varepsilon^2 + 20 y Q' Q^3 \varepsilon^2\right] \geq \mu_1 \int_{y>-\frac{B}2} \left[(\partial_y\varepsilon)^2 + \varepsilon^2 \right] e^{\frac{y}B}
- \frac 1 B \int \varepsilon^2 e^{-\frac{|y|}{2}}  .
\end{equation}
\end{lemma}

Now, we turn our attention to $ \cR_{\rm Vir}$. We begin by explaining how to control $\left|\cR_{\rm Vir,1}\right|$ and $\left|\cR_{\rm Vir,5}\right|$. We rely on the calculus inequality 
\begin{equation} \label{est:expo}
\left| e^{\frac{y}B}-1-\frac{y}B \right| \lesssim \frac{|y|^2}{B^2}e^{\frac{|y|}B}  .
\end{equation}
It follows that 
\begin{equation*} 
B\left|\cR_{\rm Vir,1}\right| +B\left|\cR_{\rm Vir,5} \right| \lesssim \frac1{B} \int_{y>-\frac{B}2}\varepsilon^2 e^{\frac{y}B}  .
\end{equation*}
Hence, $\left|\cR_{\rm Vir,1}\right|$ and $\left|\cR_{\rm Vir,5}\right|$ will be controlled by using the contribution coming from the first term on the right-hand side of \eqref{est:viriel} and by taking $B$ large enough.

To estimate $\cR_{\rm Vir,2}$, we write
\begin{align*}
&(W+F+\varepsilon)^6-(W+F)^6 - 6 (W+F+\varepsilon)^5\varepsilon +15 Q^4 \varepsilon^2
\\&\quad = \left[(W+F+\varepsilon)^6-(W+F)^6 - 6 (W+F)^5\varepsilon-15 (W+F)^4 \varepsilon^2\right] \\
&\qquad-6\left[(W+F+\varepsilon)^5-(W+F)^5 -5(W+F)^4\varepsilon\right] \varepsilon-15\left[(W+F)^4-Q^4\right]\varepsilon^2
\end{align*}
so that
\begin{align*}
&\left| \left[(W+F+\varepsilon)^6-(W+F)^6 - 6 (W+F+\varepsilon)^5\varepsilon\right] +15 Q^4 \varepsilon^2\right|
\\& \qquad \lesssim |W+F|^3|\varepsilon|^3+|\varepsilon|^6+|W+F-Q|(|W|+|F|+Q)^3\varepsilon^2.
\end{align*}
Hence, we get 
\begin{align*}
B\left|\cR_{\rm Vir,2} \right| &\lesssim \|\varepsilon \|_{L^{\infty}(y>-\frac{B}2)} \int_{y>-\frac{B}2} \left(1+|F|^3\right)\varepsilon^2e^{\frac{y}B} + \|\varepsilon \|_{L^{\infty}(y>-\frac{B}2)}^4 \int_{y>-\frac{B}2} \varepsilon^2 e^{\frac{y}B} \\ 
& \quad +\int_{y>-\frac{B}2}\left(|Q_b-Q|+|rR|+|F|\right) \left(1+|F|^3\right)\varepsilon^2e^{\frac{y}B}  .
\end{align*}
On the one hand, observe from the Sobolev embedding and the bootstrap assumption \eqref{BS:eps} that 
\begin{equation} \label{est:eps_>}
\|\varepsilon \|_{L^{\infty}(y>-\frac{B}2)} \lesssim \mathcal{N}_B(\varepsilon) \lesssim |s|^{-\frac54}  .
\end{equation} 
On the other hand, recall that $F(s,y)=\lambda^{\frac12}(s)f(s,\lambda(s)y+\sigma(s))$. For $s>2B$, we have $\lambda(s)y+\sigma(s)>\frac34 \sigma(s)$ in the region $y>-\frac{B}2$. Hence, it follows from \eqref{def:f_0}, \eqref{decay:q_0.1} and \eqref{BS:param} that 
\begin{equation} \label{est:F>}
\|F\|_{L^{\infty}(y>-\frac{B}2)} \lesssim s^{-1} \quad \text{and} \quad \|\partial_yF\|_{L^{\infty}(y>-\frac{B}2)} \lesssim s^{-2}  .
\end{equation}
Thus, we deduce by using \eqref{def:Qb}, \eqref{def:Pb}, \eqref{e:r}, \eqref{est:eps_>} and \eqref{est:F>} and by taking $|s|$ large enough that 
\begin{equation*} 
B\left|\cR_{\rm Vir,2} \right| \le \frac{\mu_1}4 \int_{y>-\frac{B}2}\varepsilon^2 e^{\frac{y}B}  .
\end{equation*}

To deal with $\cR_{\rm Vir,3}$, we observe 
\begin{align*} 
\left|\left[(W+F+\varepsilon)^5-(W+F)^5-5(W+F)^4 \varepsilon-10(W+F)^3\varepsilon^2\right] \partial_y(W+F)(e^{\frac{y}B}-1)\right|\\ 
\lesssim \left(|W+F|^2|\varepsilon|^3+|\varepsilon|^5 \right) \left(|\partial_yW|+|\partial_yF|\right)e^{\frac{y}B}  ,
\end{align*}
so that 
\begin{align*}
\left|\cR_{\rm Vir,3} \right| &\lesssim \|\varepsilon \|_{L^{\infty}(y>-\frac{B}2)} \int_{y>-\frac{B}2} \left(1+|F|^2\right)\left(1+|\partial_yF|\right)\varepsilon^2e^{\frac{y}B} \\ & \quad + \|\varepsilon \|_{L^{\infty}(y>-\frac{B}2)}^3 \int_{y>-\frac{B}2} \left(1+|\partial_yF|\right)\varepsilon^2 e^{\frac{y}B}  .
\end{align*}
Hence, we deduce from \eqref{est:eps_>} and \eqref{est:F>} by taking $|s|$ large enough (possibly depending on $B$) that
\begin{equation*} 
B\left|\cR_{\rm Vir,3} \right| \le \frac{\mu_1}8 \int_{y>-\frac{B}2}\varepsilon^2 e^{\frac{y}B}  .
\end{equation*}
To deal with $\cR_{\rm Vir,4}$, we write
\begin{equation*} 
(W+F)^3\partial_y(W+F)-Q^3Q' =\left( (W+F)^3-Q^3\right) \partial_y(W+F)+Q^3\partial_y(W-Q+F)  ,
\end{equation*}
so that 
\begin{align*}
\left|(W+F)^3\partial_y(W+F)-Q^3Q'\right| &\lesssim \left( |Q_b-Q|+|F|+|r||R|\right)\left(|W|+|F|+Q\right)^2\left( |\partial_yW|+|\partial_yF|\right)\\ & \quad +Q^3\left(|\partial_y(Q_b-Q)|+|r||R'|+|\partial_yF|\right),
\end{align*}
Thus, we deduce by using \eqref{def:Qb}, \eqref{def:Pb}, \eqref{e:r} and \eqref{est:F>} and by taking $|s|$ large enough (depending possibly on $B$) that 
\begin{equation*} 
B\left|\cR_{\rm Vir,4} \right| \le \frac{\mu_1}{2^4} \int_{y>-\frac{B}2}\varepsilon^2 e^{\frac{y}B}  .
\end{equation*}

Then, we deduce gathering all those estimates that 
\begin{equation} \label{est:f1>}
\mathrm{f}_1^>+\frac{\mu_1}{2B}\int_{y>-\frac{B}2} e^{\frac{y}B}\varepsilon^2 \lesssim \frac1{B^2} \int \varepsilon^2 e^{-\frac{|y|}{2}}  .
\end{equation}

The proof of \eqref{est:f1} follows by combining \eqref{est:f1<}, \eqref{est:f1>} and  choosing $\mu_0=2^{-5}\min\{1,\mu_1 \}$.

\smallskip

\noindent \emph{Estimate of $\mathrm{f}_2$.} We claim that 
\begin{equation} \label{est:f2}
\big| \mathrm{f}_{2}\big| \le \frac{\mu_0}2 \int \varphi_B'\left(\varepsilon^2+(\partial_y \varepsilon)^2\right) +cB^2s^ {-4}  .
\end{equation}

By using the decomposition in \eqref{decomp:EW}, we have 
\begin{equation*}
\mathrm{f}_2=2\int (\vec{m} \cdot \vec{\MM} Q ) G_B(\varepsilon)-2\int \mathcal{R} \, G_B(\varepsilon)=: \mathrm{f}_{2,1}+\mathrm{f}_{2,2}  .
\end{equation*}

We first deal with $\mathrm{f}_{2,1}$. By using the definition of $G_B(\varepsilon)$ in $\eqref{def:GBeps}$ and integration by parts, we compute 
\begin{align*}
\int \Lambda Q \, G_B(\varepsilon)&=\int \psi_B \mathcal{L}(\Lambda Q)\varepsilon -\int \psi_B'\partial_y(\Lambda Q) \varepsilon +\int (\varphi_B-\psi_B)\Lambda Q \, \varepsilon\\ & \quad -\int \Lambda Q \psi_B \left[ (W+F+\varepsilon)^5-(W+F)^5-5(W+F)^4\varepsilon\right] \\ &
\quad -5\int \Lambda Q \psi_B \left[ (W+F)^4-Q^4\right]\varepsilon  .
\end{align*}
and (also using $\mathcal{L}(\partial_yQ)=0$)
\begin{align*}
\int \partial_y Q \, G_B(\varepsilon)&= -\int \psi_B'\partial_y^2 Q \varepsilon +\int (\varphi_B-\psi_B)\partial_y Q \, \varepsilon-5\int \partial_y Q \psi_B \left[ (W+F)^4-Q^4\right]\varepsilon\\ & \quad -\int \partial_yQ \psi_B \left[ (W+F+\varepsilon)^5-(W+F)^5-5(W+F)^4\varepsilon\right] 
\end{align*}

We estimate each of these terms separately. By using the identity $\mathcal{L}\Lambda Q=-2Q$, the second and third orthogonality identities in \eqref{ortho}, the localisation properties of $\psi_B$, $\psi_B'$ and $\varphi_B$, \eqref{est:expo} and the decay properties of $Q$ and $\Lambda Q$, it follows that
\begin{align*}
&\left|\int \psi_B \mathcal{L}(\Lambda Q)\varepsilon \right|=2\left| \int (\psi_B-1) Q \varepsilon\right| \lesssim \int_{y<-\frac{B}2} e^{-\frac{|y|}2} |\varepsilon| \lesssim e^{-\frac{B}4} \|\varepsilon\|_{L^2_\loc}  ; \\
&\left|\int \psi_B'\partial_y^2 Q \varepsilon \right|+\left|\int \psi_B'\partial_y(\Lambda Q) \varepsilon \right| \lesssim \int_{y<-\frac{B}2} e^{-\frac{|y|}2} |\varepsilon| \lesssim e^{-\frac{B}4} \|\varepsilon\|_{L^2_\loc}  ;
\end{align*}
and
\begin{align*}
&\left|\int (\varphi_B-\psi_B)\Lambda Q \, \varepsilon \right|=2\left| \int(\varphi_B-\psi_B-\frac{y}B)\Lambda Q \, \varepsilon\right| \\ & \quad \quad \quad \lesssim \int_{y<-\frac{B}2} e^{-\frac{|y|}2}|\varepsilon|+\frac1{B^2}\int_{y>-\frac{B}2}e^{-\frac{|y|}2}|\varepsilon| \lesssim \left(e^{-\frac{B}4}+\frac1{B^2}\right) \|\varepsilon\|_{L^2_\loc}  ; \\ 
&\left|\int (\varphi_B-\psi_B)\partial_y Q \varepsilon \right|=\left|\int (\varphi_B-\psi_B-\frac{y}B)\partial_y Q \varepsilon \right|
\lesssim \int_{|y|>\frac{B}2} e^{-\frac{|y|}2}|\varepsilon| \\ & \quad \quad \quad \lesssim \int_{y<-\frac{B}2} e^{-\frac{|y|}2}|\varepsilon|+\frac1{B^2}\int_{y>-\frac{B}2}e^{-\frac{|y|}2}|\varepsilon| \lesssim \left(e^{-\frac{B}4}+\frac1{B^2}\right) \|\varepsilon\|_{L^2_\loc}  ; 
\end{align*}
where in the last line, we have also used the orthogonality condition  (from \eqref{ortho})
\begin{equation*} 
\int y\partial_yQ \varepsilon=\int \big( \Lambda Q-\frac12Q\big) \varepsilon=0  ,
\end{equation*}

Moreover, it follows from \eqref{def:Qb}, \eqref{def:Pb}, \eqref{e:r} and \eqref{e:F} that
\begin{align*}
\int \psi_B \left( |\Lambda Q|+\right. & \left. |\partial_yQ| \right) \left| (W+F)^4-Q^4\right| |\varepsilon| 
\\ &\lesssim \int e^ {-\frac{3|y|}4}|W+F-Q| \left(|W+F|^3+Q^3 \right) |\varepsilon|
 \lesssim s^ {-1}\|\varepsilon\|_{L^ 2_\loc}  .
\end{align*}
To deal with the nonlinear term, we recall the Sobolev bound 
\begin{equation} \label{Sob:eps_varphi}
\|\varepsilon^2 \sqrt{\psi_B}\|_{L^{\infty}}^2 \lesssim \left(\int \varepsilon^2 \right)\int \psi_B\, (\varepsilon^2+(\partial_y\varepsilon)^2)\lesssim \delta(\astar) \mathcal{N}_B(\varepsilon)^2  .
\end{equation}
Hence, we deduce from \eqref{e:r}, \eqref{e:F}, \eqref{cut:onR} and \eqref{BS:eps} that
\begin{align*}
\int \psi_B \left( |\Lambda Q|+|\partial_yQ| \right) &\left| (W+F+\varepsilon)^5-(W+F)^5-5(W+F)^4\varepsilon\right| \\ &
\lesssim \int \psi_B e^ {-\frac{3|y|}4}\left( |W+F|^3 \varepsilon^2+|\varepsilon|^5 \right)
\\ & \lesssim B\int \varphi_B'\varepsilon^2+\|\varepsilon^ 2\sqrt{\psi_B}\|_{L^{\infty}}^2 \int e^{-\frac{3|y|}4}|\varepsilon| 
\lesssim B\int \varphi_B'\varepsilon^2+s^{-\frac52} \|\varepsilon\|_{L^2_\loc} .
\end{align*}

Therefore, we deduce combining those estimates with \eqref{est:L2loc} and \eqref{BS:m}, and choosing $s$ and $B>100$ large enough that 
\begin{equation} \label{est:f21}
\big| \mathrm{f}_{2,1}\big| \le \frac{\mu_0}4 \int \varphi_B'\left(\varepsilon^2+(\partial_y \varepsilon)^2\right) +cs^ {-4}  .
\end{equation}

Now, we turn to $\mathrm{f}_{2,2}$. We compute from the definition of $G_B(\varepsilon)$ in \eqref{def:GBeps} 
\begin{equation*} 
\mathrm{f}_{2,2}=2\int \psi_B\partial_y\mathcal{R}\, \partial_y \varepsilon+2\int \varphi_B \mathcal{R} \, \varepsilon
-2\int \psi_B \mathcal{R} \left[(W+F+\varepsilon)^5-(W+F)^5 \right] .
\end{equation*}
By  the Cauchy-Schwarz inequality and the properties of $\varphi_B$ and $\psi_B$ in \eqref{cut:onR}, it holds
\begin{align*} 
&\left|\int \varphi_B \mathcal{R} \varepsilon \right| \le\left(\int \varphi_B^2e^{-\frac{y}B} \varepsilon^2 \right)^{\frac12}
\left(\int e^{\frac{y}B} \mathcal{R}^2 \right)^{\frac12} \le \frac{\mu_0}{2^{4}} \int \varepsilon^2 \varphi_B' +c B^2\| \mathcal{R}\|_{L^2_B}^2  ; \\ 
&\left|\int \psi_B \partial_y\mathcal{R} \, \partial_y\varepsilon \right| \le\left(\int \psi_B^2e^{-\frac{y}B} (\partial_y\varepsilon)^2 \right)^{\frac12}
\left(\int e^{\frac{y}B} (\partial_y\mathcal{R})^2 \right)^{\frac12} \le \frac{\mu_0}{2^{5}} \int (\partial_y\varepsilon)^2 \varphi_B' +c B^2\| \partial_y\mathcal{R}\|_{L^2_B}^2  .
\end{align*} 
To treat the nonlinear term, we observe that 
\begin{equation*} 
\left|\int \psi_B \mathcal{R} \left[(W+F+\varepsilon)^5-(W+F)^5 \right] \right| \lesssim \int \psi_B |\mathcal{R}|\left(|W|^4+|F|^4 \right)|\varepsilon|
+ \int \psi_B |\mathcal{R}||\varepsilon|^5  .
\end{equation*}
On the one hand, we deduce from \eqref{est:F>}, and then \eqref{cut:onR} and \eqref{est:Feps} that 
\begin{align*}
 \int \psi_B |\mathcal{R}|\left(|W|^4+|F|^4 \right)|\varepsilon| &\le \left(\int \psi_B^2e^{-\frac{y}B}(1+|F|^8\mathbf{1}_{\{y<-\frac{B}2\}}) \varepsilon^2 \right)^{\frac12}
\left(\int e^{\frac{y}B} \mathcal{R}^2 \right)^{\frac12}
\\ & \lesssim \frac{\mu_0}{2^{6}} \int \varepsilon^2 \varphi_B' +c B^2\| \mathcal{R}\|_{L^2_B}^2  .
\end{align*}
On the other hand, \eqref{cut:onR}, the Sobolev bound \eqref{Sob:eps_varphi}, and then \eqref{BS:eps}, \eqref{est:eps6} yield
\begin{align*}
 \int \psi_B |\mathcal{R}||\varepsilon|^5 &\lesssim \left\|\varepsilon^2\sqrt{\psi_B}\right\|_{L^\infty}
 \left(\int \varphi_B^2e^{-\frac{y}B} \varepsilon^6 \right)^{\frac12}
\left(\int e^{\frac{y}B} \mathcal{R}^2 \right)^{\frac12} \lesssim s^{-\frac54} \left( \int \varepsilon^2 \varphi_B' +c B^2\| \mathcal{R}\|_{L^2_B}^2\right).
\end{align*}

Then, we deduce combining those estimates with \eqref{est:R:L2B}, \eqref{est:L2loc}, \eqref{BS:m} and \eqref{BS:bs} that
\begin{equation} \label{est:f22}
\big| \mathrm{f}_{2,2}\big| \le \frac{\mu_0}{8} \int \varepsilon^2 \varphi_B'+c B^2s^{-4}  .
\end{equation}

Finally, we conclude the proof of \eqref{est:f2} gathering \eqref{est:f21} and \eqref{est:f22}.

\smallskip

\noindent \emph{Estimate of $\mathrm{f}_3$.} We claim that 
\begin{equation} \label{est:f3}
\left| \mathrm{f}_{3}\right| \le \frac{\mu_0}{4} \int \varphi_B'\left(\varepsilon^2+(\partial_y \varepsilon)^2\right) +cs^ {-100}  .
\end{equation}

From the definition of $G_B(\varepsilon)$ in \eqref{def:GBeps}, we have
\begin{align*}
\mathrm{f}_{3}&= \left(\frac{\sigma_s}{\lambda}-1\right) \int \partial_y\varepsilon \left[ -\partial_y(\psi_B\partial_y\varepsilon)+\varphi_B\varepsilon-\psi_B \left((W+F+\varepsilon)^5-(W+F)^5 \right) \right] \\ & 
 =: \mathrm{f}_{3,1}+\mathrm{f}_{3,2}+\mathrm{f}_{3,3}  .
\end{align*}
By using the identities 
\begin{equation*} 
\int \partial_y\varepsilon\, \partial_y(\psi_B\partial_y\varepsilon)=\frac12 \int \psi_B' (\partial_y \varepsilon)^2 \quad \text{and} \quad \int \partial_y\varepsilon \varphi_B \varepsilon=-\frac12 \int \varphi_B' \varepsilon^2  ,
\end{equation*}
we deduce from \eqref{BS:m} and \eqref{cut:onR} that 
\begin{equation*} 
\left| \mathrm{f}_{3,1}\right|+\left| \mathrm{f}_{3,2}\right| \lesssim s^{-\frac54}\int \varphi_B' \left(\varepsilon^2+(\partial_y\varepsilon)^2 \right).
\end{equation*}

To deal with $\mathrm{f}_{3,3}$, we compute
\begin{align*} 
 \int \partial_y\varepsilon\psi_B &\left((W+F+\varepsilon)^5-(W+F)^5 \right)\\&=-\frac16\int \psi_B' \left((W+F+\varepsilon)^6-(W+F)^6-6(W+F)^5\varepsilon \right) \\ 
 & \quad -\int \psi_B \partial_y(W+F) \left((W+F+\varepsilon)^5-(W+F)^5-5(W+F)^4\varepsilon \right),
\end{align*}
so that it follows from \eqref{BS:m}, \eqref{cut:onR}, and then \eqref{est:Feps}, \eqref{est:Feps_deriv}, \eqref{est:eps6}, \eqref{est:F>}, that 
\begin{equation*} 
\left| \mathrm{f}_{3,2}\right| \lesssim Bs^{-\frac54} \int \varphi_B' \left[\left(|W+F|^4+|\partial_y(W+F)|^4\right)\varepsilon^2+\varepsilon^6 \right]
 \lesssim Bs^{-\frac54}\int \varphi_B'\left((\partial_y \varepsilon)^2+ \varepsilon^2 \right)+s^{-100} .
\end{equation*}

Therefore, we deduce the proof of \eqref{est:f3} gathering those estimates and taking $s$ large enough (possibly depending on $B$).

\smallskip

\noindent \emph{Estimate of $\mathrm{f}_4$.} We claim that 
\begin{equation} \label{est:f4}
 \mathrm{f}_{4} \le \frac{\mu_0}{8} \int \varphi_B'\left(\varepsilon^2+(\partial_y \varepsilon)^2\right) +cs^ {-100}  .
\end{equation}

Recall from the definition of $\mathcal{F}$ in \eqref{def:F} that 
\begin{align*}
\mathrm{f}_4&=2\frac{\lambda_s}{\lambda}\int \Lambda \varepsilon \left[ -\partial_y(\psi_B\partial_y \varepsilon)+\varphi_B \varepsilon-\psi_B \left( (W+F+\varepsilon)^5-(W+F)^5\right)\right] \\ 
& \quad +\kappa\frac{\lambda_s}{\lambda}\int \left[\psi_B (\partial_y \varepsilon)^2+\varphi_B \varepsilon^2 -\frac13\psi_B\left( (W+F+\varepsilon)^6-(W+F)^6-6(W+F)^5\varepsilon\right)\right]  .
\end{align*}
We compute integrating by parts (see also page 97 in \cite{MaMeRa1})
\begin{align*} 
& \int (\Lambda \varepsilon) \partial_y(\psi_B\partial_y\varepsilon)=-\int \psi_B(\partial_y\varepsilon)^2+\frac12\int y\psi_B'(\partial_y\varepsilon)^2  ; \\ 
& \int (\Lambda \varepsilon) \varepsilon \varphi_B=-\frac12 \int y \varphi_B' \varepsilon^2  ;\\
& \int (\Lambda \varepsilon) \psi_B \left( (W+F+\varepsilon)^5-(W+F)^5\right)\\ & \quad \quad =\frac16 \int (2\psi_B-y\psi_B')\left( (W+F+\varepsilon)^6-(W+F)^6-6(W+F)^5\varepsilon \right)\\ &\quad \quad \quad -\int \psi_B \Lambda (W+F) \left( (W+F+\varepsilon)^5-(W+F)^5-5(W+F)^4\varepsilon \right).
\end{align*}
Hence, we deduce gathering those identities that 
\begin{align*}
\mathrm{f}_4&=\frac{\lambda_s}{\lambda}\left((2+\kappa)\int \psi_B (\partial_y \varepsilon)^2 -\int y\psi_B'(\partial_y\varepsilon)^2 +\kappa\int \varphi_B \varepsilon^2-\int y\varphi_B' \varepsilon^2\right)\\ & \quad -\frac13 \frac{\lambda_s}{\lambda}\int \left( (k+2)\psi_B-y\psi_B'\right)\left( (W+F+\varepsilon)^6-(W+F)^6-6(W+F)^5\varepsilon \right) \\ & \quad +2 \frac{\lambda_s}{\lambda}\int \psi_B \Lambda (W+F) \left( (W+F+\varepsilon)^5-(W+F)^5-5(W+F)^4\varepsilon \right)\\
&=:\mathrm{f}_{4,1}+\mathrm{f}_{4,2}+\mathrm{f}_{4,3}  .
\end{align*}
We will control each of these terms separately. Observe that $\frac{\lambda_s}{\lambda} >0$ since we are in a defocusing regime (see \eqref{BS:param} and \eqref{BS:m}). Thus 
\begin{equation*} 
\frac{\lambda_s}{\lambda}\left( -\int_{y>0} y\psi_B'(\partial_y\varepsilon)^2 -\int_{y>0} y\varphi_B' \varepsilon^2\right) \le 0  .
\end{equation*}
Moreover, we get by using \eqref{BS:param}, \eqref{BS:m} and \eqref{cut:onR}
\begin{equation*} 
\frac{\lambda_s}{\lambda} \int_{y<0}|y\psi_B'|(\partial_y\varepsilon)^2 \lesssim s^{-1}\int \varphi_B'(\partial_y\varepsilon)^2 
\end{equation*}
and, by using H\"older and Young inequalities, 
\begin{align*}
\frac{\lambda_s}{\lambda} \int_{y<0}|y\varphi_B'|\varepsilon^2 &\lesssim s^{-1}\left(\int |y|^{100}e^{\frac{y}B} \varepsilon^2\right)^{\frac1{100}}\left(\int_{y<0} e^{\frac{y}B} \varepsilon^2\right)^{\frac{99}{100}} 
\le \frac{\mu_0}{2^4} \int \varphi_B'\varepsilon^2+s^{-100}\|\varepsilon\|_{L^2}^2  .
\end{align*}

On the other hand, the terms $(2+\kappa)\int \psi_B (\partial_y \varepsilon)^2$ and $\kappa\int \varphi_B \varepsilon^2$ are positive as well as their product with $\frac{\lambda_s}{\lambda}$. However, we can estimate them as above. It follows from \eqref{BS:param} and \eqref{BS:m} that 
\begin{equation*} 
\left| \frac{\lambda_s}{\lambda} \right|\left( \int \psi_B (\partial_y \varepsilon)^2+\int \varphi_B \varepsilon^2\right) \lesssim s^{-1}B \int \varphi_B'\left((\partial_y \varepsilon)^2+ \varepsilon^2 \right).
\end{equation*}

Now, we deal with the nonlinear terms. By using \eqref{BS:param}, \eqref{BS:m} and \eqref{cut:onR}, and then \eqref{est:Feps}, \eqref{est:eps6}, \eqref{est:F>}, we get that 
\begin{equation*} 
\left|\mathrm{f}_{4,2}\right| \lesssim Bs^{-1} \int \varphi_B' \left(|W+F|^4\varepsilon^2+\varepsilon^6 \right)
 \lesssim Bs^{-1}\int \varphi_B'\left((\partial_y \varepsilon)^2+ \varepsilon^2 \right)+s^{-100}  .
\end{equation*}
By definition $\Lambda (W+F)=\frac12 (W+F)+y\partial_y(W+F)$. Moreover, we use that, for $k=0$ or $1$, 
\begin{align*}
&\left| \partial_y^k(W+F)\right| \left| (W+F+\varepsilon)^5-(W+F)^5-5(W+F)^4\varepsilon \right| \\ 
& \lesssim \left| \partial_y^k(W+F)\right| \left(|W+F|^3\varepsilon^2+\varepsilon^5 \right)
 \lesssim \left(\left| \partial_y^k(W+F)\right||W+F|^3+ \left| \partial_y^k(W+F)\right|^4\right) \varepsilon^2+\varepsilon^6  .
\end{align*}
Thus, we deduce from \eqref{BS:param}, \eqref{BS:m} and \eqref{cut:onR}, and then \eqref{est:Feps}, \eqref{est:Feps_deriv}, \eqref{est:eps6}, \eqref{est:F>}, that 
\begin{equation*} 
\left|\mathrm{f}_{4,2}\right| \lesssim Bs^{-1} \int \varphi_B' \left[\left(|W+F|^4+|\partial_y(W+F)|^4\right)\varepsilon^2+\varepsilon^6 \right]
 \lesssim Bs^{-1}\int \varphi_B'\left((\partial_y \varepsilon)^2+ \varepsilon^2 \right)+s^{-100} .
\end{equation*}

Therefore, we conclude the proof of \eqref{est:f4} gathering these estimates. 

\smallskip

\noindent \emph{Estimate of $\mathrm{f}_5$.} We claim that 
\begin{equation} \label{est:f5}
\left| \mathrm{f}_{5}\right| \le \frac{\mu_0}{2^4} \int \varphi_B'\left(\varepsilon^2+(\partial_y \varepsilon)^2\right) +cs^ {-100}  .
\end{equation}

We decompose, from the definition of $\mathrm{f}_{5}$,
\begin{align*}
\mathrm{f}_{5}&= - 2b_s\int \frac{\partial Q_b}{\partial b} \left[(W+F+\varepsilon)^5-(W+F)^5-5(W+F)^4\varepsilon\right]\psi_B \\ 
&\quad - 2\int F_s \left[(W+F+\varepsilon)^5-(W+F)^5-5(W+F)^4\varepsilon\right]\psi_B \\ 
&=: \mathrm{f}_{5,1}+ \mathrm{f}_{5,2}  ,
\end{align*}
and estimate these two terms separately. 

To deal with $\mathrm{f}_{5,1}$, we use that 
\begin{align*}
&\left| \frac{\partial Q_b}{\partial b} \left[(W+F+\varepsilon)^5-(W+F)^5-5(W+F)^4\varepsilon\right] \right|\\
&\quad \quad \lesssim \left|\frac{\partial Q_b}{\partial b} \right|\left( |W+F|^3\varepsilon^2+|\varepsilon|^5\right)
\lesssim \left(|W|^4+|F|^4+ \left|\frac{\partial Q_b}{\partial b} \right|^4\right)\varepsilon^2+\varepsilon^6  .
\end{align*}
Moreover, we observe by using \eqref{eq:dQb:db} and \eqref{cut:onR} that 
\begin{equation*} 
\left|\frac{\partial Q_b}{\partial b}(y) \right|^4\psi_B(y) \lesssim B^4\varphi_B(y)\lesssim B^5\varphi_B'(y)  .
\end{equation*}
Then, it follows from \eqref{BS:bs}, \eqref{est:Feps}, \eqref{est:eps6} and \eqref{est:F>} that
\begin{equation*} 
\left| \mathrm{f}_{5,1}\right| \lesssim B^5s^{-2}\int \varphi_B'\left( 1+|W|^4+|F|^4\right) \varepsilon^2+Bs^{-2}\int \varphi_B' \varepsilon^6
\lesssim B^5s^{-2}\int \varphi_B' \left( \varepsilon^2+(\partial_y\varepsilon)^2\right).
\end{equation*}

To handle $\mathrm{f}_{5,2}$, we first observe, arguing as above, 
\begin{equation*} 
\left| \mathrm{f}_{5,2}\right| \lesssim B \int \psi_B |F_s|\left( |W|^3+|F|^3+|F_s|^3\right) \varepsilon^2+\int \varphi_B' \varepsilon^6  .
\end{equation*}
The second term on the right-hand side of the above inequality will be dealt by using \eqref{est:eps6} and taking $\astar$ small enough. Recalling that $F(s,y)=\lambda^{\frac12}(s)f(s,\lambda(s)y+\sigma(s))$, we compute 
\begin{equation*} 
F_s(s,y)=\frac12\frac{\lambda_s}{\lambda} \lambda^{\frac12} f(s,\lambda(s)y+\sigma(s))+\left[\frac{\lambda_s}{\lambda}y+\frac{\sigma_s}{\lambda} \right]\lambda^{\frac32}\partial_y f(s,\lambda(s)y+\sigma(s))+\lambda^{\frac72}f_t(s,\lambda(s)y+\sigma(s))  .
\end{equation*}
Now, we argue as in the proof of \eqref{est:Feps} and split the integration domain into the two regions $\lambda(s)y>-\frac14\sigma(s) \iff \lambda(s)y+\sigma(s)>\frac34 \sigma(s)$ and $\lambda(s)y<-\frac14\sigma(s)$ (from \eqref{BS:param}). Thus, we deduce from \eqref{def:f_0}, \eqref{decay:q_0.1}, \eqref{decay:q_0.2}, \eqref{BS:param} and \eqref{BS:m} that 
\begin{equation*} 
\left| \mathrm{f}_{5,2}\right| \le cB^2s^{-2} \int \varepsilon^2 \varphi_B'+\frac{\mu_0}{2^5}\int \varphi_B' \left(\varepsilon^2+(\partial_y\varepsilon)^2 \right)+cs^{-100}  .
\end{equation*}

Therefore, we conclude the proof of \eqref{est:f4} gathering those estimates and taking $s$ large enough (possibly depending on $B$). 

\smallskip 

Finally, we conclude the proof of \eqref{dsF} gathering \eqref{est:f1}, \eqref{est:f2}, \eqref{est:f3}, \eqref{est:f4} and \eqref{est:f5}.

\smallskip

Now, we turn to the proof of \eqref{coF}. We decompose $\mathcal{F}$ as follows:
\begin{align*}
\F &= \int \left[(\partial_y\varepsilon)^2 \psi_B + \varepsilon^2 \ph_B-5Q^4\varepsilon^2 \psi_B \right] \\
& \quad - \frac 13\int \left[ (W +F+ \varepsilon)^6 - (W+F)^6 - 6(W+F)^5\varepsilon-15Q^4\varepsilon^2 \right] \psi_B \\ 
&=:\F_1+\F_2  .
\end{align*}
To bound by below $\F_1$, we rely on the coercivity of the linearized energy \eqref{coercivity.2} with the choice of the orthogonality conditions \eqref{ortho} and standard localisation arguments. Proceeding for instance as in the Appendix A of \cite{MaMe} or as in the proof of Lemma 3.5. in \cite{CoMa1}, we deduce that there exists $\tilde{\nu}_0>0$ such that, for $B$ large enough,
\begin{equation*} 
\F_1 \ge \tilde{\nu}_0 \, \mathcal{N}_{B}(\varepsilon)^2  .
\end{equation*}
To estimate $\F_2$, we compute
\begin{align*}
&(W+F+\varepsilon)^6-(W+F)^6 - 6 (W+F)^5\varepsilon -15 Q^4 \varepsilon^2
\\&= \left[(W+F+\varepsilon)^6-(W+F)^6 - 6 (W+F)^5\varepsilon-15 (W+F)^4 \varepsilon^2\right] -15\left[(W+F)^4-Q^4\right]\varepsilon^2,
\end{align*}
so that
\begin{align*}
\left| \F_2\right| \lesssim \int \psi_B \left[\left(|W-Q|^4+|F|^4\right)\varepsilon^2+Q^3|\varepsilon|^3+\varepsilon^6+|W+F-Q|(|W|+|F|+Q)^3\varepsilon^2\right] .
\end{align*}
We will control each term on the right-hand side separately. First observe from \eqref{def:Qb} and \eqref{e:r} and arguing as for \eqref{est:Feps} (but without the restriction $y<-\frac{B}2$) that 
\begin{equation*}
\int \psi_B\left(|W-Q|^4+|F|^4\right)\varepsilon^2 \lesssim s^{-4} \mathcal{N}_B(\varepsilon)^2+s^{-100} 
\end{equation*}
and
\begin{equation*}
\int \psi_B|W+F-Q|(|W|+|F|+Q)^3\varepsilon^2 \lesssim s^{-1} \mathcal{N}_B(\varepsilon)^2+s^{-100}  .
\end{equation*}
Moreover, we deduce from \eqref{Sob:eps_varphi} that 
\begin{equation*}
\int \psi_B Q^3 |\varepsilon|^3 \lesssim \left\| \varepsilon^2\sqrt{\psi_B} \right\|_{L^{\infty}} \left(\int Q^3|\varepsilon|\right)\lesssim \delta(\astar) \mathcal{N}_B(\varepsilon)^2 
\end{equation*}
and 
\begin{equation*}
\int \psi_B \varepsilon^6 \lesssim \left\| \varepsilon^2\sqrt{\psi_B} \right\|^2_{L^{\infty}} \left(\int \varepsilon^2 \right) \lesssim \delta(\astar)\mathcal{N}_B(\varepsilon)^2 .
\end{equation*}
Therefore, we conclude the proof of \eqref{coF} gathering those estimates.
\end{proof}

\section{Construction of flattening solitons}

\subsection{End of the construction in rescaled variables}
In this subsection, we still work with the notation introduced in Sections~\ref{section:decomp_sol} and~\ref{section:energy}.
We prove that for well adjusted initial data,
 the decomposition of the solution introduced in \eqref{decomp:v} and the bootstrap estimates \eqref{BS:param} and \eqref{BS:eps} hold true in the whole time interval $[s_0,+\infty)$ for $s_0$ large enough (equivalently, $x_0$ large enough). The result is summarized in the next proposition. 
\begin{proposition} \label{prop:BS}
For $s_0>0$ large enough, let 
$\sigma_0$ and $\lambda_0$ be such that
\begin{equation} \label{def:sigma_lamba:ini} 
\left| \lambda_0-s_0^{\frac{2(1-\theta)}{2\theta-1}} \right| \leq s_0^{\frac{2(1-\theta)}{2\theta-1}-2\rho}
 \quad \text{and} \quad 
\left|\sigma_0 - (2\theta-1)s_0^{\frac1{2\theta-1}}\right|\leq s_0^{\frac1{2\theta-1}-2\rho}  
\end{equation}
where $\rho$ satisfies \eqref{def:rho}.
Let $\varepsilon_0 \in H^1(\mathbb R)$ be such that 
\begin{equation} \label{def:eps:ini}
 \|\varepsilon_0\|_{H^1}^2+\int\varepsilon_0^2e^{\frac{y}{10}} \le s_0^{-10} 
\quad \text{and} \quad
(\varepsilon_0,\Lambda Q)=(\varepsilon_0,y\Lambda Q)=(\varepsilon_0,Q)=0  .
\end{equation}
Then there exists $b_0 \in \mathbb R$ satisfying
\begin{equation} \label{b:ini} 
b_0 \in \mathcal{D}_0:=\left[-\frac{2(1-\theta)}{2\theta-1}s_0^{-1}-s_0^{-1-3\rho},-\frac{2(1-\theta)}{2\theta-1}s_0^{-1}+s_0^{-1-3\rho}\right]
\end{equation}
such that the solution of \eqref{gkdv} evolving from 
\begin{equation*} 
U_0(x):=\lambda_0^{-\frac12}\left(Q_{b_0}+\lambda_0^{\frac12}f(s_0,\sigma_0)R+\varepsilon_0 \right)\left(\frac{x-\sigma_0}{\lambda_0} \right)+f(s_0,x) 
\end{equation*}
has a decomposition $\big(\lambda(s),\sigma(s),b(s),\varepsilon(s)\big)$ as in Lemma \ref{lemma:decomp} satisfying the bootstrap conditions~\eqref{BS:param}, \eqref{BS:eps} and
\begin{align} 
|h(s)| &\le s^{\frac{1-\theta}{2\theta-1}-2\rho}\label{BS:h}  ,
\\ 
|g(s)| &\le s^{-\frac{3-2\theta}{2\theta-1}-3\rho}  ,\label{BS:g} 
\end{align}
on $[s_0,+\infty)$, where $h$ and $g$ are defined in Lemma \ref{lemma:gh}.
\end{proposition}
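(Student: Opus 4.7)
\medskip

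\noindent\emph{Proof strategy.} I would prove Proposition~\ref{prop:BS} by a standard bootstrap plus one-dimensional topological (shooting) argument on the parameter $b_0 \in \mathcal{D}_0$, in the spirit of \cite{MaMeRa1,MaMeRa3}. For each $b_0 \in \mathcal{D}_0$, the initial data $U_0$ generates a solution of \eqref{gkdv} which, by continuity and Lemma~\ref{lemma:decomp}, admits on some maximal interval $[s_0,s^\star(b_0)]$ a decomposition $(\lambda,\sigma,b,\varepsilon)$ satisfying \eqref{BS:param}, \eqref{BS:eps}, \eqref{BS:h}, \eqref{BS:g}. The goal is to show that some choice of $b_0$ gives $s^\star(b_0)=+\infty$. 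By continuity, $s^\star(b_0)>s_0$ for every $b_0\in \mathcal{D}_0$, and if $s^\star(b_0)<\infty$ then at least one of the four bootstrap bounds is saturated at $s=s^\star$.

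\medskip

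\noindent\emph{Step 1: strict improvement of \eqref{BS:eps}.} On $[s_0,s^\star]$, I integrate the energy-virial inequality \eqref{dsF} from Section~\ref{section:energy}. Using $\lambda^\kappa\sim s^4$ from \eqref{def:kappa}, we have $(\lambda^\kappa\mathcal F)_s\lesssim \lambda^\kappa s^{-4}\lesssim 1$, and since the initial-data assumption \eqref{def:eps:ini} ensures $\mathcal F(s_0)\lesssim s_0^{-10}$, integration gives $\lambda^\kappa\mathcal F(s)\lesssim s$, whence $\mathcal F(s)\lesssim s^{-3}$. Coercivity \eqref{coF} then yields $\mathcal N_B(\varepsilon)\leq c s^{-3/2}$, which strictly improves \eqref{BS:eps} for $s_0$ large.

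\medskip

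\noindent\emph{Step 2: strict improvement of \eqref{BS:h}, \eqref{BS:param}.} From \eqref{BS:hs} and the bootstrap \eqref{BS:g}, combined with the identity $\lambda^2 s^{-(3-2\theta)/(2\theta-1)}\sim s^{-1}$ (a direct consequence of $\lambda\sim s^{2(1-\theta)/(2\theta-1)}$), we obtain $|\lambda^{-1/2}h_s|\lesssim s^{-1-3\rho}+s^{-5/4}$, so that $|h_s|\lesssim s^{(1-\theta)/(2\theta-1)-1-3\rho}+s^{(1-\theta)/(2\theta-1)-5/4}$. The crucial observation is that the choice of $c_0$ in \eqref{def:c0bis} is made precisely so that the leading order cancels at $s_0$: using \eqref{def:sigma_lamba:ini},
\begin{equation*}
h(s_0)=\lambda_0^{1/2}-\tfrac{2c_0}{(1-\theta)\int Q}\sigma_0^{1-\theta}=s_0^{(1-\theta)/(2\theta-1)}\Bigl[1-\tfrac{2c_0(2\theta-1)^{1-\theta}}{(1-\theta)\int Q}\Bigr]+O(s_0^{(1-\theta)/(2\theta-1)-2\rho})
\end{equation*}
and the bracket vanishes identically. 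Thus $|h(s_0)|\lesssim s_0^{(1-\theta)/(2\theta-1)-2\rho}$, and integrating $h_s$ yields $|h(s)|\leq \tfrac12 s^{(1-\theta)/(2\theta-1)-2\rho}$, strictly improving \eqref{BS:h}. The bound on $\sigma$ in \eqref{BS:param} then follows by integrating $\sigma_s/\lambda=1+O(s^{-5/4})$ together with the relation $\lambda^{1/2}=\tfrac{2c_0}{(1-\theta)\int Q}\sigma^{1-\theta}+O(h)$; and the bound on $\lambda$ is recovered from this $h$ relation. In both cases, thanks to \eqref{def:sigma_lamba:ini} which allows a slack of $s_0^{-2\rho}$ at initial time, the bootstrap constants $s^{\ldots-\rho}$ are strictly improved to $s^{\ldots-\rho}/2$.

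\medskip

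\noindent\emph{Step 3: shooting argument on $b_0$.} With Steps 1--2, the only bootstrap that can saturate at $s=s^\star(b_0)<\infty$ is \eqref{BS:g}. I set $N(s)=s^{(3-2\theta)/(2\theta-1)+3\rho}g(s)$, so \eqref{BS:g} reads $|N(s)|\leq 1$. The map $b_0\mapsto N(s_0)$ is affine and, using the formal identity computed from \eqref{def:gh}, satisfies $\lambda_0^2 g(s_0)=b_0+\tfrac{2(1-\theta)}{2\theta-1}s_0^{-1}+O(s_0^{-1-2\rho})$; hence as $b_0$ runs through $\mathcal{D}_0$, $N(s_0)$ sweeps across $[-1+o(1),1+o(1)]$. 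On the other hand, using \eqref{BS:gs}, a direct computation gives, at any time $s$ where $|N(s)|=1$,
\begin{equation*}
\tfrac{d}{ds}N^2(s)\geq 2\bigl(\tfrac{3-2\theta}{2\theta-1}+3\rho\bigr)s^{-1}-Cs^{-5/4+3\rho}>0
\end{equation*}
for $s_0$ large, since $|s^{(3-2\theta)/(2\theta-1)+3\rho}g_s|\lesssim s^{-5/4+3\rho}$ by the computation that $(3-2\theta)/(2\theta-1)-9/4-4(1-\theta)/(2\theta-1)=-5/4$. Thus $N(s)$ exits $[-1,1]$ transversally at any finite $s^\star(b_0)$. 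Combined with continuous dependence of the flow on $b_0$, this transversality makes $b_0\mapsto\mathrm{sgn}(N(s^\star(b_0)))$ continuous where $s^\star<\infty$. If $s^\star(b_0)<\infty$ for every $b_0\in \mathcal{D}_0$, this function would be continuous on the connected interval $\mathcal{D}_0$ with values in $\{-1,+1\}$ and values of opposite signs at the two endpoints (since $|N(s_0,b_0^\pm)|>1-o(1)$ at the right sign), a contradiction. Hence some $b_0\in\mathcal{D}_0$ satisfies $s^\star(b_0)=+\infty$.

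\medskip

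\noindent The main obstacle is the transversality at the exit time, which relies on a precise accounting of the exponents relating $\lambda$, $\sigma$ and $s$; the choice \eqref{def:rho} of $\rho$ must be compatible with beating all error terms, and the miraculous cancellation of $h(s_0)$ at leading order (Step 2), forced by the definition \eqref{def:c0bis} of $c_0$, is what makes the $h$-bootstrap closeable.
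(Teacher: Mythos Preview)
Your approach matches the paper's proof: close \eqref{BS:eps} via \eqref{dsF}--\eqref{coF}, strictly improve \eqref{BS:h} and then \eqref{BS:param} by integration (using the cancellation of $h(s_0)$ at leading order forced by the choice \eqref{def:c0bis} of $c_0$), and run a one-dimensional shooting argument on $b_0$ using the transversality of $g$ at the boundary of \eqref{BS:g}. Two minor points to add: you should also explicitly close the $b$-bound in \eqref{BS:param} (the paper does this in one line via $b=\lambda^2 g - \tfrac{4c_0}{\int Q}\lambda^{1/2}\sigma^{-\theta}$ together with the improved $\lambda,\sigma$ bounds and the bootstrap \eqref{BS:g}), and at the endpoints of $\mathcal{D}_0$ one actually has $s^\star=s_0$ (not $s^\star>s_0$), which is precisely what fixes the exit signs $\Phi(\pm1)=\pm1$ in the connectedness argument.
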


\begin{proof} We argue by contradiction and assume that for all $b_0$ satisfying \eqref{b:ini}, 
\begin{equation*} 
s^{\star}=s^{\star}(b_0):=\sup \big\{ s \ge s_0 : \eqref{BS:param}, \eqref{BS:eps}, \eqref{BS:h}, \eqref{BS:g} \ \text{hold} \ [s_0,s]  \big\}< +\infty  .
\end{equation*}
We will first show that we can strictly improve \eqref{BS:param}, \eqref{BS:eps} and \eqref{BS:h} on $[s_0,s^{\star}]$, and then find a contraction for \eqref{BS:g} by using a topological argument (see similar argument in \cite{CoMaMe}).

\smallskip 

\noindent \textit{Closing \eqref{BS:eps}.} We deduce integrating \eqref{dsF} on $[s_0,s]$ with $s \le s^{\star}$ and using \eqref{def:kappa}, \eqref{coF} and \eqref{def:eps:ini} that 
 \begin{equation*} 
 \mathcal{N}_B(\varepsilon)^2(s) \lesssim \mathcal{F}(s) +s^{-100}\lesssim s^{-3}+\left(\lambda_0^{\kappa}\mathcal{F}(s_0)-s_0\right)s^{-4} \le \frac14 s^{-\frac52}  ,
 \end{equation*}
 if $s_0$ is chosen large enough, which strictly improves \eqref{BS:eps}.
 
 \smallskip 
 
 \noindent \textit{Closing \eqref{BS:param}.} First, by using \eqref{def:gh}, and then \eqref{BS:param} and \eqref{BS:h}, we see that 
 \begin{equation} \label{BS:est:lambda_sigma}
 \left|\lambda(s)-\left(\frac{2}{\int Q}\frac{c_0}{1-\theta}\right)^2 \sigma^{2-2\theta}(s) \right| 
 \lesssim \left(\lambda^{\frac12}(s)+\sigma^{1-\theta}(s)\right)|h(s)| \lesssim s^{\frac{2(1-\theta)}{2\theta-1}-2\rho}  .
 \end{equation}
 Thus, we deduce from \eqref{BS:param} and \eqref{BS:m} that 
 \begin{equation*} 
 \left|\sigma_s(s)-\left(\frac{2}{\int Q}\frac{c_0}{1-\theta}\right)^2 \sigma^{2-2\theta}(s) \right| 
 \lesssim \lambda(s) \left| \frac{\sigma_s}{\lambda}+1\right|+\lambda^{\frac12}(s) |h(s)| \lesssim s^{\frac{2(1-\theta)}{2\theta-1}-2\rho}  ,
 \end{equation*}
since $2\rho<\frac54$, which yields, by using \eqref{BS:param},
\begin{equation} \label{BS:est:sigma_s}
 \left|\left(\sigma^{2\theta-1}\right)_s(s)-(2\theta-1)\left(\frac{2}{\int Q}\frac{c_0}{1-\theta}\right)^2 \right| 
 \lesssim \sigma^{2\theta-2}(s)s^{\frac{2(1-\theta)}{2\theta-1}-2\rho} \lesssim s^{-2\rho} .
 \end{equation}
 Observe from the definition of $c_0$ in \eqref{def:c0bis} that 
 \begin{equation*} 
 (2\theta-1)\left(\frac{2}{\int Q}\frac{c_0}{1-\theta}\right)^2=(2\theta-1)^{2\theta-1}  .
 \end{equation*}
 This implies integrating \eqref{BS:est:sigma_s} over $[s_0,s]$, for $s \le s^{\star}$, and using the condition on $\sigma_0$ in \eqref{def:sigma_lamba:ini} that
 \begin{equation*} 
 \left|\sigma^{2\theta-1}(s)-(2\theta-1)^{2\theta-1}s \right| 
 \lesssim s^{1-2\rho} \iff \left|\left(\frac{\sigma(s)}{(2\theta-1)s^{\frac1{2\theta-1}}}\right)^{2\theta-1}-1 \right| 
 \lesssim s^{-2\rho} .
 \end{equation*}
 Hence, we deduce by applying the mean value theorem to the function $r(\tau)=\tau^{\frac1{2\theta-1}}$ that
 \begin{equation} \label{BS:est:sigma}
 \left|\frac{\sigma(s)}{(2\theta-1)s^{\frac1{2\theta-1}}}-1 \right| 
 \lesssim s^{-2\rho} \iff \left|\sigma(s)-(2\theta-1)s^{\frac1{2\theta-1}} \right| 
 \lesssim s^{\frac1{2\theta-1}-2\rho} ,
 \end{equation}
which, for $s_0$ large enough, strictly improves the estimate for $\sigma$ in \eqref{BS:param}. 

Next, we get inserting \eqref{BS:est:sigma} in \eqref{BS:est:lambda_sigma} that 
\begin{equation} \label{BS:est:lambda}
\left|\frac{\lambda(s)}{s^{\frac{2(1-\theta)}{2\theta-1}}}-1 \right| 
\lesssim s^{-2\rho} \iff \left|\lambda(s)-s^{\frac{2(\theta-1)}{2\theta-1}} \right| 
\lesssim s^{\frac{2(1-\theta)}{2\theta-1}-2\rho} ,
\end{equation}
for all $s \in [s_0,s^{\star}]$, which, for $s_0$ large enough, strictly improves the estimate for $\lambda$ in \eqref{BS:param}.
 
Finally, we deduce from \eqref{BS:g}, \eqref{BS:est:sigma} and \eqref{BS:est:lambda} that 
\begin{align*}
\left|\frac{2\theta-1}{2(1-\theta)}sb(s)+1 \right| &=\frac{2\theta-1}{2(1-\theta)} \lambda^2(s)s\left|\frac{b(s)}{\lambda^2(s)}+\frac{2(1-\theta)}{2\theta-1}s^{-1}\lambda^{-2}(s) \right|\\
&\lesssim \lambda^2(s)s\left( |g(s)|+\left|\frac{4c_0}{\int Q}\lambda^{-\frac32}(s)\sigma^{-\theta}(s)-\frac{2(1-\theta)}{2\theta-1}s^{-1}\lambda^{-2}(s)\right|\right)\lesssim s^{-2\rho}  .
\end{align*}
 Thus, 
 \begin{equation*}
\left|b(s)+\frac{2(1-\theta)}{2\theta-1}s^{-1} \right| \lesssim s^{-1-2\rho}
 \end{equation*}
 for all $s \in [s_0,s^{\star}]$, which, for $s_0$ large enough, strictly improves the estimate for $b$ in \eqref{BS:param}.
 
 \smallskip 
 
 \noindent \textit{Closing \eqref{BS:h}.} By using \eqref{BS:hs} and \eqref{BS:g}, we get for any $s \in [s_0,s^{\star}]$, 
 \begin{equation} \label{BS:est:hs}
 \left| h_s(s) \right| \lesssim \lambda^{\frac52}(s)|g(s)|+\lambda^{\frac12}(s)s^{-\frac54}\lesssim s^{-1+\frac{1-\theta}{2\theta-1}-3\rho}  ,
 \end{equation}
 for $s_0$ large enough, since $3\rho < \frac54$. Moreover, observe by the definition of $c_0$ in \eqref{def:c0bis}, the definition of $h$ in \eqref{def:gh} and the choice of $\sigma_0$ and $\lambda_0$ in \eqref{def:sigma_lamba:ini} that $h(s_0)=0$. Hence, it follows integrating \eqref{BS:est:hs} over $[s_0,s^{\star}]$ and using the condition $3\rho <\frac{1-\theta}{2\theta-1}$ that
 \begin{equation*} 
 |h(s)| \lesssim s^{\frac{1-\theta}{2\theta-1}-3\rho}  ,
 \end{equation*}
 for all $s \in [s_0,s^{\star}]$, which, for $s_0$ large enough, strictly improves the estimate for $h$ in \eqref{BS:h}.
 
 \smallbreak
 
 \noindent \textit{Contradiction through a topological argument.} To simplify the notation, for any $b_0$ satisfying \eqref{b:ini}, we introduce 
 \begin{equation}\label{def:mu}
\mu : b_0 \in \mathcal{D}_0 \mapsto \mu_0=\mu(b_0) :=\left(b_0+\frac{2(1-\theta)}{2\theta-1}s_0^{-1}\right) s_0^{1+3\rho} \in [-1,1]  . \end{equation} 
Then, by using the definition of $c_0$ in \eqref{def:c0bis}, the definition of $g$ in \eqref{def:gh} and the choice of $\lambda_0$ and $\sigma_0$ in \eqref{def:sigma_lamba:ini}, we compute 
 \begin{equation*} 
 g(s_0)=\mu_0s_0^{-\frac{3-2\theta}{2\theta-1}-3\rho} \in \left[-s_0^{-\frac{3-2\theta}{2\theta-1}-3\rho},s_0^{-\frac{3-2\theta}{2\theta-1}-3\rho}\right]  .
 \end{equation*}
 
 We have assumed that for all $b_0 \in \mathcal{D}_0$, $s^{\star}=s^{\star}(b_0)<+\infty$. Since we have strictly improved \eqref{BS:param}, \eqref{BS:eps} and \eqref{BS:h}, then \eqref{BS:g} must be saturated in $s^{\star}$, which means that 
 \begin{equation*} 
 |g(s^{\star})|=\left(s^{\star}\right)^{-\frac{3-2\theta}{2\theta-1}-3\rho}  .
 \end{equation*}
 Define the function 
 \begin{equation*} 
 \Phi : \mu_0 \in [-1,1] \mapsto g(s^{\star})\left(s^{\star}\right)^{\frac{3-2\theta}{2\theta-1}+3\rho} \in \left\{-1,1\right\}  ,
 \end{equation*}
 where $s^{\star}=s^{\star}(b_0)$ and $b_0$ is given by the correspondence \eqref{def:mu}. Since \eqref{BS:g} is saturated in $s^{\star}$, it is clear that for $\mu_0=-1$, respectively $\mu_0=1$, then $s^{\star}=s_0$ and $\Phi(-1)=-1$, respectively $\Phi(1)=1$. Now, we will prove that $\Phi$ is a continuous function, which will lead to a contradiction and conclude the proof of Proposition \ref{prop:BS}.
 
 We set 
 \begin{equation*} 
 G(s)=\left(g(s)s^{\frac{3-2\theta}{2\theta-1}+3\rho} \right)^2  .
 \end{equation*}
 It is clear that $G(s^{\star})=1$. Moreover we claim the following transversality property for $G$: let $s_1 \in [s_0,s^{\star}]$ such that $G(s_1)=1$; then there exists $c_0>$ such that 
 \begin{equation} \label{transversality}
G_s(s_1) \ge c_0\left(s_1\right)^{-1}  .
\end{equation}
Indeed, we compute 
\begin{equation*} 
G_s(s)=2\left(\frac{3-2\theta}{2\theta-1}+3\rho\right)G(s)s^{-1}+2g(s)g_s(s)s^{2\left(\frac{3-2\theta}{2\theta-1}+3\rho\right)}  ,
\end{equation*}
which yields \eqref{transversality} by choosing $s_0$ large enough and using \eqref{BS:gs} and $G(s_1)=1$, since $3\rho \le \frac14$.

Finally, it remains to show that $:\mu_0 \mapsto s^{\star}$ is continuous, which will then imply easily that $\Phi$ is continuous. Assume first that $\mu_0 \in (-1,1)$, so that $s^{\star}>s_0$, and let $0<\epsilon<s^{\star}-s_0$. Then, the transversality condition \eqref{transversality} and a continuity argument imply that there exists $\delta>0$ such that for $\epsilon$ small enough\footnote{Observe by continuity that the decomposition \eqref{decomp:v} holds on some time interval after $s^{\star}$ so that $g$ is still well defined on $[s^{\star},s^{\star}+\varepsilon_0]$, for $\varepsilon_0>0$ small enough.}
\begin{equation*} 
G(s^{\star}+\epsilon) > 1+\delta \quad \text{and} \quad G(s)<1-\delta, \ \text{for all} \ s \in [s_0,s^{\star}-\epsilon]  .
\end{equation*}
Now, by continuity of the flow associated to \eqref{gkdv}, there exists $\eta>0$ such that for all $\tilde{\mu}_0 \in (-1,1)$ such that $|\mu_0-\tilde{\mu}_0|<\eta$, the corresponding function $\tilde{G}$ satisfies $ | \tilde{G}(s)-G(s) | < \delta/2$ on $[s_0,s^{\star}+\epsilon]$. Then, denoting $\tilde s^{\star}=s^{\star}(\tilde{\mu}_0)$, we deduce that 
\begin{align*}
\tilde{G}(s)<1-\frac{\delta}2, \ \forall s \in [s_0,s^{\star}-\epsilon] & \implies \tilde s^{\star} \ge s^{\star}-\epsilon  ;\\
\tilde{G}(s^{\star}+\epsilon)>1+\frac{\delta}2 & \implies \tilde s^{\star} \le s^{\star}+\epsilon  .
\end{align*}
This proves the continuity of the map $\mu_0 \mapsto s^{\star}$ at any $\mu_0 \in (-1,1)$. In the case where $\mu_0=-1$ or $\mu_0=1$, then $s^{\star}=s_0$, $G(s_0)=1$ and $G_s(s_0)>0$ (from \eqref{transversality}). Then, we conclude by using a similar argument that $:\mu_0 \mapsto s^{\star}$ is also continuous at $\mu_0 =-1$ and $\mu_0=1$.
 
This concludes the proof of Proposition \ref{prop:BS}.
\end{proof}

\subsection{Main result}\label{S:6}
We are now in a position to state the main result of this paper, in its full generality.
Define the constants
\begin{equation}\label{def:cts}
c_\lambda=\left(\frac {3\beta-1}2\right)^{-\frac {1-\beta}2},\quad
c_\sigma=\frac 1\beta \left(\frac {3\beta-1}2\right)^{1-\beta}.
\end{equation}

\begin{theorem}\label{th:2}
There exists $\rho_1>0$ such that for any $x_0$ large enough, the following holds.
Let $t_0=(2x_0)^{\frac 1\beta}$ and let $\sigma_0$ and $\lambda_0$ be such that
\begin{equation*} 
\left| \lambda_0-c_\lambda t_0^{\frac {1-\beta}2} \right| \leq t_0^{\frac {1-\beta}2(1-\rho_1)}
 \quad \text{and} \quad 
 \left|\sigma_0 - c_\sigma t_0^{\beta}\right|\leq t_0^{\beta(1-\rho_1)}
  .
\end{equation*}
Let $\varepsilon_0 \in H^1(\mathbb R)$ be such that 
\begin{equation*}
 \|\varepsilon_0\|_{H^1}^2+\int\varepsilon_0^2e^{\frac{y}{10}} \leq t_0^{-\frac {3\beta-1}{20}-\rho_1} 
\quad \text{and} \quad
(\varepsilon_0,\Lambda Q)=(\varepsilon_0,y\Lambda Q)=(\varepsilon_0,Q)=0  .
\end{equation*}
Then there exists $b_0=b_0(\lambda_0,\sigma_0,\varepsilon_0)$ with
$|b_0|\lesssim t_0^{-\frac 12(3\beta-1)}$
such that the solution $U(t)$ of \eqref{gkdv} corresponding to the following initial data at $t=t_0$
\begin{equation*} 
U(t_0,x)
= \frac 1{\lambda_0^{\frac 12}} \left(Q_{b_0}+\lambda_0^{\frac12}f(t_0,\sigma_0)R+\varepsilon_0\right)
\left(\frac{x-\sigma_0}{\lambda_0}\right) 
+ f(t_0,x) 
\end{equation*}
decomposes as 
\begin{equation}\label{for:U}
U(t,x)=\mm(t,x)+ \eta(t,x),\quad
\mm(t,x)=\frac 1{\lambda^{\frac 12}(t)} Q\left( \frac{x-\sigma(t)}{\lambda(t)}\right),
\end{equation}
where the functions $\lambda(t)$, $\sigma(t)$ and $\eta(t)$ satisfy
\begin{align}
&\left|\lambda-c_\lambda t^{\frac {1-\beta}{2}}\right|
\lesssim t^{ \frac {1-\beta}{2}(1-\rho_1)},\quad
\left|\sigma-c_\sigma t^{\beta}\right|
\lesssim t^{\beta(1-\rho_1)},\nonumber\\
&\|\eta\|_{L^2}\lesssim t_0^{-\frac14(3\beta-1)},\quad
\|\partial_x\eta\|_{L^2}\lesssim t_0^{-\frac{1+\beta}4},\nonumber\\
&\|\eta\|_{L^2(x>\frac12\sigma)}+\|\partial_x \eta\|_{L^2(x>\frac12\sigma)}\lesssim t^{-\frac14(3\beta-1)}  .
\label{est:eta:right_ext}
\end{align}
\end{theorem}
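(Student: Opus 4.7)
The plan is to deduce Theorem~\ref{th:2} from Proposition~\ref{prop:BS} by translating the bootstrap information in the rescaled time $s$ back to the original time $t$ and reading off the decomposition $U=\mm+\eta$.

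First, given $(\lambda_0,\sigma_0,\varepsilon_0)$ as in Theorem~\ref{th:2}, the relation $t_0=(2x_0)^{1/\beta}$ together with \eqref{eq:sbis} fixes $s_0$; for $\rho_1$ chosen large enough (independently of $x_0$), the hypotheses \eqref{def:sigma_lamba:ini}--\eqref{def:eps:ini} of Proposition~\ref{prop:BS} hold after the change of variable $s\leftrightarrow t$. Proposition~\ref{prop:BS} then supplies $b_0$ with $|b_0|\lesssim s_0^{-1}\sim t_0^{-(3\beta-1)/2}$ and functions $(\lambda(s),\sigma(s),b(s),\varepsilon(s))$ on $[s_0,+\infty)$ satisfying the bootstrap bounds \eqref{BS:param}, \eqref{BS:eps}, \eqref{BS:h}, \eqref{BS:g}.

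Second, since $ds/dt=\lambda^{-3}>0$ and $\lambda(s)\to+\infty$, the map $t\mapsto s$ is an increasing diffeomorphism of $[t_0,+\infty)$ onto $[s_0,+\infty)$; integrating $dt=\lambda^{3}\,ds$ with the refined bootstrap bound on $\lambda$ yields $t=\frac{3\beta-1}{2}s^{2/(3\beta-1)}(1+O(s^{-\rho}))$. Substituting into \eqref{BS:param} and using \eqref{def:cts} gives the asymptotics of $\lambda(t)$ and $\sigma(t)$ claimed in Theorem~\ref{th:2}.

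Third, I write
\begin{equation*}
\eta \;=\; U-\mm \;=\; \lambda^{-\tfrac12}\bigl(bP_b + rR + \varepsilon\bigr)\!\Bigl(\tfrac{x-\sigma}{\lambda}\Bigr) + f(t,x),
\end{equation*}
and estimate each piece. The $L^2$ contribution is dominated by $\|f(t)\|_{L^2}=\|f_0\|_{L^2}\sim t_0^{-(3\beta-1)/4}$ (mass conservation for \eqref{eq:q_0} together with \eqref{est:f_0}); the remaining $L^2$ pieces ($|b|^{1-\gamma/2}$, $|r|$, $\|\varepsilon\|_{L^2}\lesssim s^{-5/8}$ from \eqref{1est:eps}) are strictly smaller in $t_0$ for $\beta>\tfrac13$. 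The $H^1$ estimate splits analogously: $\|\partial_xf\|_{L^2}\lesssim t_0^{-(7\beta-1)/4}$ by \eqref{bound:df:dx}; the $bP_b$ and $rR$ contributions are of order $\lambda^{-1}s^{-1}\sim t^{-\beta}$; and the dominant piece $\lambda^{-1}\|\partial_y\varepsilon\|_{L^2}$ is controlled via the energy estimate \eqref{ener:eps} combined with the bootstrap $|g(s)|\lesssim s^{-(3-2\theta)/(2\theta-1)-3\rho}$ from \eqref{BS:g}. Since $(3-2\theta)/(2\theta-1)=(1+\beta)/(3\beta-1)$, the conversion $s\sim t^{(3\beta-1)/2}$ yields $|g|\lesssim t^{-(1+\beta)/2}$, and the remaining terms in \eqref{ener:eps} ($|E(U_0)|$, bounded in terms of $E(f_0)\sim x_0^{-(2\theta+1)}$ plus smaller corrections coming from the initial soliton ansatz, together with $\lambda^{-2}s^{-2}$ and $\lambda^{-2}\int Q\varepsilon^2$) are all of the same or better order, producing exactly the claimed bound $\|\partial_x\eta\|_{L^2}\lesssim t_0^{-(1+\beta)/4}$.

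Finally, for the right-region bound \eqref{est:eta:right_ext} on $x>\sigma(t)/2\sim t^\beta$, Proposition~\ref{decay:q_0} gives pointwise decay $|f|\lesssim x^{-\theta}$, $|\partial_xf|\lesssim x^{-\theta-1}$, which integrate over $x>\sigma/2$ to $\|f\|_{H^1(x>\sigma/2)}\lesssim t^{-(3\beta-1)/4}$. The support of $P_b$, $\{|y|\lesssim|b|^{-\gamma}\sim s^{3/4}\}$, is disjoint from $\{y<-\sigma/(2\lambda)\sim -s\}$ for $s$ large, so $bP_b\bigl((x-\sigma)/\lambda\bigr)$ vanishes identically on $\{x>\sigma/2\}$ for $s$ large; the $rR$ term decays exponentially in $\sigma/\lambda\sim s$; and the $\varepsilon$-piece is controlled by the weighted bootstrap $\mathcal{N}_B(\varepsilon)\lesssim s^{-5/4}$, which is much smaller than $t^{-(3\beta-1)/4}$. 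The main obstacle lies in the third step: matching the sharp exponent $(1+\beta)/4$ for $\|\partial_x\eta\|_{L^2}$ requires the full strength of the bootstrap on $|g|$ in \eqref{BS:g} and a precise reading of $E(U_0)$ against the energy of the tail $f_0$, so that no loss appears in the energy estimate \eqref{ener:eps}.
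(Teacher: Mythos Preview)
Your first three steps (selection of $b_0$ via Proposition~\ref{prop:BS}, the conversion $s\leftrightarrow t$, and the global $L^2$/$H^1$ bounds on $\eta$) follow the paper's \S\ref{S.5.3} closely, with one correction: the global bound on $\|\varepsilon\|_{L^2}$ cannot come from \eqref{1est:eps}, which is merely the qualitative comparison $\|\varepsilon\|_{L^2}\lesssim\|z\|_{L^2}^{5/8}$ with the preliminary decomposition; you must instead invoke the mass identity \eqref{mass:eps} and compute $\int U_0^2-\int Q^2$ explicitly from the initial data, as the paper does at the end of \S\ref{S.5.3}.

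There is a genuine gap in your argument for \eqref{est:eta:right_ext}. The region $x>\tfrac12\sigma(t)$ corresponds to $y>-\sigma/(2\lambda)\sim -cs$, and the bootstrap $\mathcal N_B(\varepsilon)\le s^{-5/4}$ controls only $\int\varepsilon^2 e^{y/B}$. On $-cs<y<0$ the weight $e^{y/B}$ is as small as $e^{-cs/B}$, so the bootstrap yields no decay for $\int_{y>-cs}\varepsilon^2$; your claim that ``the $\varepsilon$-piece is controlled by the weighted bootstrap'' therefore fails on this region. (Your support argument for $P_b$ is also inverted: the support of $P_b$ is \emph{contained} in $\{x>\tfrac12\sigma\}$, not disjoint from it; fortunately the $bP_b$ and $rR$ contributions are harmless by direct $L^2$ estimates anyway.) The bootstrap is only strong enough to give the estimate on $\{x>\sigma\}$, where $y>0$ and $e^{y/B}\ge1$; this is \eqref{eq:eta:right} in the paper. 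To pass from $\{x>\sigma\}$ to $\{x>\tfrac12\sigma\}$, the paper carries out in \S\ref{S.5.4} an \emph{additional monotonicity argument} on the full residue $\eta$ in the original variables: one introduces Kato-type functionals $J(\tau)=\int\eta^2\xi$ and an energy analogue $K(\tau)$ with a cutoff $\xi(\tau,x)=\chi\bigl((4x-\sigma(t))/\sigma(\tau)-2\bigr)$ moving with $\sigma$, proves $\tfrac{dJ}{d\tau},\tfrac{dK}{d\tau}\lesssim\tau^{-(3\beta+1)/2}$ using the localized soliton estimates \eqref{eq:eta:loc}, and integrates from a time $\underline t$ with $\sigma(\underline t)=\tfrac14\sigma(t)$ up to $t$. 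This step is essential and is entirely absent from your proposal.
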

To prove Theorem~\ref{th:2} from Proposition~\ref{prop:BS}, it is sufficient to return to the 
original variables $(t,x)$ (see \S\ref{S.5.3}) and to prove the additional estimate \eqref{est:eta:right_ext} which improves the region where
the residue $\eta$ converges strongly to $0$ (see \S\ref{S.5.4}).

\subsection{Returning to original variables}\label{S.5.3}
In the context of Proposition~\ref{prop:BS} and Theorem~\ref{th:2}, we prove in this subsection the following set of estimates:
\begin{align}
& \left|\lambda-c_\lambda t^{\frac {1-\beta}{2}}\right|
\lesssim t^{ \frac {1-\beta}{2}(1-\rho_1)},
\quad \left|\frac{\lambda_t}{\lambda} - \frac {1-\beta}{2} t^{-1}\right| \lesssim t^{- 1-\frac{1-\beta}2 \rho_1},
\label{sur:lat}\\
& \left|\sigma-c_\sigma t^{\beta}\right|
\lesssim t^{\beta(1-\rho_1)},
\quad \left|\frac{\sigma_t}{\lambda} - \frac 1{\lambda^3}\right| \lesssim t^{- 1-\frac{3\beta-1}8},\label{sur:sit}
\end{align}
and
\begin{align}
& \|\lambda^{\frac 1{20}}\mm ^\frac1{10}\eta\|_{L^2}\lesssim t^{-\frac12(3\beta-1)},\quad
\|\lambda^{\frac 1{20}}\mm ^\frac1{10}\partial_x \eta\|_{L^2}\lesssim t^{-\beta},\quad
\|\lambda^{\frac 1{20}}\mm ^\frac1{10}\eta\|_{L^\infty}\lesssim t^{-\frac14(5\beta-1)},\label{eq:eta:loc}\\
& \|\eta\|_{L^2(x>\sigma)}\lesssim t^{-\frac12(3\beta-1)},\quad
\|\partial_x \eta\|_{L^2(x>\sigma)}\lesssim t^{-\beta},
\label{eq:eta:right}\\
& \|\eta\|_{L^2}\lesssim t_0^{-\frac14(3\beta-1)},\quad
\|\partial_x\eta\|_{L^2}\lesssim t_0^{-\frac{1+\beta}4},\label{eq:eta}\end{align}
where $\rho_1$ is a small positive number.

\begin{proof}[Proof of \eqref{sur:lat}-\eqref{eq:eta}]
First, we relate $t$ and $s$ from \eqref{eq:sbis}. We claim that for any $t\geq t_0$, $s\geq s_0$,
\begin{equation}\label{sur:t:s}
\left|t-\frac {2\theta-1}{5-4\theta} s^{\frac {5-4\theta}{2\theta-1}}\right|\lesssim \left(1-\left(\frac {s_0}s\right)^{\frac {5-4\theta}{2\theta-1}-\rho}\right) s^{\frac {5-4\theta}{2\theta-1}-\rho}.
\end{equation}
Indeed, from \eqref{BS:param} and $dt=\lambda^3(s) ds$, one has 
($\rho$ is defined in \eqref{def:rho})
\begin{align*}
t-t_0=\int_{s_0}^s \lambda^3(s) ds 
&\geq \int_{s_0}^s (s')^{\frac{6(1-\theta)}{2\theta-1}} ds'
-c \int_{s_0}^s (s')^{\frac{6(1-\theta)}{2\theta-1}-\rho} ds'\\
&\geq \frac{2\theta-1}{5-4\theta}\left( s^{\frac {5-4\theta}{2\theta-1}}-s_0^{\frac {5-4\theta}{2\theta-1}}\right)
-c \left( s^{\frac {5-4\theta}{2\theta-1}-\rho}-s_0^{\frac {5-4\theta}{2\theta-1}-\rho}\right),
\end{align*}
and thus using~\eqref{eq:sbis},
\begin{equation*}
t-\frac{2\theta-1}{5-4\theta} s^{\frac {5-4\theta}{2\theta-1}}
\geq -c \left( 1- \frac {s_0}{s}\right) s^{\frac {5-4\theta}{2\theta-1}-\rho}.
\end{equation*}
This proves the lower bound in \eqref{sur:t:s}.
The corresponding upper bound is proved similarly.
Note that
\begin{equation}\label{t:s:beta}
\frac {2\theta-1}{5-4\theta}=\frac{3\beta-1}{2} \quad\mbox{so that}\quad
\left|t-\frac{3\beta-1}{2} s^{\frac2{3\beta-1}}\right|\lesssim \left(1-\left(\frac {s_0}s\right)^{\frac {5-4\theta}{2\theta-1}-\rho}\right) s^{\frac2{3\beta-1}-\rho}.
\end{equation}

Observing that
\begin{equation*}
c_\lambda = \left( \frac {5-4\theta}{2\theta-1}\right)^{\frac {2(1-\theta)}{5-4\theta}},
\end{equation*}
it follows from \eqref{BS:param} and \eqref{sur:t:s} that 
\begin{equation*}
\left|\lambda(t)-c_\lambda t^{\frac {2(1-\theta)}{5-4\theta}}\right|
\lesssim t^{\frac {2(1-\theta)}{5-4\theta}(1-\rho_1)}\end{equation*}
holds with $0<\rho_1:=\frac {2\theta-1}{2(1-\theta)} \rho<\frac 16$.
Moreover, we check that \eqref{BS:param} and \eqref{BS:m} imply
\begin{equation*}
\left| \frac {\lambda_t (t)}{\lambda(t)} - \frac {2(1-\theta)}{5-4\theta} \frac 1{t} \right|
\lesssim t^{-(1+\frac{2(1-\theta)}{5-4\theta}\rho_1)}.
\end{equation*}
Using $\beta=\frac 1{5-4\theta}$, one finds \eqref{sur:lat}.

Observing that
\begin{equation*}
c_\sigma = (5-4\theta)^{\frac 1{5-4\theta}} (2\theta-1)^{ \frac{4(1-\theta)}{5-4\theta}},
\end{equation*}
it follows from \eqref{BS:param} and \eqref{sur:t:s} that
\begin{equation*}
\left|\sigma(t)-c_\sigma t^{\frac {1}{5-4\theta}}\right|
\lesssim t^{\frac {1}{5-4\theta}(1-\rho_1)}.
\end{equation*}
Moreover, by \eqref{BS:param} and \eqref{BS:m}, it holds
\begin{equation*}
\left|\frac{\sigma_t}{\lambda}-\frac{1}{\lambda^3}\right|=
\left|\frac 1{\lambda^3} \left( \frac{\sigma_s}{\lambda}-1\right)\right|
\lesssim t^{-\frac{19-14\theta}{4(5-4\theta)}}.
\end{equation*}
Thus, \eqref{sur:sit} holds recalling that $\beta=\frac1{5-4\theta}$.

Last, we control $\eta$. Note that from \eqref{def:v} and \eqref{decomp:v}
\begin{equation*}
\lambda^{\frac 12} \eta\left(t,\lambda(t) y +\sigma\right)=
F(t,y) + b(t) P_{b(t)}(y) + r(t) R(y)+ \varepsilon(t, y).
\end{equation*}
Thus, the following estimates hold
\begin{align*}
 \|\eta(t)\|_{L^2}&\lesssim \|f(t)\|_{L^2} + |b(t)| \|P_{b(t)}\|_{L^2}
+|r(t)| + \|\varepsilon(t)\|_{L^2},\\
\|\partial_x \eta(t)\|_{L^2}&\lesssim \|\partial_x f(t)\|_{L^2}+\lambda^{-1}(t) \left[ 
|b(t)| \|\partial_y P_{b(t)}\|_{L^2}
+|r(t)| + \|\partial_y \varepsilon(t)\|_{L^2}\right],
\\
 \lambda^{\frac 1{20}}\|\mm ^\frac1{10}(t)\eta(t)\|_{L^2}&\lesssim \|Q^\frac1{10} F(t)\|_{L^2} + |b(t)| \|Q^\frac1{10}P_{b(t)}\|_{L^2}
+|r(t)| + \|Q^\frac1{10}\varepsilon(t)\|_{L^2},\\
\lambda^{\frac 1{20}}\|\mm ^\frac1{10}(t)\partial_x \eta(t)\|_{L^2}&\lesssim \lambda^{-1}(t) \left[ \|Q^\frac1{10}\partial_y F(t)\|_{L^2}+
|b(t)| \|Q^\frac1{10}\partial_y P_{b(t)}\|_{L^2}
\right.\\&\quad \left.+|r(t)| + \|Q^\frac1{10}\partial_y \varepsilon(t)\|_{L^2}\right].
\end{align*}
From \eqref{est:f_0}, \eqref{bound:df:dx} and $x_0= (2t_0)^{\beta}$, 
\begin{align*}
&\sup_{t\in \mathbb R}\|f(t)\|_{L^2}=\|f_0\|_{L^2}\lesssim x_0^{-\frac 12 (2\theta-1)}
\lesssim t_0^{-\frac{3\beta-1}4},\\
&\sup_{t\in \mathbb R}\|\partial_x f(t)\|_{L^2}\lesssim x_0^{-\frac 12 (2\theta+1)}
\lesssim t_0^{-\frac{7\beta-1}4}.
\end{align*}
From \eqref{e:F} and \eqref{t:s:beta},
\begin{equation*}
\|Q^\frac1{10} F(t)\|_{L^2}\lesssim t^{-\frac{3\beta-1}2},
\quad
\|Q^\frac1{10}\partial_x F(t)\|_{L^2}\lesssim t^{-(3\beta-1)}.
\end{equation*}
From \eqref{def:Pb}, \eqref{BS:param} and \eqref{t:s:beta}, it holds
\begin{align*}
& \|b(t)P_{b(t)}\|_{L^2}
\lesssim |b(t)|^{\frac 58}\lesssim t^{-\frac 5{16}(3\beta-1)},
\\&
 \|b(t)\partial_y P_{b(t)}\|_{L^2}+|b(t)| \|Q^\frac1{10}P_{b(t)}\|_{L^2}+ |b(t)| \|Q^\frac1{10}\partial_y P_{b(t)}\|_{L^2}\lesssim |b(t)|\lesssim t^{-\frac {3\beta-1}2}.
\end{align*}
From \eqref{e:r} and \eqref{t:s:beta}, it holds $|r(t)|\lesssim t^{-\frac 12(3\beta-1)}$.
Last, from \eqref{BS:eps}, 
\begin{equation*}
\|Q^\frac1{10}\varepsilon(t)\|_{L^2}+\|Q^\frac1{10}\partial_y \varepsilon(t)\|_{L^2}\lesssim 
t^{-\frac 5{8}(3\beta-1)}.
\end{equation*}
This implies the first two estimates in \eqref{eq:eta:loc}. For the last estimate in \eqref{eq:eta:loc}, we use
\begin{align*}
\|\lambda^{\frac 1{20}}\mm ^\frac1{10}\eta\|_{L^\infty}^2&
\lesssim\|\lambda^{\frac 1{20}}\mm ^\frac1{10}\eta\|_{L^2}\|\lambda^{\frac 1{20}}\partial_x[\mm ^\frac1{10}\eta]\|_{L^2}\\
&\lesssim\|\lambda^{\frac 1{20}}\mm ^\frac1{10}\eta\|_{L^2}\left(\|\lambda^{\frac 1{20}}\mm ^\frac1{10}\partial_x\eta\|_{L^2}
+\|\lambda^{\frac 1{20}}\partial_x[\mm ^\frac1{10}]\eta\|_{L^2}\right)\\
&\lesssim\|\lambda^{\frac 1{20}}\mm ^\frac1{10}\eta\|_{L^2}\left(\|\lambda^{\frac 1{20}}\mm ^\frac1{10}\partial_x\eta\|_{L^2}
+\lambda^{-1}\|\lambda^{\frac 1{20}}\mm ^\frac1{10}\eta\|_{L^2}\right)
\lesssim t^{-\frac12(5\beta-1)}.
\end{align*}
Similarly, \eqref{eq:eta:right} follows from \eqref{e:FLinfty}, the properties of $P_b$ and \eqref{BS:eps}.
 
Now, we estimate $\|\varepsilon\|_{L^2}$ and $\|\partial_y \varepsilon\|_{L^2}$.
For this, the local estimates~\eqref{BS:eps} involved in the bootstrap are not sufficient, and we have to use global mass and energy estimates from Lemma~\ref{le:3.9}. First, we compute $\int u_0^2$. Using the computations of the proof of Lemma~\ref{le:3.9}
and~\eqref{mass:W},
\begin{equation*}
\int u_0^2 -\int Q^2 
= 2 b_0 \int PQ + \frac 12 r_0 \int Q +\mathcal O(\|\varepsilon_0\|_{L^2}^2)
+\mathcal O(s_0^{-\frac 54}) + \mathcal O(x_0^{-(2\theta-1)}).
\end{equation*}
By the above estimates taken at $t=t_0$ and $\|\varepsilon_0\|_{L^2}\lesssim s_0^{-10} \lesssim t_0^{-5(3\beta-1)}$
(see \eqref{def:eps:ini})
we find
\begin{equation*}
\left|\int u_0^2-\int Q^2\right| 
\lesssim t_0^{-\frac 12 (3\beta-1)}.
\end{equation*}
Thus, by \eqref{mass:eps}, we find, for all $t\geq t_0$, 
$\|\varepsilon\|_{L^2} \lesssim t_0^{-\frac 14 (3\beta-1)}$
and the above estimates imply $\|\eta\|_{L^2} \lesssim t_0^{-\frac14 (3\beta-1)}$.

Second, we compute $E(u_0)$. Using the computations of the proof of Lemma~\ref{le:3.9}
and \eqref{ener:W},
\begin{equation*}
E(u_0) = \mathcal O(\lambda^{-2}(t_0) \|\varepsilon_0\|_{H^1}^2) + \mathcal O(\lambda^{-2}(t_0) s_0^{-2})
+\mathcal O(x_0^{-(2\theta+1)})+\mathcal O(|g(s_0)|).
\end{equation*}
Note that by \eqref{BS:g} and \eqref{t:s:beta}, it holds $|g(s_0)|\lesssim t_0^{-\frac 12 (1+\beta)}$.
We deduce from \eqref{def:eps:ini} and the previous estimates that
$|E(u_0)|\lesssim t_0^{-\frac 12 (1+\beta)}$.
Thus, by \eqref{ener:eps}, we find, for all $t\geq t_0$,
$\lambda^{-1} \|\partial_y\varepsilon\|_{L^2} \lesssim t_0^{-\frac 14 (1+\beta)}$, and the above estimates imply 
$\|\partial_x \eta\|_{L^2} \lesssim t_0^{-\frac 14 (1+\beta)}$.
\end{proof}
 
\subsection{Additional monotonicity argument.}\label{S.5.4}
To complete the proof of Theorem~\ref{th:2}, we prove \eqref{est:eta:right_ext} by extending the local estimates \eqref{eq:eta:right} for $\eta$ on the right of the soliton to the larger region $x>\frac12\sigma$. 
Write the equation for $\eta$ as follows
\begin{equation}\label{equ:eta}
\partial_t \eta+\partial_x^3 \eta
 =-\partial_xN_1 + N_2
\end{equation}
where
\begin{align*} 
N_1&=(\mm +\eta)^5-\mm ^5,\\
N_2&=\frac{\lambda_t}{\lambda}\frac 1{\lambda^{\frac 12}} \Lambda Q\left( \frac{\cdot-\sigma}{\lambda}\right)
+\left(\frac{\sigma_t}\lambda-\frac 1{\lambda^3}\right) \frac 1{\lambda^{\frac 12}} Q'\left( \frac{\cdot-\sigma}{\lambda}\right).
\end{align*}
Fix $t>t_0$ and for any $\tau\in [t_0,t]$, let
\begin{align*}
J(\tau)&=\int \eta^2(\tau,x)\xi(\tau,x) dx  , \\ 
K(\tau)&=\int \left[ (\partial_x\eta)^2-\frac13\left((\mm +\eta)^6-\mm ^6-6\mm ^5\eta\right)\right](\tau,x)\xi(\tau,x) dx , 
\end{align*}
where $ \xi(\tau,x)=\chi\left(\frac{4x-\sigma(t)}{\sigma(\tau)}-2\right)$ and $\chi$ is defined in \eqref{def:chi}.

\begin{lemma}
For $t_0$ large enough and for all $\tau\in [t_0,t]$, it holds
\begin{equation}\label{on:Js}
\frac{dJ}{d\tau}(\tau)\lesssim \tau^{-\frac 12(3\beta+1)},
\end{equation}
and
\begin{equation}\label{on:Ks}
\frac{dK}{d\tau}(\tau)\lesssim \tau^{-\frac 12(3\beta+1)}.
\end{equation}
\end{lemma}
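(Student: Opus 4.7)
The plan is to prove \eqref{on:Js} and \eqref{on:Ks} by a monotonicity-type argument for the weighted quantities $J$ and $K$, analogous in spirit to the energy estimate of Section~\ref{section:energy} but adapted to the region $x>\sigma(t)/4$. The crucial geometric observation is that, writing $z=(4x-\sigma(t))/\sigma(\tau)-2$, one has $\mathrm{supp}\,\xi_x\subset\{z\in[-2,-1]\}\subset[\sigma(t)/4,\sigma(t)/4+\sigma(\tau)/4]$, which in the rescaled variable $y=(x-\sigma(\tau))/\lambda(\tau)$ lies at distance $|y|\gtrsim \sigma(\tau)/\lambda(\tau)\sim\tau^{(3\beta-1)/2}\to\infty$ to the left of the soliton core. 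Hence, invoking the decomposition $\eta=f+\lambda^{-\frac12}(bP_b+rR+\varepsilon)\bigl((\cdot-\sigma)/\lambda\bigr)$, both $\mm$ and the soliton-localised part of $\eta$ are exponentially small on $\mathrm{supp}\,\xi_x$, and only the slow tail $f\sim c_0 x^{-\theta}$ contributes there.

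For \eqref{on:Js}, I would differentiate $J$, substitute $\partial_\tau\eta = -\partial_x^3\eta-\partial_xN_1+N_2$, and integrate by parts to reach
\begin{equation*}
J'(\tau)=\int\eta^{2}(\xi_{\tau}+\xi_{xxx})-3\!\int\eta_x^{2}\xi_x+2\!\int(\eta\xi)_x N_1+2\!\int\eta N_2\,\xi.
\end{equation*}
A direct computation yields $\xi_x=(4/\sigma(\tau))\chi'(z)\geq 0$ and $\xi_\tau=-\sigma_\tau(4x-\sigma(t))\chi'(z)/\sigma(\tau)^2\leq 0$ on $\mathrm{supp}\,\chi'$, while $|\xi_{xxx}|\sim\tau^{-3\beta}$ is dominated by $|\xi_\tau|\sim\tau^{-1}$ for $\beta>\tfrac13$; the two geometric terms are therefore nonpositive and discarded. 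One more integration by parts on $2\int \eta_x\xi N_1$ rewrites the nonlinear contribution as a combination of $\int\xi_x\,\mm^k\eta^{6-k}$ ($k=0,\dots,4$) and $\int\xi\,\mm_x[(\mm+\eta)^5-\mm^5-5\mm^4\eta]$. On $\mathrm{supp}\,\xi_x$ the exponential smallness of $\mm$ reduces the first family to $\int\xi_x\eta^6$; the tail bound $\|\eta\|_{L^\infty(\mathrm{supp}\,\xi_x)}\lesssim \sigma(t)^{-\theta}\sim t^{-(5\beta-1)/4}$ together with $\int\xi_x\,dx\lesssim 1$ gives $\int\xi_x\eta^6\lesssim t^{-3(5\beta-1)/2}\leq \tau^{-(3\beta+1)/2}$, where again $\beta>\tfrac13$ is used. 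The soliton-localised piece $\int\xi\,\mm_x[\cdots]$ is handled by weighted estimates from \eqref{eq:eta:loc}, typically of the form
\begin{equation*}
\int\mm_x\mm^{3}\eta^{2}\lesssim \|\mm_x\mm^{14/5}\|_{L^\infty}\|\mm^{1/10}\eta\|_{L^2}^{2}\lesssim \lambda^{-3}\,t^{-(3\beta-1)}\sim\tau^{-(3\beta+1)/2}.
\end{equation*}
Finally, since $|N_2|\lesssim \bigl(|\lambda_t/\lambda|+|\sigma_t/\lambda-\lambda^{-3}|\bigr)\,\mm$ pointwise, \eqref{sur:lat}--\eqref{sur:sit} combined with Cauchy--Schwarz and $\int\mm|\eta|\leq \|\mm^{9/10}\|_{L^2}\|\mm^{1/10}\eta\|_{L^2}\lesssim \lambda^{1/20}\cdot\lambda^{-1/20}t^{-(3\beta-1)/2}=t^{-(3\beta-1)/2}$ produce $|2\int\eta N_2\xi|\lesssim t^{-1-(3\beta-1)/2}=t^{-(3\beta+1)/2}$, establishing \eqref{on:Js}.

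The proof of \eqref{on:Ks} follows the same blueprint, but with more bookkeeping. Differentiating $K$ and using \eqref{equ:eta} generates: (i) the Airy cancellations, producing an $\int(\partial_x^2\eta)^2\xi_x$ negative term together with a virial quadratic form in $(\eta,\partial_x\eta)$ localised on $\mathrm{supp}\,\xi_x$; (ii) nonlinear errors of the form $\int\mm^k\eta^{5-k}\partial_x\eta\,\xi_x$ and $\int\xi\,\mm_x[\cdots]$, controlled exactly as in $J'$; (iii) a genuinely new contribution from the time derivative of the sextic density, which, after inserting $\partial_\tau\mm=-\tfrac12(\lambda_t/\lambda)\mm-(\lambda_t/\lambda)y\,\partial_y\mm+\sigma_t\,\partial_x\mm$, reduces to modulation-weighted integrals of $\mm^k\eta^{6-k}$ bounded as in $J'$ via \eqref{sur:lat}--\eqref{sur:sit} and \eqref{eq:eta:loc}; and (iv) the contribution $\int[\cdots]\xi_\tau$ of the nonlinear density, which is handled by the $L^\infty$ bound on $\eta$ over $\mathrm{supp}\,\xi_x$ exactly as in (i). Summing all pieces yields \eqref{on:Ks}.

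The main obstacle is the careful bookkeeping of powers of $\lambda(\tau),\sigma(\tau),t,t_0$ across the many nonlinear and modulation terms, and checking that each separately matches or improves $\tau^{-(3\beta+1)/2}$. The hypothesis $\beta>\tfrac13$ is decisive in at least two places: it gives $\xi_{xxx}\ll|\xi_\tau|$, which is what makes the geometric signs combine favorably, and it ensures $t^{-3(5\beta-1)/2}\leq t^{-(3\beta+1)/2}$, which is what allows the crude tail bound on $\eta$ to absorb the sextic self-interaction on $\mathrm{supp}\,\xi_x$.
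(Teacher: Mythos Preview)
Your proposal has a genuine gap: the central geometric observation is false. You claim that $\mathrm{supp}\,\xi_x\subset[\sigma(t)/4,(\sigma(t)+\sigma(\tau))/4]$ lies at distance $|y|\gtrsim \sigma(\tau)/\lambda(\tau)$ to the left of the soliton core. But the soliton at time $\tau$ sits at $x=\sigma(\tau)$, and as $\tau$ ranges over $[t_0,t]$ the value of $\sigma(\tau)$ sweeps from a small number up to $\sigma(t)$. In particular, for $\tau$ with $\sigma(\tau)\in[\sigma(t)/4,\sigma(t)/3]$ the soliton center lies \emph{inside} $\mathrm{supp}\,\xi_x$; for smaller $\tau$ it lies to the left of it. Hence $\mm$ is \emph{not} uniformly exponentially small on $\mathrm{supp}\,\xi_x$, and your treatment of the cross terms $\int\xi_x\mm^k\eta^{6-k}$ collapses. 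Equally, your claim that the $\varepsilon$ piece of $\eta$ is ``exponentially small'' away from the soliton has no basis: only weighted $L^2$ bounds such as \eqref{BS:eps} are available for $\varepsilon$, and the tail bound $\|\eta\|_{L^\infty(\mathrm{supp}\,\xi_x)}\lesssim \sigma(t)^{-\theta}$ cannot be justified. Finally, the assertion that $\int\eta^2\xi_{xxx}$ is absorbed by $\int\eta^2\xi_\tau$ is not correct: the two integrands need not have opposite signs pointwise (indeed $\chi'''$ changes sign), so the comparison $\tau^{-3\beta}\ll\tau^{-1}$ of their sizes does not translate into a sign condition on their sum.

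The paper's argument avoids all of this by never invoking spatial separation. The pure sextic term $\int\eta^6\partial_x\xi$ is handled by the weighted Sobolev bound $\|\eta^2\sqrt{\partial_x\xi}\|_{L^\infty}^2\lesssim\|\eta\|_{L^2}^2\bigl(\int(\partial_x\eta)^2\partial_x\xi+\sigma^{-3}\int\eta^2\bigr)$ (see \eqref{full:nonlin:eta}), which allows absorption into the good term $-3\int(\partial_x\eta)^2\partial_x\xi$ using only the global smallness of $\|\eta\|_{L^2}$. The mixed terms $\int\eta^2\mm^4\partial_x\xi$, $\int\eta^2|\mm|^3|\partial_x\mm|$, $\int|\eta|^5|\partial_x\mm|$ are bounded directly via the soliton-weighted norms $\|\lambda^{1/20}\mm^{1/10}\eta\|_{L^2}$ and $\|\lambda^{1/20}\mm^{1/10}\eta\|_{L^\infty}$ of \eqref{eq:eta:loc}, which are valid uniformly in the position of the soliton relative to $\mathrm{supp}\,\xi_x$. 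The $\xi_{xxx}$ term is simply bounded as a separate error using $\|\eta\|_{L^2}^2\lesssim1$ and $|\partial_x^3\xi|\lesssim\tau^{-3\beta}\leq\tau^{-(3\beta+1)/2}$. Your estimate of $\int\xi\,\mm_x[\cdots]$ via \eqref{eq:eta:loc} is essentially the paper's argument for that piece and is fine; the rest of your strategy needs to be replaced by the above.
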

\begin{proof}
We begin with the proof of \eqref{on:Js}. Using \eqref{equ:eta}, we observe after integration by parts
\begin{align*}
\frac{dJ}{d\tau} &= 2\int (\partial_\tau\eta)\eta\xi+\int \eta^2\partial_\tau\xi 
 \\&=-3 \int (\partial_x \eta)^2 \partial_x \xi+\int \eta^2\partial_\tau\xi +\int \eta^2 \partial_x^3 \xi
-2 \int (\partial_xN_1) \eta \xi+2\int N_2 \eta\xi \\ 
& =:J_1+J_2+J_3+J_4+J_5  .
\end{align*}

We compute
\begin{align*}
\partial_x \xi(\tau,x) &= \frac 4{\sigma(\tau)} \chi'\left(\frac{4x-\sigma(t)}{\sigma(\tau)}-2\right) \geq 0,\\
\partial_x^3 \xi(\tau,x) &=\left(\frac 4{\sigma(\tau)}\right)^3 \chi'''\left(\frac{4x-\sigma(t)}{\sigma(\tau)}-2\right),\\
\partial_\tau \xi(\tau,x) &=-\frac{\sigma_t(\tau)}{\sigma(\tau)}\left(\frac{4x-\sigma(t)}{\sigma(\tau)}\right) \chi'\left(\frac{4x-\sigma(t)}{\sigma(\tau)}-2\right).
\end{align*}
Note that by \eqref{sur:sit},
\begin{equation} \label{est:xi3}
0 \le \partial_x\xi \lesssim \tau^{-\beta}, \quad \text{and} \quad |\partial_x^3 \xi|\lesssim \tau^{-3\beta}\lesssim\tau^{-\frac {3\beta+1}2}  .
\end{equation}
Observe that $J_1 \le 0$. Since $\sigma_t\geq 0$ (see \eqref{sur:sit}), $\chi'\geq 0$ on $\mathbb{R}$ and $\chi'(x)=0$ for $x<-2$, we also have $\partial_\tau \xi\leq 0$, so that $J_2 \le 0$. Moreover, by using \eqref{eq:eta} and \eqref{est:xi3}, we have that 
$\left|J_3\right| \lesssim \tau^{-\frac 12 (3\beta+1)}$.
On the other hand, more integration by parts yield
\begin{align*}
J_4
& = 2 \int \left( (\mm +\eta)^5-\mm ^5\right) (\partial_x\eta\, \xi+\eta \, \partial_x\xi)\\
& = - \frac 13 \int \left[ (\mm +\eta)^6-\mm ^6-6\mm ^5\eta\right]\partial_x \xi
+2 \int \left( (\mm +\eta)^5-\mm ^5\right) \eta \partial_x\xi\\
&\quad -2\int \left[ (\mm +\eta)^5-\mm ^5-5\mm ^4\eta\right] (\partial_x \mm ) \xi\\
&=\int \left(5\mm ^4\eta^2+\frac {40}3 \mm ^3\eta^3+15\mm ^2\eta^4+8\mm \eta^5+\frac 53 \eta^6\right)\partial_x \xi\\
&\quad -2\int \left[ (\mm +\eta)^5-\mm ^5-5\mm ^4\eta\right] (\partial_x \mm ) \xi  ,
\end{align*}
so that
\begin{equation*}
\left|J_4\right|\lesssim \int \eta^6 \partial_x \xi +\int \eta^2 \mm ^4\partial_x \xi+\int \eta^2 |\mm |^3|\partial_x\mm |+\int |\eta|^5 |\partial_x\mm |.
\end{equation*}
For the first term on the right-hand side of the above estimate, we argue as in \eqref{est:infty} to deduce 
\begin{equation*} \left\|\eta^2\sqrt{\partial_x\xi}\right\|_{L^\infty}^2
\lesssim \|\eta\|_{L^2}^2\int (\partial_x\eta)^2 \partial_x\xi
 +\|\eta\|_{L^2}^2\int \eta^2\frac{\big(\partial_x^2\xi\big)^2}{\partial_x\xi}  ,
\end{equation*}
in the support of $\partial_x\xi$. Thus, by using in this region
\begin{equation}\label{A-VOIR}
\frac{ (\partial_x^2\xi )^2}{\partial_x\xi} \lesssim \sigma^{-3} \frac{(\chi'')^2}{\chi'} \lesssim \sigma^{-3},
\end{equation}
we deduce from \eqref{eq:eta}, \eqref{est:xi3} that
\begin{equation} \label{full:nonlin:eta}
\int \eta^6 \partial_x \xi\lesssim 
\left(\int \eta^2\right)^{2} \left[ \int (\partial_x \eta)^2 \partial_x \xi +  \sigma^{-3} \int \eta^2\right] \le -\frac12J_1+c\tau^{-\frac {3\beta+1}2}  , 
\end{equation}
by taking $t_0$ large enough. 
For the second term, using \eqref{eq:eta:loc} and \eqref{est:xi3}, we have
\begin{equation*}
\int \eta^2 \mm ^4\partial_x \xi
\lesssim \sigma^{-1}\lambda^{-2} \|\lambda^{\frac 1{20}} \mm ^{\frac 1{10}}\eta\|_{L^2}^2\lesssim 
\tau^{-3\beta} \lesssim\tau^{-\frac {3\beta+1}2}.
\end{equation*}
Next, using \eqref{eq:eta:loc}
\begin{equation*}
\int \eta^2 |\mm |^3|\partial_x\mm |
\lesssim \lambda^{-3} \|\lambda^{\frac 1{20}} \mm ^{\frac 1{10}}\eta\|_{L^2}^2
\lesssim \tau^{-\frac {3\beta+1}2},
\end{equation*}
and
\begin{equation*}
\int |\eta|^5 |\partial_x\mm |
\lesssim \lambda^{-\frac 32} \|\lambda^{\frac 1{20}} \mm ^{\frac 1{10}}\eta\|_{L^\infty}^3\|\lambda^{\frac 1{20}} \mm ^{\frac 1{10}}\eta\|_{L^2}^2
\lesssim \tau^{1-6\beta}\lesssim \tau^{-\frac {3\beta+1}2}.
\end{equation*}
Gathering all these estimates, we obtain that 
\begin{equation*} J_4 \le -\frac12 J_1+c\tau^{-\frac {3\beta+1}2}  .\end{equation*}
Last, we observe by \eqref{sur:lat}-\eqref{sur:sit} that $|N_2(\tau,x)|\lesssim \tau^{-1} \lambda^{-\frac14}\mm ^{\frac12}(\tau,x)$, and so
\begin{equation*}
\left|J_5\right|\lesssim
\tau^{-1} \lambda^{-\frac14}\int \mm ^{\frac12} |\eta|\lesssim \tau^{-1}\lambda^{-\frac12}\|\lambda^{\frac 1{20}}\mm ^{\frac 1{10}} \eta\|_{L^2}
\lesssim \tau^{-\frac {3\beta+1}2}.
\end{equation*}
Collecting these estimates and taking $t_0$ large enough, we have proved \eqref{on:Js}.

\smallskip

Now, we turn to the proof of \eqref{on:Ks}. We compute using \eqref{equ:eta} and integrating by parts 
\begin{align*}
\frac{dK}{d\tau} &= \int \left[(\partial_x\eta)^2-\frac13\left((\mm +\eta)^6-\mm ^6-6\mm ^5\eta \right) \right]\partial_{\tau}\xi 
+2\int \partial_x\eta \partial_{x\tau}^2\eta \xi\\ & \quad 
-2\int \left[\left((\mm +\eta)^5-\mm ^5\right)\partial_{\tau}(\mm +\eta)-5\mm ^4\partial_{\tau}\mm \eta \right] \xi
\\
&=-2 \int \left[\partial_x^2\eta+(\mm +\eta)^5-\mm ^5 \right]^2 \partial_x \xi-2\int (\partial_x^2\eta)^2 \partial_x\xi+\int (\partial_x\eta)^2\partial_x^3\xi\\ 
& \quad +2\int \partial_xN_1 \partial_x \eta\partial_x\xi +2\int \partial_xN_2 \partial_x\eta \xi-2\int N_2 N_1 \xi\\
& \quad -2\int \left[(\mm +\eta)^5-\mm ^5-5\mm ^4\eta \right] \partial_{\tau}\mm  \xi \\
&\quad + \int \left[(\partial_x\eta)^2-\frac13\left((\mm +\eta)^6-\mm ^6-6\mm ^5\eta \right) \right]\partial_{\tau}\xi \\
& =:K_1+K_2+K_3+K_4+K_5+K_6+K_7+K_8  .
\end{align*}
Observe, since $\partial_x \xi \ge 0$, that $K_1 \le 0$ and $K_2 \le 0$. Moreover, it follows from \eqref{eq:eta} and \eqref{est:xi3} that $K_3 \lesssim \tau^{-3\beta}$. Moreover, we compute 
\begin{equation*} 
 K_4 = 10\int (\mm +\eta)^4(\partial_x\eta)^2 \partial_x\xi+10\int \left[(\mm +\eta)^4-\mm ^4 \right]\partial_x\mm \partial_x\eta\partial_x\xi
\end{equation*}
so that
\begin{align*}
 K_4& \lesssim \int \mm ^4(\partial_x\eta)^2 \partial_x\xi+\int \eta^4(\partial_x\eta)^2 \partial_x\xi+\int \left|\mm ^3\partial_x\mm \eta\partial_x\eta\right|\partial_x\xi
 +\int \eta^4(\partial_x\mm )^2 \partial_x\xi \\ 
 &=:K_{4,1}+K_{4,2}+K_{4,3}+K_{4,4}  .
\end{align*}
By using \eqref{sur:lat}, \eqref{sur:sit}, \eqref{eq:eta:loc} and \eqref{est:xi3}, we deduce that 
\begin{align*} 
|K_{4,1}| &\lesssim \sigma^{-1}\lambda^{-2}\|\lambda^{\frac 1{20}} \mm ^{\frac 1{10}}\partial_x\eta\|_{L^2}^2 \lesssim \tau^{-1-2\beta} \\ 
|K_{4,3}| &\lesssim \sigma^{-1}\lambda^{-3}\|\lambda^{\frac 1{20}} \mm ^{\frac 1{10}}\eta\|_{L^2}\|\lambda^{\frac 1{20}} \mm ^{\frac 1{10}}\partial_x\eta\|_{L^2} \lesssim \tau^{-1-2\beta} \\ 
|K_{4,4}| &\lesssim \sigma^{-1}\lambda^{-3}\|\lambda^{\frac 1{20}} \mm ^{\frac 1{10}}\eta\|_{L^{\infty}}^2\|\lambda^{\frac 1{20}} \mm ^{\frac 1{10}}\eta\|_{L^2}^2\lesssim \tau^{-\frac{7\beta+1}2} \lesssim \tau^{-1-2\beta}  ,
\end{align*}
since $\beta>\frac13$. To handle the purely nonlinear term $K_{4,2}$, we use, arguing as in \eqref{Sobo:firstderiv}, the improved Sobolev estimate
\begin{equation*} \left\|\eta\partial_x\eta\sqrt{\partial_x\xi}\right\|_{L^\infty}^2
\lesssim \|\eta\|_{L^2}^2\int (\partial_x^2\eta)^2 \partial_x\xi
 +\|\eta\|_{L^2}^2\int (\partial_x\eta)^2\frac{\big(\partial_x^2\xi\big)^2}{\partial_x\xi}  ,
\end{equation*}
in the support of $\partial_x\xi$.
Thus, using \eqref{A-VOIR}, we deduce from \eqref{eq:eta}, \eqref{est:xi3} that
\begin{equation*}
K_{4,2}\lesssim 
\left(\int \eta^2\right)^{2} \left[ \int (\partial_x^2 \eta)^2 \partial_x \xi + \sigma^{-3} \int \eta^2\right] \le -\frac12K_2+c\tau^{-3\beta}  , 
\end{equation*}
by taking $t_0$ large enough. 

Observe from \eqref{sur:lat}-\eqref{sur:sit} that $|\partial_xN_2(\tau,x)|\lesssim \tau^{-1}\lambda^{-\frac54} \mm ^{\frac12}(\tau,x)$. Thus, it follows using \eqref{sur:lat} and \eqref{eq:eta:loc} that 
\begin{equation*} 
|K_5| \lesssim \tau^{-1}\lambda^{-\frac32}\|\lambda^{\frac 1{20}} \mm ^{\frac 1{10}}\partial_x\eta\|_{L^2} \lesssim \tau^{-\frac{7+\beta}4} \lesssim \tau^{-\frac{3\beta+1}2}, 
\end{equation*}
since $\beta \le 1$. To estimate $K_6$, we observe 
\begin{equation*} 
|K_6| \lesssim \int \mm ^4|\eta| |N_2|+\int |\eta|^5 |N_2| =:K_{6,1}+K_{6,2}  ,
\end{equation*}
so that it follows from $|N_2(\tau,x)|\lesssim \tau^{-1} \lambda^{-\frac14}\mm ^{\frac12}(\tau,x)$ and \eqref{sur:lat}-\eqref{eq:eta:loc}, 
\begin{align*} 
|K_{6,1}| &\lesssim \tau^{-1}\lambda^{-\frac52}\|\lambda^{\frac 1{20}} \mm ^{\frac 1{10}}\partial_x\eta\|_{L^2} \lesssim \tau^{-\frac{7+\beta}4} \lesssim \tau^{-\frac{3\beta+1}2} , \\
|K_{6,2}| &\lesssim \tau^{-1}\lambda^{-\frac12}\|\lambda^{\frac 1{20}} \mm ^{\frac 1{10}}\partial_x\eta\|_{L^{\infty}}^3\|\lambda^{\frac 1{20}} \mm ^{\frac 1{10}}\partial_x\eta\|_{L^2}^2 \lesssim \tau^{-\frac{1+17\beta}4} \lesssim \tau^{-1-2\beta}  ,
\end{align*}
since $\beta>\frac13$. To control the contribution $K_7$, we observe from \eqref{sur:lat}-\eqref{sur:sit}
\begin{equation*} 
\partial_t\mm =-\frac{\lambda_t}{\lambda} \lambda^{-\frac12} \Lambda Q\left(\frac{\cdot-\sigma}{\lambda}\right)-\frac{\sigma_t}{\lambda}\lambda^{-\frac12}Q'\left(\frac{\cdot-\sigma}{\lambda}\right) \quad \text{so that} \quad |\partial_t\mm (\tau,x)| \lesssim \lambda^{-3}\mm (\tau,x)  .
\end{equation*}
Thus, 
\begin{equation*} 
|K_7| \lesssim \lambda^{-3} \int \mm ^4\eta^2+ \lambda^{-3}\int |\mm ||\eta|^5=: K_{7,1} +K_{7,2}  .
\end{equation*}
Hence, it follows from \eqref{sur:lat}-\eqref{eq:eta:loc} that 
\begin{align*} 
|K_{7,1}| &\lesssim \lambda^{-5}\|\lambda^{\frac 1{20}} \mm ^{\frac 1{10}}\partial_x\eta\|_{L^2}^2 \lesssim \tau^{-\frac{3+\beta}2} \lesssim \tau^{-\frac{3\beta+1}2} , \\
|K_{7,2}| &\lesssim \lambda^{-3}\|\lambda^{\frac 1{20}} \mm ^{\frac 1{10}}\partial_x\eta\|_{L^{\infty}}^3\|\lambda^{\frac 1{20}} \mm ^{\frac 1{10}}\partial_x\eta\|_{L^2}^2 \lesssim \tau^{-\frac{21\beta-1}4} \lesssim \tau^{-\frac{3\beta+1}2} ,
\end{align*}
since $\frac13<\beta\le 1$. Finally, we deal with $K_8$ by writing
\begin{equation*} 
K_8 \le \int (\partial_x\eta)^2\partial_{\tau}\xi +c\int \mm ^4\eta^2\left|\partial_{\tau}\xi\right|+c\int \eta^6 \left|\partial_{\tau}\xi\right| =: K_{8,1}+K_{8,2}+K_{8,3}  .
\end{equation*}
Observe that $K_{8,1} \le 0$, since $\partial_{\tau}\xi \le 0$. Moreover, we have $0 \le \frac{4x-\sigma(t)}{\sigma(\tau)} \le 1$ on the support of $\partial_{\tau}\xi$, so that $|\partial_{\tau}\xi(\tau,x)| \lesssim \tau^{-1}$ (from \eqref{sur:lat}-\eqref{sur:sit}). Then, it follows from \eqref{sur:lat}, \eqref{eq:eta:loc} that 
\begin{equation*} 
|K_{8,2}| \lesssim \tau^{-1}\lambda^{-2}\|\lambda^{\frac 1{20}} \mm ^{\frac 1{10}}\partial_x\eta\|_{L^2}^2 \lesssim \tau^{-1-2\beta}  .
\end{equation*}
To deal with the fully nonlinear term $K_{8,3}$, we get arguing as in \eqref{full:nonlin:eta} and using \eqref{sur:lat}, \eqref{eq:eta} and 
\begin{equation}\label{A-VOIR2}
\frac{\big(\partial_{x\tau}^2\xi\big)^2}{|\partial_{\tau}\xi|} \lesssim \left| \frac{\sigma_t}{\sigma} \right| \sigma^{-2} \left(  \bar{x}^{-1}\chi'(\bar{x}-2)+\bar{x}\frac{(\chi'')^2}{\chi'} \right) \lesssim \tau^{-1}\sigma^{-2}
\end{equation}
on the support of $\partial_{\tau}\xi$ ($0 \le \bar{x}:=\frac{4x-\sigma(t)}{\sigma(\tau)} \le 1$),  
that 
\begin{equation*} 
K_{8,3} \lesssim 
\left(\int \eta^2\right)^{2} \left[ \int (\partial_x \eta)^2 \left|\partial_{\tau} \xi\right| + \tau^{-1}\sigma^{-2} \int \eta^2\right] \le -\frac12K_{8,1}+c\tau^{-1-2\beta}  , 
\end{equation*}
for $t_0$ large enough. 

We complete the proof of \eqref{on:Ks} by combining all those estimates. 
\end{proof}

\begin{proof}[Proof of \eqref{est:eta:right_ext}]
We define $\underline t$ such that $\sigma(\underline t)=\frac 14 \sigma(t)$.
Note that $\underline t\gtrsim t$ from \eqref{sur:sit}.
Next, we integrate \eqref{on:Js} on $[\underline t,t]$ and we also use $\chi=0$ on $(-\infty,-2]$ and $\chi=1$ on $[-1,+\infty)$.
We obtain 
\begin{equation*}
\int_{x\geq \frac{\sigma(t)}2} \eta^2(t,x) dx
\leq J(t)\leq J(\underline t)+c t^{-\frac {3\beta-1}2} 
\lesssim \int_{x\geq \sigma(\underline t)} \eta^2(\underline t,x) dx + t^{-\frac {3\beta-1}2}
\lesssim t^{-\frac {3\beta-1}2}  ,
\end{equation*}
where we used \eqref{eq:eta:right} in the last step. This implies the first estimate in \eqref{est:eta:right_ext}. Arguing similarly for $K$, we deduce that 
\begin{equation*}
\int_{x\geq \frac{\sigma(t)}2} \left[ (\partial_x\eta)^2-\frac13\left((\mm +\eta)^6-\mm ^6-6\mm ^5\eta\right)\right](t,x) dx
\leq K(t)\leq K(\underline t)+c t^{-\frac {3\beta-1}2} 
\lesssim t^{-\frac {3\beta-1}2}  .\end{equation*}
Moreover, we deduce by using the Sobolev embedding, \eqref{eq:eta:right} and \eqref{eq:eta} that
\begin{align*}
\int_{x\geq \frac{\sigma(t)}2} \left((\mm +\eta)^6-\mm ^6-6\mm ^5\eta\right)
&\lesssim \int \mm ^4\eta^2+\left(\int \eta^2\right)^2\int_{x\geq \frac{\sigma(t)}2}(\partial_x\eta)^2 \\ 
&\le \frac12\int_{x\geq \frac{\sigma(t)}2}(\partial_x\eta)^2+ct^{-(3\beta-1)}  ,
\end{align*}
by choosing $t_0$ large enough. Therefore, we complete the proof of the second estimate in~\eqref{est:eta:right_ext} by combining the last two estimates. 
\end{proof}

\subsection{Proof of Theorem~\ref{th:1}}
The statement of Theorem~\ref{th:1} in the Introduction corresponds to a simplification of Theorem~\ref{th:2}
and to a further rescaling and translation to consider initial data at $t=0$ and close to the soliton $Q$.

Take $x_0$, $t_0$, $\lambda_0$, $\sigma_0$, $b_0$, $U_0$ and a solution $U(t)$ of \eqref{gkdv} 
as in Theorem~\ref{th:2}.
Define the following rescaled version of $U_0$
\begin{equation*}
u_0(x)=
\lambda_0^{\frac 12} U_0( \lambda_0 x+ \sigma_0)=
Q_{ b_0}(x)+ \lambda_0^{\frac 12}f(t_0, \sigma_0) R(x)
+ \lambda_0^{\frac 12} f (t_0, \lambda_0 x+ \sigma_0)
 \end{equation*}
and consider $u(t)$ the solution of \eqref{gkdv} with initial data $u(0)= u_0$, so that
\begin{equation*}
u(t,x)=\lambda_0^{\frac 12} U(\lambda_0^3 t+t_0,\lambda_0 x+ \sigma_0).
\end{equation*}
Let (see \eqref{eq:sbis})
\begin{equation*}
T_0= t_0 \lambda_0^{-3}= c_\lambda^{-3} t_0^{\frac 12 (3\beta -1)}
=\frac{3\beta-1}2 s_0.
\end{equation*}
The decomposition \eqref{for:U} rewrites
\begin{equation*}
u(t,x) =\frac 1{\ell^{\frac 12}(t)} Q\left( \frac{x-x(t)}{\ell(t)}\right)+ w(t,x)
\end{equation*}
where
\begin{equation*}
\ell(t)=\frac{\lambda(\lambda_0^3 t+t_0)}{\lambda_0},\quad
x(t)=\frac{\sigma(\lambda_0^3 t+t_0)-\sigma_0}{\lambda_0},
\quad
w(t,x)=\lambda_0^{\frac 12} \eta(\lambda_0^3 t+t_0,\lambda_0 x+ \sigma_0).
\end{equation*}
First, as a consequence of \eqref{sur:sit}, we have
\begin{equation*}
\left|x(t)-c_\sigma \frac{(\lambda_0^3 t+t_0)^{\beta}-t_0^\beta} {\lambda_0} \right|
\lesssim \frac{(\lambda_0^3 t+t_0)^{\beta(1-\rho_1)}}{\lambda_0}
.
\end{equation*}
Since
\begin{equation*}
c_\sigma t_0^\beta \lambda_0^{-1}
=\beta^{-1} c_\lambda^{-3} t_0^{\frac 12 (3\beta-1)}
=\beta^{-1} T_0,
\end{equation*}
we obtain, for $\epsilon=\beta\rho_1>0$,
\begin{equation}\label{on:sig}
\left|x(t)-\frac {T_0 }\beta \left[{\left(\frac t{T_0}+1\right)^{\beta}-1}\right] \right|
\lesssim T_0 \left(\frac t{T_0}+1\right)^{\beta} T_0^{-3\epsilon\frac {1-\beta}{3\beta-1}}(t+T_0)^{-\epsilon}
.
\end{equation}
Similarly, by \eqref{sur:lat}, we have
\begin{equation*}
\left|\ell(t)- \left(\frac{t}{T_0}+1\right)^{\frac {1-\beta}2}\right|
\lesssim t_0^{-\rho_1\frac {1-\beta}{2}} \left(\frac{t}{T_0}+1\right)^{\frac {1-\beta}{2}(1-\rho_1)} 
\lesssim T_0^{-\rho_1\frac {1-\beta}{3\beta-1}}\left(\frac{t}{T_0}+1\right)^{\frac {1-\beta}{2}(1-\rho_1)},
\end{equation*}
and so, possibly choosing a smaller $\epsilon>0$,
\begin{equation}\label{on:lam}
\left| \ell(t) - \left(\frac{t}{T_0}+1\right)^{\frac {1-\beta}2}\right|
\lesssim (t+T_0 )^{-\epsilon}.
\end{equation}
This justifies \eqref{def:ell} for $T_\delta=T_0$.

Moreover, it follows from \eqref{eq:eta} that
\begin{align*}
\|w\|_{L^2}&\lesssim \|\eta\|_{L^2}\lesssim t_0^{-\frac14(3\beta-1)}\lesssim T_0^{-\frac 12},\\
\|\partial_x w\|_{L^2} &\lesssim
 \lambda_0 \|\partial_x\eta\|_{L^2}\lesssim \lambda_0 t_0^{-\frac{1+\beta}4}\lesssim T_0^{-\frac 12}.
\end{align*}
Therefore, for arbitrary small $\delta>0$, it is enough to choose $x_0=x_0(\delta)$ large enough and take $T_{\delta}=T_0$, so that
in particular $T_{\delta}^{-1/2}\ll \delta$, which implies the first estimate in \eqref{def:eta}.

Finally, by the definition of $w(t,x)$ and then \eqref{est:eta:right_ext}, one has
\begin{equation*}
\int_{x\geq \frac12x(t)} w^2(t,x) dx 
=\int_{y\geq \frac 12\sigma(\lambda_0^3 t+t_0)} \eta^2(\lambda_0^3 t+t_0, y) dy
\lesssim (\lambda_0^3t+t_0)^{-\frac {3\beta-1}{2}}
\lesssim T_0 \left(\frac {t}{T_0}+1\right)^{-\frac {3\beta-1}2}
\end{equation*}
and similarly,
\begin{multline*}
\int_{x\geq \frac12x(t)} (\partial_x w)^2(t,x) dx 
 =\lambda_0^2 \int_{y\geq \frac 12\sigma(\lambda_0^3 t+t_0)} (\partial_x\eta)^2(\lambda_0^3 t+t_0, y) dy\\
\lesssim \lambda_0^2 (\lambda_0^3t+t_0)^{-\frac {3\beta-1}{2}}
\lesssim \lambda_0^2 T_0 \left(\frac {t}{T_0}+1\right)^{-\frac {3\beta-1}2}.
\end{multline*}
These estimates complete the proof of \eqref{def:eta}.

\begin{remark}\label{rk:T0}
Estimates \eqref{on:sig}-\eqref{on:lam}
for $T_\delta=T_0\gg 1$ describe the behavior of the parameters both for large times and for intermediate times.
Indeed, by continuous dependence of the solution with respect to the initial data, since 
$\tilde u(t,x)=Q(x-t)$ is a solution, it is clear that $T_\delta\to +\infty$ as $\delta\to 0$, and that for
$0<t\ll T_\delta$, the solution $u(t)$ behaves like $\tilde u(t)$.\end{remark}

\subsection{Non-scattering solutions}\label{S:6.5}
We prove that the solution $U$ constructed in Theorem~\ref{th:2} does not behave in $L^2$ as $t\to +\infty$ like a solution of the linear Airy equation.
For the sake of contradiction, assume that
there exists $v_0\in L^2$ such that
defining $v(t,x)$ the solution of 
\begin{equation*}
\partial_t v + \partial_x^3 v= 0,\quad v(0)=v_0,
\end{equation*}
it holds
\begin{equation}\label{scat}
\lim_{t\to+\infty}\|U(t)-v(t)\|_{L^2} =0.
\end{equation}

We perform a monotonicity argument on $v$, similar to the one in \S\ref{S.5.4}.
Let $t\geq 0$.
For the same function $ \xi(\tau,x)=\chi\left(\frac{4x-\sigma(t)}{\sigma(\tau)}-2\right)$, define
\begin{equation*}
L(t)=\int v^2(\tau,x)\xi(\tau,x) dx
\end{equation*}
Then it follows from simple computations and \eqref{est:xi3} that
\begin{equation*}
\frac {dL}{d\tau }
=-3 \int (\partial_x v)^2 \partial_x \xi+\int v^2\partial_\tau\xi +\int v^2 \partial_x^3 \xi \lesssim \tau^{-3\beta}
\int v_0^2.
\end{equation*}
Let $ t_0>0$ be such that $\sigma(t_0)=\frac 1{8} \sigma(t)$ and $t_0\gtrsim t$.
Integrating on $[ t_0,t]$, and using the properties of $\chi$, we have
\begin{equation*}
\int_{x\geq \frac{\sigma(t)}2} v^2(t,x) dx
\leq L(t)\leq L( t_0)+c t^{- 3\beta+1} 
\lesssim \int_{x\geq 2\sigma(t_0)} v^2(t_0,x) dx + t^{- 3\beta+1}.
\end{equation*}
As $t\to +\infty$, by \eqref{scat} and then \eqref{eq:eta:right}, \eqref{sur:lat}, \eqref{sur:sit},
it holds
\begin{align*}
\int_{x\geq 2\sigma(t_0)} v^2(t_0,x) dx
&\lesssim \int_{x\geq 2\sigma(t_0)} U^2(t_0,x) dx +o(1)
\lesssim \int_{x\geq 2\sigma(t_0)} \mm ^2(t_0,x) dx +o(1)\\
&\lesssim \int_{y\geq \frac{\sigma(t_0)}{\lambda(t_0)}} Q^2(y)dy + o(1)
\lesssim e^{-ct^{\frac{3\beta-1}2}}+o(1)=o(1).
\end{align*}
Thus, 
\begin{equation*}
\lim_{t\to +\infty} \int_{x\geq \frac{\sigma(t)}2} v^2(t,x) dx =0,
\end{equation*}
but this is contradictory with \eqref{scat} since from \eqref{for:U} and \eqref{est:eta:right_ext}
\begin{equation*}
\lim_{t\to +\infty} \int_{x\geq \frac{\sigma(t)}2} U^2(t,x) dx =\int Q^2.
\end{equation*}

\end{document}